\documentclass[11pt]{amsart}
\usepackage[babel]{csquotes}
\usepackage{enumitem}
\usepackage{amsmath,amsthm,amssymb,mathrsfs,amsfonts,verbatim,enumitem,color,leftidx}
\usepackage{mathabx}
\usepackage[left=2.9cm,right=2.9cm,top=3.3cm,bottom=3.4cm,a4paper]{geometry}
\usepackage{tikz}
\usepackage{etoolbox} 
\usepackage{bbm}
\usepackage[all,tips]{xy}
\usepackage{graphicx,ifpdf}
\usepackage{stmaryrd}
\ifpdf
   \DeclareGraphicsRule{*}{mps}{*}{}
\fi

\usepackage[right,displaymath,mathlines]{lineno}
\usepackage{pgfmath}
\usepackage[colorlinks]{hyperref}
\hypersetup{
linkcolor=blue,          
citecolor=green,      
}

\usepackage{tikz}
\usetikzlibrary{arrows,snakes,backgrounds}

\newtheorem{thm}{Theorem}[section]
\newtheorem{lem}[thm]{Lemma}
\newtheorem{coro}[thm]{Corollary}
\newtheorem{prop}[thm]{Proposition}

\theoremstyle{definition}
\newtheorem{defn}[thm]{Definition}

\newtheorem{remark}[thm]{Remark}

\theoremstyle{remark}

\numberwithin{equation}{section}

\definecolor{esperance}{rgb}{0.0,0.5,0.0}

\newcommand{\R}{\mathbb{R}}

\newcommand{\G}{\Gamma}


\newcommand{\eps}{\epsilon}

\newcommand{\cA}{\mathcal{A}}

\newcommand{\cC}{\mathcal{C}}

\newcommand{\cG}{\mathcal{G}}

\newcommand{\cM}{\mathcal{M}}

\newcommand{\cO}{\mathcal{O}}

\newcommand{\cT}{\mathcal{T}}

\newcommand{\bR}{\mathbb{R}}
\newcommand{\bZ}{\mathbb{Z}}





\newcommand{\ra}{\rightarrow}

\newcommand{\onto}{\xymatrix{\ar@{>>}[r]&}}



\newcommand{\g}{\gamma}
\newcommand{\D}{\Delta}

\newcommand{\fl}{f_\lambda}

\begin{document}

\title{Local limit theorem of Brownian motion on metric trees}
\author{Soonki Hong}
\address{Soonki Hong}
\curraddr{Department of Mathematics\\ Postech \\ Cheongam-ro, Pohang, 37673 \\ Republic of Korea}
\email{soonkihong@postech.ac.kr}
\thanks{2020 \emph{Mathematics Subject Classification.} Primary 37H05; Secondary 31C25, 37D35}

\begin{abstract}
Let  $\mathcal{T}$ be a locally finite tree whose geometric boundary has infinitely many points. Suppose that a non-amenable group $\G$ acts isometrically and geometrically on the tree $\mathcal{T}$. 
 In this paper, we show that if the length spectrum is Diophantine, then there exists a continuous function $C$ on $\mathcal{T}^2$ such that the heat kernel $p(t,x,y)$ of $\mathcal{T}$ satisfies
 $$\lim_{t\rightarrow \infty}t^{3/2}e^{\lambda_0t}p(t,x,y)=C(x,y)$$
  for any $x,y\in \mathcal{T}$. Here, $\lambda_0$ is the bottom of the spectrum of the Laplacian on $\mathcal{T}$. 
 \end{abstract}
 \maketitle

\section{Introduction}\label{sec:1}
Let $\mathcal{T}$ be a locally finite metric tree with the path metric $d$. The geometric boundary $\partial \mathcal{T}$ of the tree $\mathcal{T}$ is the collection of the equivalence classes of geodesic rays in $\mathcal{T}$ up to bounded Hausdorff distance. Suppose that the degree of every vertex is larger than 2 and the geometric boundary $\partial\mathcal{T}$ consists of infinitely many points. Suppose that a discrete group $\G$ acts isometrically and geometrically (i.e. properly and cocompactly) on $\mathcal{T}$. 

To define Brownian motion on $\mathcal{T}$, we choose a Dirichlet form on graphs in \cite{BSSW} (see Section \ref{sec:2.2}). The Dirichlet form allows us to define the Laplacian $\Delta$ and the heat kernel $p(t,x,y)$ on graphs. The heat kernel is the fundamental solution of the heat equation, i.e. $$\Delta p(t,x,y)=\frac{d}{dt}p(t,x,y)\text{ and }\underset{t\ra0+}\lim p(t,x,y)=\delta_x(y),$$  where $\delta_x(y)$ is the Dirac delta mass at $x$. The heat kernel $p(t,x,y)$ is the probability density function of our Brownian motion.

Our main result is the local limit theorem of Brownian motion on $\mathcal{T}$ which describes the asymptotic behavior of the heat kernel as $t\ra\infty$ under some assumptions related to the length spectrum of $\Gamma$. The \textit{translation length} $l_\g$ of an element $\g\in\G$ is defined by
 $$l_\g:=\inf\{d(x,\g x): x\in \mathcal{T}\}.$$  The \textit{length spectrum} $\Lambda_\G$ of $\G$ is the additive subgroup of $\mathbb{R}$ generated by $\{l_\g:\g\in \G\}$.
\begin{defn}\label{def:1.1} The length spectrum $\Lambda_\Gamma$ is \textit{Diophantine} if there exist two elements $\g$ and $\g'$ such that the following property holds. There exist positive numbers $C$ and $\beta$ such that for all $p,q\in \mathbb{Z}$ with $q>0$,
 $$\left|\frac{l_\gamma}{l_{\gamma'}}-\frac{p}{q}\right|>C q^{-\beta}.$$ 
 \end{defn}
 The following theorem is the main theorem of this article.
\begin{thm}\label{thm:1.3}\emph{(Local Limit Theorem)} Let $\mathcal{T}$ be a topologically complete locally finite metric tree. Every degree of a vertex is at least 3. Suppose that the cardinality of $\partial T$ is infinite and a discrete group $\G$ acts isometrically and geometrically on $\mathcal{T}$. Suppose that the length spectrum of $\Gamma$ is a dense subgroup of $\mathbb{R}$ and is Diophantine. There exists a function $C:\mathcal{T}\times\mathcal{T}\rightarrow \mathbb{R}$ such that for any $x,y\in\mathcal{T}$,
\begin{equation}
\lim_{t\rightarrow\infty}t^{3/2}e^{\lambda_0 t}p(t,x,y)=C(x,y).
\end{equation}
\end{thm}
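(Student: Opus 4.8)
The plan is to pass from the heat kernel on the tree $\mathcal{T}$ to a spectral/geometric object on which harmonic analysis is available, then extract the $t^{-3/2}$ asymptotics by a saddle-point (Laplace/stationary-phase) argument, exactly as in the classical local limit theorems for random walks on groups with spectral gap. First I would use the $\G$-action and cocompactness to realize $p(t,x,y)$ via its Laplace transform, the resolvent (Green function) $G_s(x,y) = \int_0^\infty e^{-st} p(t,x,y)\,dt$, which is meromorphic in $s$ with its first singularity at $s=\lambda_0$. The local limit theorem is then equivalent to understanding the nature of this singularity: the claimed asymptotics $t^{3/2}e^{\lambda_0 t}p(t,x,y)\to C(x,y)$ corresponds, via a Tauberian/Karamata-type theorem, to $G_s(x,y)$ having a square-root branch point at $s=\lambda_0$, i.e. $G_s(x,y) = a(x,y) - b(x,y)\sqrt{s-\lambda_0} + o(\sqrt{s-\lambda_0})$ as $s\downarrow\lambda_0$, with $C(x,y) = \tfrac{1}{2\sqrt\pi}\, b(x,y)\, e^{0}$ up to the usual constant. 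So the real content is to prove that the Green function has precisely a square-root singularity (not a pole, not a worse branch point) and to identify the coefficient $b(x,y)$.

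To get the square-root singularity I would exploit the tree structure together with the non-amenability of $\G$. On a tree the Green function satisfies exact recursion relations along edges (first-passage decompositions): $G_s(x,y)$ factors through the first-passage generating functions $F_s(x,z)$ across edges, and these satisfy a finite system of polynomial/algebraic functional equations because $\G\backslash\mathcal{T}$ is a finite graph, so the "types" of edges form a finite set. The vector of first-passage functions is thus an algebraic function of $s$, and its dominant singularity is generically a square-root branch point arising when the Jacobian of the defining system degenerates — this is the standard mechanism (as in Lalley, Woess, Nagnibeda–Woess) for local limit theorems on trees and free-group-like settings; non-amenability guarantees $\lambda_0 < 0$ (strictly inside, so the walk is "transient at the bottom") and guarantees the branch point is genuinely of order $1/2$ rather than the behavior one sees in the amenable/recurrent case. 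Here the \emph{Diophantine} hypothesis on the length spectrum and its density enter: because $\mathcal{T}$ is a \emph{metric} (not combinatorial) tree with possibly incommensurable edge lengths, the relevant transfer operator acts on functions of a continuous parameter, and one must rule out a lattice obstruction (the analogue of periodicity/aperiodicity for random walks). Density of $\Lambda_\G$ kills the lattice case; the Diophantine condition gives a quantitative non-resonance estimate ensuring the perturbation series for the leading eigenvalue of the transfer operator near $s=\lambda_0$ converges with the right error control, so that no oscillatory term of the form $e^{i\theta t}$ survives and the limit $C(x,y)$ is a genuine constant rather than a function oscillating in $t$.

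Concretely the steps I would carry out are: (1) Set up the Dirichlet form from \cite{BSSW} and write down the resolvent equation on $\mathcal{T}$, reducing $G_s(x,y)$ to the edge first-passage functions via the tree recursion; on each edge (an interval of length $\ell$) solve the ODE $\tfrac12 u'' = (s-\lambda_0\text{-shift}) u$ explicitly so that first-passage factors are explicit $\cosh/\sinh$ expressions in $\sqrt{s}\,\ell$. (2) Using $\G$-invariance, package the boundary data of these functions over the finitely many $\G$-orbits of edges into a finite-dimensional (or compact-operator) fixed-point equation $\Phi_s(v) = v$; show by the implicit function theorem that $v(s)$ is analytic for $\re(s) > \lambda_0$ and extends continuously to $\lambda_0$. (3) Show the critical value $s=\lambda_0$ is exactly where $I - D\Phi_s$ becomes singular with a simple kernel and the second-order term is non-degenerate, yielding $v(s) = v(\lambda_0) + c\sqrt{s-\lambda_0} + \cdots$ — this forces the square-root singularity of $G_s(x,y)$ with a computable leading coefficient $b(x,y)$, continuous in $(x,y)$ because the explicit edge solutions are. (4) Invoke a Tauberian theorem (Darboux method on the Laplace-inversion contour, or the Hardy–Littlewood–Karamata theorem for $p(t,\cdot,\cdot) = \tfrac{1}{2\pi i}\int e^{st} G_s\,ds$), using density + Diophantine to push the contour past $s=\lambda_0$ with only the branch cut contributing and to bound the far-away spectrum, to conclude $t^{3/2} e^{\lambda_0 t} p(t,x,y) \to \tfrac{b(x,y)}{2\sqrt\pi} =: C(x,y)$.

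The main obstacle I expect is step (3) together with the role of the Diophantine condition in step (4): verifying that the singularity at $s=\lambda_0$ is \emph{exactly} square-root type requires showing both that $D\Phi_{\lambda_0}$ has a one-dimensional kernel (needs irreducibility/primitivity of the transfer operator, which follows from the tree being "non-degenerate" — all degrees $\ge 3$ — and $\G$ acting cocompactly) and that the relevant Hessian-type quantity does not vanish (a strict convexity statement for the leading eigenvalue $\lambda(s)$ of the transfer operator as a function of $s$, i.e. $\lambda'(\lambda_0)=0$ but $\lambda''(\lambda_0)\neq 0$; the vanishing of the first derivative is where one needs to be at the spectral bottom, and its non-vanishing is where non-amenability/$\lambda_0<0$ is used). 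Handling the \emph{metric} (continuous edge-length) aspect means the transfer operator is not a finite matrix but an operator on $C(\text{boundary of a fundamental domain})$ or on a space of holomorphic functions, so one must establish quasi-compactness (a Doeblin–Fortet/Lasota–Yorke inequality) to get a spectral gap — and it is precisely in ruling out eigenvalues of modulus equal to the spectral radius \emph{off} the positive axis that density of $\Lambda_\G$ (no lattice) and the Diophantine bound (quantitative control on the approach, preventing a cluster of near-critical eigenvalues that would spoil the clean $t^{-3/2}$ rate) are indispensable.
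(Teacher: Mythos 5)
Your route is genuinely different from the paper's: the paper never analyzes an algebraic/functional-equation structure of first-passage generating functions. Instead it builds a potential $f_\lambda$ from the $\lambda$-Martin kernel, constructs Patterson--Sullivan densities and the Gibbs measure $\overline{m}_\lambda$, proves it is the equilibrium state (Theorem \ref{thm:1.4}), uses the Diophantine hypothesis only to get \emph{uniform rapid mixing} of the geodesic flow (Lemma \ref{lem:5.2}, Theorem \ref{thm:5.4}, Corollaries \ref{coro:5.5}--\ref{coro:5.6}), and from that derives asymptotics of the first and second $\lambda$-derivatives of $G_\lambda(x,y)$ as $\lambda\to\lambda_0^-$ (Theorem \ref{thm:6.3}, Proposition \ref{prop:6.6}); the local limit then follows from a spectral-measure Tauberian argument (Proposition \ref{prop:6.11}) plus H\"older regularity of the heat kernel (Lemma \ref{bssw}). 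Your plan, by contrast, is in the Lalley--Nagnibeda--Woess spirit. As a plan it is not implausible, but two of its steps are exactly where the real content lies and are not supplied.

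First, step (3) — that $s=\lambda_0$ is a square-root branch point with one-dimensional degeneracy and nonvanishing second-order term — is asserted, not proved. The ``generic square-root singularity of algebraic systems'' mechanism does not apply here: on a metric tree the edge first-passage factors are transcendental ($\cosh/\sinh$ in $\sqrt{s}\,\ell$), the system is an analytic fixed-point problem, and neither non-amenability nor degree $\geq 3$ by itself gives you $\lambda''(\lambda_0)\neq 0$; in the paper this non-degeneracy is precisely the finiteness and positivity of the constants $\mathcal{C}$ and $\mathcal{C}_1=\int u\,d\overline{m}_{\lambda_0}$, which are produced by the Gibbs-measure and uniform-mixing machinery of Sections \ref{Sec:3}--\ref{Sec:5} and occupy most of the paper. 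Second, step (4) as stated is insufficient: an expansion $G_s(x,y)=a-b\sqrt{s-\lambda_0}+o(\sqrt{s-\lambda_0})$ along the \emph{real} axis does not give a $t^{-3/2}$ local limit via Hardy--Littlewood--Karamata alone. You must either (i) shift the Laplace-inversion contour, which requires analytic continuation and quantitative bounds on $G_s$ for \emph{complex} $s$ near the line $\operatorname{Re}s=\lambda_0$ — this is where the density/Diophantine hypotheses would have to act through a Dolgopyat-type estimate that you never formulate for your transfer operator — or (ii) exploit positivity, as the paper does, writing $p(t,x,y)$ against the spectral measure of $\Delta$ and feeding in the asymptotics of \emph{both} $\partial_\lambda G_\lambda$ and $\partial^2_\lambda G_\lambda$, uniformly in $\lambda$ near $\lambda_0$ (this uniformity is what the rapid-mixing corollaries provide and is absent from your scheme). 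Until these two points are made precise, the proposal is a program rather than a proof. (Minor: with the paper's convention $\lambda_0$ is the positive bottom of the spectrum of $-\Delta$, not a negative number.)
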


Various authors have shown the local limit theorem of random walk on hyperbolic groups and Brownian motion on Riemannian manifolds (\cite{La}, \cite{GL}, \cite{G} and \cite{LL}) since Bougerol proved the local limit theorem of Brownian motion on symmetric spaces of non-compact type (\cite{Bo}). Using the thermodynamic formalism, the authors of \cite{GL}, \cite{G} and \cite{LL} obtained a continuous function $c(x,y)$ described by 
\begin{equation}\label{eq:1.1}
\lim_{\lambda\ra\lambda_0-} \sqrt{\lambda_0-\lambda}\frac{\partial}{\partial \lambda}G_\lambda(x,y)=c(x,y).
\end{equation}
Utilizing the equation \eqref{eq:1.1} and Hardy-Littlewood Tauberian theorem, the authors proved the local limit theorem. In particular, the proof of Theorem \ref{thm:1.3} is analogous to the method in \cite{LL}. The remaining part of this section is devoted to introducing the key tools for the proof of Theorem \ref{thm:1.3}.

The \textit{$\lambda$-Green function} $G_\lambda:\mathcal{T}\times\mathcal{T}\rightarrow \mathbb{R}$ is defined by 
$$G_\lambda(x,y):=\int_{0}^{\infty}e^{\lambda t}p(t,x,y)dt.$$ The \textit{$\lambda$-Martin kernel} $k_\lambda:\mathcal{T} \times \mathcal{T} \ra \mathbb{R}$ is a map defined by
$$k_\lambda(x_0,x,y)=\frac{G_\lambda(x,y)}{G_\lambda(x_0,y)}.$$
The \textit{$\lambda$-Martin boundary} $\partial_\lambda \mathcal{T}$ is the boundary of the closure of the image of the embedding defined by $\lambda$-Martin kernel $y\mapsto k_\lambda(x_0,\cdot,y)$ on $\mathcal{T}$. The author of this article and Lim proved the convergence of $\lambda$-Green function and characterized $\lambda$-Martin boundary (\cite{HL}). Since the measures to be introduced in this article are related to the $\lambda$-Martin kernel, the characteristic of $\lambda$-Martin kernel plays an important role to prove Theorem \ref{thm:1.3}. 

Following the methods in \cite{PPS} and \cite{BPP}, we construct Gibbs measure $\overline{m}_\lambda$ of a potential function $f_\lambda$ defined by 

\begin{equation}\label{eq:1.2}
f_\lambda(g):=-2\lim_{t\ra0+}\frac{\log k_\lambda(g(0),g(t),g_+)}{t}
\end{equation}
for all geodesic line $g$, where $g_+=\underset{t\rightarrow \infty}\lim g(t)$. By the chain rule, we have
$$f_\lambda(g)=-2\lim_{t\rightarrow 0+}\frac{k_\lambda(g(0),g(t),g_+)-k_\lambda(g(0),g(0),g_+)}{t}.$$ 

To construct Gibbs measure, we verify the existence of Patterson-Sullivan density of $f_\lambda$, which is a family of measures $\mu_{x,\lambda}$ on $\partial\mathcal{T}$ satisfying that for any $x,y\in \mathcal{T}$ and $\gamma\in \Gamma$
 and $\xi \in \partial \mathcal{T},$
$$\g_*\mu_{x,\lambda}=\mu_{\g x,\lambda} \text{ and }\frac{d\mu_{y,\lambda}}{d\mu_{x,\lambda}}(\xi)=k_\lambda^2(x,y,\xi)e^{\delta_\lambda\beta_{\xi}(x,y)},$$
where $\delta_\lambda$ is the critical exponent (see Definition \ref{def:3.1}) and $\beta_\xi(x,y)$ is Busemann cocycle (see Section \ref{sec:2.1}). Patterson constructed the density and applied the density to prove that the critical exponent of a Fuchsian group acting on $\mathbb{H}^2$ coincides with the Hausdorff dimension on the limit set of the Fuchsian group (\cite{P}). Sullivan proved the same result when a discrete group acts convex cocompactly on $\mathbb{H}^{d}$ (\cite{S}). Coornaert generalized the result of Patterson and Sullivan (\cite{C}) to $\delta$-hyperbolic space. Ledrappier constructed Patterson-Sullivan density associated with H\"older continuous functions on the unit tangent bundle of Riemannian manifolds (\cite{Le}). 

Using Hopf parametrization (see Section \ref{sec:2.1}), Patterson-Sullivan density and N\"aim kernel defined by \eqref{3.13}, we obtain a Gibbs measure $m_\lambda$ of $f_\lambda$ on $\mathcal{G}\mathcal{T}$ defined for any $g\in \mathcal{G}\mathcal{T}$ by
$$m_\lambda(g):=\theta_x^\lambda(g_-,g_+)^2e^{2\delta_\lambda(g_-|g_+)_x}d\mu_{x,\lambda(g_-)} d\mu_{x,\lambda}(g_+)dt,$$
where $g_\pm:=\underset{t\rightarrow \infty}\lim g(\pm t)$, $g(t)$ is the closest point to $x$ on $g$ and $(g_-|g_+)_x$ is Gromov product of $g_-$ and $g_+$ based on $x$ (see Section \ref{Sec:2}). Since $m_\lambda$ is $\Gamma$-invariant, we have a probability measure $\overline{m}_\lambda$ on $\Gamma\backslash \mathcal{G}\mathcal{T}$ induced by $m_\lambda$. Although not all Gibbs measures of H\"older continuous functions are equilibrium states (see Definition \ref{def:4.5}), Gibbs measure coincides with the equilibrium state under certain assumptions. Thus the construction of Gibbs measure is important. Roblin constructed Gibbs measure of a constant function and the measure is equal to the entropy maximizing measure, known as the Bowen-Margulis measure (\cite{R}). Following Roblin's construction, Gibbs measures of H\"older continuous functions on the unit tangent bundle of quotient spaces of negatively curved manifolds and trees were built in \cite{PPS} and \cite{BPP}, respectively. In Section \ref{Sec:4}, we introduce the coding of $\Gamma\backslash\mathcal{G}\mathcal{T}$ in \cite{BPP} and show that $\overline{m}_\lambda$ satisfies the following theorem.

\begin{thm}\label{thm:1.4}Let $\mathcal{T}$ be a topologically complete locally finite metric tree. Every degree of a vertex is at least 3. Suppose that the cardinality of $\partial T$ is infinite and a discrete group $\G$ acts isometrically and geometrically on $\mathcal{T}$. The measure $\overline{m}_\lambda$ is the equilibrium state of $f_\lambda$ and the pressure $P_\lambda$ of $f_\lambda$ is equal to $\delta_\lambda$ (see Definition \ref{def:4.5}). 
\end{thm}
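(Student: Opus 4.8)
The plan is to transport the statement to symbolic dynamics via the coding of $\Gamma\backslash\mathcal{G}\mathcal{T}$ recalled in Section~\ref{Sec:4} from \cite{BPP}, and then to run the standard thermodynamic formalism of subshifts of finite type, exactly as in \cite{PPS,BPP}; observe that the Diophantine hypothesis of Theorem~\ref{thm:1.3} plays no role here. Since $\Gamma$ acts geometrically on the locally finite metric tree $\mathcal{T}$, the quotient graph $\Gamma\backslash\mathcal{T}$ is finite and carries finitely many edge lengths, so the discrete-time geodesic flow on $\Gamma\backslash\mathcal{G}\mathcal{T}$ is coded by a two-sided subshift of finite type $(\Sigma,\sigma)$ on a finite alphabet (non-backtracking edge paths in $\Gamma\backslash\mathcal{T}$), and the geodesic flow on $\Gamma\backslash\mathcal{G}\mathcal{T}$ is the suspension of $\sigma$ under a locally constant positive roof function $\tau$, the length of the current edge. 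Because $\Gamma$ is non-elementary and $\partial\mathcal{T}$ is infinite, $(\Sigma,\sigma)$ is topologically mixing. By the multiplicative cocycle identity for $k_\lambda$ together with the chain-rule form of $f_\lambda$ in \eqref{eq:1.2}, the integral of $f_\lambda$ along a geodesic flow segment from $g(0)$ to $g(T)$ telescopes to $-2\log k_\lambda(g(0),g(T),g_+)$; restricting to one fundamental edge this defines a function $\phi\colon\Sigma\to\mathbb{R}$, and the regularity estimates for $k_\lambda$ and the N\"aim kernel established earlier (resp. in \cite{HL}) show that $\phi$ is H\"older continuous and that the critical exponent $\delta_\lambda$ of Definition~\ref{def:3.1} is finite and strictly positive.

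I first identify the pressure. The map $s\mapsto P_\sigma(\phi-s\tau)$, with $P_\sigma$ the subshift pressure, is continuous, strictly decreasing, and tends to $\mp\infty$ as $s\to\pm\infty$, so it has a unique zero $s_0$. By Abramov's formula for suspension flows, the pressure $P_\lambda$ of $f_\lambda$ on $\Gamma\backslash\mathcal{G}\mathcal{T}$ equals $s_0$. On the other hand, $\delta_\lambda$ is the abscissa of convergence of the Poincar\'e series $\sum_{\gamma\in\Gamma}\exp\bigl(\int_{x_0}^{\gamma x_0}f_\lambda-s\,d(x_0,\gamma x_0)\bigr)$; under the coding, orbit points $\gamma x_0$ correspond, up to a bounded error coming from the fixed base point, to admissible words, with $\int_{x_0}^{\gamma x_0}f_\lambda$ comparable to a Birkhoff sum $S_n\phi$ and $d(x_0,\gamma x_0)$ to $S_n\tau$, so comparing the Poincar\'e series with $\sum_n\sum_{\sigma^nw=w}e^{S_n\phi(w)-sS_n\tau(w)}$ shows that its abscissa of convergence is precisely $s_0$. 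Hence $P_\lambda=\delta_\lambda$.

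It remains to identify $\overline{m}_\lambda$. By the Ruelle--Perron--Frobenius theorem the H\"older potential $\phi-\delta_\lambda\tau$ on the mixing subshift $(\Sigma,\sigma)$ has a unique equilibrium state $\nu$, and by Abramov's formula its suspension is the unique equilibrium state of $f_\lambda$ for the geodesic flow; since $\tau$ and $\phi$ are bounded, this suspended measure is finite, which is what makes $\overline{m}_\lambda$ a probability measure. To conclude that $\overline{m}_\lambda$ coincides with it, I verify that $m_\lambda$ has the Gibbs property in the symbolic coordinates: the $m_\lambda$-mass of a dynamical ball of combinatorial size $n$ around $g$ is comparable, with multiplicative constants independent of $g$ and $n$, to $e^{S_n\phi(g)-\delta_\lambda S_n\tau(g)}$. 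This is a direct computation from the transformation rule $\frac{d\mu_{y,\lambda}}{d\mu_{x,\lambda}}=k_\lambda^2 e^{\delta_\lambda\beta_\xi(x,y)}$ for the Patterson--Sullivan density, together with two-sided bounds on the N\"aim kernel $\theta_x^\lambda(g_-,g_+)$ and on the Gromov product $(g_-|g_+)_x$ along dynamical balls. Since the Gibbs property characterizes the measure uniquely on a mixing subshift, $\overline{m}_\lambda$ equals the suspension of $\nu$, hence is the equilibrium state of $f_\lambda$, which proves the theorem.

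The main obstacle is this last step: establishing the Gibbs property of $m_\lambda$ directly from the Patterson--Sullivan construction requires uniform (independent of $g$ and of the size of the dynamical ball) control of $\theta_x^\lambda(g_-,g_+)$ and of $(g_-|g_+)_x$, and one must also check that $f_\lambda$ meets precisely the regularity hypotheses of the \cite{BPP} machinery (H\"older exponent, finiteness and positivity of $\delta_\lambda$). All of this rests on the fine asymptotics of the $\lambda$-Martin kernel $k_\lambda$ and on the description of the $\lambda$-Martin boundary from \cite{HL}.
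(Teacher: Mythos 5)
Your overall strategy is essentially the paper's: both arguments pass to the BPP cross-section and coding of Lemma \ref{scoding1}, prove a Gibbs-type estimate on cylinders for the measure built from the Patterson--Sullivan density and the N\"aim kernel, invoke a uniqueness theorem for equilibrium states of H\"older potentials on the resulting finite-alphabet shift, and return to the flow via Abramov's formula and \eqref{eq:3.21}. The one genuinely different step is the identification $P_\lambda=\delta_\lambda$: you locate the flow pressure as the unique root $s_0$ of $s\mapsto P_\sigma(\phi-s\tau)$ and then match $s_0$ with $\delta_\lambda$ by comparing the Poincar\'e series with periodic-orbit sums, whereas the paper never performs this orbit-counting comparison. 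Instead it builds $\delta_\lambda$ into the induced potential $F_\lambda$ of \eqref{eq::4.2}, proves the cylinder estimate \eqref{eq:3.15} with zero pressure constant via the shadow lemma (Lemma \ref{lem:3.4}) and Ancona's inequality \eqref{eq:3.19}, and then Buzzi's criterion (Theorem \ref{buzzi}) yields both conclusions at once: weak Gibbs with $p(m)=0$ forces the pressure of $F_\lambda$ to vanish, i.e.\ $P_\lambda=\delta_\lambda$, and simultaneously identifies the equilibrium state (Lemma \ref{vari}). Your route gives a transparent ``pressure equals abscissa of convergence'' statement, but it costs an extra comparison in which $e^{\int_x^{\gamma x}f_\lambda}$ --- which, as Section \ref{subsec:3.1} stresses, is not actually defined since $f_\lambda$ depends on the forward endpoint --- must be replaced by $G_\lambda^2(x,\gamma x)$ up to uniform constants via strong Ancona; the paper gets this for free from the shadow lemma. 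Two points to tighten in your sketch: the coding shift is only transitive in general (the paper distinguishes $L_{\Gamma,\mathbb{Z}}=\mathbb{Z}$ from $2\mathbb{Z}$, and when stabilizers are nontrivial the alphabet involves double cosets, not merely non-backtracking edges), so you should appeal to uniqueness of Gibbs/equilibrium states under transitivity rather than asserting topological mixing; and the ``direct computation'' you defer is exactly the shadow lemma --- besides the transformation rule \eqref{eq:3.4} it needs the uniform two-sided bound \eqref{eq:3.9} on $\mu_{\gamma y,\lambda}(\mathcal{O}_x(\gamma y))$ (full support of the density plus cocompactness) together with Ancona, so it is the real content of the proof rather than a routine verification.
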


Applying the results in \cite{D} and \cite{Me}, the authors of \cite{LL} proved the uniform rapid mixing property of Gibbs measure of $f_\lambda$ on the unit tangent bundle of closed manifolds. In \cite{BPP}, under certain assumption, the rapid mixing property also appears as the application of results in \cite{D} and \cite{Me}. In Section \ref{Sec:5}, we verify that the results \cite{LL} and \cite{BPP} guarantee the uniform rapid mixing property of a Gibbs measure of $f_\lambda$ on $\mathcal{G}\mathcal{T}$ (see Theorem \ref{thm:5.4}). 

This article is constructed as follows. In Section \ref{Sec:2}, we recall the definitions and the properties of the geometry of trees and discuss the result of Brownian motion on graphs in \cite{HL}. In Section \ref{Sec:3}, we construct Patterson-Sullivan density and Gibbs measure of $f_\lambda$. In Section \ref{Sec:4}, we show that Gibbs measure $\overline{m}_\lambda$ is the equilibrium state of $f_\lambda$. In Section \ref{Sec:5}, we verify that Gibbs measure has uniform rapid mixing property under the assumption in Theorem \ref{thm:1.3}. Applying the uniform rapid mixing, we obtain formulae to show Theorem \ref{thm:1.3}. In Section \ref{Sec:6}, using the results in section \ref{Sec:5}, we show the Theorem \ref{thm:1.3}.
\subsection*{Acknowledgments} We would like to thank S. Lim for his encouragement and helpful suggestions. This paper is supported by Basic Science Research Institute Fund under NRF grant number 2021R1A6A1A10042944 and National Research Foundation of Korea(NRF) grant funded by the Korea government(MSIT). (2020R1A2C1A01005446).

\section{Preliminaries}\label{Sec:2}
\subsection{The geometry of trees}\label{sec:2.1}
In this section, we summarize the geometry of trees. We refer to \cite{BH} and \cite{BPP} for details. The assumption is the same as in the beginning of Section \ref{sec:1}. Let $V$ be the set of vertices of $\mathcal{T}$ and let $E$ be the set of edges of $\mathcal{T}$. Let $l_e$ be the length of an edge $e$. Denote by $l_m$ and $l_M$ the minimal and the maximal edge length of $\mathcal{T}$, respectively.  A geodesic segment, a ray and a line is the isometry from a close interval $[0,l]$, a ray $[0,+\infty)$ and a line $(-\infty,\infty)$ to $\mathcal{T}$, respectively. Denote by $[x,y]$ the geodesic segment from $x$ to $y$. Since the quotient space $\G\backslash \mathcal{T}$ is compact and locally finite, $l_m$ and $l_M$ are finite and positive. Denote by $d_M$ the maximal degree of vertices. Under the assumption, $d_M$ is finite. Let $S(x,r)$ and $B(x,r)$ be the sphere and the ball of radius $r$ centered at $x$, respectively. Denote by $Diam(A)$ the diameter of a connected subset $A$ of $\mathcal{T}$. Fix a connected fundamental domain $T_0$ of $\G$ in $\mathcal{T}$. 

Denote $\overline{\mathcal{T}}:=\mathcal{T}\cup\partial \mathcal{T}$. Since the action of $\G$ is cocompact, the limit set $\Lambda \G$ of $\G$ (i.e. the accumulation points in $\partial \mathcal{T}$ of $\G$-orbit in $\mathcal{T}$) coincides with $\partial \mathcal{T}$. The \textit{shadow} $\cO_x(y)$ of a point $y\in \mathcal{T}$ seen from $x$ is the set of end points $\xi$ in $\partial \mathcal{T}$ of the geodesic rays from $x$ 
 passing through $y$. Busemann cocycle $\beta:\partial \mathcal{T}\times\mathcal{T}\times \mathcal{T} \ra \bR$ is defined by
 $$\beta_{\xi}(x,y):=\displaystyle\lim_{z\ra+\xi} \{d(x,z)-d(y,z)\}$$
for all $(\xi,x,y)\in \partial\mathcal{T} \times\mathcal{T}\times \mathcal{T}$. Gromov product with a base point $x$ is defined by 
 $$(y|z)_x:=\frac{1}{2}\{d(x,y)+d(x,z)-d(y,z)\}$$
 for all $y,z\in \mathcal{T}$. Gromov product on $\partial{\mathcal{T}}$ with a base point $x$ is also defined as follows: for any $\xi,\zeta\in \overline{\mathcal{T}}$,
 $$(\xi|\zeta)_x:=\lim_{y\ra\xi,z\ra\zeta}\frac{1}{2}\{d(x,y)+d(x,z)-d(y,z)\}.$$
 For any $x,y\in \mathcal{T}$ and $\xi,\zeta\in\partial \mathcal{T}$, 
 \begin{equation}\label{eq:2.1}
2(\xi|\zeta)_x=2(\xi|\zeta)_y+\beta_{\xi}(x,y)+\beta_{\zeta}(x,y).
\end{equation}
 The visual distance $d_x$ between two points $\xi$ and $\zeta$ in $\partial \mathcal{T}$ with a base point $x$ is defined by
 $$d_x(\xi,\zeta):=e^{-(\xi|\zeta)_x}.$$
Let $\cG \mathcal{T}$ be the space of geodesic lines in $\mathcal{T}$. Denote $\cG_x\mathcal{T}:=\{g\in \cG\mathcal{T}:g(0)=x\}$. The distance $d_{\cG\mathcal{T}}$ between $g$ and $g'$ is defined by
$$d_{\cG\mathcal{T}}(g,g'):=\int_{-\infty}^\infty d(g(t),g'(t))e^{-2|t|}dt.$$
The geodesic flow $\phi_t$ on $\cG\mathcal{T}$ is defined by
 $$\phi_tg(s):=g(s+t)$$
 for all $g\in\cG\mathcal{T}$. Define a map $\pi:\cG\mathcal{T}\ra\mathcal{T}$ as follows: for all $g\in \cG\mathcal{T}$,
$$\pi(g):=g(0).$$
 Denote $\partial^2\mathcal{T}:=\{(\xi,\zeta)\in\partial\mathcal{T}\times\partial\mathcal{T}:\xi\neq\zeta\}$.  The space $\cG\mathcal{T}$ is identified with $\partial^2 \mathcal{T}\times \bR$ via Hopf parametrization with the base point $x$:
 $$g\mapsto (g_-,g_+,t),$$
  where $g_{\pm}:=\displaystyle\lim_{s\ra\pm\infty}g(s)$ and $g(t)$ is the closest point to $x$ on $g$. Denote $f\asymp_c g$ if $\frac{1}{c}g\leq f \leq cg.$ The following lemma will be used to show H\"older continuity of $f_\lambda$ (see Proposition \ref{prop:2.11}).
\begin{lem}\label{lem:2.1} \emph{(\cite{BPP} Lemma 3.4)} There exist constants $\epsilon$ and $C$ satisfying the following property. For all geodesic lines $g,g'$ with $d_{\cG\mathcal{T}}(g,g')<\varepsilon$, there exist constants $a<0<b$ and $s$ such that $g([a,b])=g(\mathbb{R})\cap g'(\mathbb{R})$ and $g(t)=g'(t+s)$ for all $t\in [a,b]$, and
\begin{displaymath}
d_{\cG\mathcal{T}}(g,g')\asymp_C d_{\pi(g)}(g_-,g_-')^2+d(\pi(g), \pi (g'))+d_{\pi(g)}(g_+,g_+')^2.
\end{displaymath}
\begin{figure}[h]
\begin{center}
\begin{tikzpicture}[scale=1]
  \draw [thick](-3,0) -- (3,0);   \draw [thick](-4.5,-0.2) -- (-3,0);\draw[thick](-4.5,0.2) -- (-3,0);\draw[thick] (4.5,-0.2) -- (3,0);\draw [thick](4.5,0.2) -- (3,0);
 \node at (-2.7,-0.3) {$=g(a)$};  \node at (-2.5,0.3) {$g'(a+s)$};   \node at (2.7,0.3) {$g'(b+s)$};\node at (0,-0.3) {$g(0)$};  \node at (0.4,0.3) {$g'(0)$};\node at (2.6,-0.3) {$=g(b)$};\node at (-4.8,-0.3) {$g'_-$};\node at (-4.8,0.3) {$g_-$};\node at (4.8,-0.3) {$g'_+$};\node at (4.8,0.3) {$g_+$}; \fill (-3,0)    circle (2pt);\fill (-4.5,0.2)    circle (2pt); \fill (3,0)    circle (2pt);\fill (4.5,0.2)    circle (2pt); \fill (0,0)    circle (2pt);\fill (4.5,-0.2)    circle (2pt);\fill (-4.5,-0.2)    circle (2pt);\fill (0.4,0) circle (2pt);
\end{tikzpicture}
\end{center}
\caption{Two geodesic lines $g_1$ and $g_2$ with $d_{\cG\mathcal{T}}(g,g')<\varepsilon$}\label{figure1}
\end{figure}
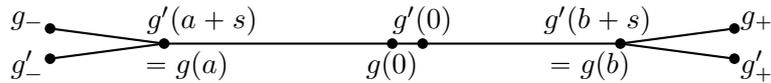
\end{lem}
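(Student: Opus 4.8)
The plan is to reduce the claimed equivalence to an explicit one-dimensional integral. I would first pin down the local structure of a pair of nearby geodesic lines in the tree, and then evaluate $d_{\cG\mathcal{T}}(g,g')=\int_{\mathbb{R}}d(g(t),g'(t))e^{-2|t|}\,dt$ on the regions that structure cuts out.

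\textbf{Stage 1 (local structure; the main obstacle).} The function $t\mapsto d(g(t),g'(t))$ is $2$-Lipschitz, so from $d_{\cG\mathcal{T}}(g,g')\ge e^{-2}\int_{-1}^{1}d(g(t),g'(t))\,dt$ together with the Lipschitz bound one gets $\max_{|t|\le 1/2}d(g(t),g'(t))\lesssim\sqrt{d_{\cG\mathcal{T}}(g,g')}$. I would combine this with two elementary tree facts: two geodesic lines meet in a convex set (empty, a point, a segment, a ray, or a line), and beyond any point at which they separate the distance $d(g(t),g'(t))$ grows at least linearly in the parameter, while $\int_{\mathbb{R}}|t|e^{-2|t|}\,dt=\tfrac12$. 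Using these, for $d_{\cG\mathcal{T}}(g,g')<\varepsilon$ with $\varepsilon$ a small absolute constant one rules out every configuration except: $g(\mathbb{R})\cap g'(\mathbb{R})=g([a,b])$ with $a<0<b$, the two lines run along it in the same direction so that $g(t)=g'(t+s)$ on $[a,b]$ for some $s$, and moreover $|a|,b\ge 1$ and $|s|\le\tfrac12$. (In each excluded case -- empty intersection, a single point, a short segment, a segment not containing $g(0)$, or a long segment traversed in opposite directions -- one exhibits a unit-length window of parameters near $0$ on which $d(g(\cdot),g'(\cdot))$ stays above an absolute constant, contradicting $d_{\cG\mathcal{T}}(g,g')<\varepsilon$; this case analysis is exactly the structural content of \cite[Lemma 3.4]{BPP} and is the part I expect to require real care.)

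\textbf{Stage 2 (the integral and identification of terms).} Given the structure I would split $d_{\cG\mathcal{T}}(g,g')=\int_{-\infty}^{a}+\int_{a}^{b}+\int_{b}^{\infty}$. On $[a,b]$ one has $d(g(t),g'(t))=d(g'(t+s),g'(t))=|s|$, so the middle piece is $|s|\int_{a}^{b}e^{-2|t|}\,dt$, which lies in $[(1-e^{-2})|s|,\,|s|]$ because $[a,b]\supseteq[-1,1]$. For $t>b$ the branch point $g(b)=g'(b+s)$ is the gate from $g(t)$ to $g'(\mathbb{R})$, whence $t-b\le d(g(t),g'(t))\le 2(t-b)+|s|$ and therefore $\tfrac14 e^{-2b}\le\int_{b}^{\infty}d(g(t),g'(t))e^{-2t}\,dt\le e^{-2b}$; symmetrically $\int_{-\infty}^{a}$ is comparable to $e^{2a}$. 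Summing gives $d_{\cG\mathcal{T}}(g,g')\asymp_{C}|s|+e^{-2b}+e^{2a}$ with an absolute $C$. Finally I identify the terms with the right-hand side of the lemma: $d(\pi(g),\pi(g'))=d(g(0),g'(0))=d(g'(s),g'(0))=|s|$; the rays $[g(0),g_{+})$ and $[g(0),g'_{+})$ agree along $g([0,b])$ and branch at $g(b)$, so $(g_{+}|g'_{+})_{g(0)}=b$ and $d_{\pi(g)}(g_{+},g'_{+})^{2}=e^{-2b}$; and likewise $d_{\pi(g)}(g_{-},g'_{-})^{2}=e^{2a}$. Substituting yields the claimed $d_{\cG\mathcal{T}}(g,g')\asymp_{C}d_{\pi(g)}(g_{-},g_{-}')^{2}+d(\pi(g),\pi(g'))+d_{\pi(g)}(g_{+},g_{+}')^{2}$. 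Aside from Stage 1, everything here is routine bookkeeping.
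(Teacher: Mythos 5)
Your proposal is essentially correct, but note that the paper itself offers no proof of this statement: it is quoted verbatim from \cite{BPP} (Lemma 3.4), so what you have written is a self-contained replacement for the cited argument rather than an alternative to a proof in the text. Your Stage 2 is exactly the computation one expects and it checks out: on $[a,b]$ one has $d(g(t),g'(t))=|s|$; for $t>b$ the gate property of the branch point $g(b)=g'(b+s)$ gives $d(g(t),g'(t))=(t-b)+|t-b-s|$, whence $\tfrac14 e^{-2b}\le\int_b^\infty d(g(t),g'(t))e^{-2t}dt\le e^{-2b}$, and symmetrically for $t<a$; and the identifications $d(\pi(g),\pi(g'))=|s|$, $(g_+|g'_+)_{\pi(g)}=b$, $(g_-|g'_-)_{\pi(g)}=-a$ are correct, giving $d_{\cG\mathcal{T}}(g,g')\asymp_C e^{2a}+|s|+e^{-2b}$ as claimed. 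Stage 1 is, as you say, the real content, and your quantitative tools do suffice: disjointness, a point or reversed-orientation overlap, or a short overlap all force $d(g(t),g'(t))\gtrsim |t-t_0|$ on a unit window near $0$ and hence $d_{\cG\mathcal{T}}(g,g')$ bounded below by an absolute constant; moreover once the overlap is $g([a,b])$ with matching orientation, the bound $\varepsilon>d_{\cG\mathcal{T}}(g,g')\ge \tfrac14 e^{-2b}$ (resp.\ $\tfrac14 e^{2a}$) forces $b\ge\tfrac12\log\tfrac{1}{4\varepsilon}$ and $-a\ge\tfrac12\log\tfrac{1}{4\varepsilon}$, so your claims $|a|,b\ge 1$ and (via the Lipschitz/maximum estimate) $|s|\le\tfrac12$ do follow for $\varepsilon$ small. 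The only point worth spelling out is the degenerate case in which $g$ and $g'$ share a ray or coincide up to parametrization, so that $g(\mathbb{R})\cap g'(\mathbb{R})$ is a ray or a line; then $b=+\infty$ or $a=-\infty$, the corresponding term $e^{-2b}$ or $e^{2a}$ is $0$ (consistent with $g_\pm=g'_\pm$), and your integral estimates go through unchanged, but the statement's ``constants $a<0<b$'' should be read as allowing these infinite values.
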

\subsection{Laplacian and heat kernel on trees}\label{sec:2.2} In this section, using a Dirichlet forms on $\mathcal{T}$, we recall the definition of the Laplacian of $\mathcal{T}$. See \cite{BSSW} and \cite{HL} for detail.

Choose a $\Gamma$-invariant orientation of edges of $\mathcal{T}$ and denote by $i(e)$ and $t(e)$ the initial vertex and the terminal vertex of an edge $e$, respectively. Let $e^o$ be the open edge of $e$. Let $e_s$ be a point on $e$ satisfying $d(i(e),e_s)=s$. Since every edge $e$ is isometric to a closed interval $[0,l_e]$, the restriction $f|_e$ of a function $f$ on $e$ is often considered as a function on $[0,l_e]$. Thus it is possible that $f|_e$ has the derivative at a point in $e^o$. 

 The space $C^\infty(\mathcal{T})$ is the space of continuous functions $f$ such that for any $e\in E$ and $k\in \mathbb{N}$, the restriction $f|_e$ has a continuous $k$-th derivative on $e^o$ and $\underset{e^o}\sup|(f|_e)^{k}|<\infty.$  For all $f\in C^\infty(\mathcal{T})$, the derivatives of $f|_e$ at the vertices exist. However, the derivative $(f|_e)^{(k)}(v)$ depends on each edge $e$ containing $v$.
 The space $C_c^\infty(\mathcal{T})$ is the subspace of compactly supported functions in $C^\infty(\mathcal{T})$.
 
 The integral of a function $f$ is defined by
 $$\int_{\mathcal{T}} fd\mu:=\sum_{e\in E} \int_{0}^{l_e} f(e_s)ds,$$
 where $ds$ is the Lebesgue measure on the real line $\mathbb{R}$. Since the orientation of edges is $\Gamma$-invariant, for any integrable function $f$,
 \begin{equation}
 \nonumber\int_{\mathcal{T}} fd\mu=\sum_{\gamma\in\Gamma}\int_{T_0}f(\gamma x)d\mu(x),
 \end{equation}
 where $T_0$ is a fundamental domain.
 
 Let $O$ be an open set in $\mathcal{T}$. \textit{Sobolev space} $W^1({O})$ is the space of $L^2$-functions $f$ with the following properties:
 \begin{itemize}
 \item Every restriction $(f|_e)$ has the first weak derivative on $e^o$ and $\displaystyle\int_{e^o\cap O} |f|_e'(s)|^2 ds<\infty$.
 \item $\|f'\|^2_{L^2({O})}=\displaystyle\int_{O} |f'|^2 d\mu<\infty.$
 \end{itemize} 
 Denote by $W_0^1({O})$ the closure of $C_c^\infty({O})$ in $W^1({O})$ with respect to the norm defined by $\|f\|_{W^1({O})}=(\|f\|_{L^2({O})}+\|f'\|_{L^2({O})}\|)^{1/2}$. The space $W_{loc}^1(O)$ is the space of functions $f$ such that for any compact set $K\subset O$, there exists a function $f_1\in W_0^1(O)$ satisfying $f|_K=f_1|_K$.
 
 The space $W_0^1(\mathcal{T})$ is the domain of our \textit{Dirichlet form} $\mathcal{E}$ on $\mathcal{T}$ and Dirichlet form $\mathcal{E}$ is defined by 
 $$\mathcal{E}(f,g)=\int_{\mathcal{T}} f'g'd\mu$$
 for all $f,g\in W_0^1(\mathcal{T}).$ 
  
 The domain $Dom(\Delta)$ of the \textit{Laplacian} $\Delta$ of $\mathcal{T}$ is the space of function $f\in W^1_0(\mathcal{T})$ with the following property: There exists a constant $C_f$ such that for all $h\in W_0^1(\mathcal{T}),$
 $\mathcal{E}(f,g)\leq C_f\|h\|_{L^2(\mathcal{T})}.$ By Riesz representation theorem, for any $f\in Dom(\Delta)$, there exists a $L^2$-function $u$ such that for any $h\in W_0^1(\mathcal{T}),$ 
 $$\mathcal{E}(f,g)=-(u,h).$$
 Then we define Laplacian $\Delta f$ of $f$ to be $u$. Under the assumption in Section \ref{sec:1}, the heat kernel $p(t,x,y)$ exists and is a positive continuous function. The heat kernel satisfies
 \begin{equation}\label{eq:2201283}
 \int_{\mathcal{T}} p(t,x,y)d\mu(y)=1 \text{ and } \int_{\mathcal{T}} p(t,x,y)p(s,y,z)d\mu(y)=p(t+s,y,z)
 \end{equation}
 (see \cite{BSSW} Theorem 4.3 and Theorem 5.12). The map $y\mapsto p(t,x,y)$ is in $C^\infty(\mathcal{T})$ for fixed $t>0$ and $x\in \mathcal{T}$(see \cite{BSSW} Theorem 5.23). The next lemma plays a crucial rule to show local limit theorem and follows from the results in \cite{St1,St2, St3} as an application of Moser iteration techniques.
 \begin{lem} \label{bssw}\emph{(\cite{BSSW} Theorem 4.2)} Let $\mathcal{T}$ be a topologically complete locally finite metric tree. For any compact set $K$, point $x\in K$ and compact interval $[a,b]$, there exist constants $\alpha$ and $C$ such that for any $y\in \mathcal{T}$,
 $$\sup\left\{\frac{|p(t,x_1,y)-p(t,x_2,y)|}{d(x_1,x_2)^\alpha}:t\in [a,b],x_1,x_2\in K\right\}\leq Cp(2b,x,y).$$
 \end{lem}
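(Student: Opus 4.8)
The plan is to derive the estimate from the De Giorgi--Nash--Moser theory for the strongly local regular Dirichlet form $\mathcal{E}$, in the metric--measure formulation of Sturm \cite{St1,St2,St3}, followed by a forward Harnack chain. A preliminary reduction: it suffices to treat pairs $x_1,x_2\in K$ with $d(x_1,x_2)$ smaller than some fixed $r_0>0$, since once the $\sup$--bound below is available the case $d(x_1,x_2)\ge r_0$ is immediate; this lets us work on parabolic cylinders of spatial radius so small that their backward time--slices stay inside $[a/2,b]$, which is what ultimately keeps every constant independent of $y$ (so that the relevant suprema remain finite even when $y$ lies near $K$). Write $K_1$ for the closed $\sqrt{a/2}$--neighbourhood of $K$, compact because $\mathcal{T}$, being complete and locally finite, is proper.

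First I would check that $\mathcal{E}$ is a strongly local regular Dirichlet form whose intrinsic metric is comparable to the path metric $d$ (a routine computation for this one--dimensional type of energy), and that $(\mathcal{T},d,\mu)$ satisfies, on balls of bounded radius, the two inputs of \cite{St3}: volume doubling and a scale--invariant $L^2$--Poincar\'e inequality. Volume doubling holds since at small scales $\mu(B(z,r))\asymp r$ with constants independent of $z$ --- by cocompactness and local finiteness there are only finitely many isometry types of pointed $r$--balls --- and at any bounded scale because $\mathcal{T}$ is a tree of bounded geometry; only a local version is needed, so the failure of a global scale--invariant Poincar\'e inequality on a tree is irrelevant. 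The Poincar\'e inequality on a metric tree is essentially one--dimensional: on edges it is the classical interval inequality, and at a branch vertex one compares a function, edge by edge, to its value at the vertex, the delicate point being uniformity of the constant over the finitely many types of branch vertices, which again uses cocompactness. Granting these hypotheses, \cite{St3} provides a local parabolic Harnack inequality for non--negative weak solutions of $\partial_t u=\Delta u$, with constant depending only on the doubling and Poincar\'e constants.

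For fixed $y$, the function $(t,x)\mapsto p(t,x,y)$ is such a solution on $(0,\infty)\times\mathcal{T}$ (using $p(t,x,y)=p(t,y,x)$ and smoothness of $x\mapsto p(t,x,y)$). Hence the Moser iteration underlying the Harnack inequality gives interior parabolic H\"older regularity: there are $\alpha\in(0,1)$ and $C_1$, depending only on the structural constants, on $K_1$ and on $[a,b]$ but \emph{not} on $y$, with
\begin{equation*}
|p(t,x_1,y)-p(t,x_2,y)|\ \le\ C_1\, d(x_1,x_2)^{\alpha}\,\sup\{\, p(s,z,y):s\in[a/2,b],\ z\in K_1\,\}
\end{equation*}
for all $t\in[a,b]$ and all $x_1,x_2\in K$ with $d(x_1,x_2)<r_0$. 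To absorb the supremum: given $(s,z)$ with $s\in[a/2,b]$ and $z\in K_1$, the point $(2b,x)$ lies strictly in the future ($2b-b>0$) and at bounded spatial distance, so a space--time tube joining $(s,z)$ to $(2b,x)$ can be covered by a number of parabolic cylinders bounded in terms of $\diam(K_1)$, $b$ and the structural constants; iterating the parabolic Harnack inequality along it gives $p(s,z,y)\le C_2\,p(2b,x,y)$ with $C_2$ independent of $y$, $s$, $z$. Taking the supremum over $(s,z)$ and combining with the displayed inequality (and the trivial case $d(x_1,x_2)\ge r_0$) yields the lemma with $C=C_1C_2$ and this $\alpha$.

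The substantive step is the verification of Sturm's hypotheses with constants uniform over $\mathcal{T}$ --- in particular the Poincar\'e inequality at the branch vertices --- together with the identification of the intrinsic metric with $d$. Once that input is secured, the rest is the standard packaging of the parabolic Harnack inequality (interior H\"older regularity plus a Harnack chain), and the only thing to watch is the book--keeping that keeps every constant independent of $y$, which is exactly why one restricts to small spatial scales and to times bounded away from $0$.
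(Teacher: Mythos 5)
Your argument is correct and follows essentially the route the paper relies on: the paper does not prove this lemma itself but quotes it as Theorem 4.2 of \cite{BSSW}, remarking that it follows from Sturm's results \cite{St1,St2,St3} via Moser iteration, which is exactly your scheme (verify the strongly local regular Dirichlet form, local doubling and Poincar\'e hypotheses, apply the parabolic Harnack inequality to get interior H\"older continuity of $(t,x)\mapsto p(t,x,y)$ with constants independent of $y$, then a forward Harnack chain to dominate the local supremum by $p(2b,x,y)$). The only cosmetic remark is that cocompactness is not needed for this lemma: since the constants may depend on $K$ and $[a,b]$, local finiteness near the compact set $K$ already gives the required uniformity.
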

 \subsection{Properties of Green function}\label{sec:2.2} In this section, we recall the properties of Green functions and show that $f_\lambda$ in \eqref{eq:1.2} is H\"older continuous. 

The assumption is the same as in Section \ref{sec:1} holds. By Theorem 8.5 in \cite{SW}, the bottom of $L^2$-spectrum $\lambda_0$ of $-\D$ is positive. By Corollary 3.12 and Theorem 3.15 in \cite{HL}, for any $\lambda\in (-\infty,\lambda_0]$, $\lambda$-Green function is finite. A function $f\in W_{loc}^1(O)$ on a connected open subset $O$ of $\mathcal{T}$ is $\lambda$-\textit{harmonic} if $f$ is the weak solution of the equation $-\D f=\lambda f$, i.e for any compactly supported function $g \in W^1(O),$  
$$\mathcal{E}(f,g)=\lambda(f,g).$$

For any $r,l>0$ and $\lambda\in [0,\lambda_0]$, there exists a constant $D_{r,l}$ such that for any positive $\lambda$-harmonic function $f$ on $B(x,r+l)$ and $y,z\in B(x,r)$,
\begin{equation}\label{eq:2.2}{f(y)/}{f(z)}<e^{D_{r,l}}\end{equation}
(\cite{HL} Corollary 3.4). The inequality \eqref{eq:2.2} is called Harnack inequality.
\begin{lem}\label{lem:2.2}Let $f$ be a positive $\lambda$-harmonic function on $B(x,r_1+r_2)$. For any $\lambda\in [0,\lambda_0]$, $r_1,r_2\in(0, l_m/4]$ and $y,z\in  B(x,r_1)$, we have
\begin{equation}\label{eq:2.3}
\frac{f(y)}{f(z)}\leq e^{4d_M\sqrt{r_1/r_2}}
\end{equation}
\end{lem}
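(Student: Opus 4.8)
The strategy is to reduce Lemma~\ref{lem:2.2} to Harnack's inequality \eqref{eq:2.2} via a \emph{chaining} argument: connect $y$ to $z$ through a short path whose consecutive points lie in overlapping balls that are small enough that the constant $D_{r,l}$ from \eqref{eq:2.2} is explicit and harmless. First I would recall that on a tree the geodesic $[y,z]$ through two points of $B(x,r_1)$ stays inside $B(x,2r_1)\subset B(x,r_1+r_2)$ is not quite what we want; instead, note $d(y,z)\le 2r_1$, and we plan to cover $[y,z]$ by balls of radius $r_2/4$ centered at a sequence of points $y=p_0,p_1,\dots,p_N=z$ on $[y,z]$ with $d(p_{i-1},p_i)\le r_2/4$, so that $p_{i-1},p_i\in B(p_{i-1}, r_2/4)$ and the larger ball $B(p_{i-1}, r_2/4 + r_2/4)$ on which we apply Harnack sits inside $B(x, r_1 + r_2/2)\subset B(x,r_1+r_2)$ (here $r_1+r_2/2 \le r_1+r_2$, using $r_2/2\le r_2$). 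Then $f(p_{i-1})/f(p_i)\le e^{D_{r_2/4,\,r_2/4}}$ and multiplying the $N$ estimates gives $f(y)/f(z)\le e^{N D_{r_2/4, r_2/4}}$.

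The subtle point is that \eqref{eq:2.2} is stated with an abstract constant $D_{r,l}$, whereas the conclusion \eqref{eq:2.3} has a fully explicit exponent $4d_M\sqrt{r_1/r_2}$. So I do \emph{not} want to feed \eqref{eq:2.2} a general radius; I want to run the chaining with balls whose radius is comparable to the \emph{edge length} $l_m$, where the local structure of the tree is controlled. Concretely, since $r_2\le l_m/4$, a ball of radius $r_2/4\le l_m/16$ around any point meets at most one vertex and is essentially a ``star'' with at most $d_M$ segments emanating; on such a star a positive $\lambda$-harmonic function is, on each segment, a combination of $\cosh$ and $\sinh$ (for $\lambda>0$) or affine (for $\lambda=0$) of the arclength parameter, with Kirchhoff matching at the center. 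From this one extracts a \emph{quantitative} Harnack constant of the form $D_{r_2/4,r_2/4}\le c\, d_M$ for an absolute $c$ — indeed the oscillation of such a function across a star of size $\rho$ is at most $O(d_M\rho\cdot\|f'\|/f)$, and the eigenvalue equation with $\lambda\le\lambda_0$ and $\rho\lesssim l_m$ bounds the logarithmic derivative. I expect the clean bound to come out as: on $B(p,\rho)$ with $\rho\le l_m/4$ one has $f(u)/f(v)\le e^{2 d_M \rho}$-type control, and then one optimizes the number of chaining steps.

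The cleanest route, and the one I would actually write, is to choose the step size and number of steps to make the arithmetic match $4d_M\sqrt{r_1/r_2}$ exactly: take $N=\lceil \sqrt{r_1/r_2}\rceil$ chaining points equally spaced on $[y,z]$, so each step has length $d(y,z)/N\le 2r_1/N\le 2\sqrt{r_1 r_2}$, hence each step-ball has radius $\le \sqrt{r_1r_2}\le r_2/2$ (using $r_1\le r_2$? — no, $r_1$ need not be $\le r_2$, so instead bound $\sqrt{r_1 r_2}\le (r_1+r_2)/2$ and keep the enlarged ball inside $B(x,r_1+r_2)$ directly). Applying the star-Harnack bound $f(p_{i-1})/f(p_i)\le e^{2 d_M\sqrt{r_1 r_2}/\sqrt{r_1 r_2}\cdot(\text{const})}$... — this is where the main obstacle lies: \textbf{matching the constant}. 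The real work is proving the sharp local Harnack estimate $f(u)/f(v)\le e^{(\text{something})\cdot d_M\, d(u,v)/\rho}$ on a star-ball of radius $\rho$, with the constant small enough that after $N\asymp\sqrt{r_1/r_2}$ multiplications one lands at $4 d_M\sqrt{r_1/r_2}$. I would handle this by a direct ODE computation on each edge of the star: write $f$ on each prong as $f(s)=f(0)\cosh(\sqrt\lambda\,s)+\alpha_j\sinh(\sqrt\lambda\,s)/\sqrt\lambda$ (or the $\lambda\to 0$ limit), use positivity of $f$ on the prong and the Kirchhoff condition $\sum_j \alpha_j=0$ to bound each $|\alpha_j|$ by $C\,d_M f(0)/\rho$, and deduce the oscillation bound; the factor $d_M$ enters exactly through the Kirchhoff cancellation. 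Everything else — the chaining, the containment of balls, the final product — is routine once that local estimate is in hand. In the write-up I would present the local estimate as a sub-claim, prove it by the ODE argument, and then close with the chaining bookkeeping.
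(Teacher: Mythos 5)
Your proposal has a genuine gap, and it is exactly at the point you yourself flag as ``where the main obstacle lies.'' First, the chaining geometry does not close: with $N=\lceil\sqrt{r_1/r_2}\rceil$ equally spaced points the step-balls have radius of order $\sqrt{r_1r_2}$, which exceeds $r_2$ whenever $r_1>r_2$, so they do not stay inside $B(x,r_1+r_2)$; your fallback ``bound $\sqrt{r_1r_2}\le(r_1+r_2)/2$ and keep the enlarged ball inside $B(x,r_1+r_2)$'' does not repair this, since a ball of radius $(r_1+r_2)/2$ about a point of $B(x,r_1)$ is only contained in $B(x,r_1+(r_1+r_2)/2)$. Second, even granting the local star estimate $|f'|/f\le C\,d_M/\rho$ on balls of radius $\rho$ (which you leave unproven), the only admissible radius for balls centered on $[y,z]$ is $\rho\le r_2$, and then integrating (or multiplying per-step factors) along a geodesic of length up to $2r_1$ yields an exponent of order $d_M\,r_1/r_2$, not $d_M\sqrt{r_1/r_2}$. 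Since $r_1/r_2$ can be arbitrarily large (e.g.\ $r_1=l_m/4$, $r_2\to0$), a bound $e^{C d_M r_1/r_2}$ does not imply \eqref{eq:2.3}; the square root in the exponent is the whole content of the lemma and cannot come out of uniform-step chaining. (A chaining-type argument could be salvaged only by proving the gradient estimate with a small explicit constant relative to the distance to the complement of $B(x,r_1+r_2)$ and integrating that weight along $[y,z]$, which gives a bound of order $d_M\log(1+r_1/r_2)$; but that is a different and harder argument than what you wrote, and the required local estimate is still missing.)

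The paper's proof is of a completely different nature and is essentially one line: it does not re-derive any Harnack inequality, but recalls that in the proof of Corollary 3.4 of \cite{HL} the constant in \eqref{eq:2.2} has the explicit form $D_{r,l}=\max_x\sqrt{2|S(x,r+l)|\mu(B(x,r))/l}$, and then observes that for $r_1+r_2\le l_m/2$ one has $|S(x,r_1+r_2)|\le d_M$ and $\mu(B(x,r_1))\le d_M r_1$, so $D_{r_1,r_2}\le d_M\sqrt{2r_1/r_2}\le 4d_M\sqrt{r_1/r_2}$. In other words, the $\sqrt{r_1/r_2}$ dependence is inherited from the Moser-iteration-type constant of \cite{HL}, not produced by local ODE analysis; your route, as written, neither proves its key sub-claim nor yields the stated exponent.
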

\begin{proof} In the proof of Corollary 3.4 in \cite{HL}, we have $D_{r,l}=\underset{x}\max\sqrt{2|S(x,r+l)|\mu(B(x,r))/l},$ where $|S(x,r+l)|$ is the number of points in $S(x,r+l).$ 
 Since $r_1+r_2\leq l_m/2$, $$|S(x,r_1+r_2)|\leq d_M\text{ and }\mu(B(x,r_1))\leq d_Mr_1.$$ Thus we have \eqref{eq:2.3}.
\end{proof}
\begin{prop}\label{prop:2.3} \emph{(\cite{HL} Corollary 3.16)}\label{prop:2.3} For any $\lambda\in(-\infty,\lambda_0]$ and $x\in \mathcal{T}$, the function $y\mapsto G_\lambda(x,y)$ is a positive $\lambda$-harmonic function on $\mathcal{T}\backslash\{x\}$ and is contained in $C^\infty(\mathcal{T}\backslash\{x\})$.
\end{prop}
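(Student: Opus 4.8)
The plan is to exploit that $G_\lambda(x,\cdot)$ is the kernel of the resolvent $(-\Delta-\lambda)^{-1}$, so that formally $(-\Delta_y-\lambda)G_\lambda(x,\cdot)=\delta_x$ and hence $G_\lambda(x,\cdot)$ is $\lambda$-harmonic on $\mathcal{T}\backslash\{x\}$; the rigorous version comes from integrating the heat equation by parts in the time variable, after which smoothness off $x$ reduces to one-dimensional elliptic regularity along each open edge. Positivity is immediate: by the properties recalled in Section~\ref{sec:2.2}, $p(t,x,y)>0$ for all $t>0$, and by \cite{HL} (Corollary 3.12 and Theorem 3.15) the integral $G_\lambda(x,y)=\int_0^\infty e^{\lambda t}p(t,x,y)\,dt$ is finite and locally bounded in $y\in\mathcal{T}\backslash\{x\}$ for every $\lambda\le\lambda_0$, so $G_\lambda(x,y)\in(0,\infty)$.

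To prove $\lambda$-harmonicity, fix $x$ and a test function $g\in C_c^\infty(\mathcal{T}\backslash\{x\})$ with support $K$; by density this suffices. For each fixed $t>0$, $y\mapsto p(t,x,y)$ lies in $C^\infty(\mathcal{T})$ and solves $\Delta_y p(t,x,\cdot)=\partial_t p(t,x,\cdot)$, so by the definition of $\Delta$ through the Dirichlet form,
\[
\mathcal{E}\big(p(t,x,\cdot),g\big)=-\big(\partial_t p(t,x,\cdot),g\big)=-\frac{d}{dt}\big(p(t,x,\cdot),g\big).
\]
Assuming (see the last paragraph) that $\int_0^\infty e^{\lambda t}\,\|p(t,x,\cdot)\|_{W^1(K)}\,dt<\infty$ for every compact $K$ with $x\notin K$ — so that $G_\lambda(x,\cdot)\in W^1_{loc}(\mathcal{T}\backslash\{x\})$ and the functional $\mathcal{E}(\cdot,g)$ may be moved inside the $t$-integral — one obtains
\[
\mathcal{E}\big(G_\lambda(x,\cdot),g\big)=\int_0^\infty e^{\lambda t}\,\mathcal{E}\big(p(t,x,\cdot),g\big)\,dt=-\int_0^\infty e^{\lambda t}\,\frac{d}{dt}\big(p(t,x,\cdot),g\big)\,dt .
\]
Integrating by parts in $t$: the boundary term at $0$ is $-\lim_{t\to0+}(p(t,x,\cdot),g)=-g(x)=0$ since $x\notin K$; the one at $\infty$ vanishes because $t\mapsto e^{\lambda t}(p(t,x,\cdot),g)$ is integrable on $(0,\infty)$ — it is dominated by $\|g\|_\infty e^{\lambda t}\int_K p(t,x,y)\,d\mu(y)$, whose $t$-integral equals $\|g\|_\infty\int_K G_\lambda(x,y)\,d\mu(y)<\infty$ — so that it tends to $0$ along some sequence $t\to\infty$, which is all that is needed. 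The surviving term is $\lambda\int_0^\infty e^{\lambda t}(p(t,x,\cdot),g)\,dt=\lambda(G_\lambda(x,\cdot),g)$ by Tonelli, so $\mathcal{E}(G_\lambda(x,\cdot),g)=\lambda(G_\lambda(x,\cdot),g)$ for all such $g$; that is, $G_\lambda(x,\cdot)$ is a weak solution of $-\Delta f=\lambda f$ on $\mathcal{T}\backslash\{x\}$.

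Smoothness off $x$ then follows from one-dimensional elliptic regularity: on each open edge $e^o$ not containing $x$ the weak equation is the constant-coefficient ODE $-f''=\lambda f$, whose weak solutions are classical — an explicit combination of $\cos(\sqrt\lambda\,\cdot)$ and $\sin(\sqrt\lambda\,\cdot)$ when $\lambda>0$, and of hyperbolic or affine functions otherwise — hence $C^\infty$ on $e^o$, with derivatives of every order bounded on the finite-length edge. Together with the continuity of $G_\lambda(x,\cdot)$ (from \cite{HL}, or from membership in $W^1_{loc}$) and the Kirchhoff vertex conditions built into the weak formulation, this places $G_\lambda(x,\cdot)$ in $C^\infty(\mathcal{T}\backslash\{x\})$ in the sense of Section~\ref{sec:2.2}.

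The main obstacle — and the reason the statement is quoted from \cite{HL} rather than settled by a soft semigroup argument — is the integrability condition $\int_0^\infty e^{\lambda t}\,\|p(t,x,\cdot)\|_{W^1(K)}\,dt<\infty$ assumed above (which also underlies the local boundedness of $G_\lambda$ used in the boundary-term estimate). For small $t$ one uses the off-diagonal decay of $p(t,x,\cdot)$ on $K$, valid because $x$ lies at positive distance from $K$, and the local parabolic estimates behind Lemma~\ref{bssw} (from \cite{St1,St2,St3,BSSW}); for large $t$ one uses Lemma~\ref{bssw} for the spatial derivatives together with a decay bound for $p(t,x,\cdot)$. The truly delicate point is the endpoint $\lambda=\lambda_0$: the $L^2$ spectral estimate only gives $\|e^{t\Delta}f\|_{L^2}\le e^{-\lambda_0 t}\|f\|_{L^2}$, whence $\int_0^\infty e^{\lambda_0 t}\|e^{t\Delta}f\|_{L^2}\,dt=\infty$, so the required decay of $e^{\lambda_0 t}p(t,x,y)$ cannot be read off from the semigroup alone; this is exactly where the pointwise heat-kernel and Green-function estimates of \cite{HL} — the inputs special to the tree — enter, and they are what the present paper ultimately refines into $e^{\lambda_0 t}p(t,x,y)\sim C(x,y)\,t^{-3/2}$.
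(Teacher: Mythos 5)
The paper offers no proof of this proposition: it is quoted verbatim from \cite{HL} (Corollary 3.16), so there is no internal argument to compare yours against. Judged on its own, your skeleton is the natural one and the easy parts are fine: positivity follows from positivity of $p(t,x,y)$ and finiteness of $G_\lambda$ for $\lambda\le\lambda_0$; once the weak identity $\mathcal{E}(G_\lambda(x,\cdot),g)=\lambda(G_\lambda(x,\cdot),g)$ is in hand, smoothness on each open edge is indeed just the ODE $-f''=\lambda f$, and the paper's definition of $C^\infty$ only asks for continuity plus bounded edge-wise derivatives (the remark about Kirchhoff conditions is not needed for that).

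The genuine gap is that the decisive step is assumed, not proved: you postulate $\int_0^\infty e^{\lambda t}\|p(t,x,\cdot)\|_{W^1(K)}\,dt<\infty$ in order to (i) place $G_\lambda(x,\cdot)$ in $W^1_{loc}(\mathcal{T}\backslash\{x\})$, which is part of the definition of $\lambda$-harmonic, and (ii) pull $\mathcal{E}(\cdot,g)$ through the $t$-integral. For $\lambda<0$ this is soft, but for $\lambda\in(0,\lambda_0]$, and above all at $\lambda=\lambda_0$, it is exactly the nontrivial content of the statement — as you acknowledge — so the proposal is a plausible reduction to the estimates of \cite{HL} rather than a proof. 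Two further points to be careful about: your treatment of the boundary term at infinity uses $\int_K G_\lambda(x,y)\,d\mu(y)<\infty$, but in this paper that local integrability is \emph{derived from} Proposition \ref{prop:2.3} (Lemma \ref{lem:2.4}), so it must be sourced independently (e.g.\ from continuity/local boundedness of the Green function in \cite{HL}) to avoid circularity; and testing only against $g\in C_c^\infty(\mathcal{T}\backslash\{x\})$ requires a short density argument to reach all compactly supported $g\in W^1$, which you assert but do not supply.
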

Using the above proposition, we show the following lemma. The lemma will be used to prove Theorem \ref{thm:6.3}.
\begin{lem}\label{lem:2.4}For any $\lambda\in[0,\lambda_0]$, distinct points $x,y\in \mathcal{T}$, and connected compact set $K$,
$$\int_K G_\lambda (x,z)d\mu(z)<\infty.$$ 
For any $R>0$, there exist constant $C_R$ such that for any $\lambda\in[0,\lambda_0]$ and $x\in \mathcal{T}$,
$$\int_{B(x,R)} G_\lambda (x,z)d\mu(z)<C_R.$$ 
\end{lem}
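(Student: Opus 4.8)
The plan is to handle the two assertions separately, both reducing to the local behavior of $G_\lambda(x,\cdot)$ near the singularity $x$ together with $\lambda$-harmonicity away from it. For the first assertion, fix distinct $x,y$. On $K\setminus B(x,\rho)$ for small $\rho>0$ the function $z\mapsto G_\lambda(x,z)$ is a positive $\lambda$-harmonic function by Proposition \ref{prop:2.3}, hence continuous on the compact set $K\setminus B(x,\rho)$, so its integral there is finite. Thus the only issue is integrability on a small ball around $x$, and it suffices to prove the second assertion (the uniform bound on $\int_{B(x,R)}G_\lambda(x,z)\,d\mu(z)$), since for $K$ connected and compact we may simply take $R$ with $K\subset B(x,R)$.

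For the second assertion, I would first reduce to a small radius. Split $B(x,R)$ into $B(x,r_0)$ and $B(x,R)\setminus B(x,r_0)$ for a fixed $r_0\le l_m/8$. On the annular part, $G_\lambda(x,\cdot)$ is $\lambda$-harmonic and positive; using Harnack (inequality \eqref{eq:2.2}) along a chain of balls of radius comparable to $r_0$ covering a path from $\partial B(x,r_0)$ to any point of $B(x,R)$, one bounds $G_\lambda(x,z)$ by $e^{C(R)}\sup_{S(x,r_0)}G_\lambda(x,\cdot)$ with $C(R)$ depending only on $R$ (the number of balls needed is controlled by $R/r_0$ and $d_M$, uniformly in $x$ since $\Gamma$ acts cocompactly and $l_m,l_M,d_M$ are finite and positive). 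So it remains to bound $\sup_{S(x,r_0)}G_\lambda(x,\cdot)$ and $\int_{B(x,r_0)}G_\lambda(x,z)\,d\mu(z)$ uniformly in $x$ and $\lambda\in[0,\lambda_0]$.

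For the behavior near $x$, I would use the heat-kernel definition $G_\lambda(x,z)=\int_0^\infty e^{\lambda t}p(t,x,z)\,dt$ together with the stochastic completeness $\int_{\mathcal{T}}p(t,x,z)\,d\mu(z)=1$ from \eqref{eq:2201283}. Integrating over $B(x,r_0)$ and using Tonelli gives $\int_{B(x,r_0)}G_\lambda(x,z)\,d\mu(z)=\int_0^\infty e^{\lambda t}\big(\int_{B(x,r_0)}p(t,x,z)\,d\mu(z)\big)\,dt\le\int_0^\infty e^{\lambda_0 t}\,\mathbb{P}_x(\text{still in }B(x,r_0)\text{ at time }t)\,dt$. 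This integral is finite because $\lambda<\lambda_0$ is not available — we only have $\lambda\le\lambda_0$ — so a direct crude bound does not close; instead I would invoke finiteness of $G_{\lambda_0}(x,z)$ for $z\ne x$ (Corollary 3.12 and Theorem 3.15 of \cite{HL}, quoted in the text) and an a priori pointwise comparison: since $z\mapsto G_\lambda(x,z)$ is $\lambda$-harmonic on the punctured ball and dominated in a controlled way, one compares $G_\lambda(x,\cdot)$ with $G_{\lambda_0}(x,\cdot)$ on a fixed sphere $S(x,r_0)$, where the latter is a continuous, hence bounded, function, and the bound is uniform in $x$ by cocompactness. The local $L^1$ integrability on $B(x,r_0)$ then follows since in one real dimension the Green function of a Sturm–Liouville operator has an at most logarithmic (in fact Lipschitz, away from the vertex structure) singularity — concretely, $G_\lambda(x,\cdot)$ restricted to each edge incident to $x$ solves $-f''=\lambda f$ off $x$ with a jump in the derivative, so it is continuous at $x$ and bounded near $x$, giving even a finite supremum, not merely integrability.

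The main obstacle is obtaining the bounds uniformly in $\lambda$ up to and including $\lambda_0$, rather than merely for each fixed $\lambda<\lambda_0$: one cannot afford an implicit constant blowing up as $\lambda\to\lambda_0^-$. I expect to handle this by the monotonicity $\lambda\mapsto G_\lambda(x,z)$ (immediate from the integral formula, as $e^{\lambda t}p(t,x,z)$ is increasing in $\lambda$), so that every quantity is dominated by its value at $\lambda=\lambda_0$, reducing all uniformity in $\lambda$ to the single finite, continuous function $G_{\lambda_0}(x,\cdot)$ on the relevant sphere; uniformity in $x$ is then the routine consequence of the cocompact $\Gamma$-action, since $G_\lambda(\gamma x,\gamma z)=G_\lambda(x,z)$ and one may take $x$ in the compact fundamental domain $T_0$. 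Assembling: uniform pointwise bound on a fixed sphere via $G_{\lambda_0}$ $\Rightarrow$ uniform bound on the annulus $B(x,R)\setminus B(x,r_0)$ via Harnack chaining $\Rightarrow$ finite integral there; plus the edgewise ODE analysis near $x$ $\Rightarrow$ bounded (hence integrable) on $B(x,r_0)$; sum to get $C_R$.
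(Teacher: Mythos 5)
Your reduction of the first assertion to the second, and your reductions via $G_\lambda\le G_{\lambda_0}$ and the cocompact action to $\lambda=\lambda_0$ and $x\in\overline{T_0}$, match the paper. But the core of the lemma is the uniform control of the integral near the singularity $z=x$, and there your argument has a genuine gap. Harnack \eqref{eq:2.2} and Proposition \ref{prop:2.3} only apply on balls not containing $x$, so your "comparison on a fixed sphere $S(x,r_0)$" bounds $G_\lambda(x,\cdot)$ on the sphere but does not propagate across the source point into $B(x,r_0)\setminus\{x\}$: on each edge segment issuing from $x$ the restriction solves $g''=-\lambda g$, a two-parameter family, and the sphere value pins down only one combination of the constants; the other is essentially $G_\lambda(x,x)=\lim_{z\to x}G_\lambda(x,z)$. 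Your ODE observation does show this limit is finite for each fixed pair $(x,\lambda)$ (so your proof of the first, non-uniform assertion is fine), but it gives no bound on it that is uniform over $x\in\overline{T_0}$ and $\lambda$ up to $\lambda_0$, and Harnack chaining toward $x$ only yields a bound of the form $d(x,z)^{-c}$ with an uncontrolled exponent. The phrase "dominated in a controlled way" is exactly the content of the lemma, and neither monotonicity in $\lambda$ nor cocompactness supplies it, since you would still need, e.g., boundedness or continuity of $x\mapsto G_{\lambda_0}(x,x)$ (or of the ball integrals), which you never establish.

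The paper closes precisely this point by a global, not local, argument: assuming $\int_{B(x_n,R)}G_{\lambda_0}(x_n,z)\,d\mu(z)\ge n$ for some sequence $x_n\in\overline{T_0}$, it uses the semigroup identity in \eqref{eq:2201283} to convolve with $p(1,\cdot,\cdot)$ and transfer this hypothetical blow-up to $\int_K G_{\lambda_0}(x_n,z)\,d\mu(z)$ for a fixed compact set $K$ far from $x$, where Harnack (now legitimately applied, since $G_{\lambda_0}(\cdot,z)$ is $\lambda_0$-harmonic on a ball containing $x$ and $x_n$) forces $\int_K G_{\lambda_0}(x,z)\,d\mu(z)=\infty$, contradicting the finiteness you both get from Proposition \ref{prop:2.3}. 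Some such global input (this convolution trick, or a strong-Markov identity $G_\lambda(x,x)=G_\lambda(x,z)/\mathbb{E}_x[1_{t_z<\infty}e^{\lambda t_z}]$ together with a uniform lower bound on the denominator) is needed; without it your assembled constant $C_R$ is not independent of $x$ and $\lambda$.
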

\begin{proof} For any compact set $K$, there exists a ball $B(x,r)$ containing $K$. Thus it is enough to show the existence of the constant $C_R$. By Proposition \ref{prop:2.3}, $\int_K G_\lambda(x,z)d\mu(z)<\infty$ if $x\notin K$. For any $\lambda\in[0,\lambda_0]$, $\g\in \G$ and $x,y\in \mathcal{T}$, $G_\lambda(x,y)$ is equal to $G_\lambda(\g x, \g y)$ and $G_\lambda(x,y)\leq G_{\lambda_0}(x,y)$. Hence we will consider the case when $\lambda=\lambda_0$ and $x$ is in the closure of the fundamental domain ${T_0}$. Suppose that there exist $R$ and a sequence $\{x_n\}$ in $\overline{T_0}$ such that for any $n$, $\int_{B(x_n,R)} G_{\lambda_0}(x_n,y)d\mu(y)\geq n$. Choose a compact set $K$ disjoint to $B(x,2R+Diam(T_0))$. By Harnack inequality \eqref{eq:2.2}, for any $x\in T_0$, we have
\begin{equation}\label{eq:2201281}
 \int_K G_{\lambda_0}(x,z)d\mu(z)\overset{ \eqref{eq:2.2} }\geq C\int_K G_{\lambda_0}(x_n,z)d\mu(z),
 \end{equation}
 where $C=e^{-D_{Diam(T_0),R}}$.
 By the definition of $\lambda$-Green function, we have the equation of \eqref{eq:2201282}. Changing the range of the integral, we obtain the inequality of \eqref{eq:2201282}.
\begin{equation}\label{eq:2201282}
\int_K G_{\lambda_0}(x_n,z)d\mu(z)=\int_K \int_0^\infty e^{\lambda_0 t} p(t,x_n,z)dtd\mu(z)\geq \int_K \int_0^\infty e^{\lambda_0 (t+1)} p(t+1,x_n,z)dt d\mu(z).
\end{equation}
 The continuity of the heat kernel guarantees that the number  $$m:=\min\{p(1,y',z'):(y',z')\in K\times \overline{B(x,R+Diam(T_0))}\}$$
 is bounded and positive. By the second equation in \eqref{eq:2201283}, we have the first equation of \eqref{eq:2201284}. Changing the range of the integral, we have the first inequality of \eqref{eq:2201284}. The remaining part of \eqref{eq:2201284} follows from the choice of $m$ and $\{x_n\}$.
\begin{equation}\label{eq:2201284} 
\begin{split}
&\int_K \int_0^\infty e^{\lambda_0 (t+1)} p(t+1,x_n,z)dt d\mu(z)\\
&\overset{\eqref{eq:2201283}}=\int_K\int_0^\infty\int_{\mathcal{T}}e^{\lambda_0 (t+1)}p(t,x_n,z)p(1,z,y)d\mu(z)dtd\mu(y)\\
&\geq  \int_K\int_0^\infty\int_{B(x_n,R)} e^{\lambda_0(t+1)}p(t,x_n,z)p(1,z,y)d\mu(z)dtd\mu(y)\\
&\geq me^{\lambda_0}\mu(K)\int_{B(x_n,R)}\int_0^\infty e^{\lambda_0 t}p(t,x_n,z)d\mu(z)\\
&=  me^{\lambda_0}\mu(K)\int_{B(x_n,R)} G_{\lambda_0}(x_n,z)d\mu(z)\geq me^{\lambda_0}\mu(K) n.
\end{split}
\end{equation} 
The ineqaulities \eqref{eq:2201281}, \eqref{eq:2201282} and \eqref{eq:2201284} show that any point $x\in T_0$, $\int_K G_{\lambda_0}(x,z)d\mu(z)$ diverges, which is a contradiction. Hence for any $R\geq 0,$ there exists $C_R$ such that for any $\lambda\in [0,\lambda_0]$ and $x\in \mathcal{T}$, $$\int_{B(x,R)}G_\lambda(x,y)d\mu(y)\leq C_R.$$ 
This completes the proof.
\end{proof}
As in \cite{Ac},\cite{GL}, \cite{G} and \cite{LL}, we also verified that the uniform Ancona inequality holds. 
\begin{thm} \emph{(\cite{HL} Theorem 1.1)}\label{thm:2.6} Let $\mathcal{T}$ be a topologically complete locally finite metric tree. Suppose that the cardinality of $\partial T$ is infinite and a discrete group $\G$ acts isometrically and geometrically on $\mathcal{T}$. There exists a constant $C>0$ such that for any $\lambda \in [0,\lambda_0]$ and three points $x,y,z$ with $y\in [x,z]$, $d(x,y)\geq 1$ and $d(y,z)\geq 1$,  
\begin{equation} \label{eq:2.4}
\frac{1}{C}G_\lambda(x,y)G_\lambda(y,z)\leq G_\lambda(x,z)\leq{C}G_\lambda(x,y)G_\lambda(y,z).
\end{equation}
\end{thm}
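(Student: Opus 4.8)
The plan is to reduce the two-sided estimate \eqref{eq:2.4} to an \emph{exact} factorisation of the $\lambda$-Green function across the separating point $y$, and then to uniform two-sided control of the diagonal values $G_\lambda(y,y)$. Since $\mathcal{T}$ is a tree and $y\in[x,z]$, the point $y$ disconnects $x$ from $z$; let $U$ be the connected component of $\mathcal{T}\setminus\{y\}$ containing $z$, so that $x\notin\overline{U}$ (we may assume $x,z\neq y$, the remaining cases being trivial). I claim that for every $\lambda\in(-\infty,\lambda_0]$,
\begin{equation}\label{eq:factor}
G_\lambda(x,z)=\frac{G_\lambda(x,y)\,G_\lambda(y,z)}{G_\lambda(y,y)} .
\end{equation}
Granting \eqref{eq:factor}, the quotient in \eqref{eq:2.4} equals exactly $1/G_\lambda(y,y)$, so it suffices to produce $C>0$ with $C^{-1}\le G_\lambda(y,y)\le C$ for all $y\in\mathcal{T}$ and $\lambda\in[0,\lambda_0]$; in particular this route proves \eqref{eq:2.4} for \emph{every} $y\in[x,z]$, so the hypotheses $d(x,y),d(y,z)\ge 1$ are not really needed.

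To prove \eqref{eq:factor} I would first fix $\lambda<\lambda_0$, so that all the Green functions below are finite. Set
$$v:=G_\lambda(x,\cdot)-\frac{G_\lambda(x,y)}{G_\lambda(y,y)}\,G_\lambda(y,\cdot)$$
on $\overline{U}$. By Proposition~\ref{prop:2.3} both terms are $\lambda$-harmonic on $U$ (as $x,y\notin U$), so $v$ is $\lambda$-harmonic on $U$, continuous up to $y$, with $v(y)=0$. Extend $v$ by $0$ outside $U$ to a finite-energy $L^2$ function $\widetilde v$. Testing the Dirichlet form against functions supported in a small ball around $y$ that avoids $x$, the identity $-\Delta v=\lambda v$ on the half-edges of $U$ emanating from $y$, together with $v(y)=0$, shows that $\widetilde v$ is weakly $\lambda$-harmonic off $y$ and $(-\Delta-\lambda)\widetilde v=c\,\delta_y$ for a single constant $c$ (the negative sum of the one-sided derivatives of $v$ at $y$ along the edges of $U$). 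Applying $G_\lambda=(-\Delta-\lambda)^{-1}$ forces $\widetilde v=c\,G_\lambda(y,\cdot)$; but $\widetilde v\equiv0$ on the nonempty set $\mathcal{T}\setminus\overline{U}$ whereas $G_\lambda(y,\cdot)>0$ everywhere, so $c=0$ and $v\equiv0$ on $U$. Evaluating at $z$ gives \eqref{eq:factor} for $\lambda<\lambda_0$, and then for $\lambda=\lambda_0$ by monotone convergence as $\lambda\uparrow\lambda_0$, using that $G_{\lambda_0}$ is finite (\cite{HL}, Corollary~3.12 and Theorem~3.15). Alternatively, \eqref{eq:factor} follows at once from the strong Markov property of the diffusion at the first hitting time $\tau_y$ of $y$, since occupation density at $z$ is accumulated only after $\tau_y$.

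For the diagonal bounds, note that $e^{\lambda t}p(t,y,y)$ is nondecreasing in $\lambda$, so $G_0(y,y)\le G_\lambda(y,y)\le G_{\lambda_0}(y,y)$ for $\lambda\in[0,\lambda_0]$, and $G_\lambda(\gamma y,\gamma y)=G_\lambda(y,y)$ reduces everything to $y$ in the compact closure of a fundamental domain $T_0$. The lower bound $\inf_{y\in\overline{T_0}}G_0(y,y)>0$ comes from the uniform short-time on-diagonal heat kernel estimate: $G_0(y,y)\ge\int_0^1 p(t,y,y)\,dt\ge c_0>0$, using $p(t,y,y)\ge c\,t^{-1/2}$ for small $t$ uniformly over $\overline{T_0}$. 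For the upper bound one needs $\sup_{y\in\overline{T_0}}G_{\lambda_0}(y,y)<\infty$: the part $\int_0^1 e^{\lambda_0 t}p(t,y,y)\,dt$ is controlled by the matching short-time bound $p(t,y,y)\le C_0\,t^{-1/2}$, while the uniform finiteness of $\int_1^\infty e^{\lambda_0 t}p(t,y,y)\,dt$ over $y\in\overline{T_0}$ is an instance of the Green-function estimates of \cite{HL} (the same analytic input behind Proposition~\ref{prop:2.3} and Lemma~\ref{lem:2.4}), combined with cocompactness. Taking $C:=\max\{\,\sup_{y\in\overline{T_0}}G_{\lambda_0}(y,y),\,c_0^{-1}\,\}$ finishes the proof.

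The main obstacle is precisely this last step: the uniform upper bound on the \emph{critical} diagonal Green function $G_{\lambda_0}(y,y)$. A soft spectral estimate only yields $e^{\lambda_0 t}p(t,y,y)\le C$, which is not integrable in $t$, so one genuinely needs the polynomial-in-$t$ decay of $e^{\lambda_0 t}p(t,\cdot,\cdot)$ — a weak precursor of Theorem~\ref{thm:1.3} itself — together with cocompactness to make it uniform in $y$; this is the substantive input that we import from \cite{HL}. A secondary technical point is the passage to $\lambda=\lambda_0$ in \eqref{eq:factor}, where $G_{\lambda_0}(x,\cdot)$ need not lie in $L^2$; approximating from below, as above, sidesteps this.
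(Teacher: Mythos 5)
The paper itself contains no proof of Theorem \ref{thm:2.6}; it is quoted verbatim from \cite{HL}, so your argument is a genuinely independent, tree-specific route, and its core reduction is sound. Since $y$ disconnects $x$ from $z$, the strong Markov property at the hitting time $t_y$ gives exactly the paper's identity \eqref{eq:2.8}, $G_\lambda(x,z)=\mathbb{E}_x[1_{t_y<\infty}e^{\lambda t_y}]\,G_\lambda(y,z)$, and the same reasoning with target point $y$ itself turns this into your exact factorization $G_\lambda(x,z)=G_\lambda(x,y)G_\lambda(y,z)/G_\lambda(y,y)$, reducing \eqref{eq:2.4} to uniform two-sided bounds on the diagonal Green function. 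Keep the probabilistic justification: the extension-by-zero/PDE variant leaves unverified both that $G_\lambda(x,\cdot)|_U$ has finite energy and that the resolvent may be applied to $\delta_y$ in the way you use it. The monotonicity in $\lambda$, the $\Gamma$-invariance reduction to $\overline{T_0}$, and the lower bound $\inf_{y,\lambda}G_\lambda(y,y)>0$ are all fine, and your observation that the hypotheses $d(x,y),d(y,z)\ge 1$ become superfluous on a tree is correct \emph{given} the diagonal bounds.

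The one genuine soft spot is the uniform upper bound $\sup_{y}G_{\lambda_0}(y,y)<\infty$. It cannot simply be called an instance of the Green-function estimates of \cite{HL}, since the finiteness quoted in Section \ref{sec:2.2} is off-diagonal, and your remark that one needs polynomial-in-$t$ decay of $e^{\lambda_0 t}p(t,\cdot,\cdot)$ overstates the requirement (that is essentially the content of Theorem \ref{thm:1.3}); all that is needed is integrability of $t\mapsto e^{\lambda_0 t}p(t,y,y)$ on $[1,\infty)$, uniformly over $y\in\overline{T_0}$. This can be closed with inputs already in the paper's framework: the parabolic Harnack inequality of \cite{St3}, on which \cite{BSSW} rests, gives $p(t,y,y)\le C\,p(t+1,y,z_y)$ for $t\ge 1$ and some $z_y$ with $d(y,z_y)=1$, with $C$ uniform at scale one by cocompactness, whence $\int_1^\infty e^{\lambda_0 t}p(t,y,y)\,dt\le Ce^{-\lambda_0}G_{\lambda_0}(y,z_y)$, which is finite and uniformly bounded (compare the semigroup trick in the proof of Lemma \ref{lem:2.4}). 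Alternatively, you can bypass the diagonal altogether, which also explains why the distance hypotheses appear: apply \eqref{eq:2.8} to an auxiliary point $y'\in[y,z]$ with $d(y,y')=1$ to get $\mathbb{E}_x[1_{t_y<\infty}e^{\lambda t_y}]=G_\lambda(x,y')/G_\lambda(y,y')$, then use the uniform two-sided bounds on $G_\lambda$ at distance one (Harnack \eqref{eq:2.2} with cocompactness, as in the proof of Lemma \ref{lem:2.9}, together with the bounds from \cite{HL} quoted around \eqref{eq:2201153}) and $G_\lambda(x,y')\asymp G_\lambda(x,y)$, which is where $d(x,y)\ge 1$ is used. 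With either repair your proof is complete.
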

Using Ancona inequality, we obtain the following corollary.
\begin{coro}\label{coro2.7}\label{coro:2.7} \emph{(\cite{HL} Theorem 4.15)} Let $\mathcal{T}$ be a topologically complete locally finite metric tree. Suppose that the cardinality of $\partial T$ is infinite and a discrete group $\G$ acts isometrically and geometrically on $\mathcal{T}$. There exist constants $C>0$ and $\rho\in(0,1)$ such that for any $\lambda \in [0,\lambda_0]$ and geodesic segments $[x,y]$ and $[x',y']$ of which the intersection is the geodesic segment of length $n\geq 1$,   
\begin{equation}\label{eq:2.5}
\left|\frac{G_\lambda(x,y)/G_\lambda(x',y)}{G_\lambda(x,y')/G_\lambda(x',y')}-1 \right| \leq C\rho^n
\end{equation}
\begin{figure}[h]
\begin{center}
\begin{tikzpicture}[scale=1]
  \draw [thick](-3,0) -- (3,0);   \draw [thick](-4.5,-0.2) -- (-3,0);\draw[thick](-4.5,0.2) -- (-3,0);\draw[thick] (4.5,-0.2) -- (3,0);\draw [thick](4.5,0.2) -- (3,0);
\node at (-4.8,-0.3) {$x'$};\node at (-4.8,0.3) {$x$};\node at (4.8,-0.3) {$y'$};\node at (4.8,0.3) {$y$}; \fill (-4.5,0.2)    circle (2pt); \fill (4.5,0.2)    circle (2pt); \fill (4.5,-0.2)    circle (2pt);\fill (-4.5,-0.2)    circle (2pt); \node at (0,0.3) {$n$};\draw[snake=brace,thick] (-3,0.05) -- (3,0.05);
\end{tikzpicture}
\end{center}
\caption{Strong Ancona inequality}\label{figure2}
\end{figure}
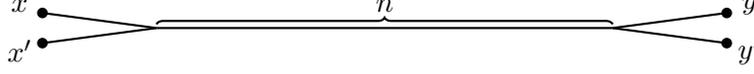
\end{coro}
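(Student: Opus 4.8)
The plan is to exploit the fact that $\mathcal{T}$ is a tree: every point of the common geodesic segment is a cut point of $\mathcal{T}$, and through a cut point the $\lambda$-Green function factorizes \emph{exactly}. This will force the quotient on the left of \eqref{eq:2.5} to be identically equal to $1$, so that the corollary holds with any $C>0$ and $\rho\in(0,1)$.

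\emph{Step 1: factorization through a cut point.} First I would prove that if $w\in[x,z]$, then
$$G_\lambda(x,z)=F_\lambda(x,w)\,G_\lambda(w,z),$$
where $F_\lambda(x,w):=G_\lambda(x,w)/G_\lambda(w,w)\in(0,\infty)$ does not depend on $z$. (Finiteness of $G_\lambda(w,w)$ for $\lambda\in[0,\lambda_0]$ is part of the finiteness of the $\lambda$-Green function quoted above from \cite{HL}; it ultimately reflects that on each edge the diffusion of \cite{BSSW} is one-dimensional, so $p(t,w,w)=O(t^{-1/2})$ as $t\to0^+$.) To see the identity, note that $w$ separates $x$ from $z$ in $\mathcal{T}$, so every continuous path from $x$ to a small ball around $z$ must pass through $w$; hence, if $\tau_w$ denotes the first hitting time of $w$ and $\nu_{x,w}$ its (possibly defective) law for the Brownian motion on $\mathcal{T}$ started at $x$, the strong Markov property gives $p(t,x,z)=\int_0^t p(t-s,w,z)\,d\nu_{x,w}(s)$. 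Multiplying by $e^{\lambda t}$, integrating over $t\in(0,\infty)$ and using Fubini yields the displayed formula with $F_\lambda(x,w)=\int_0^\infty e^{\lambda s}\,d\nu_{x,w}(s)$; this integral is finite because it equals the finite positive ratio $G_\lambda(x,z)/G_\lambda(w,z)$ for any admissible $z$, and, by continuity of the Green function, letting $z\to w$ identifies it with $G_\lambda(x,w)/G_\lambda(w,w)$.

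\emph{Step 2: conclusion.} In the configuration of Figure \ref{figure2} — the relative position of $x,x',y,y'$ being as in that figure — write the common segment as $[u,v]$ with $u$ on the side of $x$ and $x'$. Then $u$ lies on each of the four geodesics $[x,y]$, $[x',y]$, $[x,y']$, $[x',y']$, so Step 1 gives $G_\lambda(x,y)=F_\lambda(x,u)G_\lambda(u,y)$, $G_\lambda(x',y)=F_\lambda(x',u)G_\lambda(u,y)$, and the same with $y$ replaced by $y'$. Dividing,
$$\frac{G_\lambda(x,y)/G_\lambda(x',y)}{G_\lambda(x,y')/G_\lambda(x',y')}=\frac{F_\lambda(x,u)/F_\lambda(x',u)}{F_\lambda(x,u)/F_\lambda(x',u)}=1,$$
so the left-hand side of \eqref{eq:2.5} vanishes identically.

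The hard part is not the algebra but the justification of Step 1 on the metric tree: that the first hitting time of a point is a stopping time with a well-defined (sub-)probability law, that the strong Markov property of the Dirichlet-form diffusion of \cite{BSSW} may be applied at $\tau_w$, and that $s\mapsto p(t-s,w,z)$ integrated against $d\nu_{x,w}(s)$ reconstitutes $p(t,x,z)$. These are standard for the conservative diffusion considered here, so I expect them to be routine, but I would write them out with care. For completeness I would also point out that \eqref{eq:2.5} follows, with a genuine geometric decay $\rho^n$, from the weak Ancona inequality of Theorem \ref{thm:2.6} together with the Harnack inequality \eqref{eq:2.2} via the standard iteration argument of \cite{Ac,GL,G,LL}; this is the route taken in \cite{HL}, and it is what one must use in a general hyperbolic setting where no exact factorization is available.
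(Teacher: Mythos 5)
Your proof is correct for the configuration the corollary actually intends (that of Figure \ref{figure2}), and it takes a genuinely different route from the paper: the paper gives no argument here at all, quoting Theorem 4.15 of \cite{HL}, where the uniform strong Ancona inequality \eqref{eq:2.5} is deduced from the weak Ancona inequality (Theorem \ref{thm:2.6}) and the Harnack inequality \eqref{eq:2.2} by the standard iteration scheme of \cite{Ac,GL,G,LL} — an argument designed for general hyperbolic graphs, where no cut points exist. You instead use the tree structure: every point of the common segment separates the endpoints, so the strong Markov property at the hitting time gives the exact factorization $G_\lambda(x,z)=\mathbb{E}_x\big[1_{t_u<\infty}e^{\lambda t_u}\big]\,G_\lambda(u,z)$ for $z$ beyond $u$; this is precisely the paper's own identity \eqref{eq:2.8}, which is obtained from \eqref{eq:2.7} and the strong Markov property independently of Corollary \ref{coro:2.7}, so there is no circularity, and it makes the double ratio in \eqref{eq:2.5} identically equal to $1$. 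What each approach buys: yours is sharper and essentially computation-free, but it is special to trees; the route through \cite{HL} is what survives in settings without cut points, and it is also the mechanism behind the other consequences the paper draws from \eqref{eq:2.5} (convergence of Martin kernels, N\"aim kernels, H\"older estimates), so it cannot be dispensed with globally. Two small points. First, the identification $F_\lambda(x,u)=G_\lambda(x,u)/G_\lambda(u,u)$ is an unnecessary extra claim (finiteness of $G_\lambda$ on the diagonal, though true for this locally one-dimensional diffusion, is never needed); defining $F_\lambda(x,u)=\mathbb{E}_x[1_{t_u<\infty}e^{\lambda t_u}]$ as in \eqref{eq:2.8} suffices. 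Second, your restriction to the orientation of the figure, with $x,x'$ on one side of the common segment $[u,v]$ and $y,y'$ on the other, is essential and matches the intended reading: in the crossed configuration ($x'$ on the $y$-side, $y'$ on the $x$-side) the same factorization gives the value $\mathbb{E}_u[1_{t_v<\infty}e^{\lambda t_v}]\,\mathbb{E}_v[1_{t_u<\infty}e^{\lambda t_u}]$ for the double ratio, which need not be close to $1$ for large $n$, so \eqref{eq:2.5} is not meant to cover that case.
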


 By Corollary \ref{coro:2.7}, for any $x,y\in \mathcal{T}$, $\xi \in \mathcal{T}$ and $\lambda\in [0,\lambda_0]$, the limit
$$k_\lambda(x,y,\xi):=\lim_{t\rightarrow\infty}k_\lambda(x,y,g(t))$$
exists, where $g$ is the geodesic ray from $x$ to $\xi\in\partial \mathcal{T}$. For any fixed $x$, the map define by $\xi\mapsto k_\lambda(x,\cdot,\xi)$ is the bijection from $\partial \mathcal{T}$ to $\partial_\lambda \mathcal{T}$ (\cite{HL} Theorem 4.17).
 \begin{coro}\label{coro:2.8}For any $\lambda\in [0,\lambda_0]$ and $(x,\xi)\in \mathcal{T}\times\partial \mathcal{T}$, the function $k_{\lambda,x,\xi}(y):=k_\lambda(x,y,\xi)$ is positive $\lambda$-harmonic on $\mathcal{T}$ and is in $C^\infty(\mathcal{T})$. 
\end{coro}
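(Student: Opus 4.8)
The plan is to transfer the known regularity and harmonicity of the $\lambda$-Green function $G_\lambda(x,\cdot)$ to the Martin kernel by taking the limit along a geodesic ray and checking that the relevant properties are stable under this limiting procedure. First I would fix $x\in\mathcal{T}$ and $\xi\in\partial\mathcal{T}$, let $g$ be the geodesic ray from $x$ to $\xi$, and write $z_t=g(t)$, so that by the discussion following Corollary \ref{coro:2.7} we have $k_{\lambda,x,\xi}(y)=\lim_{t\to\infty}k_\lambda(x,y,z_t)=\lim_{t\to\infty}G_\lambda(y,z_t)/G_\lambda(x,z_t)$ for every $y\in\mathcal{T}$. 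For each fixed large $t$, Proposition \ref{prop:2.3} tells us that $y\mapsto G_\lambda(y,z_t)$ is positive and $\lambda$-harmonic on $\mathcal{T}\setminus\{z_t\}$ and lies in $C^\infty(\mathcal{T}\setminus\{z_t\})$; dividing by the constant $G_\lambda(x,z_t)>0$ preserves both properties, so $y\mapsto k_\lambda(x,y,z_t)$ is positive $\lambda$-harmonic and $C^\infty$ on $\mathcal{T}\setminus\{z_t\}$. Fixing any bounded region, for $t$ large enough $z_t$ lies outside it, so on every ball $B(x_0,r)$ the functions $k_\lambda(x,\cdot,z_t)$ are eventually positive $\lambda$-harmonic and smooth.

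Next I would establish local uniform control so that the limit inherits these properties. The key point is a uniform Harnack-type bound: by Lemma \ref{lem:2.2} (and the iterated Harnack inequality \eqref{eq:2.2} to pass from small balls to arbitrary balls), on any fixed ball $B(x_0,r)$ the ratios $k_\lambda(x,y',z_t)/k_\lambda(x,y'',z_t)$ for $y',y''\in B(x_0,r)$ are bounded above and below by constants depending only on $r$ (and the geometry), uniformly in large $t$. Normalizing at a basepoint, the family $\{k_\lambda(x,\cdot,z_t)\}_{t\text{ large}}$ is then uniformly bounded above and below by positive constants on $B(x_0,r)$. Since each member is a weak solution of $-\Delta f=\lambda f$, standard elliptic regularity on the metric tree — which underlies Lemma \ref{bssw} and the $C^\infty$ statement in Proposition \ref{prop:2.3}, via Moser iteration and the local Schauder-type estimates on each edge together with the vertex conditions — gives uniform $C^\infty$ bounds on slightly smaller balls. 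Therefore the pointwise limit $k_{\lambda,x,\xi}$ is itself a positive weak solution of $-\Delta f=\lambda f$ on $B(x_0,r)$, hence positive $\lambda$-harmonic, and the uniform derivative bounds force the convergence to be in $C^\infty_{\mathrm{loc}}$, so $k_{\lambda,x,\xi}\in C^\infty(B(x_0,r))$. As $x_0$ and $r$ are arbitrary, $k_{\lambda,x,\xi}$ is positive $\lambda$-harmonic and $C^\infty$ on all of $\mathcal{T}$.

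The main obstacle is the passage to the limit: one must rule out that $k_{\lambda,x,\xi}$ degenerates (vanishes identically or blows up locally) and one must justify that $C^\infty$ regularity survives the limit rather than merely continuity. Both are handled by the uniform two-sided Harnack estimate from Lemma \ref{lem:2.2}/\eqref{eq:2.2} together with the interior elliptic estimates already invoked in \cite{HL} and \cite{BSSW}; once these uniform bounds are in hand, passing the equation $\mathcal{E}(k_\lambda(x,\cdot,z_t),h)=\lambda(k_\lambda(x,\cdot,z_t),h)$ to the limit against a fixed test function $h\in C_c^\infty(\mathcal{T})$ is routine, and the limit being a nonzero nonnegative $\lambda$-harmonic function is strictly positive by the Harnack inequality. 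This completes the proof.
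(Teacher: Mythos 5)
Your overall strategy (take the limit of $k_\lambda(x,\cdot,g(t))$ along the ray, use Harnack \eqref{eq:2.2} for locally uniform two-sided bounds, and pass the weak equation to the limit) is the same starting point as the paper, but the technical core is where your write-up has a genuine gap. In this paper, ``$\lambda$-harmonic'' is defined only for functions in $W^1_{loc}(\mathcal{T})$, and $W^1_{loc}$ membership means local agreement with an element of $W^1_0(\mathcal{T})$, i.e.\ with a limit of $C_c^\infty$ functions in the $W^1$-norm; moreover the weak equation must be tested against all compactly supported $W^1$ functions, not just $h\in C_c^\infty(\mathcal{T})$. Your proposal never verifies either point: asserting that the limit ``is itself a positive weak solution, hence positive $\lambda$-harmonic'' presupposes exactly the membership statement that has to be proved. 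Relatedly, the appeal to ``standard elliptic regularity via Moser iteration and Schauder-type estimates with vertex conditions'' is not an available off-the-shelf tool on a metric tree; what is actually true (and what would make your compactness step honest) is the elementary one-dimensional fact that on each edge a $\lambda$-harmonic function solves a second-order ODE, so sup-bounds from Harnack/Lemma \ref{lem:2.2} control all edge-wise derivatives. Lemma \ref{bssw} concerns the heat kernel and does not supply the uniform $C^\infty$ bounds you invoke.

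The paper closes precisely this gap with a Dirichlet-form argument: it multiplies the differences $k_{\lambda,x,g(t_1)}-k_{\lambda,x,g(t_2)}$ by a Lipschitz cutoff $f_{K'}$, uses the Leibniz rule and $\lambda$-harmonicity to get a Caccioppoli-type bound of the form energy by the $L^2$-norm on $N_1(K')$, uses the strong Ancona inequality \eqref{eq:2.5} (plus Harnack/Arzela--Ascoli for uniform convergence on compacta) to see this goes to $0$, and then invokes closedness of $\mathcal{E}$ to conclude that $f_{K'}k_{\lambda,x,g(t)}$ converges in $W^1_0(\mathcal{T})$, so the limit is in $W^1_{loc}$ and harmonicity passes to the limit; smoothness is then quoted from Corollary 3.2 of \cite{HL} rather than re-derived. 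Your route can be repaired: once you have the locally uniform $C^1$ control (which, as noted, should be justified by the edge-wise ODE rather than Moser/Schauder), either run the paper's cutoff-and-closedness argument verbatim or prove directly that a continuous, edge-wise smooth function with locally bounded derivative times a cutoff lies in $W^1_0(\mathcal{T})$. As written, though, the step from ``pointwise limit with good bounds'' to ``$\lambda$-harmonic in the sense of this paper'' is missing, and it is exactly the step the paper's proof is designed to supply.
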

\begin{proof}Let $g$ be a geodesic ray from $x$ to $\xi$. Fix $T>0$ and a compact set $K$ with $g(t)\notin N_1(K)$ for any $t\geq T$, where $N_1(K)$ is the set of points $y$ with $d(y,K)\leq 1$. Harnack inequality \eqref{eq:2.2} shows that the family of the functions $k_{\lambda,x,g(t)}(y):= k_\lambda(x,y,g(t))$ on $K$ is equicontinuous and uniformly bounded. By Arzela-Ascoli's theorem (see \cite{Mun} Theorem 47.1), the function $ k_{\lambda,x,g(t)}(y)$ converges uniformly to $k_{\lambda,x,\xi}(y)$ on compact sets. Choose a function $f$ on $\mathbb{R}_{\geq 0}$ with $$f(0)=1,\quad f|_{[1,\infty)}\equiv 0\text{ and }|f'|\leq 2.$$ For a given compact set $K'$, define a smooth function function $f_{K'}$ by $f_{K'}(y)=f(d(K',y)).$ Obviously, $f_{K'}$ is a function in $C^\infty(\mathcal{T})$ whose support is $N_1(K')$.
Leibniz rule of Dirichlet forms (see page 171 in \cite{LSV}) shows the first equality below. The second equality follows from the definition of a $\lambda$-harmonic function. 
\begin{equation}
\begin{split}
\nonumber &\mathcal{E}(f_{K'}(k_{\lambda,x,g(t_1)}-k_{\lambda,x,g(t_2)}),f_{K'}(k_{\lambda,x,g(t_1)}-k_{\lambda,x,g(t_2)}))\\
&= \mathcal{E}(f_{K'}^2(k_{\lambda,x,g(t_1)}-k_{\lambda,x,g(t_2)}),k_{\lambda,x,g(t_1)}-k_{\lambda,x,g(t_2)})+\|f_{K'}'(k_{\lambda,x,g(t_1)}-k_{\lambda,x,g(t_2)})\|^2_{L^2(\mathcal{T})}\nonumber\\
&=\lambda\|f_{K'}(k_{\lambda,x,g(t_1)}-k_{\lambda,x,g(t_2)})\|^2_{L^2(\mathcal{T})}
+\|f_{K'}'(k_{\lambda,x,g(t_1)}-k_{\lambda,x,g(t_2)})\|^2_{L^2(\mathcal{T})}\nonumber\\
&\leq(\lambda_0+4)\|(k_{\lambda,x,g(t_1)}-k_{\lambda,x,g(t_2)})1_{N_1(K')}\|^2_{L^2(\mathcal{T})}.\nonumber
\end{split}
\end{equation}
The above inequality and \eqref{eq:2.5} implies that $\underset{t_1,t_2\rightarrow \infty}\lim\|f_{K'}(k_{\lambda,x,g(t_1)}-k_{\lambda,x,g(t_2)})\|_{W^1(\cT)}=0.$
Since $\mathcal{E}$ is a closed form, $f_{K'}k_{\lambda,x,g(t)}$ converges to a function in $W^1_0(\mathcal{T})$ which is supported by $N_1(K')$ and is equal to $k_{\lambda,x,\xi}$ on $K'$. Thus $k_{\lambda,x,\xi}$ is in $W_{loc}^1(\mathcal{T})$. Since $k_{\lambda,x,g(t)}$ is positive $\lambda$-harmonic,  $k_{\lambda,x,\xi}$ is harmonic. Corollary 3.2 in \cite{HL} shows that $k_{\lambda,x,\xi}$ is in $C^\infty(\mathcal{T})$. 
\end{proof}
By Corollary \ref{coro:2.8}, the $\G$-invariant function $f_\lambda$ on $\cG\mathcal{T}$ is well-defined for all $\lambda\in [0,\lambda_0]$. The remaining part of this section is devoted to show H\"older continuity of $f_\lambda$.

For any $x\in \mathcal{T}$, the probability measure $\mathbb{P}_x$ on the space $\Omega_x$ of continuous paths $\omega(t)$ starting at $x$ is defined for any $0<t_1<\cdots<t_n$ and Borel sets $B_n$ in $\mathcal{T}$, by
\begin{eqnarray}\label{eq:2.6}
\displaystyle &&\mathbb{P}_x[\omega({t_1)}\in B_1,\cdots, \omega({t_n})\in B_n]\\&=&\nonumber\displaystyle \int_{B_1\times \cdots \times B_n} p(t_1,x,y_1)p(s_1,y_1,y_2)\cdots p(s_n,y_{n-1},y_n)d\mu^n(y_1,\cdots,y_n),\end{eqnarray}
 where $\mu^n=\mu\times\cdots\times \mu$ and $s_i=t_i-t_{i-1}$. Since the Dirichlet form $\mathcal{E}$ is strong local and regular (\cite{BSSW} Theorem 3.28), the process induced by \eqref{eq:2.6} is the strong Markov process. (\cite{FOT} Theorem 7.3.1)

Denote $\mathbb{E}_x(f):=\int_{\Omega_x} fd\mathbb{P}_x.$ Then $\lambda$-Green function has the following property: For any $\lambda\in [0,\lambda_0]$ and measurable function $f$ on a Borel set $B\subset \mathcal{T}$,
\begin{equation}\label{eq:2.7}
\int_B G_\lambda(x,y)f(y)d\mu(y)=\mathbb{E}_x\left[\int_0^\infty e^{\lambda t}1_B(\omega(t))f(\omega(t))dt\right].
\end{equation}

 Let $t_y(\omega)$ be the exit time of the connected component of $\mathcal{T}\backslash\{y\}$ containing the starting point $\omega(0)$, i.e. $t_y(\omega):=\inf\{s\geq0:\omega({s})=y\}.$ For convenience, denote $$t_y:=t_y(\omega),1_{t_y<\infty}:=1_{\{\omega:t_y(\omega)<\infty\}}(\omega)\text{ and }e^{\lambda t_y(\omega)}:=e^{\lambda t_y}.$$ By \eqref{eq:2.7}, following the proof of Proposition 8.5 in \cite{LL}, for any distinct points $x,y,z$ on the same geodesic in order, we have
\begin{equation}\label{eq:2.8}
G_\lambda(x,z)=\mathbb{E}_{x}[1_{t_{y}<\infty}e^{\lambda t_y}G_\lambda(y,z)]=G_\lambda(y,z)\mathbb{E}_{x}[1_{t_y<\infty}e^{\lambda t_y}].
\end{equation}
The equation \eqref{eq:2.8} enables us to show the following lemma.
\begin{lem}\label{lem:2.9}
\begin{enumerate} 
\item For any $\lambda\in [0,\lambda_0]$ and distinct points $x,y$, the limit
$$A_\lambda(x,y)=\lim_{\epsilon\rightarrow0+}\frac{\mathbb{E}_{x_\epsilon}[1_{t_y<\infty}e^{\lambda t_y}]-\mathbb{E}_x[1_{t_y<\infty}e^{\lambda t_y}]}{\epsilon}$$ exists, where $x_\epsilon$ is the point on $[x,y]$ with $d(x,x_\epsilon)=\epsilon.$ Furthermore, there exist a constant $C$ such that $|A_\lambda(x,y)|\leq C$ for any $\lambda\in [0,\lambda_0]$ and $x,y$ with $d(x,y)=2l_M$.
\item For any $\lambda\in [0,\lambda_0]$ and $\epsilon>0$, there exists a constant $\delta$ satisfying the following property: 
Let $y$ be a point and let $e$ be an edge with $l_M\leq d(e,y)\leq 2l_M.$ For any $\lambda'\in [0,\lambda_0]$ with $|\lambda-\lambda'|<\delta$ and $x\in e$, we have $$| A_{\lambda}(x,y)|_e-A_{\lambda'}(x,y)|_e|<\epsilon.$$
\end{enumerate}
\end{lem}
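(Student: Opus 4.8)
The plan is to exploit the representation \eqref{eq:2.8}, namely $G_\lambda(x,z)=G_\lambda(y,z)\,\mathbb{E}_x[1_{t_y<\infty}e^{\lambda t_y}]$, which shows that for fixed $y$ and any $z$ strictly beyond $y$ on a geodesic through $x$, the function $x\mapsto \mathbb{E}_x[1_{t_y<\infty}e^{\lambda t_y}]$ agrees with $x\mapsto G_\lambda(x,z)/G_\lambda(y,z)$ on the connected component $U$ of $\mathcal{T}\setminus\{y\}$ containing $x$. By Proposition \ref{prop:2.3}, $G_\lambda(\cdot,z)$ is $\lambda$-harmonic and in $C^\infty$ on $\mathcal{T}\setminus\{z\}\supset \overline{U}$; hence $x\mapsto \mathbb{E}_x[1_{t_y<\infty}e^{\lambda t_y}]$ is $\lambda$-harmonic on $U$ and extends smoothly up to $y$, so in particular its restriction to the edge containing $x$ has a one-sided derivative. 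Taking $z\to\xi$ along the ray from $y$ away from $x$ and using Corollary \ref{coro:2.7} (to justify that the ratio $G_\lambda(x,z)/G_\lambda(y,z)$ stabilizes), one identifies $\mathbb{E}_x[1_{t_y<\infty}e^{\lambda t_y}]$ up to scale with the Martin kernel $k_\lambda(y,x,\xi)$, which is smooth on all of $\mathcal{T}$ by Corollary \ref{coro:2.8}. In any case the one-sided directional derivative along $[x,y]$ exists, and this limit is precisely $A_\lambda(x,y)$; thus part (1) reduces to a regularity statement that is already in hand.

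For the uniform bound in part (1), I would first reduce to a compact situation: since $G_\lambda(\g x,\g y)=G_\lambda(x,y)$ and $G_\lambda(x,y)\le G_{\lambda_0}(x,y)$, one may assume $x$ lies in $\overline{T_0}$, so $(x,y)$ ranges over a compact subset of pairs with $d(x,y)=2l_M$ and $\lambda\in[0,\lambda_0]$. On such a set, apply the derivative estimate: the one-sided derivative of a positive $\lambda$-harmonic function on a ball $B(x_\epsilon,2l_M)$ is controlled, via Harnack-type bounds, by its value, so $|A_\lambda(x,y)|\le D\cdot\mathbb{E}_x[1_{t_y<\infty}e^{\lambda t_y}]\le D\cdot G_{\lambda_0}(x,z)/G_{\lambda_0}(y,z)$, and the latter is bounded on the compact parameter set (using Theorem \ref{thm:2.6} for the lower bound on $G_\lambda(y,z)$). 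Quantitatively one can use a Cauchy-estimate-type argument for the smooth $\lambda$-harmonic function together with Lemma \ref{lem:2.2} (with $r_1$ small, $r_2$ of order $l_m$) to get an explicit constant depending only on $d_M$, $l_m$, $l_M$; this gives the uniform $C$.

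For part (2), the content is joint continuity in $\lambda$: one must show $\lambda\mapsto A_\lambda(x,y)|_e$ is continuous, uniformly over $x\in e$ and over $y$ and $e$ with $l_M\le d(e,y)\le 2l_M$. I would argue that $\lambda'\mapsto G_{\lambda'}(\cdot,z)$ converges to $G_\lambda(\cdot,z)$ in a strong enough topology (e.g. in $C^1$ on the relevant compact edge, uniformly) as $\lambda'\to\lambda$; this follows from continuity of the Green function in $\lambda$ (from \cite{HL}) combined with interior elliptic estimates for $\lambda'$-harmonic functions, which bound $C^1$ norms on a slightly smaller set by $L^\infty$ norms on a larger one with constants uniform in $\lambda'\in[0,\lambda_0]$ — again by Harnack \eqref{eq:2.2} / Lemma \ref{lem:2.2} plus the equation $-\Delta f=\lambda' f$. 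Combined with the uniformly positive lower bound on $G_{\lambda'}(y,z)$ from Ancona (Theorem \ref{thm:2.6}), the ratios $G_{\lambda'}(x,z)/G_{\lambda'}(y,z)$ and their one-sided $x$-derivatives converge uniformly, and passing $z\to\xi$ (controlled uniformly by Corollary \ref{coro:2.7}) yields the claimed $\delta$. The main obstacle is making the convergence $A_{\lambda'}\to A_\lambda$ \emph{uniform} in the two spatial parameters simultaneously: this requires that all the constants in the Harnack/elliptic estimates and in the Ancona lower bound be uniform over the cocompact family of configurations, which is exactly what the $\G$-equivariance and cocompactness buy us after the reduction to $x\in\overline{T_0}$; the delicate point is that the geodesic ray to $\xi$ must be handled with the geometric (edge-length) uniformity of Corollary \ref{coro:2.7} rather than a naive limit.
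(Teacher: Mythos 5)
Your proposal is correct and follows essentially the same route as the paper: both rest on the identity \eqref{eq:2.8}, writing $\mathbb{E}_x[1_{t_y<\infty}e^{\lambda t_y}]=G_\lambda(x,z)/G_\lambda(y,z)$ for a point $z$ beyond $y$, differentiating the smooth $\lambda$-harmonic Green function along the edge containing $x$ (Proposition \ref{prop:2.3}), and obtaining the uniform bound and the $\lambda$-continuity from the edge equation $\partial_x^2G_\lambda=\lambda G_\lambda$, Harnack-type lower bounds on $G_\lambda(y,z)$, and cocompactness (finitely many configurations modulo $\Gamma$). The only (harmless) deviations are your optional passage $z\to\xi$ to the Martin kernel, which the paper avoids by simply fixing a vertex $z$ with $d(y,z)\le l_M$ (resp. $l_M\le d(e',z)\le 2l_M$), and your quantitative gradient estimate for the bound in (1), where the paper instead uses continuity of $\lambda\mapsto\partial_xG_\lambda(\cdot,z)|_e$ over the compact interval $[0,\lambda_0]$.
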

\begin{proof}$(1)$ By Proposition \ref{prop:2.3}, the restriction $G_\lambda(x,y)|_e$ of the function $x\mapsto G_\lambda(x,y)$ on an edge $e$ has the derivative $\frac{\partial}{\partial x}G_\lambda(x,y)|_e$ at any interior point of $e\backslash\{y\}$. For any edge $e$ with $y\notin e$, $\frac{\partial}{\partial x}G_\lambda(x,y)|_e$ is bounded on $e^o$ and the derivatives at the vertices of $e$ exist. 

Let $x,y,z$ be distinct points on the same geodesic in order. For sufficiently small $\epsilon>0$, there exists an edge $e$ such that $[x,x_{\epsilon}]\subset e$ and $e\cap [x,y]\neq \phi$. Suppose that $z$ is a vertex with $0<d(y,z)\leq l_M$. By \eqref{eq:2.8}, we have
$$\underset{\epsilon\rightarrow0+}\lim\frac{\mathbb{E}_{x_\epsilon}[1_{t_y<\infty}e^{\lambda t_y}]-\mathbb{E}_x[1_{t_y<\infty}e^{\lambda t_y}]}{\epsilon}=\underset{\epsilon\rightarrow0+}\lim\frac{G_\lambda(x_\epsilon,z)-G_\lambda(x, z)}{\epsilon G_\lambda(y,z)}=\frac{\partial}{\partial x}\frac{G_\lambda(x,z)|_e}{G_\lambda(y,z)}.$$ 
Thus $A_\lambda(x,y)$ exists for all $x\neq y$.

Since the $\G$-action is cocompact and $G_\lambda(y,z)=G_\lambda(\g y, \g z)$, for all $y,z\in\mathcal{T}$ and $\g\in \G$, Lemma 4.4 in \cite{HL} shows that there exists a positive constant $c$ such that for all $y,z$ with $d(y,z)\leq l_M$ and $\lambda\in[0,\lambda_0]$, $G_\lambda(y,z)\geq c$. The family of functions $x\mapsto \lambda G_\lambda(x,z)|_e$ converges uniformly to $x\mapsto\lambda' G_{\lambda'}(x,z)|_e$ on $e$ with $z\notin e$ as $\lambda$ goes to $\lambda'$. Obviously, $\frac{\partial^2}{\partial x^2}  G_{\lambda}(x,z)|_e=\lambda G_\lambda(x,z)|_e$ for all $\lambda\in[0,\lambda_0]$ and $\frac{\partial}{\partial x}G_{\lambda}(x,z)|_e$ also converges uniformly to
$\frac{\partial}{\partial x}G_{\lambda'}(x,z)|_e$ on $e$ with $z\notin e$ as $\lambda$ goes to $\lambda'$. Thus for any $e$ and a vertex $z$ with $z\notin e$, the map $\lambda\mapsto \frac{\partial}{\partial x}G_\lambda(x,z)|_e$ is continuous with respect to $\|\cdot\|_\infty$-norm. 
The number of pairs $(e,z)\in E\times V$ with $l_M < d(e,z) \leq3l_M$ is finite up to $\G$-action. Thus we have
$$m:=\max \left\{\underset{x\in e}\sup\left|\frac{\partial}{\partial x}G_\lambda(x,z)|_e\right|:  e\in E, z\in V ,l_M\leq d(e,z)\leq 3l_M,\lambda\in[0,\lambda_0]\right\}<\infty.$$ 
Hence $|A_\lambda(x,y)|\leq \frac{m}{c}$ when $d(x,y)=2l_M$.\\
$(2)$ Let $e'$ be an edge containing $y$. Then $l_M\leq d(e,e')\leq 2l_M$. Let $z$ be a vertex on a geodesic containing edges $e$ and $e'$ in order. Suppose that $l_M\leq d(e',z)\leq 2l_M$ and $e'$ is closer to $z$ than $e$. The function $y\mapsto G_\lambda(y,z)|_{e'}$ on $e'$ is bounded below and above.  The function $y\mapsto G_{\lambda}(y,z)|_{e'}$ converges uniformly to $y\mapsto G_{\lambda'}(y,z)|_{e'}$ as $\lambda'$ goes to $\lambda$. Thus we conclude that $x\mapsto A_{\lambda'}(x,y)|_e$ on $e$ converges uniformly to $x\mapsto A_\lambda(x,y)|_e$ as $\lambda'$ goes to $\lambda$. The number of $(e,e',z)\in E\times E\times V$ satisfying $l_M\leq d(e,e')\leq 2l_M$ and $l_M\leq d(e',z)\leq 2l_M$ is finite up to $\Gamma$-action. Thus there exists the number $\delta$ satisfying Lemma \ref{lem:2.9} $(2)$.
\end{proof}
\begin{defn} Let $(X,d)$ be a metric space and let $\alpha\in (0,1].$ A function $f:X\rightarrow\mathbb{R}$ is $\alpha$-\textit{H\"older continuous} if there exists a constant $\epsilon$ such that
$$|f|_{\alpha}:=\sup\left\{\frac{|f(x)-f(y)|}{d(x,y)^\alpha}:x\neq y\text{ and }d(x,y)<\epsilon\right\}<\infty .$$
\end{defn} 
Denote $\|f\|_\alpha:=\|f\|_\infty+|f|_\alpha$. The space $C^\alpha_b(X)$ of bounded $\alpha$-H\"older continuous functions is a Banach space with respect to $\|\cdot\|_\alpha$-norm. 
\begin{prop}\label{prop:2.11} There exists a constant $\alpha\in(0,1)$ such that $f_\lambda$ is $\Gamma$-invariant and  $f_\lambda\in C^\alpha_b( \cG\mathcal{T})$ for all $\lambda\in[0,\lambda_0]$. 
For any $\alpha'\in(0,\alpha),$ the map $\lambda\mapsto f_\lambda$ is a continuous map from $[0,\lambda_0]$ to the space of $\alpha'$-H\"older continuous functions with respect to $\|\cdot\|_{\alpha'}$-norm.
\end{prop}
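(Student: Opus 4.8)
The plan is to first rewrite $f_\lambda$ in terms of quantities already controlled in Section~\ref{Sec:2}. Since $k_\lambda(g(0),g(0),g_+)=1$, the chain-rule form of \eqref{eq:1.2} shows that $f_\lambda(g)$ is $-2$ times the logarithmic derivative, along $g$ and based at $g(0)$, of the positive $\lambda$-harmonic function $y\mapsto G_\lambda(y,g_+)$ of Corollary~\ref{coro:2.8}. Fixing the reference distance $\ell:=2l_M$ and using \eqref{eq:2.8} pushed to the boundary, in the form $\mathbb{E}_x[1_{t_y<\infty}e^{\lambda t_y}]=G_\lambda(x,\xi)/G_\lambda(y,\xi)$ for every $\xi\in\cO_x(y)$, I would then record the identity
\[
f_\lambda(g)=-2\,\frac{A_\lambda\bigl(g(0),g(\ell)\bigr)}{\mathbb{E}_{g(0)}\bigl[1_{t_{g(\ell)}<\infty}e^{\lambda t_{g(\ell)}}\bigr]},
\]
which holds because $A_\lambda(x,y)$ and $\mathbb{E}_x[1_{t_y<\infty}e^{\lambda t_y}]$ depend only on $(x,y)$, and their ratio is the logarithmic derivative of $G_\lambda(\cdot,\xi)$ at $x$ toward $y$ — the same for every $y$ on the forward ray, and independent of $\xi\in\cO_x(y)$. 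In particular $f_\lambda$ factors through $g\mapsto(g(0),g(\ell))$ into $\{(x,y):d(x,y)=2l_M\}$. $\Gamma$-invariance is then immediate from $G_\lambda(\gamma x,\gamma y)=G_\lambda(x,y)$, and so is boundedness: the numerator is bounded by Lemma~\ref{lem:2.9}$(1)$ (stated exactly for distance $2l_M$), while the denominator $G_\lambda(g(0),\xi)/G_\lambda(g(\ell),\xi)$ is bounded below, uniformly in $\lambda\in[0,\lambda_0]$ and in $g$, by the Ancona inequality \eqref{eq:2.4} together with cocompactness.

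For Hölder continuity I would argue locally, using $\Gamma$-invariance to keep $g(0)$ in the fundamental domain, and exploit two structural facts when $d_{\cG\mathcal{T}}(g,g')$ is small: $f_\lambda$ does not depend on $g_-$ at all; and, because the logarithmic derivative of $G_\lambda(\cdot,\xi)$ along a geodesic is the same for all $\xi$ in a common forward shadow (again via \eqref{eq:2.8} at the boundary), two close geodesics — which by Lemma~\ref{lem:2.1} share a long forward sub-ray — produce the same $\xi$-contribution, so the dependence on $g_+$ drops out entirely. This reduces everything to the dependence on $g(0)$ and on the direction in which $g$ leaves it. Since $G_\lambda(\cdot,\xi)$ restricted to an edge solves $u''=-\lambda u$, it is real-analytic there with first and second derivatives bounded uniformly in $\xi$, in $\lambda$, and in the edge by Harnack (Lemma~\ref{lem:2.2}) and cocompactness; hence its logarithmic derivative is Lipschitz along edges with uniform constants, and combining this with the comparison $d_{\cG\mathcal{T}}(g,g')\asymp_C d_{\pi(g)}(g_-,g_-')^2+d(\pi(g),\pi(g'))+d_{\pi(g)}(g_+,g_+')^2$ of Lemma~\ref{lem:2.1} yields $|f_\lambda(g)-f_\lambda(g')|\le C\,d_{\cG\mathcal{T}}(g,g')^{\alpha}$ for a suitable $\alpha\in(0,1)$.

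The step I expect to be the real obstacle is this last one in the regime where $g(0)$ lies within distance $O(d_{\cG\mathcal{T}}(g,g'))$ of a vertex: there the logarithmic derivative of $G_\lambda(\cdot,\xi)$ jumps between the two adjacent edges, so a single-edge Lipschitz bound is not available, and one must instead track, via the common-segment data $g([a,b])=g(\mathbb{R})\cap g'(\mathbb{R})$, $g(t)=g'(t+s)$ of Lemma~\ref{lem:2.1}, exactly how $g$ and $g'$ enter and leave that vertex — this is the point at which the edgewise structure of $\cG\mathcal{T}$, and at the level of $\Gamma\backslash\cG\mathcal{T}$ the coding of \cite{BPP}, is genuinely used, and it is also where one must check that every constant above is uniform over $\lambda\in[0,\lambda_0]$. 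Granting the uniform bound $\sup_{\lambda\in[0,\lambda_0]}\|f_\lambda\|_\alpha<\infty$, the continuity statement follows by interpolation: Lemma~\ref{lem:2.9}$(2)$ and the $\lambda$-continuity of $G_\lambda$ from \cite{HL} give $\|f_\lambda-f_{\lambda'}\|_\infty\to0$ as $\lambda'\to\lambda$, whence for $\alpha'\in(0,\alpha)$ one has $|f_\lambda-f_{\lambda'}|_{\alpha'}\le(2\sup_\mu\|f_\mu\|_\alpha)^{\alpha'/\alpha}\,\|f_\lambda-f_{\lambda'}\|_\infty^{1-\alpha'/\alpha}\to0$, which is the asserted continuity of $\lambda\mapsto f_\lambda$ into $C^{\alpha'}_b(\cG\mathcal{T})$.
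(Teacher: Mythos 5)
Your reduction to the pair $(g(0),g(2l_M))$ is correct and is essentially the paper's formula \eqref{eq:2.9}: by \eqref{eq:2.8}, $k_\lambda(g(0),g(2l_M),\xi)=1/\mathbb{E}_{g(0)}[1_{t_{g(2l_M)}<\infty}e^{\lambda t_{g(2l_M)}}]$ for every $\xi$ in the shadow $\cO_{g(0)}(g(2l_M))$, so your ratio identity agrees with \eqref{eq:2.9}, and your observation that the $g_+$-dependence disappears exactly once two geodesics share a forward segment of length greater than $2l_M$ is a correct simplification of the step where the paper instead invokes Corollary \ref{coro2.7} to obtain the exponentially small bound \eqref{eq:2.14}. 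The $\Gamma$-invariance, the boundedness via Lemma \ref{lem:2.9}(1) combined with a Harnack-type bound on the kernel (the paper's \eqref{eq:2.10}), and the interpolation $|f|_{\alpha'}\le 2\|f\|_\infty^{1-\alpha'/\alpha}|f|_{\alpha}^{\alpha'/\alpha}$ for the continuity in $\lambda$ all follow the same route as the paper.

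The genuine gap is that, after your reduction, the entire H\"older estimate rests on the dependence on the base point (the time-shift direction), and this is exactly the case you do not prove: you give a single-edge Lipschitz bound for the logarithmic derivative of $G_\lambda(\cdot,\xi)$ and then explicitly defer the regime where $g(0)$ is within distance $O(d_{\cG\mathcal{T}}(g,g'))$ of a vertex. That deferral cannot stand, because the one-sided derivative of a $\lambda$-harmonic function along a geodesic may change from one edge to the next at a vertex (only the Kirchhoff condition is imposed there), so edgewise Lipschitz bounds do not patch together, and without this case the claimed inequality $|f_\lambda(g)-f_\lambda(g')|\le C\,d_{\cG\mathcal{T}}(g,g')^{\alpha}$, hence $f_\lambda\in C_b^{\alpha}(\cG\mathcal{T})$, is not established. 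The paper's proof supplies the mechanism you are missing: it decomposes $f_\lambda(\phi_sg)-f_\lambda(g)$ as in \eqref{eq::2.17}, controls the renormalization factor $|1-k_\lambda(g(0),g(s),g_+)|\le e^{8d_M}\sqrt{s}$ by the quantitative Harnack inequality of Lemma \ref{lem:2.2} via \eqref{eq:2.11} --- this is the source of the exponent $\tfrac12$ and hence of $\alpha=\min\{\tfrac12,-\tfrac12\log\rho\}$ --- and bounds the remaining difference of derivatives by the edgewise mean value estimate \eqref{eq:2.12}, giving \eqref{eq:2.13}; Lemma \ref{lem:2.1} then converts the two regimes into a bound in $d_{\cG\mathcal{T}}$, with all constants uniform in $\lambda\in[0,\lambda_0]$. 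Your instinct that the behaviour at vertices is the delicate point is sound (the paper's \eqref{eq:2.12} is itself stated for two points of a single edge, so this is precisely where care and uniformity in $\lambda$ are required), but flagging the difficulty is not the same as resolving it, and your proposal contains no substitute for the $\sqrt{s}$ Harnack estimate.
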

\begin{proof}The second equality of \eqref{eq:2.9} follows from the equation \eqref{eq:2.8}. Lemma \ref{lem:2.9} shows the last equality of \eqref{eq:2.9}. \begin{equation}\label{eq:2.9}
\begin{split}
{f_\lambda (g)}&=\lim_{\epsilon\ra0+}\lim_{s\ra\infty}\frac{-2k_\lambda(g(0),g(\epsilon),g(s))+2k_\lambda(g(0),g(0),g(s))}{\epsilon}\\&\overset{\eqref{eq:2.8}}=\lim_{\epsilon\ra0+}\lim_{s\ra\infty}\frac{-\mathbb{E}_{g(\epsilon)}[1_{t_{g(2l_M)}<\infty}e^{\lambda t_{g(2l_M)}}]+\mathbb{E}_{g(0)}[1_{t_{g(2l_M)}<\infty}e^{\lambda t_{g(2l_M)}}]}{\epsilon}2k_\lambda(g(0),g(2l_M),g(s))\\
 &=\lim_{\epsilon\ra0+}\frac{-\mathbb{E}_{g(\epsilon)}[1_{t_{g(2l_M)}<\infty}e^{\lambda t_{g(2l_M)}}]+\mathbb{E}_{g(0)}[1_{t_{g(2l_M)}<\infty}e^{\lambda t_{g(2l_M)}}]}{\epsilon}2k_\lambda(g(0),g(2l_M),g_+) \\ &\overset{\substack{\text{Lemma}\\\ref{lem:2.9}}}=-2A_\lambda(g(0),g(2l_M))k_\lambda(g(0),g(2l_M),g_+).
 \end{split}
 \end{equation}
 By Harnack inequality \eqref{eq:2.2}, for any geodesic line $g$ and $\lambda\in[0,\lambda_0]$,
  \begin{eqnarray}\label{eq:2.10}
 k_\lambda(g(0),g(2l_M),g_+)\leq e^{D_{2l_M,l_M}}k_\lambda (g(0),g(0),g_+)= e^{D_{2l_M,l_M}}.
 \end{eqnarray}
  Denote $C_1:=\sup\{\rho^{-2l_M}|f_\lambda(g)|:g\in \cG\mathcal{T},\lambda\in[0,\lambda_0]\}$, where $\rho$ is the constant in Corollary \ref{coro2.7}. 
By Lemma \ref{lem:2.9}, \eqref{eq:2.9} and \eqref{eq:2.10}, $C_1$ is finite. 

Let $\alpha :=\min\{-\frac{1}{2}\log \rho,\frac{1}{2}\}.$ We claim that there exist $\epsilon, C>0$ such that for all $g_1,g_2\in \cG\mathcal{T}$ with $d_{\mathcal{G}\mathcal{T}}(g_1,g_2)<\epsilon$,
$$|f_\lambda(g_1)-f_\lambda(g_2)|\leq Cd_{\mathcal{G}\mathcal{T}}(g_1,g_2)^\alpha.$$
 By Lemma \ref{lem:2.2} and Corollary \ref{coro:2.8}, for any geodesic line $g$ and $s \in [0,l_m/4]$, 
\begin{equation}\label{eq:2.11}
 e^{-8d_M\sqrt{s/l_m}}
\leq{k_\lambda(g(0),g(s),g_+)}\leq e^{8d_M\sqrt{s/l_m}}.
\end{equation}
For any $(x,\xi)\in \mathcal{T}\times \partial \mathcal{T}$, $\frac{\partial^2}{\partial y^2} k_\lambda(x,y,\xi)=\lambda k_\lambda(x,y,\xi)$ (see Corollary \ref{coro:2.8}). Applying the mean value theorem on an edge $e$, for any $y_1,y_2\in e$, there exists $y'\in [y_1,y_2]$ satisfying
\begin{equation}\label{eq:2.12}
\left|\frac{\partial}{\partial z}\middle|_{z=y_1}k_\lambda(x,z,\xi)|_e-\frac{\partial}{\partial z}\middle|_{z=y_2}k_\lambda(x,z,\xi)|_e\right|=|\lambda k_\lambda(x,y',\xi) d(y_1,y_2)|.
\end{equation}
For any geodesic line $g$ and $s\in[0,l_m/4]$, we have 
\begin{equation}\label{eq::2.17}
\begin{split}
& |f_\lambda(\phi_s g)-f_\lambda(g)|\\ 
&=|f_\lambda(\phi_s g)-k_\lambda(g(0),g(s),g_+)f_\lambda(\phi_s g)+k_\lambda(g(0),g(s),g_+)f_\lambda(\phi_s g)-f_\lambda(g)|\\
&\leq|f_\lambda(\phi_s g)-k_\lambda(g(0),g(s),g_+)f_\lambda(\phi_s g)|+|k_\lambda(g(0),g(s),g_+)f_\lambda(\phi_s g)-f_\lambda(g)|\\
&\leq e^{8d_M}\sqrt{s}|f_\lambda(\phi_s g)|+\max\{\lambda k_\lambda(g(0),g(s'),g_+):s'\in[0,l_m/4]\}s.
\end{split}
\end{equation}
Using \eqref{eq:2.11} and the fact that $\max\{|e^{8d_M\sqrt{s/l_m}}-1|,|1-e^{-8d_M\sqrt{s/l_m}}|\}\leq e^{8d_M}\sqrt{s}$ for all $s\in [0,l_m/4]$, we obtain the first term of the last inequality of \eqref{eq::2.17}. The second term of the last inequality of \eqref{eq::2.17} follows from \eqref{eq:2.12}.
  Harnack inequality \eqref{eq:2.2} allows us to choose a constant $C_2$ satisfying 
  $$C_2\geq C_1e^{8d_M}+\max\{\lambda k_\lambda(g(0),g(s_1),g_+):s_1\in[0,l_m/4]\}.$$
  Hence, for any $s\leq \min\{1,l_m/4\},$ we have
  \begin{equation}\label{eq:2.13}|f_\lambda(\phi_s g)-f_\lambda(g)|\leq C_2d(\pi (g),\pi(\phi_s g))^{\frac{1}{2}}.\end{equation}

 Let $g_1$ and $g_2$ be the geodesic lines with $\pi(g_1)=\pi(g_2)$ and $g_1|_{[0,T]}=g_2|_{[0,T]}$ for some $T>2l_M+2$. The equation \eqref{eq:2.9} shows the first and the second line of \eqref{eq:2.14}. By Corollary \ref{coro2.7}, there exist constants $C_3>0$ and $\rho\in(0,1)$ satisfying the inequality of \eqref{eq:2.14}.
\begin{eqnarray}\label{eq:2.14}
 &&|f_\lambda(g_1)-f_\lambda(g_2)|\\
\nonumber&\overset{\eqref{eq:2.9}}=& |2A(g_1(0),g_1(2l_M))| |k_\lambda(g_1(0),g_1(2l_M),g_{1+})-k_\lambda(g_2(0),g_2(2l_M),g_{2+})|\\
\nonumber&\overset{\substack{\text{Coro}\\\ref{coro2.7}}}\leq& C_3\rho^{T-2l_M}|2A_\lambda(g_1(0),g_1(2l_M))k_\lambda(g_1(0),g_1(2l_M),g_{1+})|\\\nonumber&\overset{\eqref{eq:2.9}}=&C_3\rho^{T-2l_M}|f_\lambda(g_1)|\leq C_1C_3d_{g_1(0)}(g_{1+},g_{2+})^{2\alpha}.
\end{eqnarray}
 
Choose $\varepsilon$ satisfying $\varepsilon<\min\{l_m/4,1,e^{-2l_M-2}\}$. Let $C_4$ be a constant in Lemma \ref{lem:2.1}. Then \eqref{eq:2.13} and \eqref{eq:2.14} shows that for any $g_1,g_2 \in \mathcal{G}\mathcal{T}$ with $d_{\cG\mathcal{T}}(g_1,g_2)<\varepsilon$ and $\pi(g_1)=\pi(\phi_s g_2),$
\begin{eqnarray}
\nonumber |f_\lambda(g_1)-f_\lambda(g_2)|&=& |f_\lambda(g_1)-f_\lambda(\phi_s g_2)+ f_\lambda(\phi_s g_2)-f_\lambda(g_2)|\\
 \nonumber&\leq&  |f_\lambda(g_1)-f_\lambda(\phi_s g_2)|+ |f_\lambda(\phi_s g_2)-f_\lambda(g_2)|\\
 \nonumber&\leq& C_1C_3d_{g_1(0)}(g_{1+},g_{2+})^{2\alpha}+C_2d(\pi (g_2),\pi(\phi_s g_2)))^\alpha\\
 \nonumber&\leq&(C_1C_3+C_2)C_4d_{\cG\mathcal{T}}(g_1,g_2)^\alpha.
 \end{eqnarray}
  Lemma \ref{lem:2.1} shows the last inequality above. Hence, $f_\lambda$ is H\"older continuous.
 
To complete the proof, it remains to show that the map $\lambda\mapsto f_\lambda$ is a continuous map from $[0,\lambda_0]$ to $C_b^{\alpha'}(\cG\mathcal{T})$. We claim that for any sufficiently close $x,y\in \mathcal{T}$, the function $\xi \mapsto k_\lambda(x,y,\xi)$ converges uniformly to $\xi \mapsto k_{\lambda'}(x,y,\xi)$ as $\lambda$ goes to $\lambda'$.
By \eqref{eq:2.9}, for any $g\in \mathcal{G}\mathcal{T},$
\begin{equation}\nonumber
\begin{split}
|f_\lambda(g)-f_{\lambda'}(g)|\leq& |A_\lambda(g(0),g(2l_M))-A_{\lambda'}(g(0),g(2l_M))|k_\lambda(g(0),g(2l_M),g_+)\\
&+|A_{\lambda'}(g(0),g(2l_M))\|k_\lambda(g(0),g(2l_M),g_+)-k_{\lambda'}(g(0),g(2l_M),g_+)|
\end{split}
\end{equation}
  If the claim is true, Lemma \ref{lem:2.9} shows that the above inequality shows that $$\lim_{\lambda\rightarrow\lambda'}\|f_\lambda-f_{\lambda'}\|_\infty=0.$$
 Since $|f|_{\alpha'}\leq 2\|f\|_\infty^{1-\alpha'/\alpha}|f|_{\alpha}^{\alpha'/\alpha}$, the map $\lambda\mapsto f_\lambda$ is continuous with respect to $\|\cdot\|_{\alpha'}$-norm.
 
 The remaining part is to prove the claim. By Corollary \ref{coro2.7} and Harnack inequality \eqref{eq:2.2}, for any $\epsilon/3>0$, there exists a constant $R>2$ such that for any $g\in \cG\cT$ with $g(0)=x$, $\lambda\in [0,\lambda_0]$ and $t\geq R$ and for any point $y$ with $d(x,y)\leq 2l_M$,
  \begin{equation}
 \nonumber \left| k_\lambda(x,y,g(R))-k_\lambda(x,y,g(t))\right|\overset{\substack{\text{Coro}\\\ref{coro2.7}}}< \frac{\epsilon k_\lambda(x,y,g(R))}{3e^{D_{l_M,1}}}\overset{\eqref{eq:2.2}}\leq \frac{\epsilon}{3}.
  \end{equation}
  Since the action of $\G$ is cocompact, for any $\lambda$ sufficiently close to $\lambda'$, point $z$ with $d(x,z)=R,$ and points $x$ and $y$ with $d(x,y)\leq 2l_M$,
  $$|k_\lambda(x,y,z)-k_{\lambda'}(x,y,z)|<\epsilon/3.$$
  For any $\lambda$ sufficiently close to $\lambda'$, $g\in \cG_x \mathcal{T}$, we have 
\begin{equation}
\begin{split}
&\nonumber|k_\lambda(x,y,g_+)-k_{\lambda'}(x,y,g_+)|\\
&\leq |k_\lambda(x,y,g_+)-k_{\lambda}(x,y,g(R))|+|k_\lambda(x,y,g(R))-k_{\lambda'}(x,y,g(R))|\\
&+|k_{\lambda'}(x,y,g(R))-k_{\lambda'}(x,y,g_+)|< \epsilon,
\end{split}
\end{equation}
where $g$ is the geodesic ray from $x$ to $\xi.$ The above inequality shows that $\xi\mapsto k_\lambda(x,y,\xi)$ converges uniformly to $\xi \mapsto k_{\lambda'}(x,y,\xi)$ as $\lambda$ goes to $\lambda'$.
  \end{proof}
\section{Gibbs measure related to $\lambda$-Green functions}\label{Sec:3} In the section, we construct Patterson-Sullivan density and Gibbs measure of $f_\lambda$. 
\subsection{Poincar\'e series and critical exponent of $f_\lambda$}\label{subsec:3.1} In the section, we define Poincar\'e series and the critical exponent of $f_\lambda$ which is necessary for constructing Patterson-Sullivan density.  

Let $f$ be a H\"older continuous function on the unit tangent bundle $T^1 X$ of a manifold or a tree $X$. The integral $f$  from $x$ to $y$ of is define by
\begin{equation}\label{eq:3.1}\int_x^y f:=\int_{0}^{d(x,y)} f(v_{\phi_t g}) dt,\end{equation}
 where $g$ is a geodesic segment from $x$ to $y$ and $v_{\phi_t g}$ is the unit tangent vector to $g$ at time $t$. In \cite{Le}, \cite{PPS} and \cite{BPP}, the equation \eqref{eq:3.1} enable us to define the critical exponent of $f$ and Poincar\'e series of $f$. Since the function $f_\lambda$ is defined on $\cG\mathcal{T}$, the equation \eqref{eq:3.1} for $f_\lambda$ is not valid. Hence, $G_\lambda^2(x,y)$ will be used instead of $e^{\int_x^y f_\lambda}$.
\begin{defn}\label{def:3.1} Let $x,y$ be points in $\mathcal{T}$.
\begin{enumerate}
\item The \textit{Poincar\'e series} $P_{x,y,\lambda}(s)$ of $f_\lambda$ is defined by
$$P_{x,y,\lambda}(s):=\sum_{\g\in\G_{x,y}}G_\lambda^2(x,\g y)e^{-sd(x,\g y)},$$
where $\Gamma_{x,y}=\{\g\in\G:x\neq \g y\}$.
\item The \textit{critical exponent} $\delta_{\lambda}$ of $f_\lambda$ is defined by
$$\delta_{\lambda}:= \limsup_{n\ra +\infty}\frac{1}{n} \log \sum_{\substack{\g\in \G_{x,y}\\n-1\leq d(x,\g y)<n}} G_\lambda^2 (x,\gamma y).$$ 
\end{enumerate}
\end{defn}
The \textit{critical exponent} $\delta_\G$ of the discrete group $\G$ is defined by
$$\delta_{\G}:= \limsup_{n\ra +\infty}\frac{1}{n} \log N(\G,n),$$
where $N(\G,n)$ is the cardinality of the set $\{\g\in\G:n-1\leq d(x,\g x)<n\}$. As in the proof of Proposition 2 in \cite{LW}, the critical exponent $\delta_\G$ coincides with the volume entropy of $\mathcal{T}$. Using the formula for the volume entropy of trees in \cite{Lim}, one obtains that the critical exponent $\delta_{\G}$ is finite.

By \eqref{eq:2.2}, the critical exponent $\delta_{\lambda}$ is independent of the choice of $x$ and $y$. Consider the finiteness of the critical exponent $\delta_\lambda$. By Proposition 4.10 in \cite{HL}, $\delta_{\lambda}\leq 0$. Since $\G$ acts cocompactly, for all $x,y\in \mathcal{T}$ with $1\leq d(x,y)\leq 2$, $\frac{1}{a}\leq G_\lambda(x,y)$ for some $a>0$. Let $\{x_n\}$ be the sequence on the geodesic segment $[x,y]$ satisfying $d(x,x_n)=n$. By Ancona inequality, for any $x,y$ with $n-1<d(x,y)\leq n$, we have 
$$G_\lambda(x,y)\geq \frac{1}{C^{n-1}}G_\lambda(x,x_1)G_\lambda(x_1,x_2)\cdots G_\lambda(x_{n-2},y)\geq \frac{1}{(aC)^{n-2}}$$
and
\begin{equation}\label{eq:3.2}
\sum_{\substack{\g\in \G\\(n-1)\leq d(x,\g y)<n}} G_\lambda^2 (x,\gamma y)\geq \frac{N(\G,n)}{(aC)^{2n-4}}.
\end{equation}
The equation \eqref{eq:3.2} implies that $\delta_\lambda$ is bounded below by $\delta_\G-2\log aC$. Thus the critical exponent $\delta_\lambda$ is finite for all $\lambda\in [0,\lambda_0]$. Poincar\'e series $P_{x,y,\lambda}(s)$ converges when $s>\delta_\lambda$, and diverges when $s<\delta_\lambda$. Poincar\'e series $P_{x,y,\lambda}(s)$ is of \textit{divergence type  (convergence type, resp.)} if the $P_{x,y,\lambda}(\delta_\lambda)$ diverges (converges, resp.).
\subsection{Patterson-Sullivan density of $f_\lambda$}\label{subsec:2.3} Applying the method in \cite{LW} and \cite{Le}, we construct Patterson-Sullivan density of $f_\lambda$.
\begin{defn} A \textit{Patterson-Sullivan} density of dimension $s$ for $f_\lambda$ is a family $\{\mu_
{x,\lambda}\}_{x\in \mathcal{T}}$ of measures on the geometric boundary $\partial \mathcal{T}$ such that for all $x,y\in \mathcal{T}$, $\g\in \G$, and $\xi \in \partial \mathcal{T},$
\begin{equation}\label{eq:3.3}
\g_*\mu_{x,\lambda}=\mu_{\g x,\lambda}
\end{equation}
and
\begin{equation}\label{eq:3.4}
\frac{d\mu_{y,\lambda}}{d\mu_{x,\lambda}}(\xi)=k_\lambda^2(x,y,\xi)e^{s\beta_{\xi}(x,y)}.
\end{equation}
\end{defn}
 
 \begin{prop}\label{prop:3.3}For any $\lambda\in [0,\lambda_0]$, there exists a Patterson-Sullivan density of dimension $\delta_\lambda$ for $f_\lambda$. The support of Patterson-Sullivan density is $\partial \mathcal{T}$. \end{prop}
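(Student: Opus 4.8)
The plan is to run the classical Patterson--Sullivan construction, following the method of \cite{LW} and \cite{Le}, with the density $e^{\int_x^y f_\lambda}$ replaced throughout by $G_\lambda^2(x,y)$. Fix a base point $x_0\in\mathcal{T}$ and write $P(s):=P_{x_0,x_0,\lambda}(s)$. Since it is not known a priori whether $P$ is of divergence type at $s=\delta_\lambda$, I would first invoke Patterson's lemma \cite{P} to produce a nondecreasing, slowly varying function $h\colon\mathbb{R}_{\ge0}\to\mathbb{R}_{>0}$ (i.e.\ $h(t+a)/h(t)\to1$ as $t\to\infty$ for each fixed $a$) such that the modified series
\begin{equation*}
\widetilde P(s):=\sum_{\gamma\in\Gamma_{x_0,x_0}}h\bigl(d(x_0,\gamma x_0)\bigr)\,G_\lambda^2(x_0,\gamma x_0)\,e^{-s\,d(x_0,\gamma x_0)}
\end{equation*}
still has critical exponent $\delta_\lambda$ and diverges at $s=\delta_\lambda$. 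Then, for $s>\delta_\lambda$ and $x\in\mathcal{T}$, I would set
\begin{equation*}
\mu_{x,\lambda}^{s}:=\frac{1}{\widetilde P(s)}\sum_{\gamma\in\Gamma_{x_0,x_0}}h\bigl(d(x,\gamma x_0)\bigr)\,G_\lambda^2(x,\gamma x_0)\,e^{-s\,d(x,\gamma x_0)}\,\delta_{\gamma x_0},
\end{equation*}
a finite measure on the compact space $\overline{\mathcal{T}}$ ($\delta_{\gamma x_0}$ being the unit point mass at $\gamma x_0$) whose total mass is, by the Harnack inequality \eqref{eq:2.2} applied to $G_\lambda(\,\cdot\,,\gamma x_0)$ (Proposition \ref{prop:2.3}) together with the slow variation of $h$, bounded above and below by positive constants uniformly in $s$ and locally uniformly in $x$. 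A diagonal argument over a sequence $s_k\downarrow\delta_\lambda$ then yields simultaneous weak-$*$ limits $\mu_{x,\lambda}$ for all $x\in\mathcal{T}$.

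Next I would show that the $\mu_{x,\lambda}$ are carried by $\partial\mathcal{T}$: since the action of $\Gamma$ is proper, any compact $K\subset\mathcal{T}$ meets only finitely many orbit points $\gamma x_0$, so $\mu_{x_0,\lambda}^{s}(K)=O\bigl(1/\widetilde P(s)\bigr)\to0$ as $s\downarrow\delta_\lambda$ because $\widetilde P(s)\to\infty$; passing to an open neighbourhood of $K$ with compact closure and using weak-$*$ lower semicontinuity on open sets gives $\mu_{x_0,\lambda}(K)=0$, and likewise for each $x$. As $\mathcal{T}$ is $\sigma$-compact, $\mathrm{supp}\,\mu_{x,\lambda}\subset\partial\mathcal{T}$. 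This step is exactly where Patterson's slowly varying weight is needed: without divergence of $\widetilde P$ at $\delta_\lambda$ the limiting mass could sit on the orbit $\Gamma x_0$.

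Then I would verify \eqref{eq:3.3} and \eqref{eq:3.4}. Equivariance is formal: $G_\lambda(\gamma x,\gamma y)=G_\lambda(x,y)$ and $d(\gamma x,\gamma y)=d(x,y)$, so reindexing the sum gives $\gamma_*\mu_{x,\lambda}^{s}=\mu_{\gamma x,\lambda}^{s}$, which passes to the limit. For \eqref{eq:3.4}, the weight of $\mu_{y,\lambda}^{s}$ at the atom $\gamma x_0$ divided by that of $\mu_{x,\lambda}^{s}$ equals
\begin{equation*}
\frac{h\bigl(d(y,\gamma x_0)\bigr)}{h\bigl(d(x,\gamma x_0)\bigr)}\;k_\lambda(x,y,\gamma x_0)^{2}\;e^{\,s\,(d(x,\gamma x_0)-d(y,\gamma x_0))}.
\end{equation*}
As $\gamma x_0\to\xi\in\partial\mathcal{T}$, the first factor tends to $1$ (slow variation of $h$ and $|d(y,\gamma x_0)-d(x,\gamma x_0)|\le d(x,y)$), the second to $k_\lambda(x,y,\xi)^2$ by the existence of the boundary Martin kernel recorded after Corollary \ref{coro:2.7}, and the exponent to $s\,\beta_\xi(x,y)$ by the definition of the Busemann cocycle; moreover the convergence is uniform over those $\gamma$ with $\gamma x_0$ in a small shadow around $\xi$, thanks to the uniform Harnack inequality \eqref{eq:2.2} and the uniform Ancona estimate \eqref{eq:2.5}. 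Covering $\partial\mathcal{T}$ by finitely many small shadows on each of which the weight ratio is within $\varepsilon$ of $k_\lambda(x,y,\cdot)^2e^{\delta_\lambda\beta_{\cdot}(x,y)}$ once $s$ is close to $\delta_\lambda$, and then letting $s\downarrow\delta_\lambda$, upgrades this to $d\mu_{y,\lambda}(\xi)=k_\lambda^2(x,y,\xi)\,e^{\delta_\lambda\beta_\xi(x,y)}\,d\mu_{x,\lambda}(\xi)$, which is \eqref{eq:3.4}.

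Finally, for the support statement: $\mathrm{supp}\,\mu_{x_0,\lambda}$ is a nonempty (the total mass is positive) closed $\Gamma$-invariant subset of $\partial\mathcal{T}$, and since $\Gamma$ is non-elementary its action on the limit set $\Lambda\Gamma=\partial\mathcal{T}$ is minimal, so $\mathrm{supp}\,\mu_{x_0,\lambda}=\partial\mathcal{T}$; by \eqref{eq:3.4} the same holds for every $\mu_{x,\lambda}$. I expect the main obstacle to be the combination of the second and third steps---arranging the weight so that no mass escapes into $\mathcal{T}$ in the limit, and establishing the convergence of the weight ratios to $k_\lambda^2e^{\delta_\lambda\beta}$ uniformly enough to identify the Radon--Nikodym derivative---both of which rely essentially on the uniform Harnack and uniform Ancona inequalities proved earlier.
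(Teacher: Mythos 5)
Your proposal is correct and follows essentially the same route as the paper's proof: Patterson's weighted series with $G_\lambda^2$ in place of $e^{\int f}$, weak-$*$ limits as $s\downarrow\delta_\lambda$, divergence of the modified series forcing the limit mass onto $\partial\mathcal{T}$, equivariance by reindexing, the Radon--Nikodym identity \eqref{eq:3.4} from convergence of the weight ratios via the Martin kernel and Busemann cocycle (using Harnack and strong Ancona), and minimality of $\Lambda_\Gamma=\partial\mathcal{T}$ for the support. The only cosmetic point is the ``diagonal argument for all $x$'' (which literally handles only countably many base points), but this is harmless since, as your own ratio computation shows and as the paper does, convergence at a single base point already determines $\mu_{x,\lambda}$ for every $x$.
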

\begin{proof}There exists a non-decreasing function $h:\R_+\ra\R_+\cup\{+\infty\}$ satisfying the following.
 \begin{itemize}
 \item For any $\eps>0$, there exists $r_\eps$ satisfying $h(t+r)\leq e^{\eps t}h(r)$ for all $t\geq 0$ and $r\geq r_\eps$ 
 \item and $$P_{x,y,\lambda,h}(\delta_\lambda):=\sum_{{\g \in\G_{x,y}}}G_\lambda^2(x,\g y)h(d(x,\g y))e^{-sd(x,\g y)}$$ diverges when $s\leq\delta_\lambda$ (see \cite{Q} Lemma 4.9).
\end{itemize}
  If Poincar\'e series $P_{x,y,\lambda}(s)$ is of divergence type, then the constant function $h\equiv1$ satisfies the above conditions.

 For any $s>\delta_\lambda$, define a family $\{\mu_{s,x,\lambda}\}_{x\in\mathcal{T}}$ of measures  on $\overline{\mathcal{T}}$ as follows:
\begin{displaymath}
\mu_{s,x,\lambda}:=\displaystyle\frac{\sum_{\g \in \G_{x,y}}G_\lambda^2(x,\g y)h(d(x,\g y))e^{-sd(x,\g y)}\delta_{\g y}}{\sum_{\g \in \G_{y,y}}G_\lambda^2(y,\g y)h(d(y,\g y))e^{-sd(y,\g y)}},
\end{displaymath}
where $\delta_{\g y}$ is the Dirac delta mass at $\g y$. Let $\{s_n\}$ be a sequence converging to $\delta_\lambda$. Since $\overline{\mathcal{T}}$ is compact, there exists a subsequence $\{\mu_{s_{n_k},y,\lambda}\}$ of probability measures converging weakly to a measure probability $\mu_{y,\lambda}$ on $\overline{\mathcal{T}}$. Since $P_{y,y,\lambda,h}(\delta_\lambda)$ diverges, the support of $\mu_{y,\lambda}$ is in $\Lambda_\G$.
  
 Let $f$ be a continuous function satisfying $f\equiv 0$ on $B(x,2d(x,y))$. For all $s_{n_k}$, we have
\begin{eqnarray}\label{eq:3.5}
&&\sum_{\g \in \G_{x,y}}G_\lambda^2(x,\g y)h(d(x,\g y))e^{-s_{n_k}d(x,\g y)}f(\g y)\\
&=&\sum_{\g \in \G_{x,y}}\frac{G_\lambda^2(x,\g y)}{G_\lambda^2(y,\g y)}G_\lambda^2(y,\g y)\frac{h(d(x,\g y))}{h(d(y,\g y))}h(d(y,\g y))\frac{e^{-s_{n_k}d(x,\g y)}}{e^{-s_{n_k}d(y,\g y)}}e^{-s_{n_k}d(y,\g y)}f(\g y)\nonumber
\end{eqnarray}
For any $\g\in \Gamma_{y,y}$, $f(\g y)=f(y)=0$. Thus by \eqref{eq:3.5}, 
\begin{equation}\label{eq:3.6}
\mu_{s_k,x,\lambda}(f)=\int_{\overline{\mathcal{T}}}k_\lambda^2(y,x,z)\frac{h(d(x,z))}{h(d(y,z))}\frac{e^{-s_{n_k}d(x,z)}}{e^{-s_{n_k}d(y,z)}}f(z)d\mu_{s_{n_k},y,\lambda}(z).
\end{equation}
For any points $z$ with $d(y,z)\geq r_{s_{n_k}-\delta_\lambda}$, 
$$h(d(x,z))\leq h(d(x,y)+d(y,z))\leq e^{(s_{n_k}-\delta_\lambda)d(x,y)}h(d(y,z)).$$
As $k$ goes to infinity, the left hand side of \eqref{eq:3.6} converges to a weak limit $\mu_{x,\lambda}$ satisfying $$d\mu_{x,\lambda}(\xi)=k_\lambda^2(y,x,\xi)e^{\delta_\lambda\beta_\xi(y,x)}d\mu_{y,\lambda}(\xi)$$
for any point $\xi$ in the support of $\mu_{y,\lambda}$. The equation \eqref{eq:3.4} follows from the above equality.
Fix $\g\in\G$. Choose a sufficiently large number $R$ such that $\g x\in B(x,2d(x,y)+R)$. For any continuous function $f$ satisfying $f\equiv 0$ on $B(x,2d(x,y)+R)$, we obtain
\begin{equation}\label{eq:3.7}
\begin{split}
P_{y,y,\lambda,h}(s_{n_k})\mu_{s_{n_k},\g x,\lambda}(f)&=\sum_{\g' \in\G_{\gamma x,y}}G_\lambda^2(\g x,\g' y)h(d(\g x, \g' y))e^{-s_{n_k}d(\g x,\g' y)}f(\g' y)\\
&=\sum_{\g\g' \in \G_{\gamma x,y}}G_\lambda^2(\g x,\g\g' y)h(d(\g x, \g\g' y))e^{-s_{n_k}d(\g x,\g\g' y)}f(\g\g' y)\\
&=\sum_{\g' \in \G_{x,y}}G_\lambda^2(x,\g' y)h(d(x, \g' y))e^{-s_{n_k}d(x, \g' y)}f(\g\g'y)\\
&=P_{y,y,\lambda,h}(s_{n_k})\int_{\overline{\mathcal{T}}}f(\g z)\mu_{s_{n_k},x,\lambda}(z).
\end{split}
\end{equation}
The equation \eqref{eq:3.7} implies that $\mu_{s_{n_k},\g x,\lambda}(f)=\g_*\mu_{s_{n_k},x,\lambda}(f)$ and $\mu_{\g x,\lambda}(f)=\g_*\mu_{x,\lambda}(f)$.
 By \eqref{eq:3.3}, the support of $\mu_{x,\lambda}$ is $\G$-invariant. Since the smallest nonempty $\G$-invariant subset of $\partial \mathcal{T}$ is $\Lambda_\G$ (\cite{Q}, Proposition 4.7), the support of $\mu_{x,\lambda}$ is  $\Lambda_\G$. Since the $\G$-action is cocompact, $\Lambda_\G=\partial \mathcal{T}$.
\end{proof}

 The cocompact action of $\G$ and Harnack inequality imply $\int_{T_0}\mu_{x,\lambda}(\partial \mathcal{T})d\mu(x)<\infty$. After this, we assume that for any $\lambda\in [0,\lambda_0]$, $\int_{T_0}\mu_{y,\lambda}(\partial\mathcal{T})d\mu(y)=1$. 
Similar to Corollary 3.1 in \cite{LL}, Proposition \ref{prop:2.11} shows the following lemma.
\begin{lem}\label{2203311} For any $x\in \mathcal{T}$, the map $\lambda\mapsto \mu_{x,\lambda}$ is continuous.
\end{lem}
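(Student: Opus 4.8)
The plan is to fix $x\in\mathcal{T}$, take a sequence $\lambda_n\to\lambda$ in $[0,\lambda_0]$, and show that every weak-$*$ limit point of $\{\mu_{x,\lambda_n}\}$ equals $\mu_{x,\lambda}$; since $\overline{\mathcal{T}}$ is compact this gives weak-$*$ convergence. First I would reduce to a fixed base point: by the transformation rule \eqref{eq:3.4} and \eqref{eq:3.3}, it suffices to prove continuity of $\lambda\mapsto\mu_{y,\lambda}$ for the distinguished base point $y$ used in Proposition \ref{prop:3.3} (where we normalized $\mu_{y,\lambda}$ to be a probability measure via $\int_{T_0}\mu_{y,\lambda}(\partial\mathcal{T})\,d\mu=1$), because the Radon-Nikodym derivative $k_\lambda^2(y,x,\xi)e^{\delta_\lambda\beta_\xi(y,x)}$ depends continuously on $\lambda$: the claim established at the end of the proof of Proposition \ref{prop:2.11} gives that $\xi\mapsto k_\lambda(y,x,\xi)$ converges uniformly as $\lambda\to\lambda'$, and $\beta_\xi(y,x)$ is bounded (by $d(x,y)$) and independent of $\lambda$, so it remains only to know that $\delta_\lambda$ varies continuously.

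So the two ingredients I would assemble are: (a) continuity of the critical exponent $\lambda\mapsto\delta_\lambda$; and (b) continuity of $\lambda\mapsto\mu_{y,\lambda}$ as a probability measure on the compact space $\overline{\mathcal{T}}$. For (a), I would argue using the definition $\delta_\lambda=\limsup_n\frac1n\log\sum_{n-1\le d(x,\gamma y)<n}G_\lambda^2(x,\gamma y)$ together with the uniform two-sided Ancona bounds: Theorem \ref{thm:2.6} gives $G_\lambda(x,z)\asymp_C \prod G_\lambda(\cdot,\cdot)$ along a geodesic uniformly in $\lambda\in[0,\lambda_0]$, and the cocompactness plus Lemma 4.4 of \cite{HL} (invoked in the proof of Lemma \ref{lem:2.9}) bounds $G_\lambda(u,v)$ above and below for bounded $d(u,v)$, uniformly in $\lambda$. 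Combined with the continuity of $\lambda\mapsto G_\lambda(u,v)$ for fixed $u,v$ (Proposition \ref{prop:2.3} and the convergence arguments in Lemma \ref{lem:2.9}), this should force $\delta_\lambda$ to be a continuous — in fact locally Lipschitz or at least uniformly continuous — function of $\lambda$ on $[0,\lambda_0]$. For (b), I would mimic the construction in Proposition \ref{prop:3.3}: given $\lambda_n\to\lambda$ and passing to a subsequence so that $\mu_{y,\lambda_n}\to\nu$ weakly-$*$, I would test against a continuous function $f$ supported away from $y$ and away from a large ball, write the defining sum $\sum_\gamma G_{\lambda_n}^2(y,\gamma y)h_n(d(y,\gamma y))e^{-s\,d(y,\gamma y)}f(\gamma y)$, and pass to the limit using (a), the uniform Ancona estimates, and the continuity of individual Green functions; the normalization $\int_{T_0}\mu_{y,\lambda_n}(\partial\mathcal{T})\,d\mu=1$ combined with Harnack \eqref{eq:2.2} keeps the total masses uniformly controlled, so no mass escapes and $\nu$ satisfies the same defining relation as $\mu_{y,\lambda}$, hence $\nu=\mu_{y,\lambda}$.

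The main obstacle I anticipate is handling the auxiliary function $h$ in the convergence-type case: the slowly-growing weight $h=h_\lambda$ coming from \cite{Q} Lemma 4.9 is not canonical and may itself depend on $\lambda$, so one must either show the construction can be done with a single $h$ valid for all nearby $\lambda$, or (cleaner) reduce directly to the divergence-type situation where $h\equiv1$. In fact, since the paper later proves (Theorem \ref{thm:1.4}) that the pressure $P_\lambda$ equals $\delta_\lambda$, one expects the Poincaré series to be of divergence type; if that is available one takes $h\equiv1$ throughout and the subsequential-limit argument in (b) goes through verbatim with the uniform-in-$\lambda$ Ancona bounds doing all the work. Absent that, the extra care is in showing the $r_\varepsilon$ and the growth rate of $h_{\lambda_n}$ can be chosen uniformly on a neighborhood of $\lambda$, which follows from the uniformity of the Ancona constant $C$ and of the lower bounds on short-range Green functions. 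Everything else is a routine weak-$*$ compactness and uniqueness argument paralleling Corollary 3.1 of \cite{LL}.
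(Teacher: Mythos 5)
Your overall strategy (weak-$*$ compactness of measures on the compact space $\overline{\mathcal{T}}$, then identification of every limit point of $\mu_{x,\lambda_n}$ with $\mu_{x,\lambda}$) is the same strategy behind the paper's one-line proof, which simply invokes Proposition \ref{prop:2.11} and Corollary 3.1 of \cite{LL}. But there is a genuine gap at the decisive step. You conclude with ``$\nu$ satisfies the same defining relation as $\mu_{y,\lambda}$, hence $\nu=\mu_{y,\lambda}$''. The ``defining relation'' --- $\Gamma$-equivariance \eqref{eq:3.3}, the conformality \eqref{eq:3.4} with kernel $k_\lambda$ and exponent $\delta_\lambda$, and the normalization --- only pins down $\nu$ if one knows that the normalized Patterson--Sullivan density of dimension $\delta_\lambda$ for $f_\lambda$ is \emph{unique}. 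You neither prove nor cite this uniqueness, and it is the heart of the matter: Proposition \ref{prop:3.3} produces $\mu_{x,\lambda}$ only through a subsequential limit, so without uniqueness the map $\lambda\mapsto\mu_{x,\lambda}$ is not even canonically defined, let alone continuous. Uniqueness is what Ledrappier--Lim rely on (via the results of \cite{PPS}); in the present setting it has to come from something like the shadow lemma (Lemma \ref{lem:3.4}, whose proof uses only \eqref{eq:3.3}--\eqref{eq:3.4} and hence applies to any such density) together with an ergodicity argument, or from the uniqueness of the equilibrium state furnished by Buzzi's theorem in Section \ref{Sec:4}. Any complete proof must supply this ingredient.

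Two further points. First, your plan to identify the limit by ``writing the defining sum'' $\sum_\gamma G_{\lambda_n}^2(y,\gamma y)h_n(d(y,\gamma y))e^{-s d(y,\gamma y)}f(\gamma y)$ conflates the pre-limit measures $\mu_{s,y,\lambda_n}$ with $\mu_{y,\lambda_n}$ itself: the latter is only a weak-$*$ limit along a subsequence $s_k\downarrow\delta_{\lambda_n}$, so letting $\lambda_n\to\lambda$ inside the sums is a double-limit/diagonal argument that you have not controlled (and it is where your worry about the $\lambda$-dependence of $h$ actually lives). The cleaner route is to forget the orbital sums entirely and pass to the limit directly in \eqref{eq:3.4}, using the uniform convergence $k_{\lambda_n}\to k_\lambda$ from the claim in the proof of Proposition \ref{prop:2.11} together with $\delta_{\lambda_n}\to\delta_\lambda$; then invoke uniqueness. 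Second, the continuity of $\lambda\mapsto\delta_\lambda$ is itself only sketched in your proposal (``this should force $\delta_\lambda$ to be continuous''); continuity of each finite sum does not pass to the $\limsup$ without a uniform rate, so you would need the uniform-in-$\lambda$ almost-multiplicativity of $n\mapsto\sum_{n-1\le d(x,\gamma y)<n}G_\lambda^2(x,\gamma y)$ coming from Theorem \ref{thm:2.6}, or alternatively the identification $\delta_\lambda=P_\lambda$ and the Lipschitz dependence of pressure on the potential --- but the latter is only established in Section \ref{Sec:4}, after this lemma, so the order of the argument must be checked.
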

The following lemma is analogous to Lemma 3.10 in \cite{PPS} and Lemma 4.11 in \cite{BPP} called Mohsen's shadow lemma (\cite{M}).
\begin{lem}\label{lem:3.4}There exists a constant $C>1$ such that 
 for any $\lambda\in [0,\lambda_0]$, $x,y\in \overline{T_0}$ and $\g\in \G$ with $d(x,\g y)>1$, \begin{equation}\label{eq:3.8}
\frac{1}{C}e^{-\delta_\lambda d(x,\g y)}G_\lambda^2 (x,\g y)\leq \mu_{x,\lambda}(\cO_x(\g y))\leq Ce^{-\delta_\lambda d(x,\g y)}G_\lambda^2 (x,\g y).
\end{equation}
\end{lem}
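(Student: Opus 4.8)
The plan is to follow the classical Patterson–Sullivan shadow lemma argument, adapted to the potential $G_\lambda^2$ in place of $e^{\int_x^y f_\lambda}$. The key geometric point is that the shadow $\cO_x(\g y)$ is, up to bounded error in the relevant quantities, the set of boundary points $\xi$ whose geodesic from $x$ passes near $\g y$; equivalently, for $\xi \in \cO_x(\g y)$ one has $\beta_\xi(x, \g y) = d(x,\g y) - 2(\xi | \g y)_x$ with the Gromov product $(\xi|\g y)_x$ bounded by a universal constant. So first I would record that the Radon–Nikodym formula \eqref{eq:3.4} together with $\g$-equivariance \eqref{eq:3.3} gives, for any $x,y$ and $\g$,
\begin{equation}\label{eq:shadowplan1}
\mu_{x,\lambda}(\cO_x(\g y)) = \int_{\cO_x(\g y)} k_\lambda^2(\g y, x, \xi)\, e^{\delta_\lambda \beta_\xi(\g y, x)}\, d\mu_{\g y,\lambda}(\xi) = \int_{\cO_x(\g y)} k_\lambda^2(\g y, x, \xi)\, e^{\delta_\lambda \beta_\xi(\g y, x)}\, d\mu_{x,\lambda}(\g^{-1}\xi),
\end{equation}
and then estimate the integrand from above and below on the shadow.

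For the two factors in the integrand: by the cocycle identity $\beta_\xi(\g y, x) = -\beta_\xi(x, \g y)$ and the relation $\beta_\xi(x,\g y) = d(x,\g y) - 2(\xi|\g y)_x$ for $\xi$ in the shadow, combined with a uniform bound $0 \le (\xi | \g y)_x \le c_0$ valid on $\cO_x(\g y)$ (a standard fact in trees, tracing through Section \ref{sec:2.1}), we get $e^{-\delta_\lambda d(x,\g y)} \asymp e^{\delta_\lambda \beta_\xi(\g y,x)}$ up to a constant depending only on $c_0$ and $\lambda_0$. For the Martin kernel factor, I would use the Ancona-type estimates: $k_\lambda(\g y, x, \xi) = G_\lambda(x, z)/G_\lambda(\g y, z)$ in the limit $z \to \xi$ along a ray through $\g y$, and since the geodesic from $\g y$ (or $x$) to such $\xi$ passes through $\g y$, Theorem \ref{thm:2.6} gives $G_\lambda(x, z) \asymp_C G_\lambda(x, \g y) G_\lambda(\g y, z)$ once $d(x,\g y) \ge 1$ and $d(\g y, z) \ge 1$, hence $k_\lambda(\g y, x, \xi) \asymp_C G_\lambda(x, \g y)$ uniformly over $\xi \in \cO_x(\g y)$ and $\lambda \in [0,\lambda_0]$. (For the finitely many configurations where some distance is below $1$, Harnack \eqref{eq:2.2} absorbs the discrepancy into the constant, using cocompactness and $x,y \in \overline{T_0}$.) Plugging both estimates into \eqref{eq:shadowplan1} yields
$$
\mu_{x,\lambda}(\cO_x(\g y)) \asymp_{C'} e^{-\delta_\lambda d(x,\g y)}\, G_\lambda^2(x,\g y)\, \mu_{x,\lambda}(\g^{-1}\cO_x(\g y)),
$$
so it remains to bound $\mu_{x,\lambda}(\g^{-1}\cO_x(\g y)) = \mu_{\g^{-1} x, \lambda}(\cO_{\g^{-1}x}(y))$ above and below by universal constants.

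The main obstacle is exactly this last point: showing $\mu_{z,\lambda}(\cO_z(y))$ is bounded above and away from $0$ uniformly for $z$ in a $\G$-orbit of a point in $\overline{T_0}$, $y \in \overline{T_0}$, and $\lambda \in [0,\lambda_0]$. The upper bound by $1$ is trivial since $\mu_{z,\lambda}$ is a probability-comparable measure; more carefully, $\mu_{z,\lambda}(\partial\mathcal{T})$ is uniformly bounded by the normalization and the cocompactness/Harnack remark following Proposition \ref{prop:3.3}. The lower bound is the genuine content — one needs that shadows of nearby points (here $d(z,y)$ is bounded by $\mathrm{Diam}(T_0)$) carry a definite amount of mass. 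Here I would argue by contradiction using the weak-* convergence construction of $\mu_{z,\lambda}$ from Proposition \ref{prop:3.3}: if the mass of such a shadow could be arbitrarily small, then along a convergent subsequence of base points (using cocompactness to reduce to $\overline{T_0}$) and of $\lambda$'s (using compactness of $[0,\lambda_0]$ and the continuity Lemma \ref{2203311}), one would produce a limiting Patterson–Sullivan measure giving zero mass to a nonempty open subset of $\partial\mathcal{T}$, contradicting Proposition \ref{prop:3.3} which asserts the support is all of $\partial\mathcal{T} = \Lambda_\G$. Alternatively, and perhaps more cleanly, one estimates $\mu_{z,\lambda}(\cO_z(y))$ directly from below at the level of the finite approximating sums $\mu_{s_n, z,\lambda}$: the orbit points $\g' y$ lying "behind" $y$ as seen from $z$ all contribute to $\cO_z(y)$, and Ancona plus the divergence of the (modified) Poincaré series forces their total weighted contribution to be a definite fraction of the whole. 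Everything else is a bookkeeping of constants, using that all estimates are uniform in $\lambda \in [0,\lambda_0]$ because the constants in Harnack \eqref{eq:2.2}, Ancona \eqref{eq:2.4}, and the finiteness of $\delta_\lambda$ are.
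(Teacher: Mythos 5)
Your skeleton is the same as the paper's: use the Radon--Nikodym relation \eqref{eq:3.4} to write $\mu_{x,\lambda}(\cO_x(\g y))$ as an integral over the shadow against $\mu_{\g y,\lambda}$, use the uniform Ancona inequality (Theorem \ref{thm:2.6}) to get $k_\lambda(\g y,x,\xi)\asymp G_\lambda(x,\g y)$ for $\xi$ in the shadow, use that on a tree $\beta_\xi(\g y,x)=-d(x,\g y)$ exactly on $\cO_x(\g y)$, and reduce everything to a uniform two-sided bound on a shadow mass, which is the paper's inequality \eqref{eq:3.9} (imported from Lemma 4.13 of \cite{BPP}, i.e.\ Mohsen's compactness/full-support argument, essentially what you sketch using Proposition \ref{prop:3.3}, Lemma \ref{2203311} and cocompactness). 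So the route is the right one; the problems are in how you carry out the two reduction steps.

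First, the equivariance bookkeeping is wrong, and it matters. Equation \eqref{eq:3.3} gives $\g_*\mu_{y,\lambda}=\mu_{\g y,\lambda}$, so $d\mu_{\g y,\lambda}(\xi)=d\mu_{y,\lambda}(\g^{-1}\xi)$, not $d\mu_{x,\lambda}(\g^{-1}\xi)$; moreover your identity $\mu_{x,\lambda}(\g^{-1}\cO_x(\g y))=\mu_{\g^{-1}x,\lambda}(\cO_{\g^{-1}x}(y))$ is false as written, and if one applies \eqref{eq:3.3} correctly the right-hand side equals $\mu_{x,\lambda}(\cO_x(\g y))$ itself, so your reduction is circular. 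The factor that genuinely remains is $\mu_{\g y,\lambda}(\cO_x(\g y))=\mu_{y,\lambda}(\cO_{\g^{-1}x}(y))$: the measure is based at the shadowed point $y\in\overline{T_0}$, while the viewing point $\g^{-1}x$ is arbitrarily far away. Consequently the remaining lemma you formulate (``shadows of nearby points, with $d(z,y)\le \mathrm{Diam}(T_0)$'') is the wrong statement; the lower bound must be uniform in the unbounded distance $d(y,\g^{-1}x)$, which does hold because in a tree $\cO_{\g^{-1}x}(y)$ depends only on the initial direction of $[y,\g^{-1}x]$ at $y$, leaving, modulo $\G$ and the compactness of $\overline{T_0}\times[0,\lambda_0]$, an essentially compact family of nonempty open shadows to which your full-support/continuity argument applies -- this repaired statement is exactly \eqref{eq:3.9}. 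Second, the boundary computation: on the shadow it is $(\xi|x)_{\g y}$ that is bounded (indeed zero on a tree), whereas $(\xi|\g y)_x=d(x,\g y)$ is unbounded, and the correct identity is $\beta_\xi(x,\g y)=d(x,\g y)-2(\xi|x)_{\g y}$; your two slips (wrong basepoint for the Gromov product, wrong sign in the identity) happen to cancel and produce the correct exponent $e^{-\delta_\lambda d(x,\g y)}$, but the justification as written is incorrect. With these two points fixed, your argument coincides with the paper's proof.
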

\begin{proof} 
As in the proof of Lemma 4.13 in \cite{BPP}, using \eqref{eq:3.3}, we obtain a constant $C$ such that for any $\lambda\in [0,\lambda_0]$, $x,y\in \overline{T_0}$ with $d(x,\gamma y)>1$,
\begin{equation}\label{eq:3.9}
\frac{1}{C}\leq\mu_{\g y,\lambda}(\cO_x(\g y))\leq C.
\end{equation}
 
By \eqref{eq:3.4}, we have
\begin{equation}\label{eq:3.10}
\mu_{x, \lambda}(\cO_x(\g y))=\int_{\cO_x(\g y)} k_\lambda^2 (\g y, x,\xi)e^{\delta_\lambda\beta_\xi(\g y,x)}d\mu_{\g y,\lambda}(\xi).
\end{equation}
For all $\xi \in \cO_x(\g y)$, the geodesic ray $g$ from $x$ to $\xi$ contains the geodesic segment $[x,\g y]$. By Ancona inequality \eqref{eq:2.4}, for all $t>d(x,\g y)+1$ and geodesic line $g$ with $g_+\in \mathcal{O}_x(\gamma y),$
\begin{equation}\label{eq:3.11}
\frac{1}{C'}G_\lambda(x,\g y)\leq\frac{G_\lambda(x,g(t))}{G_\lambda(\g y ,g(t))}\leq\frac{C'G_\lambda(x,\g y)G_\lambda(\g y,g(t))}{G_\lambda(\g y, g(t))}={C'}{G_\lambda(x,\g y)}.
\end{equation}
The inequality \eqref{eq:3.11} shows $G_\lambda(x,\g y)/C'\leq k_\lambda(\g y,x,\xi)\leq C'G_\lambda(x,\g y)$. Since $\beta_\xi(\g y,x)$ is equal to $-d(x,\g y)$ for all $\xi \in \mathcal{O}_x(\g y)$, 
\begin{equation}\label{eq:3.12}
\begin{split}
\frac{1}{C'}e^{-\delta_\lambda d(x,\g y)}\mu_{\g y}(\mathcal{O}_x(\g y))G_\lambda^2(x,&\g y)\\
&\leq\mu_{x, \lambda}(\cO_x(\g y))\leq C'e^{-\delta_\lambda d(x,\g y)}\mu_{\g y}(\mathcal{O}_x(\g y))G_\lambda^2 (x,\g y).
\end{split}
\end{equation}
By \eqref{eq:3.9} and \eqref{eq:3.12}, we have the inequality \eqref{eq:3.8}.
\end{proof}

\subsection{Gibbs measure of $\fl$}\label{subsec:2.4} In this section, we construct Gibbs measure of $f_\lambda$.
\begin{prop} \emph{(\cite{K}, Section 4, p.390, Remark 2)}
If Ancona inequality \eqref{eq:2.5} holds, for all $x\in \mathcal{T}$ and $\xi,\zeta\in \partial\mathcal{T}$, the limit
\begin{equation}\label{3.13}
\theta_x^\lambda(\xi,\zeta):=\lim_{y\ra\xi,z\ra\zeta}\frac{G_\lambda(y,z)}{G_\lambda(y,x)G_\lambda(x,z)}
\end{equation}
exists.
\end{prop}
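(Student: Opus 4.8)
The plan is to fix $\lambda\in[0,\lambda_0]$ and the base point $x$, to assume $\xi\neq\zeta$ (this is the only case needed, since \eqref{3.13} is used only for the two endpoints of a geodesic line), and to show that the function
\[
F(y,z):=\frac{G_\lambda(y,z)}{G_\lambda(y,x)G_\lambda(x,z)},\qquad y,z\in\mathcal{T},
\]
is a Cauchy net as $(y,z)\to(\xi,\zeta)$ in $\overline{\mathcal{T}}\times\overline{\mathcal{T}}$; the limit is then $\theta_x^\lambda(\xi,\zeta)$. Two inputs are used: the multiplicative Ancona inequality (Theorem \ref{thm:2.6}), which gives a uniform two-sided bound $M^{-1}\le F(y,z)\le M$ near $(\xi,\zeta)$, and the strong Ancona inequality (Corollary \ref{coro:2.7}), whose exponential factor $\rho^n$ forces the Cauchy property once one checks that the relevant geodesic segments overlap on segments whose lengths tend to $\infty$.

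First I would set up the geometry. Let $\ell$ be the bi-infinite geodesic with endpoints $\xi,\zeta$, and let $c$ be the point of $\ell$ nearest to $x$; then $[x,\xi)=[x,c]\cup[c,\xi)$ and $[x,\zeta)=[x,c]\cup[c,\zeta)$, where $[c,\xi)$ and $[c,\zeta)$ are the two rays of $\ell$ issuing from $c$. For $y\in\mathcal{T}$, the convergence $y\to\xi$ means that for each $w\in[c,\xi)$ one has $w\in[x,y]$ eventually (similarly for $z\to\zeta$); so, fixing $w\in[c,\xi)$ and $w'\in[c,\zeta)$ with $d(c,w),d(c,w')>1$, it suffices to treat $y$ with $w\in[x,y]$ and $z$ with $w'\in[x,z]$. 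For such $y,z$ one has $c\in[x,y]\cap[x,z]$, $[y,z]=[y,c]\cup[c,z]$, and $[c,w]\subseteq[c,y]$, $[c,w']\subseteq[c,z]$. Applying Theorem \ref{thm:2.6} to the triples $(y,c,z)$, $(y,c,x)$, $(x,c,z)$ (if $d(x,c)<1$, replace the last two comparisons by the Harnack inequality \eqref{eq:2.2} on $B(c,1)$, comparing $G_\lambda(y,x)$ with $G_\lambda(y,c)$ and $G_\lambda(x,z)$ with $G_\lambda(c,z)$) yields $G_\lambda(y,z)\asymp G_\lambda(y,c)G_\lambda(c,z)$, $G_\lambda(y,x)\asymp G_\lambda(y,c)$ and $G_\lambda(x,z)\asymp G_\lambda(c,z)$ with implied constants depending only on $x$ and $c$; substituting, the factors $G_\lambda(y,c)$ and $G_\lambda(c,z)$ cancel, so $F(y,z)$ lies between two positive constants, and I take $M$ to be the larger.

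Next, the Cauchy estimate. Given $y,y'$ with $w\in[x,y]\cap[x,y']$ and $z,z'$ with $w'\in[x,z]\cap[x,z']$, split $|F(y,z)-F(y',z')|\le|F(y,z)-F(y',z)|+|F(y',z)-F(y',z')|$, and, using $G_\lambda(y,z)=G_\lambda(z,y)$, write
\[
F(y,z)-F(y',z)=\frac{G_\lambda(y,z)}{G_\lambda(x,y)G_\lambda(x,z)}\left(1-\frac{G_\lambda(z,y')/G_\lambda(x,y')}{G_\lambda(z,y)/G_\lambda(x,y)}\right).
\]
Corollary \ref{coro:2.7}, applied to the segments $[z,y']$ and $[x,y]$ — whose intersection is the single segment from $c$ to the branch point $\beta$ of $y$ and $y'$, of length $d(c,\beta)\ge d(c,w)$ — bounds the parenthesis by $C\rho^{d(c,w)}$; since the prefactor equals $F(y,z)\le M$, this gives $|F(y,z)-F(y',z)|\le MC\rho^{d(c,w)}$. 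Symmetrically, comparing $[y',z']$ with $[x,z]$ (overlap of length $\ge d(c,w')$) gives $|F(y',z)-F(y',z')|\le MC\rho^{d(c,w')}$. Hence $|F(y,z)-F(y',z')|\le MC(\rho^{d(c,w)}+\rho^{d(c,w')})$, which tends to $0$ as $w\to\xi$ and $w'\to\zeta$; so $F$ is Cauchy and $\theta_x^\lambda(\xi,\zeta)=\lim F$ exists.

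I expect the one genuinely delicate point is the geometric bookkeeping just used: one must verify that, because $w\in[x,y]\cap[x,y']$ and $w'\in[x,z]\cap[x,z']$, each of the two pairs of geodesic segments fed into Corollary \ref{coro:2.7} meets along a single geodesic segment (not a disconnected set), and that this segment has length at least $d(c,w)$, resp.\ $d(c,w')$. This is precisely where one uses that $c$ is the nearest point of $\ell$ to $x$ — so the directions at $c$ towards $x$, towards $\xi$, and towards $\zeta$ are three distinct directions — together with $[c,w]\subseteq[c,y]\cap[c,y']$. Everything else is routine manipulation with the symmetry $G_\lambda(y,z)=G_\lambda(z,y)$ and the two Ancona inequalities. (For completeness one can note that if $\xi=\zeta$ the same estimates instead give $F(y,z)\to+\infty$, consistent with reading $\theta_x^\lambda$ as an extended-real-valued kernel; this case is not needed below.)
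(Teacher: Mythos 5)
Your proof is correct, and it is the intended mechanism behind this statement: the paper itself gives no argument but only cites Kaimanovich \cite{K}, and your derivation — uniform two-sided bounds on $F(y,z)$ from Theorem \ref{thm:2.6} (with the Harnack substitute \eqref{eq:2.2} when $d(x,c)<1$), followed by the Cauchy estimate from Corollary \ref{coro:2.7} applied to the pairs $[z,y'],[x,y]$ and $[y',z'],[x,z]$ — is the standard route from the strong Ancona inequality to the existence of the N\"aim kernel. Your geometric bookkeeping is also sound: since $c$ is the projection of $x$ onto the geodesic $(\xi,\zeta)$, the three directions at $c$ toward $x$, $\xi$, $\zeta$ are distinct, so each of the two intersections is indeed the single segment $[c,\beta]$, resp.\ $[c,\beta']$, of length at least $d(c,w)$, resp.\ $d(c,w')>1$, as Corollary \ref{coro:2.7} requires; together with the symmetry $G_\lambda(y,x)=G_\lambda(x,y)$ of the Green function this closes the argument for $\xi\neq\zeta$, which is the only case the paper uses (the kernel is defined on $\partial^2\mathcal{T}$).
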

The function $\theta_x^\lambda:\partial^2\mathcal{T}\ra\bR$ is called \textit{N\"aim kernel}. Similar to the proof of Proposition \ref{prop:2.11}, applying \eqref{eq:2.5}, we have the following corollary.
\begin{coro}\label{coro:3.6} For any $\epsilon>0$, there exists a constant $\delta$ such that for any $\lambda\in[\lambda_0-\delta,\lambda_0]$, $x\in \mathcal{T}$ and $\xi,\zeta\in \partial \mathcal{T}$ with $(\xi|\zeta)_x\leq Diam(T_0)$,
$$|\theta_x^\lambda(\xi,\zeta)-\theta_x^{\lambda_0}(\xi,\zeta)|<\epsilon.$$
\end{coro}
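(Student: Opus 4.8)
The plan is to mirror the argument used to establish the H\"older‑continuity statement in Proposition \ref{prop:2.11}, replacing the Martin kernel there by the N\"aim kernel and the strong Ancona inequality \eqref{eq:2.5} by itself in the two‑boundary‑point regime. The key point is that \eqref{eq:2.5} gives uniform control, in $\lambda$, of ratios of Green functions along a long common geodesic segment, so the defining limit \eqref{3.13} for $\theta_x^\lambda$ is attained at a rate that is \emph{uniform} in $\lambda \in [0,\lambda_0]$. Concretely, first I would fix $x$ and two boundary points $\xi,\zeta$ with $(\xi|\zeta)_x \le \operatorname{Diam}(T_0)$, let $g$ be the geodesic line with $g_- = \xi$, $g_+=\zeta$, and use that the geodesics $[g(-s),x]$ and $[x,g(t)]$ share, for $s,t$ large, a common segment of length $n = n(s,t) \to \infty$ around $g(0)$ (this is where the Gromov‑product bound on $(\xi|\zeta)_x$ enters: it forces $g$ to pass within bounded distance of $x$, so $g(0)$ is a well‑controlled point). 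Applying \eqref{eq:2.5} to the Green‑function ratio $\dfrac{G_\lambda(g(-s),g(t))/G_\lambda(x,g(t))}{G_\lambda(g(-s),g(0))/G_\lambda(x,g(0))}$ (or a suitable such quotient built from the four relevant endpoints) yields
\[
\left| \frac{G_\lambda(g(-s),g(t))}{G_\lambda(g(-s),x)\,G_\lambda(x,g(t))} - \theta_x^\lambda(\xi,\zeta) \right| \le C\rho^{\,n(s,t)}\,\theta_x^\lambda(\xi,\zeta),
\]
with $C,\rho$ the \emph{absolute} constants of Corollary \ref{coro:2.7}.

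Second, I would choose $n_0$ so that $C\rho^{n_0} < \epsilon/(4\,\Theta)$, where $\Theta$ is a uniform upper bound for $\theta_x^\lambda(\xi,\zeta)$ over $\lambda \in [0,\lambda_0]$ and over the compact set of configurations with $(\xi|\zeta)_x \le \operatorname{Diam}(T_0)$; such a bound exists by the same Ancona/Harnack estimates (the ratio $G_\lambda(y,z)/(G_\lambda(y,x)G_\lambda(x,z))$ is pinched between $1/C$ and $C$ by \eqref{eq:2.4} once the three points are far apart, and Harnack \eqref{eq:2.2} handles the bounded‑distance corrections — this is finite up to $\G$‑action by cocompactness). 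For that fixed $n_0$, the quantity $G_\lambda(g(-s),g(t))/\big(G_\lambda(g(-s),x)G_\lambda(x,g(t))\big)$ at the finite "truncation level" corresponding to $n(s,t)=n_0$ depends only on Green functions between points at bounded distance from the fundamental domain, hence — by cocompactness and by the continuity of $\lambda \mapsto G_\lambda(p,q)$ for fixed $p \ne q$ (Proposition \ref{prop:2.3} together with the uniform convergence statements already proved inside the proof of Lemma \ref{lem:2.9})— is continuous in $\lambda$, uniformly over the compact family of configurations. So there is $\delta>0$ with $|\,\cdot_\lambda - \cdot_{\lambda_0}\,| < \epsilon/2$ whenever $\lambda \in [\lambda_0-\delta,\lambda_0]$.

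Third, I would assemble the estimate: for $\lambda \in [\lambda_0-\delta,\lambda_0]$ and $(\xi|\zeta)_x \le \operatorname{Diam}(T_0)$,
\[
|\theta_x^\lambda(\xi,\zeta) - \theta_x^{\lambda_0}(\xi,\zeta)|
\le |\theta_x^\lambda - T^\lambda_{n_0}| + |T^\lambda_{n_0} - T^{\lambda_0}_{n_0}| + |T^{\lambda_0}_{n_0} - \theta_x^{\lambda_0}|
< \frac{\epsilon}{4} + \frac{\epsilon}{2} + \frac{\epsilon}{4} = \epsilon,
\]
where $T^\lambda_{n_0}$ denotes the value of the Green‑function ratio at truncation level $n_0$; the outer two terms are controlled by the first step and the middle term by the second. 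Finally I would note the base‑point $x$ can be taken anywhere after translating by $\G$ to bring the relevant finite subtree into (a bounded neighborhood of) $T_0$, so the constant $\delta$ is independent of $x$.

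I expect the main obstacle to be bookkeeping, not substance: one must pin down exactly \emph{which} four‑point Green‑function cross‑ratio to feed into Corollary \ref{coro:2.7} so that its limit is literally $\theta_x^\lambda(\xi,\zeta)$, and one must verify that the geometric hypothesis $(\xi|\zeta)_x \le \operatorname{Diam}(T_0)$ genuinely forces the common segment of the two approaching geodesics to grow while staying in a $\G$‑cocompact‑controlled region (so that the finitely‑many‑configurations‑up‑to‑$\G$ argument applies and all constants are uniform). Everything else — the uniform upper bound $\Theta$, the $\lambda$‑continuity at a fixed truncation level, the final three‑term split — is routine and parallels the corresponding steps in the proof of Proposition \ref{prop:2.11}.
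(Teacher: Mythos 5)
Your proposal is correct and follows essentially the same route as the paper, which proves this corollary only by the remark that it is ``similar to the proof of Proposition \ref{prop:2.11}, applying \eqref{eq:2.5}'': namely, the strong Ancona inequality gives a $\lambda$-uniform geometric rate of convergence of the truncated Green-function ratio to $\theta_x^\lambda(\xi,\zeta)$, continuity of $\lambda\mapsto G_\lambda$ (via cocompactness) handles the fixed truncation level, and a three-term triangle inequality concludes. The use of $(\xi|\zeta)_x\leq Diam(T_0)$ to force the geodesic $(\xi,\zeta)$ to pass within bounded distance of $x$, so that all configurations are controlled up to the $\G$-action, is exactly the intended role of that hypothesis.
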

Using Hopf parametrization and {N\"aim} kernel, we obtain a $\phi_t$-invariant measure as follows.
\begin{defn}
The \textit{Gibbs measure} of $f_\lambda$ on $\cG\mathcal{T}$ is defined by
$$m_\lambda(g):=\theta_x^\lambda(g_-,g_+)^2e^{2\delta_\lambda(g_-|g_+)_x}\mu_{x,\lambda}(g_-)\mu_{x,\lambda}(g_+)dt$$
for all $g\in \cG\mathcal{T}$, where $(g_-,g_+,t)$ is Hopf parametrization of $g$ based on $x$ and $dt$ is the Lebesgue measure on $\bR.$
\end{defn}
By \eqref{eq:2.1} and \eqref{eq:3.4}, the measure $m_\lambda$ does not depend on the choice of $x$. By \eqref{eq:3.3}, $m_\lambda$ is $\G$-invariant. Since $dt$ is invariant under the translations $t\mapsto t+s$, the measure $m_\lambda$ is $\phi_t$-invariant. Choose a fundamental domain $(\cG\mathcal{T})_0$ of $\G$ in $\cG\mathcal{T}$ contained in the set of geodesic lines $g$ such that $\pi (g)\in \overline{T_0}$. The compactness of $\G$-action guarantees that $m_\lambda((\cG\mathcal{T})_0)$ is finite. Thus there exists a probability measure $\overline{m}_\lambda$ on $\G\backslash \cG\mathcal{T}$ induced by $m_\lambda$. Similar to Corollay 3.11 in \cite{LL}, by Proposition \ref{prop:2.11} and \ref{2203311}, we have the following lemma.
\begin{lem}\label{2203312} The map $\lambda\mapsto m_\lambda((\cG\mathcal{T})_0)$ is continuous on $[0,\lambda_0]$.
\end{lem}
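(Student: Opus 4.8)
The plan is to reduce the continuity of $\lambda\mapsto m_\lambda((\cG\mathcal{T})_0)$ to the continuity statements already established for the three ingredients appearing in the definition of $m_\lambda$, namely the Patterson--Sullivan densities $\mu_{x,\lambda}$ (Lemma \ref{2203311}), the N\"aim kernel $\theta_x^\lambda$ (Corollary \ref{coro:3.6}), and the critical exponent $\delta_\lambda$ (which inherits continuity from Proposition \ref{prop:2.11} and the theory in \cite{BPP}, since $\delta_\lambda = P_\lambda$ by Theorem \ref{thm:1.4}). First I would fix a base point $x\in\overline{T_0}$ and write $m_\lambda$ on the fundamental domain $(\cG\mathcal{T})_0$ in Hopf coordinates as
$$m_\lambda((\cG\mathcal{T})_0)=\int \theta_x^\lambda(\xi,\zeta)^2 e^{2\delta_\lambda(\xi|\zeta)_x}\, \mathbbm{1}_{(\cG\mathcal{T})_0}(\xi,\zeta,t)\, d\mu_{x,\lambda}(\xi)\, d\mu_{x,\lambda}(\zeta)\, dt.$$
Because $(\cG\mathcal{T})_0$ consists of geodesics with $\pi(g)\in\overline{T_0}$, the $t$-range is bounded and the Gromov product $(\xi|\zeta)_x$ is bounded on the relevant set by something like $\operatorname{Diam}(T_0)$, so the exponential factor $e^{2\delta_\lambda(\xi|\zeta)_x}$ is uniformly bounded above and below, and the integrand is supported on a fixed compact region of $\partial^2\mathcal{T}\times\bR$.

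Next I would argue convergence as $\lambda\to\lambda'$ (with $\lambda'\in[0,\lambda_0]$) by a standard weak-convergence-of-measures argument combined with uniform convergence of the density. The product measures $\mu_{x,\lambda}\times\mu_{x,\lambda}\times dt$ converge weakly to $\mu_{x,\lambda'}\times\mu_{x,\lambda'}\times dt$ by Lemma \ref{2203311}; the densities $F_\lambda(\xi,\zeta,t):=\theta_x^\lambda(\xi,\zeta)^2 e^{2\delta_\lambda(\xi|\zeta)_x}\mathbbm{1}_{(\cG\mathcal{T})_0}$ converge to $F_{\lambda'}$ uniformly on the compact support region, using Corollary \ref{coro:3.6} for $\theta_x^\lambda$ and the continuity of $\delta_\lambda$ for the exponential factor. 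Writing the difference $m_\lambda((\cG\mathcal{T})_0)-m_{\lambda'}((\cG\mathcal{T})_0)$ as $\int(F_\lambda-F_{\lambda'})\,d(\mu_{x,\lambda}^{\otimes 2}\times dt) + \int F_{\lambda'}\,d(\mu_{x,\lambda}^{\otimes 2}\times dt - \mu_{x,\lambda'}^{\otimes 2}\times dt)$, the first term is controlled by $\|F_\lambda-F_{\lambda'}\|_\infty$ times the (uniformly bounded) total mass, and the second term goes to zero by weak convergence against the bounded continuous function $F_{\lambda'}$.

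The one subtlety — and the step I expect to require the most care — is that the indicator $\mathbbm{1}_{(\cG\mathcal{T})_0}$ is not continuous, so $F_{\lambda'}$ is not genuinely a bounded continuous function and plain weak convergence does not immediately apply. I would handle this exactly as in the proof of Corollary 3.11 in \cite{LL}: choose the fundamental domain $(\cG\mathcal{T})_0$ so that its boundary is $m_{\lambda'}$-null (equivalently $\mu_{x,\lambda'}\otimes\mu_{x,\lambda'}\otimes dt$-null), which is possible because the boundary consists of geodesics passing through the $\mu_{x,\lambda'}$-negligible vertex set or lying over $\partial T_0$; then weak convergence of measures does give convergence of the integral of a bounded function whose discontinuity set is null for the limit measure (portmanteau theorem). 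Alternatively one can sandwich $\mathbbm{1}_{(\cG\mathcal{T})_0}$ between continuous functions and use that the Patterson--Sullivan measures have no atoms on $\partial\mathcal{T}$, together with a continuity-of-total-mass argument to squeeze. Combining these estimates yields $\lim_{\lambda\to\lambda'}m_\lambda((\cG\mathcal{T})_0)=m_{\lambda'}((\cG\mathcal{T})_0)$ for every $\lambda'\in[0,\lambda_0]$, which is the assertion.
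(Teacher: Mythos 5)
Your argument is correct and is essentially the paper's own route: the paper gives no details here, simply invoking the analogue of Corollary 3.11 in \cite{LL} together with Proposition \ref{prop:2.11} and Lemma \ref{2203311}, and your expansion (boundedness of $(g_-|g_+)_x$ and of the $t$-range on $(\cG\mathcal{T})_0$, uniform convergence of $\theta_x^\lambda$ and of the exponential factor, weak convergence of $\mu_{x,\lambda}^{\otimes 2}\times dt$, and the null-boundary/portmanteau treatment of the indicator) is exactly the intended proof. The only point worth noting is that continuity of $\delta_\lambda$ is only recorded later in the paper (Corollary \ref{2203313}, via Theorem \ref{thm:1.4}), but since that result does not rely on this lemma your forward reference introduces no circularity.
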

\section{Variactional principle of Gibbs measure of $f_\lambda$}\label{Sec:4}
One way to prove the variational principle (see Theorem \ref{thm:1.4}) is to show weak Gibbs property (see Definition \ref{def:4.4}) for a potential function $F_\lambda$ on the cross section $\G\backslash Y$ of $\G\backslash\cG\mathcal{T}$ as in \cite{BPP}, which will be defined shortly in Section \ref{subsec:3.4}. Let $\tau$ and $T$ be the first return time and the first return map of $\G\backslash Y$, respectively (see Definition \ref{def:4.1}). In Section \ref{sec:3.5}, we introduce a homeomorphism ${\Theta}$ from $(\G\backslash Y,T)$ to a transitive Markov shift $(\Sigma,\sigma)$ satisfying ${\Theta}\circ T=\sigma\circ{\Theta}$ and we prove the variational principle for the potential $F_\lambda\circ \Theta^{-1}$ using weak Gibbs property and the homeomorphism $\Theta$. In Section \ref{sec:3.6}, using the Abramov's formula for the suspension $(\Sigma,\sigma)_{\tau\circ\Theta^{-1}}$, we show the variational principle for $f_\lambda$. 
 
\subsection{Cross section of $\cG\mathcal{T}$}\label{subsec:3.4} In this section, we introduce the cross section $Y$ of $\cG\mathcal{T}$ and show the weak Gibbs property of a measure $\overline{\nu}_\lambda$ related to $F_\lambda$ (see \eqref{eq::4.2}) on $\G\backslash Y$. We refer to Chapter 5.4 of \cite{BPP} for details. 

\begin{defn}\label{def:4.1}
A closed subset $C$ of a metric space $X$ with a flow $\{\psi_t\}_{t\in \mathbb{R}}$ is the \textit{cross section} to the flow $\{\psi_t\}$ on $X$ if for any $x\in X$, $\psi_t x\in C$ for some positive number $t$ and if there exist a continuous function $\tau:C\ra \R$, which is called  the \textit{first return time} of $C$, such that $\psi_{\tau(x)}x\in C$ and $\psi_t x\notin C$ for all $x\in C$ and $t\in(0,\tau(x))$. The map $T$ defined by $Tx:=\psi_{\tau(x)}x$ for any $x\in C$ is called the \textit{first return map} of the cross section $C$.
\end{defn}

Let $Y$ be the set of geodesic lines $g$ satisfying $\pi(g)\in V$. For any $g\in \cG\mathcal{T}$, there exists a constant $t\in [0,l_M]$ such that $\phi_t  g \in Y$. Define the first return time $\tau:Y\ra\mathbb{R}$ by $\tau(g)=\inf\{t>0: \phi_t g\in Y\}$ for all $g\in Y$. Note that $\tau(g)$ is the length of the edge containing $[g(0),g(l_m)]$. Obviously, $\tau(g)\in[l_m,l_M]$ for all $g\in Y$. The set $Y$ is the cross section of $\cG\mathcal{T}$. The distance $d_Y$ on $Y$ defined by
\begin{equation}\label{eq:3.14}d_Y(g,g'):=e^{-\min\{|i|:\pi(T^ig)\neq \pi(T^ig'),i\in \mathbb{Z}\}}.\end{equation}
for all $g,g'\in Y$. We also define a distance on $\G\backslash Y$ by
$$d_{\G\backslash Y}(\G g, \G g')=\min\{d_Y(g_1,g_2):g_1\in \G g, g_2\in \G g'\}.$$
The map $\tau$ is locally constant and thus is $\tau$ is H\"older continuous.

 Let $T$ be the first return map of $Y$. The $\G$-invariant potential $F_\lambda$ on $Y$ is defined by 
 \begin{equation}\label{eq::4.2}
 F_\lambda(g):=\int_0^{\tau(g)} f_\lambda(\phi_t g)-\delta_\lambda dt=2\log k_\lambda(\pi(Tg),\pi(g),g_+)-\delta_\lambda d(\pi(g),\pi(Tg)).
 \end{equation}
By Corollary \ref{coro:2.7}, the function $F_\lambda$ is also H\"older continuous (see \cite{GL} Section 6.1 and \cite{G} Lemma 3.1). The function $F_\lambda$ is considered as a function on $\Gamma\backslash Y.$

Fix a vertex $x$. The modified Hopf parametrization of $Y$ is defined by $g\mapsto(g_-,g_+,n)$, where $\pi (T^ng)$ is the closest point to a base point $x\in V$. As in \cite{BPP}, a measure $\nu_\lambda$ is defined for any $g\in Y$, by 
$$\nu_{\lambda}(g):=\theta_{x}^\lambda(g_-,g_+)^2\mu_{x,\lambda}(g_-)\mu_{x,\lambda}(g_+)dn,$$
where $dn$ is the counting measure on $\mathbb{Z}$. The measure $\nu_{\lambda}$ doesn't depend on the choice of $x$. The measure $\nu_{\lambda}$ is also $\G$-invariant and $T$-invariant. 

For any two distinct integers $p<q$, the cylinder set $C(g,p,q)$ of a geodesic line $g\in Y$ is the set of geodesic lines $g'$ satisfying $\pi (T^ig)=\pi (T^ig')$ for all integers $i \in [p,q]$.  Then the measure $\nu_{\lambda}$ satisfies the following lemma. 

\begin{lem}\label{Gibbs} There exists a constant  $C$ such that for any $\lambda\in [0,\lambda_0]$ and $g'\in C(g,p,q),$
\begin{equation}\label{eq:3.15}\frac{1}{C}\leq \frac{\nu_{\lambda}(C(g,p,q))}{e^{S_{p}^{q}F_\lambda(g')}}\leq C,\end{equation}
where $S_{p}^{q}F_\lambda(g'):=\displaystyle\sum_{n=p}^{q} F_\lambda(T^n g').$
\end{lem}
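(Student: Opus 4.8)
The plan is to unwind the definitions of $\nu_\lambda$ and $F_\lambda$ in terms of the N\"aim kernel, the Patterson--Sullivan density, and the shadow lemma (Lemma \ref{lem:3.4}), and to compare the measure of a cylinder with the Birkhoff sum $S_p^q F_\lambda(g')$ term by term. First I would fix $g'\in C(g,p,q)$ and apply the modified Hopf parametrization based at the vertex $x_0:=\pi(T^{p}g')$ (or any fixed vertex; independence of the base point is built into $\nu_\lambda$). Writing $v_p:=\pi(T^p g'),\dots,v_q:=\pi(T^q g')$ for the consecutive vertices along $g'$, the cylinder $C(g,p,q)$ consists of geodesic lines whose negative endpoint lies in the shadow $\cO_{v_q}(v_p)$ and whose positive endpoint lies in $\cO_{v_p}(v_q)$, with the $\mathbb{Z}$-coordinate fixed; so
\begin{equation}\label{eq:cylprod}
\nu_\lambda(C(g,p,q))\asymp_{C_0}\ \theta^\lambda_{x_0}(g'_-,g'_+)^2\,\mu_{x_0,\lambda}\!\big(\cO_{v_q}(v_p)\big)\,\mu_{x_0,\lambda}\!\big(\cO_{v_p}(v_q)\big),
\end{equation}
where the implied constant is uniform because $\theta^\lambda_{x_0}$ varies by a bounded multiplicative factor over the cylinder (Corollary \ref{coro:3.6} together with Harnack) and $d(x_0,v_p),d(x_0,v_q)$ are bounded. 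Choosing $x_0=v_p$ kills the left shadow contribution up to a constant and simplifies $\theta$.

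Next I would estimate each factor on the right of \eqref{eq:cylprod} using the shadow lemma. Translating $v_p,v_q$ back into the fundamental domain $\overline{T_0}$ and using $\Gamma$-invariance of $\mu_{\cdot,\lambda}$, Lemma \ref{lem:3.4} gives
\begin{displaymath}
\mu_{v_p,\lambda}\!\big(\cO_{v_p}(v_q)\big)\ \asymp_{C_1}\ e^{-\delta_\lambda d(v_p,v_q)}\,G_\lambda^2(v_p,v_q),
\end{displaymath}
and similarly for the other shadow, while by the definition \eqref{3.13} of the N\"aim kernel one has $\theta^\lambda_{v_p}(g'_-,g'_+)\asymp G_\lambda(v_p,v_p\text{-side data})/\!\dots$, i.e.\ the N\"aim factor is comparable to a ratio of Green functions that telescopes against the shadow estimates. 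Combining these, $\nu_\lambda(C(g,p,q))$ is comparable, up to a multiplicative constant depending only on $l_m,l_M,d_M$, to $e^{-\delta_\lambda d(v_p,v_q)}$ times a product/ratio of Green functions evaluated along the geodesic $[v_p,v_q]$.

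On the other side, I would expand the Birkhoff sum. By \eqref{eq::4.2},
\begin{displaymath}
S_p^q F_\lambda(g')=\sum_{n=p}^{q-1}\Big(2\log k_\lambda(v_{n+1},v_n,g'_+)-\delta_\lambda\,d(v_n,v_{n+1})\Big)
= 2\log\!\prod_{n=p}^{q-1} k_\lambda(v_{n+1},v_n,g'_+)\ -\ \delta_\lambda\, d(v_p,v_q),
\end{displaymath}
using that $d(v_p,v_q)=\sum d(v_n,v_{n+1})$ since the $v_n$ lie on a geodesic. The telescoping product of Martin kernels, via the cocycle-type identity $k_\lambda(v_{n+1},v_n,\xi)=G_\lambda(v_n,w)/G_\lambda(v_{n+1},w)$ in the limit $w\to\xi$, collapses to $G_\lambda(v_p,g'_+\text{-ray})/G_\lambda(v_q,g'_+\text{-ray})$, which by Ancona's inequality \eqref{eq:2.4} is comparable to $G_\lambda(v_p,v_q)$ up to a constant raised to a power bounded in terms of the number of steps --- and here the \emph{strong} Ancona inequality (Corollary \ref{coro2.7}) is what upgrades this to a genuinely uniform two-sided bound independent of $q-p$. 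Matching this with the shadow-lemma estimate for $\nu_\lambda(C(g,p,q))$ from the previous step, the $e^{-\delta_\lambda d(v_p,v_q)}$ factors cancel and the Green-function factors cancel up to a uniform constant, yielding \eqref{eq:3.15}.

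The main obstacle is ensuring the constant $C$ is genuinely uniform in $\lambda\in[0,\lambda_0]$, in the endpoints $g'_\pm$, and --- crucially --- in the cylinder depth $q-p$. Uniformity in $\lambda$ and in the bounded-geometry data is handled by the cocompactness of the $\Gamma$-action plus Harnack (\eqref{eq:2.2}, Lemma \ref{lem:2.2}) and the continuity statements of Proposition \ref{prop:2.11} and Corollary \ref{coro:3.6}. Uniformity in $q-p$ is the delicate point: a naive application of ordinary Ancona \eqref{eq:2.4} accumulates a constant $C^{q-p}$, so one must instead use the strong Ancona estimate \eqref{eq:2.5} to control the product of Martin kernels $\prod_n k_\lambda(v_{n+1},v_n,g'_+)$ by the corresponding product for a fixed reference endpoint with an error $\prod_n(1+C\rho^{\,\cdot})$ that converges. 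This is exactly the mechanism used for the analogous weak-Gibbs estimates in \cite{BPP} (Lemma 4.13 and the discussion in Chapter 5.4) and in \cite{PPS}, and I would follow that argument, with the N\"aim kernel $\theta^\lambda_x$ and the ratios $G_\lambda(x,\gamma y)^2$ playing the roles of the exponentials $e^{\int f}$ there.
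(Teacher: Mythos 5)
Your proposal is correct and follows essentially the same route as the paper: identify $C(g,p,q)$ with a product of shadows at $x_p$ and $x_q$, note the N\"aim factor is uniformly bounded because the basepoint lies on the geodesic, apply the shadow lemma (Lemma \ref{lem:3.4} together with \eqref{eq:3.9}) to get $\nu_\lambda(C(g,p,q))\asymp e^{-\delta_\lambda d(x_p,x_q)}G_\lambda^2(x_p,x_q)$, and match this with the telescoped Birkhoff sum $e^{S_p^qF_\lambda(g')}=k_\lambda^2(x_q,x_p,g'_+)e^{-\delta_\lambda d(x_p,x_q)}$ via Ancona. The one point where you overcomplicate: the feared $C^{q-p}$ accumulation does not arise, since the product of Martin kernels telescopes exactly to $k_\lambda(x_q,x_p,g'_+)$ and a single application of the ordinary Ancona inequality \eqref{eq:2.4} at the point $x_q$ (the paper's \eqref{eq:3.19}) already yields a two-sided bound independent of $q-p$, so the strong Ancona estimate is not needed; likewise the uniform bound on $\theta_{x_p}^\lambda$ comes from Ancona (the paper's \eqref{eq:3.16}), valid for all $\lambda\in[0,\lambda_0]$, rather than from Corollary \ref{coro:3.6}, which only concerns $\lambda$ near $\lambda_0$.
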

\begin{proof}
 By Ancona inequality \eqref{eq:2.4}, there exists a constant $C_1$ such that for any $\lambda\in [0,\lambda_0]$, $g\in \cG\mathcal{T}$ and point $x$ on the geodesic line $g$, 
\begin{equation}\label{eq:3.16}
\frac{1}{C_1}\leq \theta_{x}^\lambda(g_-,g_+)\leq C_1.
\end{equation}
\begin{figure}[h]
\begin{center}
\begin{tikzpicture}[scale=1]
  \draw [thick](-3,0) -- (3,0);   \draw [thick](-5,-0.5) -- (-3,0);\draw[thick](-5,0.5) -- (-3,0);\draw[thick] (5,-0.5) -- (3,0);\draw [thick](5,0.5) -- (3,0);\draw plot[smooth,densely dotted] coordinates{(-5,0.5) (-5.2,0) (-5,-0.5) };\draw plot[smooth,densely dotted] coordinates{(5,0.5) (5.2,0) (5,-0.5) };
  \node at (-3,-0.3) {$x_p$};\node at (-2.1,-0.3) {$x_{p+1}$};\node at (-1.3,-0.3) {$x_{p+2}$};\node at (2.2,-0.3) {$x_{q-1}$};\node at (3,-0.3) {$x_q$}; \fill (-3,-0)   circle (2pt); \fill (-2.2,0)    circle (2pt); \fill(-1.4,-0)  circle (2pt);\fill (3,-0)    circle (2pt); \fill (2.2,-0)    circle (2pt); \node at (-6.1,0) {\footnotesize{$\mathcal{O}_{x_q}(x_{p})$}};\node at (6.1,0) {\footnotesize{$\mathcal{O}_{x_p}(x_{q})$}};
\end{tikzpicture}
\end{center}
\caption{$C(g,p,q)$}\label{figure3}
\end{figure}

Denote $x_i:=\pi (T^ig)$. Since $$C(g,p,q)=\{g'\in Y:\pi (T^pg')=x_p,g'_-\in \cO_{x_q}(x_p),g'_+\in \cO_{x_p}(x_q)\}$$ (see Figure \ref{figure3}), by definition of $\nu_\lambda$, $$\nu_{\lambda}(C(g,p,q))=\displaystyle\int_{\mathcal{O}_{x_p}(x_q)}\int_{\mathcal{O}_{x_q}(x_p)}\theta_{x_p}^\lambda(g'_-,g'_+)^2\mu_{x_p,\lambda}(g'_-)\mu_{x_p,\lambda}(g'_+).$$ 
and this shows that 
\begin{equation}\label{eq:3.17}
\begin{split}
\frac{1}{C_1}\mu_{x_p,\lambda}(\cO_{x_q}(x_p))\mu_{x_p,\lambda}(&\cO_{x_p}(x_q))\\
&\leq \nu_{\lambda}(C(g,p,q)) \leq {C_1}\mu_{x_p,\lambda}(\cO_{x_q}(x_p))\mu_{x_p,\lambda}(\cO_{x_p}(x_q)),
\end{split}
\end{equation}
where $C_1$ is the constant in the inequality \eqref{eq:3.16}. Lemma \ref{lem:3.4} and the inequalities \eqref{eq:3.9}
and \eqref{eq:3.17} show that there exists a contant $C_2$ such that
\begin{equation}\label{eq:3.18}
\frac{1}{C_2}e^{-\delta_\lambda d(x_p,x_q)}{G_\lambda^2 (x_p,x_q)}\leq \nu_{\lambda}(C(g,p,q))\leq {C_2e^{-\delta_\lambda d(x_p,x_q)}}{G_\lambda^2 (x_p,x_q)}
\end{equation}
By Ancona inequality, there exists a constant $C_3$ such that for any $g'\in C(g,p,q)$,
\begin{equation}\label{eq:3.19}
\frac{1}{C_3}k_\lambda(x_q,x_p,g'_+)\leq G_\lambda (x_p,x_q)\leq C_3k_\lambda(x_q,x_p,g'_+).
\end{equation}
Since $e^{S_{p}^{q}F_\lambda(g')}=k_\lambda^2(x_q,x_p,g'_+)e^{-\delta_\lambda d(x_p,x_q)}$ for any $g' \in C(g,p,q)$, the inequality \eqref{eq:3.15} follows from the inequalites \eqref{eq:3.18} and \eqref{eq:3.19} when $C=C_2C_3^2$.
\end{proof}

The measure $\overline{\nu}_{\lambda}$ on $\G\backslash Y$ induced by $\nu_{\lambda}$ satisfies for any two integers $p<q$ and any geodesic line $g$,
\begin{equation}\label{eq:3.20}
\overline{\nu}_{\lambda}(\G C(g,p,q)):=\frac{\nu_\lambda(C(g,p,q))}{|\G_{[x_p,x_q]}|},
\end{equation}
where $x_i=(\pi T^ig)$ and $|\G_{[x_p,x_q]}|$ is the cardinality of the stabilizer of the geodesic segment $[x_p,x_q]$. 

Consider the supension $Y_\tau:=(\G\backslash Y)\times \bR/(\G g,s+\tau (g))\sim (\G Tg,s)$ of $Y$.  The suspension flow $T_t$ on $Y_\tau$ is defined by $T_t[\G g,s]=[\G g,s+t]$ for all $[\G g,s]\in Y_\tau$. For any $\G g\in \G\backslash Y$, the vertical distance between two points $[\G g,s]$ and $[\G g,t]$ is defined by
$$d_V([\G g,s],[\G g,t])=|s-t|.$$
The horizontal distance between two points $[\G g,t]$ and $[\G g',t]$ is defined by
$$d_H([\G g,t],[\G g',t])=d_{\G\backslash Y}(\G g,\G g').$$ Bowen-Walters distance $d_{BW}$ between $[\G g,t]$ and $[\G g',s]$ is defined by the smallest path length of a chain of the vertical paths and horizontal paths from $[\G g,t]$ to $[\G g',s]$. Then the map $\Phi$ defined by $\Phi[\G g,t]=\phi_t\G g$ is a bilipschitz homeomorphism from $Y_\tau$ to $\G\backslash \cG\mathcal{T}$ satisfying $\Phi\circ T_t =\phi_t\circ\Phi$ (\cite{BPP}, Theorem 5.8).

The measure $\overline{\nu}_{\lambda}$ also satisfies 
\begin{equation}\label{eq:3.21}
d(\Phi^{-1})_*\overline{m}_{\lambda}([\G g,s])=\frac{1}{\overline{\nu}_\lambda(\Gamma\backslash Y)\int_{\G\backslash Y} \tau d\overline{\nu}_{\lambda}}dsd\overline{\nu}_{\lambda}(\G g),
\end{equation}
where $ds$ the Lebesgue measure on $\bR$ (\cite{BPP}, Theorem 5.8).
\subsection{Coding of $\G\backslash \cG\mathcal{T}$}\label{sec:3.5} In this section, we introduce a transitive Markov shift $(\Sigma,\sigma)$  homeomorphic to $(\G\backslash Y,T)$ (see \cite{BPP}). 

Denote by $L_{\G,\mathbb{Z}}$ the subgroup of $\mathbb{Z}$ generated by $\{n\in \mathbb{Z}:\G T^n g = \G g \text{ for some } g\in Y\}$. If the quotient space $\G\backslash \mathcal{T}$ is compact, then $L_{\G,\mathbb{Z}}$ is $\mathbb{Z}$ or $2\mathbb{Z}$ (\cite{BPP}, Theorem 4.17).  Since the proof of the variational principle when $L_{\G,\mathbb{Z}}=2\mathbb{Z}$ is similar to the proof when $L_{\G,\mathbb{Z}}=\mathbb{Z}$, in this article, we deal with the case when $L_{\G,\mathbb{Z}}=\mathbb{Z}$ (see \cite{BPP}). \begin{defn}
 Let $\cA=\{a_i\}$ be a countable discrete alphabet and $(A_{i,j})_{i,j\in\cA}\in\{0,1\}$ be a transition matrix. 
 \begin{enumerate}
 \item A \textit{(topological) Markov shift} $(\Sigma,\sigma)$ of $(\cA,A)$ consists of a subspace $\Sigma$ of $\cA^{\bZ}$ defined by
 $\Sigma:=\{x=(x_n)_{n\in\bZ}: A_{x_n,x_{n+1}}=1\}$
 and a shift map $\sigma$ defined by $\sigma(x)_{n}:=x_{n+1}.$ 
 \item A \textit{word} $w=a_1\cdots a_n$ of the length $n$ is a finite sequence of alphabets. A word $w$ is \textit{admissible} if $A_{a_i,a_{i+1}}=1$ for all $i$. For all $p<q\in \bZ,$ the \textit{cylinder set} $[w,p,q]$ of a word $w=a_1\cdots a_{q-p+1}$ is the subset of $\Sigma$ defined by
 $$[w,p,q]:=\{x\in\Sigma: x_{p+i-1}=a_i, \forall i\in [1,q-p+1]\cap \bZ\}.$$
 \item A Markov shift $(\Sigma,\sigma)$ is \textit{transitive} if for $x,y\in \cA$, there exists an admissible word $w=a_1\cdots a_n$ with $x=a_1$ and $y=a_n$. 
  \end{enumerate}
\end{defn} 
The distance $d_\Sigma$ between $x$ and $x'$ in $\Sigma$ is  
\begin{equation}\label{eq:3.22}
d_\Sigma(x,x'):=e^{-\min\{|n|: x_n\neq x_n'\}}.
\end{equation}
Let $\text{Per}_n$ be the set of elements $x$ in $\Sigma$ satisfying $\sigma^n x= x$. A continuous function $f$ on $\Sigma$ is called \textit{potential}. Denote $S_{n_1}^{n_2}f(x):=\Sigma_{i=n_1}^{n_2}f(\sigma^{i}x)$.
\begin{defn}\label{def:4.4} A 
$\sigma$-invariant Borel probability measure $m$ on $\Sigma$ is called \textit{weak Gibbs measure} of $f$ if there exists a constant $p(m)$ satisfying the following property: For any compact set $K$, there exists a constant $C_K$ satisfying for all $x\in \text{Per}_n \cap K$ and $n\geq 1,$ 
\begin{equation}\label{eq:3.23}
\frac{1}{C_K}\leq \frac{m(\{x: x_i=y_i ,\,\forall i\in [0,n-1]\cap\bZ\})}{e^{S_0^nf(x)-p(m)n}}\leq C_K.
\end{equation} 
\end{defn}
 Denote $f^-:=\max\{-f,0\}$. The definition of the pressure for $f$ and the equilibrium state for $f$ is the following.
 \begin{defn}\label{def:4.5} Let $X$ be a locally compact space. Let $T$ ($\phi_t$, resp.) be a homeomorphism (an one-parameter group of homeomorphisms, resp.) on $X$. Let $f$ be a continuous function. Denote by $\cM_f$ the space of $T$ ($\phi_t$, resp.)-invariant probability measures $m$ in $X$ with $\int_X f^-dm<+\infty$. The \textit{pressure} $P_f$ for $f$ is 
$$P_f:=\sup\left\{h_m+\int_{X}f dm: m\in \cM_f \right\},$$
where $h_m$ is the entropy of $T$ ($\phi_1$, resp.) with respect to $m$.  The \textit{equilibrium state} of $f$ is a measure $m$ in $\cM_f$ satisfying $P_f=h_m+\int_X fdm.$ 
 \end{defn} 
 
Denote $\text{var}_n(f):=\sup\{|f(x)-f(y)|:x_k=y_k,\, \forall |k|\leq n\}.$ Buzzi's theorem in Appendix A of \cite{BPP} is as follows.
\begin{thm}\label{buzzi}\emph{(\cite{BPP} Corollary A.5)}  Suppose that a potential $f$ on a transitive Markov shift $(\Sigma,\sigma)$ satisfies $\underset{n=1}{\overset{\infty}\sum} n \emph{var}_n f<+\infty$. A measure $m\in \cM_f$ is a weak Gibbs measure of $f$ if and only if $m$ is the equilibrium state of $f$ and the pressure coincides with $p(m)$ in \eqref{eq:3.23}.
\end{thm}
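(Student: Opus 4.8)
The plan is to read this as a statement in the thermodynamic formalism of countable (topological) Markov shifts and to prove the two implications separately, the hypothesis $\sum_{n\ge1}n\,\mathrm{var}_nf<+\infty$ being exactly what makes the Ruelle--Perron--Frobenius theory of Sarig and Buzzi--Sarig available for $f$ (positive recurrence, existence of eigendata, and the identity ``Gurevich pressure $=$ variational pressure''). The basic tool used throughout is \emph{bounded distortion}: if $y,y'$ lie in a common length-$n$ cylinder then $|S_0^nf(y)-S_0^nf(y')|\le 2\sum_{k\ge0}\mathrm{var}_kf=:D<\infty$, since $\sigma^jy$ and $\sigma^jy'$ agree on coordinates $-j,\dots,n-1-j$ for each $j$.

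For the ``if'' direction, assume $m$ is the equilibrium state of $f$ and $P_f=p(m)$; after subtracting the constant $P_f$ from $f$ we may take $P_f=0$. Existence of an equilibrium state makes the shift positive recurrent for $f$, so the RPF theorem provides a positive continuous eigenfunction $h$ and a conformal measure $\nu$ with $\mathcal{L}_fh=h$, $\mathcal{L}_f^{*}\nu=\nu$, $\int h\,d\nu<\infty$ and $dm=\dfrac{h\,d\nu}{\int h\,d\nu}$. Iterating $\mathcal{L}_f^{*}$ and applying bounded distortion yields $\nu([w,0,n-1])\asymp_{e^{D}}e^{S_0^nf(y)}\,\nu(\sigma^n[w,0,n-1])$ for any $y$ in the cylinder; on a compact set $K$ only finitely many symbols occur, so $\nu(\sigma^n[w,0,n-1])$ and $h$ are bounded above and below there, and multiplying these bounds gives $m([w,0,n-1])\asymp_{C_K}e^{S_0^nf(y)}$ for $y\in K$, in particular on $\mathrm{Per}_n\cap K$. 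This is \eqref{eq:3.23} with $p(m)=0=P_f$.

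For the ``only if'' direction, assume $m$ is a weak Gibbs measure with constant $p:=p(m)$. I would first decompose $m$ into ergodic components and reduce to $m$ ergodic, and aim to prove $h_m+\int f\,dm=p$ and $p=P_f$. For the first identity: pick a compact $K$ with $m(K)>1-\varepsilon$; for $m$-a.e.\ $x$ a density-one set of times lands in $K$, so along these $n$ one compares $m([x_0\cdots x_{n-1}])$ with $e^{S_0^nf(x)-pn}$ (the step where the periodic-orbit bound must be promoted to a bound at $m$-typical cylinders), and then the Shannon--McMillan--Breiman theorem for the generating $1$-cylinder partition (which has finite entropy since $m\in\cM_f$) together with Birkhoff's theorem gives $h_m=p-\int f\,dm$, hence $p\le P_f$. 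For $p\ge P_f$: given any ergodic $\mu\in\cM_f$, put $g_n:=m([x_0\cdots x_{n-1}])/\mu([x_0\cdots x_{n-1}])$; then $\int g_n\,d\mu=\sum_{C}m(C)\le1$ over length-$n$ cylinders $C$, so $\liminf_ng_n<\infty$ $\mu$-a.e.\ by Fatou, while SMB for $\mu$ and the Gibbs comparison for $m$ force $\tfrac1n\log g_n\to h_\mu+\int f\,d\mu-p$ $\mu$-a.e.; a positive limit would make $g_n\to\infty$ $\mu$-a.e., a contradiction, so $h_\mu+\int f\,d\mu\le p$ and $P_f\le p$. Thus $P_f=p=p(m)$ and $m$ is an equilibrium state; uniqueness is then the standard fact that for a positive recurrent potential of summable variation on a transitive countable Markov shift the equilibrium state, when it exists, is unique (equivalently, any equilibrium state is weak Gibbs with the same constant, and two ergodic measures comparable on all cylinders coincide).

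I expect the main obstacle to be the non-compactness of $\Sigma$: the Gibbs estimate \eqref{eq:3.23} is only postulated at periodic points inside compact sets, whereas the entropy and pressure computations need a comparison of $m([x_0\cdots x_{n-1}])$ with $e^{S_0^nf-pn}$ along $m$-typical (and $\mu$-typical) orbits, which may make rare excursions to infinity. Bridging this gap — controlling those excursions using invariance of $m$, inner regularity, bounded distortion and, if needed, an inducing scheme on a compact subshift, as in Buzzi's appendix — is the technical heart of the argument; it is also where the integrability condition in the definition of $\cM_f$ and the positive-recurrence input to the RPF theorem enter, so the summable-variation hypothesis is essential rather than cosmetic.
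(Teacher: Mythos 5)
The paper offers no internal proof of Theorem \ref{buzzi}: it is quoted as Corollary A.5 of Buzzi's appendix to \cite{BPP} and used as a black box, so your sketch can only be judged as a reconstruction of that external argument, and as such it has genuine gaps rather than being a complete alternative route. The most concrete one is in your ``if'' direction: the shift here is two-sided ($\Sigma\subset\cA^{\bZ}$, as in the definition of the Markov shift and the metric \eqref{eq:3.22}), so $\sigma$ is invertible and the objects you invoke --- the Ruelle operator $\mathcal{L}_f$, an eigenfunction $h$, a conformal measure $\nu$ with $\mathcal{L}_f^{*}\nu=\nu$ and $dm=h\,d\nu/\int h\,d\nu$ --- do not make sense as written; the Sarig and Buzzi--Sarig RPF theory lives on one-sided shifts. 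One must first replace $f$ by a cohomologous potential depending only on nonnegative coordinates (Sinai's coboundary trick) and transport the Gibbs and equilibrium properties back and forth, and this reduction is exactly where the weight $n$ in the hypothesis $\sum_{n\ge1}n\,\mathrm{var}_nf<+\infty$ is consumed; your argument never uses that weight (your distortion constant needs only $\sum_n\mathrm{var}_nf$, and in fact also $\mathrm{var}_0f<\infty$, which is not implied for a countable alphabet). In the same direction, ``existence of an equilibrium state makes the shift positive recurrent'' is itself a substantive Buzzi--Sarig theorem with hypotheses (one-sided potential of summable variations, $\sup f<\infty$, finite Gurevich pressure) that you neither state nor verify.

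The second gap you flag yourself but do not close, and it is the heart of the matter: the weak Gibbs estimate \eqref{eq:3.23} is postulated only at points of $\mathrm{Per}_n$ lying in a compact set, whereas your SMB/Birkhoff computation of $h_m$ and your Fatou comparison with an arbitrary ergodic $\mu\in\cM_f$ require two-sided bounds on $m(\{y:y_i=x_i,\,0\le i\le n-1\})$ at $m$-typical and $\mu$-typical points, which are neither periodic nor confined to finitely many symbols. Promoting the periodic-point estimate to typical cylinders --- closing a typical word into a periodic orbit inside a controlled compact set via transitivity, at the cost of a bounded change of length and of Birkhoff sum, and verifying that the coordinate partition has finite entropy so that SMB applies (this does not follow from $m\in\cM_f$ alone, contrary to your parenthetical) --- is precisely the content of Buzzi's proof, not a detail that can be deferred. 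So your proposal correctly identifies the architecture (one-sided RPF theory for one implication, Bowen-style SMB/Fatou comparisons for the other), but as it stands it proves neither implication; within this paper the honest treatment is the one actually taken, namely citing \cite{BPP}, Corollary A.5.
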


 To recall the coding of $\G\backslash \mathcal{T}$ in \cite{BPP}, we first define the alphabet of the coding. Denote by $\overline{e}$ the opposite direction of an oriented edge $e$  in $\Gamma \backslash \mathcal{T}$ or $\mathcal{T}$. Choose a lift $\widetilde{e}$ of an oriented edge $e$ in $\G\backslash \mathcal{T}$ with $\overline{\widetilde{e}}=\widetilde{\overline{e}}$. Denote  by  $\G_e$ the stabilizer of the edge $\widetilde{e}$ in $\G$. Let $\widetilde{v}$ be a lift of a vertex $v\in e$ in $\mathcal{T}$. Denote by $\G_v$  the stabilizer of $\widetilde{v}$ in $\G$. Choose an element $\g_{e,v}$  in $\G$ satisfying $\g_{e,v} \widetilde{v}\in\widetilde{e}$.  Then an injective homomorphism $\rho_{e,v}$ from $\G_e$ to $\G_v$ is defined for all $\g \in \G_e$, by $\rho_{e,v}(\g)=\g_{e,v}^{-1} \g \g_{e,v}$. 
 
An element $(e^-,[h],e^+)$ of an alphabet $\mathcal{A}$ is constructed as follows: 
\begin{itemize}
\item the oriented edges $e^-$ and $e^+$ in $\G\backslash\mathcal{T}$ intersects at $t(e^-)=i(e^+)$.
\item $[h] \in \rho_{e^-,t(e^-)}(\G_{e^-})\backslash\G_{t(e^-)}/\rho_{e^+,i({e^+})} (\G_{{e^+}})$.
\item $[h]$ is a nontrivial element of $\rho_{e^-,t(e^-)}(\G_{e^-})\backslash\G_{t(e^-)}/\rho_{e^+,i({e^+})} (\G_{{e^+}})$ if $e^- = \overline{e^+}$.
\end{itemize}
Since the $\G$-action is cocompact and proper, $\mathcal{A}$ is finite.

If the transition matrix $A_{(e^-,h,e^+),(e'^-,h',e'^+)}$ is defined by 
 $$A_{(e^-,h,e^+),(e'^-,h',e'^+)}:=\begin{cases}1 & \text{ if } e^+=e'^-\\0 & \text{otherwise,}\end{cases}$$ 
then we obtain a subspace $\Sigma=\{(e_i^-,h_i,e^+_i)_{i\in \mathbb{Z}}:A_{(e_i^-,h_i,e^+_i),(e_{i+1}^-,h_i,e^+_{i+1})}=1\}$ of $\mathcal{A}^{\mathbb{Z}}$ and a shift map $\sigma:\Sigma\ra\Sigma$ defined by $ \sigma(x)_i=x_{i+1}$ for all $x\in \Sigma$. 
\begin{figure}[h]
\begin{center}
\begin{tikzpicture}[scale=1]
  \draw [thick](-4,0.4) -- (-3,1)--(-2,0.5)--(-1,1)--(0,0.6)--(1,0.8)--(2,0.5)--(3,0.7)--(4,0.4);   \draw [thick](-1.5,-0.1)--(-0.5,-0.5);\draw[thick] (0.5,-0.5)--(1.5,-0.3);
  \draw [->,thick,dashed,smooth] (0,-1.5)-- (-0.43,-0.58);   \draw [->,,thick,dashed,smooth] (0,-1.5)-- (0.43,-0.58);  \draw [->,,thick,dashed,smooth] (-0.5,0.76)--(-1.03,-0.23) ; 
  \draw [->,,thick,dashed,smooth]  (0.52,0.68)--(1,-0.28); 
  \node at (-4.4,0.4) {$\cdots$};  \node at (-1,0.37) {$\gamma_i$};\node at (1.1,0.27) {$\gamma_{i+1}$};\node at (4.4,0.4) {$\cdots$};\node at (-3.3,0.48) {$f_{i-3}$};\node at (-2.59,0.62) {$f_{i-2}$};\node at (-1.63,1) {$f_{i-1}$};\node at (-0.4,1.1) {$f_i$};\node at (0.54,1) {$f_{i+1}$};  \node at (1.6,0.9) {$f_{i+2}$}; \node at (2.58,0.38) {$f_{i+3}$}; \node at (3.45,0.34) {$f_{i+4}$}; \node at (-1.07,-0.49) {$\widetilde{e_i}$};\node at (1.08,-0.65) {$\widetilde{e_{i+1}}$};\node at  (0,-2)  {$\widetilde{v_{i+1}}$}; \node at (-0.8,-1) {$\gamma_{e_i,v_{i+1}}$}; \node at (1,-1.1) {$\gamma_{e_{i+1},v_{i+1}}$};
 \fill (-4,0.4) circle (2pt); \fill(-3,1)  circle (2pt);\fill (-2,0.5) circle (2pt); \fill  (-1,1) circle (2pt); \fill  (0,0.6) circle (2pt); \fill  (1,0.8) circle (2pt); \fill (2,0.5) circle (2pt); \fill (3,0.7) circle (2pt); \fill (4,0.4) circle (2pt);
  \fill  (0,-1.5) circle (2pt);\fill  (-1.5,-0.1) circle (2pt);\fill  (-0.5,-0.5) circle (2pt);\fill (0.5,-0.5) circle (2pt); \fill (1.5,-0.3) circle (2pt);
\end{tikzpicture}
\end{center}
\caption{The construction of $\Theta$}\label{figure4}
\end{figure}
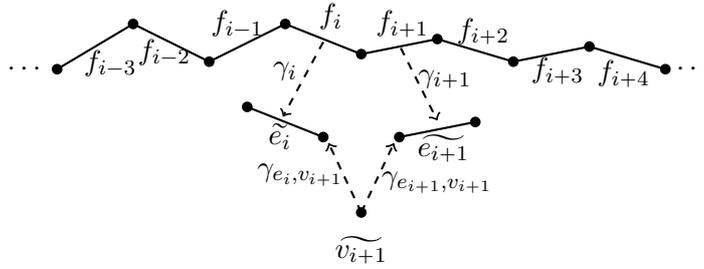

Let $g$ be a geodesic line in $Y$. Denote $\pi (T^i g):=w_i$ and $f_i:=[\pi (T^i g), \pi (T^{i+1} g)]$. Let $\widetilde{v_i}$ and $\widetilde{e_i}$ be the lifts of $v_i:=\G w_i$ and $e_i:=\G f_i$ which are chosen to define the groups $\G_{v_i}$ and $\G_{e_i}$, respectively. Choose $\g_i\in \G$ such that $\widetilde{e_i}=\g_i f_i$. The element 
$$h_{i+1}:=\g_{e_i,v_{i+1}}^{-1}\g_i\g_{i+1}^{-1}\g_{e_{i+1},v_{i+1}}$$
 is an element of $\G_{v_{i+1}}$. If $\widetilde{e_i}=\g'_i{f_i}$, then $\g'_i\g_i^{-1}\in\G_{e_i}$ and
$$\g_{e_i,v_{i+1}}^{-1}\g'_i\g'^{-1}_{i+1} \g_{e_{i+1},v_{i+1}}=\rho_{e_i,v_{i+1}}(\g'_i \g^{-1}_i)h_{i+1}\rho_{e_{i+1},v_{i+1}}(\g_{i+1}\g'^{-1}_{i+1}).
$$
Thus the element $[h_{i+1}]$ in $\rho_{e_i,v_{i+1}}(\G_{e_i})\backslash\G_{v_{i+1}}/\rho_{e_{i+1},v_{i+1}} (\G_{{e_{i+1}}})$ is independent of the choice of $\g_i$. Similarly, $[h_i]$ does not depend on the choice of $g'\in \G g$. 
 
 A map $\Theta$ from $\G\backslash Y$ to $\Sigma$ is defined by $\Theta(\G g)_i:=(e_{i-1},[h_i],e_i).$
The map $\Theta$ satisfies the following theorem.
\begin{lem}\emph{(\cite{BPP}, Theorem 5.1)}\label{scoding1} The map $\Theta$ is a bilipschitz homeomorphism from $(\G\backslash Y,T)$ to $(\Sigma,\sigma)$ satisfying $\sigma \Theta=\Theta T$. Furthermore, the Markov shift $(\Sigma,\sigma)$ is transitive if every degree of vertex is larger than 2 and $\Lambda_\G=\mathbb{Z}$.
\end{lem}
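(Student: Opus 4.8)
The plan is to establish the four assertions in turn, following the scheme of \cite{BPP}: that $\Theta$ is well defined and satisfies $\sigma\circ\Theta=\Theta\circ T$; that $\Theta$ is a bijection; that $\Theta$ is bilipschitz; and that $(\Sigma,\sigma)$ is transitive under the stated hypotheses. For the first point, the computation preceding the statement already shows that the double coset $[h_i]$ is independent of the auxiliary element $\g_i$ with $\widetilde{e_i}=\g_i f_i$ and of the chosen representative of $\G g$, while $e_i=\G f_i$ is visibly $\G$-invariant, so $\Theta$ descends to $\G\backslash Y$. Since the first return map $T$ merely shifts the indexing of the vertices $w_i$ and edges $f_i$ crossed by $g$, one reads $\Theta(\G Tg)_i=\Theta(\G g)_{i+1}$ directly off the formula, i.e.\ $\sigma\circ\Theta=\Theta\circ T$. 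Finally $\Theta(\G g)$ really lies in $\Sigma$: consecutive letters $\Theta(\G g)_i=(e_{i-1},[h_i],e_i)$ and $\Theta(\G g)_{i+1}=(e_i,[h_{i+1}],e_{i+1})$ share the middle edge, so the transition matrix entry is $1$; and since $g$ does not backtrack in $\mathcal{T}$, whenever $e_{i-1}=\overline{e_i}$ the turn of $g$ at $w_i$ is ``straight through'', which forces $[h_i]$ to be nontrivial, matching the last defining condition on the alphabet.

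For bijectivity I would write down the inverse explicitly. Given $x=(e_{i-1},[h_i],e_i)_{i\in\mathbb{Z}}\in\Sigma$, fix the distinguished lift $\widetilde{e_0}$ and build a bi-infinite edge path in $\mathcal{T}$ recursively: having lifted $e_i$ to an oriented edge $\epsilon_i$ of $\mathcal{T}$ ending at a vertex $u_{i+1}$ lying above $v_{i+1}$, use a representative of $[h_{i+1}]$ together with the fixed elements $\g_{e_i,v_{i+1}}$ and $\g_{e_{i+1},v_{i+1}}$ to single out the lift $\epsilon_{i+1}$ of $e_{i+1}$ issuing from $u_{i+1}$ (this inverts the formula $h_{i+1}=\g_{e_i,v_{i+1}}^{-1}\g_i\g_{i+1}^{-1}\g_{e_{i+1},v_{i+1}}$), and extend to negative indices the same way. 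The nontriviality clause in the alphabet is exactly what guarantees $\epsilon_{i+1}\neq\overline{\epsilon_i}$, so the path does not backtrack and is a geodesic line $g$ through vertices, that is $g\in Y$. Changing the representatives of the $[h_i]$ or the initial lift replaces $g$ by a $\G$-translate, so $\G g\in\G\backslash Y$ is well defined; by construction $\Theta(\G g)=x$, and applying the construction to $\Theta(\G g)$ returns $\G g$, so this is a two-sided inverse and $\Theta$ is a bijection.

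Next, the bilipschitz bound amounts to comparing the metrics \eqref{eq:3.14} and \eqref{eq:3.22} up to a uniformly bounded shift of index. If $d_{\G\backslash Y}(\G g,\G g')$ is small, pick representatives $g,g'$ with $\pi(T^ig)=\pi(T^ig')$ for all $|i|\le N$; then the edges of $\mathcal{T}$ crossed by $g$ and $g'$ coincide for $|i|<N$, so the orbits $e_i$ agree, and choosing the same $\g_i$ for $g$ and $g'$ makes the elements $h_i$ agree too, whence $\Theta(\G g)_i=\Theta(\G g')_i$ for $|i|\le N-1$. Conversely, if the symbol sequences of $\Theta(\G g)$ and $\Theta(\G g')$ agree on $[-N,N]$, running the reconstruction of the previous paragraph with identical choices produces representatives whose lifted edge paths, hence whose vertices $\pi(T^i\cdot)$, coincide on a block of size $N-O(1)$, so $d_{\G\backslash Y}(\G g,\G g')$ is small. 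Thus $\Theta$ and $\Theta^{-1}$ are Lipschitz with a universal constant, so $\Theta$ is a bilipschitz homeomorphism.

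Finally, transitivity when every vertex of $\mathcal{T}$ has degree at least $3$ and $L_{\G,\mathbb{Z}}=\mathbb{Z}$. Given letters $a=(e^-,[h],e^+)$ and $a'=(e'^-,[h'],e'^+)$, I would connect $e^+$ to $e'^-$ by an admissible word: since $\G\backslash\mathcal{T}$ is a finite connected graph there is an edge path from the terminal vertex of $e^+$ to the initial vertex of $e'^-$, and because each vertex of $\mathcal{T}$ carries at least two edges distinct from any given one, this path can be taken to lift to a geodesic segment of $\mathcal{T}$ (replace each backtrack greedily by one of the remaining available edges). At every vertex of such a path the required alphabet letter exists — the nontriviality obstruction occurs only at a ``straight through'' turn, which a geodesic path avoids, or else can be absorbed using the degree hypothesis — so concatenating these letters, with $a$ in front and $a'$ at the end, yields an admissible word from $a$ to $a'$. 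I expect this last step to be the main technical point: one has to check carefully that the connecting path chosen in the quotient genuinely lifts to a geodesic of $\mathcal{T}$ and that the transition and nontriviality conditions are satisfiable at each vertex — this is exactly where the hypothesis $L_{\G,\mathbb{Z}}=\mathbb{Z}$ (rather than $2\mathbb{Z}$) is needed, to rule out a parity obstruction to irreducibility — whereas the first three steps are a careful unwinding of the definitions.
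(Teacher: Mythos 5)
You should note at the outset that the paper itself contains no proof of this lemma: it is quoted from \cite{BPP} (Theorem 5.1), and the only argument the paper supplies nearby is the pre-lemma verification that $[h_i]$ is independent of the choices. So the benchmark is BPP's argument, and at the level of architecture your outline does follow it: equivariance and the relation $\sigma\Theta=\Theta T$, reconstruction of the edge path from the symbol sequence, comparison of agreement radii for the bilipschitz estimate, and connectivity of the finite quotient graph for transitivity. The parts you treat as routine are correct in substance (non-backtracking of $g$ forcing nontriviality of $[h_i]$ when $e_{i-1}=\overline{e_i}$; the shift relation; the two Lipschitz bounds up to a bounded shift of index).

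However, the two steps carrying the real content are exactly the ones you leave unproved. For injectivity, your inverse is defined only after choosing double-coset representatives and lifts, and the sentence ``changing the representatives of the $[h_i]$ or the initial lift replaces $g$ by a $\G$-translate'' is the whole difficulty, not an observation: given two elements of $Y$ with the same coding, matching them letter by letter only yields, for each $n$, some $\gamma_n\in\G$ carrying one lifted edge path to the other on the block $[-n,n]$, each $\gamma_n$ determined only modulo edge stabilizers, and one must produce a single $\gamma\in\G$ valid for all $n$ (for instance, all $\gamma_n$ send a fixed vertex to a fixed vertex, so by properness they take finitely many values and any value occurring infinitely often works for every block); it is precisely to make this induction run that the alphabet is built from double cosets, and this is the technical heart of BPP's theorem. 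For transitivity, the step you describe as ``can be absorbed using the degree hypothesis'' is the point to prove: when the connecting path in $\G\backslash\mathcal{T}$ is forced to backtrack at a vertex $v$, one needs a nontrivial class in $\rho_{e,v}(\G_e)\backslash\G_v/\rho_{\overline{e},v}(\G_{\overline{e}})$, and this is where the hypothesis that every vertex of $\mathcal{T}$ has degree at least $3$ enters, via the identity expressing the tree degree of $\widetilde{v}$ as the sum of the indices $[\G_v:\rho_{e,v}(\G_e)]$ over the quotient edges at $v$. Moreover your reading of the hypothesis $L_{\G,\Z}=\Z$ is off: a parity (bipartite) phenomenon does not obstruct irreducibility of the two-sided shift, since the transition graph remains strongly connected; what it obstructs is aperiodicity, i.e.\ mixing of the discrete-time dynamics, which is what separates $L_{\G,\Z}=\Z$ from $2\Z$ in \cite{BPP} and is why the paper restricts attention to the case $L_{\G,\Z}=\Z$. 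As it stands, then, the bijectivity and transitivity claims in your proposal are asserted rather than established.
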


Let  $[w]$ be the cylinder set of an admissible word $w=a_1\cdots a_{q-p+1}$ for $p<q$ and let $a_i=(e_i^-,[h_i],e_i^+)$. The admissible word $w$ corresponds to an edge path $e_{1}^-\cdots e_{q-p+1}^-e_{q-p+1}^+$ in $\G\backslash \mathcal{T}$. Choose a lifting $[x_{p-1},x_{q+1}]$ in $\mathcal{T}$ of the path $e_{1}^-\cdots e_{q-p+1}^-e_{q-p+1}^+$. It follows from \eqref{eq:3.20} that
 \begin{equation}\label{eq:3.24}
\frac{ \Theta_*\overline{\nu}_{\lambda}([w,p,q])}{\overline{\nu}_{\lambda}(\G\backslash Y)}=\frac{\overline{\nu}_{\lambda}(\G C(g,p-1,q+1))}{\overline{\nu}_{\lambda}(\G\backslash Y)}=\frac{{\nu}_{\lambda}(C(g,p-1,q+1))}{|\G_{[x_{p-1},x_{q+1}]}|\overline{\nu}_{\lambda}(\G\backslash Y)},  \end{equation}
 for any $g\in Y$ with $\Theta(\G g)\in [w]$. Using \eqref{eq:3.24} and Lemma \ref{Gibbs}, we have the following lemma. 
 \begin{lem}\label{lem:3.14} The measure $\Theta_*\frac{\overline{\nu}_{\lambda}}{\overline{\nu}_{\lambda}(\G\backslash Y)}$ satisfies the weak Gibbs property, i.e. for any admissible word $w=a_1\cdots a_{p-q+1}$, there exists a constant $C_w$ such that for all $x\in [w,p,q]$,
 \begin{equation}\label{eq:4.12}
 \frac{1}{C_w}\leq \frac{\Theta_*\overline{\nu}_{\lambda}([w,p,q])/\overline{\nu}_{\lambda}(\G\backslash Y)}{e^{S_p^qF_\lambda\circ\Theta^{-1}(x)}}\leq C_w.
 \end{equation} 
  \end{lem}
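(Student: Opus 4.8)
The plan is to feed the identity \eqref{eq:3.24} — which already translates the mass of a $\Sigma$-cylinder into a $\nu_\lambda$-mass of the corresponding $Y$-cylinder — into the Gibbs estimate of Lemma \ref{Gibbs}, and then to absorb the remaining harmless constants (the stabilizer index, the total mass $\overline{\nu}_\lambda(\G\backslash Y)$, and two boundary terms coming from an index shift) into the word-dependent constant $C_w$.

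First I would fix $\lambda\in[0,\lambda_0]$ and an admissible word $w=a_1\cdots a_{q-p+1}$ with $a_i=(e_i^-,[h_i],e_i^+)$, together with the chosen lift $[x_{p-1},x_{q+1}]$ in $\mathcal{T}$ of the edge path $e_1^-\cdots e_{q-p+1}^-e_{q-p+1}^+$. Since $\Theta$ is a bijection from $\G\backslash Y$ onto $\Sigma$ with $\sigma\Theta=\Theta T$, one has $\Theta^{-1}([w,p,q])=\G C(g_0,p-1,q+1)$ for any fixed $g_0\in Y$ passing through $[x_{p-1},x_{q+1}]$; more precisely, for each $x\in[w,p,q]$ there is a representative $g'$ of $\Theta^{-1}(x)$ lying in $C(g_0,p-1,q+1)$ (a geodesic segment in a tree is determined by its vertex sequence). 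By \eqref{eq:3.24} (which bundles together \eqref{eq:3.20} and the definition of $\overline{\nu}_\lambda$),
$$\frac{\Theta_*\overline{\nu}_\lambda([w,p,q])}{\overline{\nu}_\lambda(\G\backslash Y)}=\frac{\nu_\lambda(C(g_0,p-1,q+1))}{N_w\,\overline{\nu}_\lambda(\G\backslash Y)},\qquad N_w:=|\G_{[x_{p-1},x_{q+1}]}|,$$
where $N_w$ is finite because $\G$ acts properly, and depends only on $w$ since distinct lifts of the edge path are $\G$-conjugate.

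Next I would apply Lemma \ref{Gibbs} to the cylinder $C(g_0,p-1,q+1)$ with the element $g'$ above: there is a constant $C>0$, uniform in $\lambda$, with $C^{-1}\le \nu_\lambda(C(g_0,p-1,q+1))\,e^{-S_{p-1}^{q+1}F_\lambda(g')}\le C$. I would then rewrite
$$S_{p-1}^{q+1}F_\lambda(g')=S_p^q F_\lambda(g')+F_\lambda(T^{p-1}g')+F_\lambda(T^{q+1}g'),$$
use that $F_\lambda$, being $\G$-invariant and H\"older continuous, is bounded on $\G\backslash Y$, so that the two boundary terms have absolute value at most $M:=\|F_\lambda\|_\infty<\infty$, and note that $\G$-invariance of $F_\lambda$ together with $\sigma\Theta=\Theta T$ gives $F_\lambda(T^n g')=(F_\lambda\circ\Theta^{-1})(\sigma^n x)$ for all $n$, so that $S_p^q F_\lambda(g')=S_p^q(F_\lambda\circ\Theta^{-1})(x)$. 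Combining these facts with the display above yields, for every $x\in[w,p,q]$,
$$\frac{e^{-2M}}{C\,N_w\,\overline{\nu}_\lambda(\G\backslash Y)}\le \frac{\Theta_*\overline{\nu}_\lambda([w,p,q])/\overline{\nu}_\lambda(\G\backslash Y)}{e^{S_p^q(F_\lambda\circ\Theta^{-1})(x)}}\le \frac{C\,e^{2M}}{N_w\,\overline{\nu}_\lambda(\G\backslash Y)},$$
and taking $C_w:=C\,e^{2M}\max\{N_w\,\overline{\nu}_\lambda(\G\backslash Y),(N_w\,\overline{\nu}_\lambda(\G\backslash Y))^{-1}\}$ gives \eqref{eq:4.12}. (If one wants $C_w$ independent of $\lambda$ as well, one further observes, as in Lemma \ref{2203312}, that $\lambda\mapsto\overline{\nu}_\lambda(\G\backslash Y)$ and $\lambda\mapsto\|F_\lambda\|_\infty$ are continuous on the compact interval $[0,\lambda_0]$, hence bounded away from $0$ and $\infty$.)

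The analytic content is entirely contained in \eqref{eq:3.24} and Lemma \ref{Gibbs}; the only point requiring care is the off-by-one bookkeeping between the alphabet coding — where a length-$(q-p+1)$ word $w$ pins down the vertices $x_{p-1},\dots,x_{q+1}$ and hence the $Y$-cylinder $C(\cdot,p-1,q+1)$ — and the Birkhoff sums, together with the verification that the two extra summands, the stabilizer factor $N_w$, and the total mass $\overline{\nu}_\lambda(\G\backslash Y)$ are all bounded constants. I expect this index chasing, rather than any estimate, to be the main (mild) obstacle.
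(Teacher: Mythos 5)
Your proposal is correct and follows exactly the route the paper intends: it combines \eqref{eq:3.24} with Lemma \ref{Gibbs} (the paper states the lemma as an immediate consequence of these two facts without spelling out the details). Your added bookkeeping — the index shift from $S_{p-1}^{q+1}$ to $S_p^q$ absorbed via $\|F_\lambda\|_\infty<\infty$, and the finiteness of $|\G_{[x_{p-1},x_{q+1}]}|$ and of $\overline{\nu}_\lambda(\G\backslash Y)$ being swallowed into $C_w$ — is a faithful filling-in of the omitted steps.
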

By Harnack inequality \eqref{eq:2.2}, $\sup |F_\lambda\circ \Theta^{-1}|=\sup |F_\lambda|<\infty$. The space $\cM_{F_\lambda\circ \Theta^{-1}}$ is the space of $\sigma$-invariant probability measures on $\Sigma$.   Since $\Theta$ is bilipschitz, $F_\lambda\circ\Theta^{-1}$ is also H\"older continuous and satisfies $\underset{1\geq n}\sum n\text{var}_nF_\lambda\circ\Theta^{-1}<\infty$. Lemma \ref{lem:3.14} and Buzzi's theorem (Theorem \ref{buzzi}) show that $\Theta_*\frac{\overline{\nu}_{\lambda}}{\overline{\nu}_{\lambda}(\G\backslash Y)}$ is the equilibrium state of $F_\lambda\circ\Theta^{-1}$ and the pressure is zero. The map $\Theta_*$ is a bijection from the space of $T$-invariant measure probability measures on $\G\backslash Y$ to the space of $\sigma$-invariant probability measures on $\Sigma$. Thus we have the following lemma.
 
 \begin{lem}\label{vari} For any $\lambda\in [0,\lambda_0]$, the measure $\frac{\overline{\nu}_\lambda}{\overline{\nu}_{\lambda}(\G\backslash Y)}$ is the equilibrium state of $F_\lambda$ and the pressure of $F_\lambda$ is zero.
\end{lem}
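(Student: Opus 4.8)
The plan is to reduce the statement to Buzzi's theorem (Theorem \ref{buzzi}) via the coding $\Theta$, most of the substantive work having already been done in Lemma \ref{Gibbs} and Lemma \ref{lem:3.14}. First I would check that $\frac{\overline{\nu}_\lambda}{\overline{\nu}_\lambda(\G\backslash Y)}$ is a genuine element of $\cM_{F_\lambda}$: it is a $T$-invariant Borel probability measure on the compact space $\G\backslash Y$, and since $\sup|F_\lambda|<\infty$ by the Harnack inequality \eqref{eq:2.2}, the integrability condition $\int F_\lambda^-\,d\overline{\nu}_\lambda<\infty$ is automatic. Pushing forward by the homeomorphism $\Theta$ of Lemma \ref{scoding1} produces a $\sigma$-invariant probability measure on the transitive Markov shift $(\Sigma,\sigma)$ that lies in $\cM_{F_\lambda\circ\Theta^{-1}}$, again because $F_\lambda\circ\Theta^{-1}$ is bounded.

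Next I would quote Lemma \ref{lem:3.14}, which says precisely that $\Theta_*\frac{\overline{\nu}_\lambda}{\overline{\nu}_\lambda(\G\backslash Y)}$ is a weak Gibbs measure of the potential $F_\lambda\circ\Theta^{-1}$ with Gibbs constant $p(m)=0$ in \eqref{eq:3.23}. Since $F_\lambda$ is H\"older continuous by Corollary \ref{coro:2.7} (compare Proposition \ref{prop:2.11}) and $\Theta$ is bilipschitz, $F_\lambda\circ\Theta^{-1}$ is H\"older continuous on $\Sigma$ and therefore satisfies the summable-variation hypothesis $\sum_{n\ge1} n\,\mathrm{var}_n(F_\lambda\circ\Theta^{-1})<\infty$ required by Theorem \ref{buzzi}. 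Applying that theorem then yields that $\Theta_*\frac{\overline{\nu}_\lambda}{\overline{\nu}_\lambda(\G\backslash Y)}$ is the equilibrium state of $F_\lambda\circ\Theta^{-1}$ on $(\Sigma,\sigma)$ and that the pressure $P_{F_\lambda\circ\Theta^{-1}}$ equals $p(m)=0$.

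Finally I would transport this conclusion back along the conjugacy $\Theta$. Since $\Theta$ is a homeomorphism with $\sigma\Theta=\Theta T$, the map $m\mapsto \Theta_*m$ is an affine bijection from the $T$-invariant probability measures on $\G\backslash Y$ onto the $\sigma$-invariant probability measures on $\Sigma$; it preserves measure-theoretic entropy (a conjugacy invariant, so $h_{\Theta_*m}(\sigma)=h_m(T)$) and satisfies $\int_\Sigma (F_\lambda\circ\Theta^{-1})\,d(\Theta_*m)=\int_{\G\backslash Y}F_\lambda\,dm$ by the change of variables formula. Comparing the two suprema in Definition \ref{def:4.5} gives $P_{F_\lambda}=P_{F_\lambda\circ\Theta^{-1}}=0$, and the $\Theta_*$-preimage of the equilibrium state of $F_\lambda\circ\Theta^{-1}$, namely $\frac{\overline{\nu}_\lambda}{\overline{\nu}_\lambda(\G\backslash Y)}$, realizes this supremum, hence is the equilibrium state of $F_\lambda$. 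The only real care needed is this bookkeeping step — confirming that $\Theta_*$ restricts to a bijection between $\cM_{F_\lambda}$ and $\cM_{F_\lambda\circ\Theta^{-1}}$ and that pressure and equilibrium states are conjugacy invariants — since the boundedness of $F_\lambda$ trivializes the integrability conditions, I do not expect any genuine obstacle here; all the analytic difficulty was already absorbed into the weak Gibbs estimate \eqref{eq:4.12}.
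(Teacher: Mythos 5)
Your proposal is correct and follows essentially the same route as the paper: boundedness of $F_\lambda$ via Harnack, H\"older continuity and summable variations of $F_\lambda\circ\Theta^{-1}$ from the bilipschitz coding, the weak Gibbs estimate of Lemma \ref{lem:3.14}, Buzzi's theorem (Theorem \ref{buzzi}) giving the equilibrium state with zero pressure on $(\Sigma,\sigma)$, and transport back via the bijection $\Theta_*$ on invariant measures. The paper's argument is exactly this, stated in the paragraph preceding the lemma.
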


 \subsection{Variational principle}\label{sec:3.6} In this section, we show that the measure $\overline{m}_\lambda$ is the equilibrium state for $f_\lambda$.
 
  The roof function $r$ on $\Sigma$ is the length of the edge $e_0^+$ of $x$ in $\Sigma$ and coincides with $\tau\circ \Theta^{-1}(x)$. Let $\Sigma_r:=(\Sigma\times \bR)/(x,r(s))\sim(\sigma(x),0)$ be the suspension of $\Sigma$. The suspension flow $\sigma_r^t$ on $\Sigma_r$ is defined by $\sigma_r^t ([x,s]):=[x,s+t].$ Then the map $\overline{\Theta}:[\G g,s]\mapsto[\Theta(\G g),s])$ is a homeomorphism from $Y_\tau$ to $\Sigma_r$ satisfying $\overline{\Theta}T_t=\sigma_r^t \overline{\Theta}$. By Theorem 5.9 in \cite{BPP}, the $\overline{\Theta}\circ\Phi^{-1}$ is a bilipschitz homeomorphism from $\G\backslash\cG\mathcal{T}$ to $\Sigma_r$ satisfying  $\overline{\Theta}\circ\Phi^{-1}\phi_t=\sigma_r^t \overline{\Theta}\circ\Phi^{-1}$.
  
  The map $({\overline{\Theta}}\circ\Phi^{-1})_* $ is a bijection from the space $\cM_\phi$ of $\phi_t$-invariant probability measures on $\G\backslash\cG\mathcal{T}$ to the space $\cM_{\sigma_r}$ of $\sigma_r^t$-invariant measures $\Sigma_r$. Thus the pressure of $f_\lambda-\delta_\lambda$ is the same as the pressure of $f_\lambda\circ\Phi\circ{\overline{\Theta}}^{-1}-\delta_\lambda$. 
 
Since $l_m\leq r(x)\leq l_M$ for all $x\in \Sigma$, the space of $\sigma$-invariant probability measures $m$ on $\Sigma$ with $\int_\Sigma r dm<+\infty$ coincides with the space $\cM_\sigma$ of $\sigma$-invariant probability measures. By the result of \cite{AK}, a bijective map $S$ from $\cM_\sigma$ to $\cM_{\sigma_r}$ is  defined by 
$$dS(m):= \frac{ds\times dm}{\int_\sigma r dm}.$$         
 for all $m\in \cM_\sigma$.
  For all $m\in \cM_\sigma$, we have
  \begin{equation}\label{potential}
  \begin{split}
  &\int_\Sigma\int_0^{r(x)} f_\lambda\circ\Phi\circ{\overline{\Theta}}^{-1}([x,s])-\delta_\lambda dsdm(x)\\
  &=\int_\Sigma\int_0^{r(x)}f_\lambda\circ\Phi([\Theta^{-1}(x),s])-\delta_\lambda dsdm(x)\\
&= \int_\Sigma\int_0^{r(x)}f_\lambda(\phi_s\Theta^{-1}(x))-\delta_\lambda dsdm(x)=\int_\Sigma F_\lambda(\Theta^{-1}(x))dm(x).
 \end{split}
 \end{equation}
By Abramov's formula in \cite{Ab}, for all $m\in \cM_\sigma$, 
\begin{equation}\label{Abramov}
h_{S(m)}(\sigma_r^1)=\frac{h_m(\sigma)}{\int_\Sigma r dm}.
\end{equation}
Using \eqref{potential} and \eqref{Abramov},
we obtain 
\begin{equation}\label{pressure}
P_{ f_\lambda\circ\Phi\circ\overline{\Theta}^{-1}-\delta_\lambda}(S(m))=\frac{P_{F_\lambda\circ\Theta^{-1}}(m)}{\int_\Sigma r dm}.	
\end{equation}
\begin{proof}[Proof of Theorem \ref{thm:1.4}] The equation \eqref{pressure} and Lemma \ref{vari} show that the equilibrium state of $ f_\lambda\circ\Phi\circ\overline{\Theta}^{-1}-\delta_\lambda$ is $S(\Theta_*\frac{\overline{\nu}_{\lambda}}{\overline{\nu}_{\lambda}(\G\backslash Y)})$. By \eqref{eq:3.21}, the measure $(\Phi\circ\overline{\Theta}^{-1})_*S(\Theta_*\frac{\overline{\nu}_{\lambda}}{\overline{\nu}_{\lambda}(\G\backslash Y)})$ is equal to $\overline{m}_\lambda$. This implies Theorem \ref{thm:1.4}.
\end{proof}
Since $\delta_\lambda$ is non-positive by Proposition 4.10 in \cite{HL}, we have the following corollary.
\begin{coro}\label{2203313} The map $\lambda\mapsto P_\lambda$ is continuous on $[0,\lambda_0]$ and $P_\lambda$ is non-positive.
\end{coro}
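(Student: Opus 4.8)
The plan is to read the statement off Theorem \ref{thm:1.4}, which already gives $P_\lambda=\delta_\lambda$ for every $\lambda\in[0,\lambda_0]$. The non-positivity is then immediate: $\delta_\lambda\le 0$ by Proposition 4.10 in \cite{HL} (the inequality recalled in Section \ref{subsec:3.1}), hence $P_\lambda\le 0$. It remains to prove that $\lambda\mapsto\delta_\lambda$ is continuous on $[0,\lambda_0]$, and for this I would use the elementary fact that, for a fixed flow, the topological pressure is $1$-Lipschitz in the uniform norm of the potential.

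First I would note that $f_\lambda$ is bounded on $\cG\mathcal{T}$ for every $\lambda\in[0,\lambda_0]$: it lies in $C^\alpha_b(\cG\mathcal{T})$ by Proposition \ref{prop:2.11} (concretely $\|f_\lambda\|_\infty\le C_1\rho^{2l_M}$ in the notation of that proof). Consequently the constraint $\int f_\lambda^-\,dm<\infty$ in Definition \ref{def:4.5} is automatic, so the space $\cM_{f_\lambda}$ over which $P_\lambda$ is taken coincides, for all $\lambda$, with the fixed space $\cM$ of $\phi_t$-invariant probability measures on $\G\backslash\cG\mathcal{T}$. Then for $\lambda,\lambda'\in[0,\lambda_0]$ and any $m\in\cM$,
\[
h_m+\int f_\lambda\,dm\;\le\;h_m+\int f_{\lambda'}\,dm+\|f_\lambda-f_{\lambda'}\|_\infty\;\le\;P_{\lambda'}+\|f_\lambda-f_{\lambda'}\|_\infty ;
\]
taking the supremum over $m\in\cM$ and exchanging the roles of $\lambda$ and $\lambda'$ yields
\[
|P_\lambda-P_{\lambda'}|\;\le\;\|f_\lambda-f_{\lambda'}\|_\infty .
\]
This estimate is meaningful because $P_\lambda=\delta_\lambda$ is finite for every $\lambda\in[0,\lambda_0]$, as shown in Section \ref{subsec:3.1}.

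Finally I would invoke Proposition \ref{prop:2.11}: for any $\alpha'\in(0,\alpha)$ (with $\alpha$ as in that proposition) the map $\lambda\mapsto f_\lambda$ is continuous from $[0,\lambda_0]$ into $C^{\alpha'}_b(\cG\mathcal{T})$, so $\|f_\lambda-f_{\lambda'}\|_\infty\le\|f_\lambda-f_{\lambda'}\|_{\alpha'}\to 0$ as $\lambda'\to\lambda$. Combined with the Lipschitz estimate above, this gives the continuity of $\lambda\mapsto P_\lambda$ on $[0,\lambda_0]$. I do not expect a genuine obstacle here; the one point that needs a word of care is the finiteness of the pressure, so that the Lipschitz bound is not vacuous — this follows from $P_\lambda=\delta_\lambda$ together with Section \ref{subsec:3.1}, or alternatively from $P_\lambda\le h_{\mathrm{top}}(\phi_1)+\|f_\lambda\|_\infty$ and the finiteness of the volume entropy of $\mathcal{T}$.
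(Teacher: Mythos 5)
Your proposal is correct, and the non-positivity part is exactly the paper's argument: Theorem \ref{thm:1.4} gives $P_\lambda=\delta_\lambda$, and $\delta_\lambda\le 0$ by Proposition 4.10 of \cite{HL}. For continuity the paper offers no explicit argument at all -- the corollary is stated after Theorem \ref{thm:1.4} with only the one-sentence remark about non-positivity, the continuity being left implicit (presumably resting on Proposition \ref{prop:2.11} and the continuity statements in Lemmas \ref{2203311} and \ref{2203312}). What you supply is the standard thermodynamic-formalism route: since $f_\lambda$ is bounded, $\cM_{f_\lambda}$ is the full space of $\phi_t$-invariant probability measures independently of $\lambda$, the pressure is $1$-Lipschitz in $\|\cdot\|_\infty$ of the potential, and $\|f_\lambda-f_{\lambda'}\|_\infty\to 0$ by Proposition \ref{prop:2.11}; you also correctly flag that the estimate is only meaningful because $P_\lambda=\delta_\lambda$ is finite (Section \ref{subsec:3.1}), or alternatively because entropy is finite via the finite-alphabet coding, so all quantities in your inequality chain are finite. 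This is a clean and complete way to close the gap the paper glosses over, and it has the advantage of not needing continuity of the Patterson--Sullivan densities or of $m_\lambda((\cG\mathcal{T})_0)$, only the continuity of $\lambda\mapsto f_\lambda$ already established in Proposition \ref{prop:2.11}.
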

After this, we will use $P_\lambda$ instead of $\delta_\lambda$.
\section{Uniform rapid mixing}\label{Sec:5}
 \subsection{Uniform rapid mixing property of Gibbs measure of $f_\lambda$} Applying the results in \cite{BPP}, \cite{D} and \cite{LL}, we prove the uniform mixing property of Gibbs measure of $f_\lambda$ under assumptions in Theorem \ref{thm:1.3}. 

Let $\{\psi_t\}$ be a flow on a metric space $X$. The space $C^{k,\alpha}_b(X)$ is the space of functions $f$ satisfying the following properties:  
\begin{itemize}
\item For any $x\in X$, the function $t\mapsto f(\psi_t x)$ is a $C^k$-function.
\item For any integer $i\in[0,k]$, the function $\partial_t^if(x):=\frac{\partial^k}{\partial t^k}|_{t=0}f(\psi_t x)$ is in $C^{\alpha}_b(X)$. 
\end{itemize}
The space $C^{k,\alpha}_b(X)$ is a Banach space with respect to the norm $\|f\|_{k,\alpha}=\max_{0\leq i\leq k} \|\partial_t^i f\|_{\alpha}.$ For any $f_1,f_2\in C_b^{k,\alpha}$, the correlation function of $f_1$ and $f_2$ at time $t$ under the flow $\{\psi_t\}$ for a probability measure $m$ is defined by
\begin{equation}
\rho_{f_1,f_2,m}(t):=\left|\int_{X} f_1(x)f_2(\psi_t x)dm(x)-\int_X f_1dm\int_X f_2dm\right|.
\end{equation}
\begin{defn}A $\psi_t$-invariant probability measure $m$ on a metric space $X$ is \textit{mixing} if for any $f_1,f_2$ in $C_b^{k,\alpha},$ $\underset{t\rightarrow\infty}\lim \rho_{f_1,f_2,m}(t)=0.$
  
A flow $\{\psi_t\}$ on a metric space $X$ has \textit{superpolynomial decay} of $\alpha$-correltaion for a $\psi_t$-invariant probability measure $m$ if for any $n$, there exist $C$ and $k$ such that for any $f_1, f_2$ in $C^{k,\alpha}_b(X),$
$$\rho_{f_1,f_2,m}(t)\leq C(1+|t|)^{-n}\|f_1\|_{k,\alpha}\|f_2\|_{k,\alpha}.$$
The flow $\{\psi_t\}$ is \textit{rapid mixing} with respect to $m$ if  $\{\psi_t\}$ has superpolynomial decay of $\alpha$-correlation for some $\alpha$.
 \end{defn}
 
In Section \ref{sec:1}, we introduced the Diophatine assumption for the rapid mixing of Gibbs measures of continuous functions on the unit tangent bundle $T^1\mathcal{T}$ (see Definition \ref{def:1.1}). The following lemma demonstrates the mixing property of the geodesic flow on $\mathcal{T}$
 \begin{lem} \emph{(\cite{BPP} Theorem 9.7)}\label{lem:5.2} Let $\mathcal{T}$ be a topologically complete locally finite metric tree. A discrete group $\Gamma$ acts isometrically and geometrically on $\Gamma$. Suppose that the normalized Gibbs measure $\overline{m}_f$ of a potential $f$ on $\G\backslash T^1\mathcal{T}$ exists and is mixing for the geodesic flow $\phi_t$ and $0<l_m\leq l_M<\infty.$ Suppose that $\Lambda_\G$ is Diophantine. Then the geodesic flow on $\G\backslash\cG\mathcal{T}$ is rapid mixing with respect to ${\overline{m_f}}$.

   \end{lem}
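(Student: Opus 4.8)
The plan is to recognize this as an instance of Dolgopyat's mechanism for superpolynomial decay of correlations of suspension flows over subshifts of finite type (\cite{D}, in the packaged form of \cite{Me}), exactly as it was applied in \cite{LL} and \cite{BPP}; what has to be done is to verify that the symbolic model of the geodesic flow on $\G\backslash\cG\mathcal{T}$ from Section \ref{sec:3.5} meets the hypotheses of that mechanism, the only non-formal input being the Diophantine property of $\Lambda_\G$.

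First I would use the coding of Section \ref{sec:3.5}: the map $\overline{\Theta}\circ\Phi^{-1}$ is a bilipschitz, time-preserving conjugacy between $(\G\backslash\cG\mathcal{T},\phi_t)$ and the suspension flow $(\Sigma_r,\sigma_r^t)$, where $(\Sigma,\sigma)$ is a transitive two-sided Markov shift on a \emph{finite} alphabet and the roof function $r$ is the edge-length function. Since $0<l_m\le r\le l_M<\infty$ and $r$ is locally constant, $r$ is H\"older with $\sum_{n\ge1}n\,\text{var}_n r<\infty$, and the space $\cM_\sigma$ of $\sigma$-invariant probabilities coincides with those having $\int_\Sigma r\,dm<\infty$; under the conjugacy $\overline{m}_f$ corresponds to $S(m_f)$ for the equilibrium state $m_f$ on $\Sigma$ of the (H\"older, summable-variation) potential coding $f$. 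Because rapid mixing is preserved by a time-preserving bilipschitz conjugacy and passes between the one- and two-sided codings, this reduces the claim to showing that $\sigma_r^t$ has superpolynomial decay of $\alpha$-correlations for $S(m_f)$ and H\"older observables.

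Next I would check the hypotheses of Dolgopyat's theorem for $(\Sigma,\sigma,r)$: besides the regularity just noted, one needs $r$ not cohomologous to a constant (supplied here by the hypothesis that $\overline{m}_f$ is mixing, equivalently that $\sigma_r^t$ is weakly mixing) and a Diophantine/uniform-non-integrability condition on $r$, which is arranged by producing two periodic orbits of $\sigma$ whose period ratio is a Diophantine number. Here I would use that the sum of $r$ over a periodic orbit of $\sigma$ equals the length of the corresponding closed geodesic in $\G\backslash\mathcal{T}$, i.e.\ the translation length $l_\g$ of a hyperbolic $\g\in\G$, and that every such $l_\g$ arises this way; hence the assumption that $\Lambda_\G$ is Diophantine --- two elements $\g,\g'$ with $|l_\g/l_{\g'}-p/q|>Cq^{-\beta}$ for all integers $p,q>0$ --- produces two periodic orbits of $\sigma$ with Diophantine period ratio, and transitivity of $(\Sigma,\sigma)$ puts them in a common transitive subsystem, which is exactly the input Dolgopyat needs to build the holomorphic family of twisted transfer operators and its contraction estimate. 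Feeding this into Dolgopyat's theorem gives, for each $n$, constants $C,k$ with $\rho_{f_1,f_2,S(m_f)}(t)\le C(1+|t|)^{-n}\|f_1\|_{k,\alpha}\|f_2\|_{k,\alpha}$; transporting back through $\overline{\Theta}\circ\Phi^{-1}$ yields rapid mixing of $\phi_t$ on $\G\backslash\cG\mathcal{T}$ for $\overline{m}_f$.

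The hard part will be that middle step --- converting the purely number-theoretic Diophantine property of the translation-length spectrum into the precise Diophantine/UNI hypothesis required by \cite{D}. I would need to be careful that the two chosen closed geodesics occur as periodic orbits inside a single transitive subsystem (so that the homoclinic configuration underlying Dolgopyat's argument is available), and that the two standing hypotheses are invoked correctly: $0<l_m\le l_M<\infty$ to keep all the invariant-measure and Abramov correspondences between $\cM_\sigma$, $\cM_{\sigma_r}$ and $\cM_\phi$ valid, and the mixing of $\overline{m}_f$ to rule out the lattice case in Dolgopyat's dichotomy so that the decay is genuinely superpolynomial rather than merely nonzero.
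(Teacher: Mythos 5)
Your proposal reconstructs exactly the argument behind this statement: the paper itself gives no proof but quotes it as \cite{BPP} Theorem 9.7, whose proof (as the paper notes in the surrounding discussion) proceeds precisely by transporting the geodesic flow to the suspension of the finite-alphabet Markov coding with the edge-length roof function and applying the rapid-mixing criteria of \cite{D} (and \cite{Me}), with the Diophantine length-spectrum hypothesis supplying Dolgopyat's Diophantine condition on periods of periodic orbits and the mixing hypothesis excluding the lattice case. So your route is essentially the same as the cited proof, and no genuine gap beyond the care you already flag is apparent.
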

   
 The cocompact action of $\Gamma$ guarantees that Gibbs measure on $\Gamma\backslash \cT$ for the potential $f\equiv0$ is finite. If the length spectrum $\Lambda_\Gamma$ dense and Gibbs measure of $f\equiv 0$ is finite, the normalized Gibbs measure, which is called Bowen-Margulis measure, is mixing (\cite{BPP} Theorem 4.17). Under the assumption of Theorem \ref{thm:1.3}, the Bowen-Margulis measure satisfies the assumption of Lemma \ref{lem:5.2} and has the rapid mixing property. Using this fact and the result in \cite{LL}, we will obtain the uniform rapid mixing of $\overline{m}_\lambda$. To do this, we introduce the following definition.
\begin{defn}\label{def:5.3} A flow $\psi_t$ on a metric space $X$ is \textit{topologically power mixing} if there exist $t_0,\beta>0$ such that for any $r>0$, $t\geq\max\{r^{-\beta},t_0\}$, and $x_1,x_2\in X$
$$\psi_t B(x_1,r)\cap B(x_2,r)\neq \phi,$$
where $B(x,r)$ is the ball of radius $r$ centered at $x$.
\end{defn}
\begin{thm}\label{thm:5.4} \emph{(\cite{LL}, Theorem 7.2, Uniform rapid mixing)} Let $(\Sigma,\sigma)$ be a Markov shift constructed from a finite alphabet $\cA$ and a transition matrix $A$. Suppose that the flow $\{\sigma_r^t\}$ on $\Sigma_r$ is topologically power mixing. There exists $\alpha_0>0$ satisfying the following property. Let $f_0\in C_b^{\alpha_0}(\Sigma_r)$. There exist $\epsilon>0$, $\alpha>0$ and $c_1,c_2>0$ such that for any $f$ with $\|f-f_0\|_{\alpha_0}<\epsilon$ and any $f_1,f_2,f_3\in C_b^{\alpha}(\Sigma_r)$, we obtain for any $s,t>0$,
\begin{equation}
\begin{split}
&\left|\frac{1}{t}\int_0^t \int_{\Sigma_r}f_1(f_2\circ\sigma_r^s)(f_3\circ\sigma_r^t)d\overline{m_f}ds-\int_{\Sigma_r}f_1d\overline{m_f}\int_{\Sigma_r}f_2d\overline{m_f}\int_{\Sigma_r}
f_3d\overline{m_f}\right|\\
&\leq c_1[(1+s)^{-c_2}+(1+t)^{-c_2}] \|f_1\|_\alpha\|f_2\|_\alpha\|f_3\|_\alpha.\nonumber
\end{split}
\end{equation}
Here $m_f$ is the Gibbs measure of $f$.
\end{thm}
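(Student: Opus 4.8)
The statement is Theorem~7.2 of \cite{LL}; the plan is first to place ourselves in its hypotheses and then to recall why it holds, following the transfer operator method of Dolgopyat and of Lalley--Ledrappier. Two hypotheses must be checked in our setting. That $(\Sigma,\sigma)$ is built from a \emph{finite} alphabet was established in Section~\ref{Sec:4} from the cocompactness and properness of the $\G$-action. To see that the suspension flow $\{\sigma_r^t\}$ on $\Sigma_r$ is topologically power mixing, note that by Theorem~5.9 of \cite{BPP} it is bilipschitz-conjugate to the geodesic flow $\{\phi_t\}$ on $\G\backslash\cG\mathcal{T}$, so it suffices to verify the property for $\{\phi_t\}$; and Lemma~\ref{lem:5.2} gives that the Bowen--Margulis measure, which has full support, is rapid mixing for $\{\phi_t\}$ under the density and Diophantine assumptions of Theorem~\ref{thm:1.3}. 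Testing the rapid-mixing estimate against smoothed indicators of balls of radius $r$, whose Hölder norms are polynomial in $1/r$, shows that $\phi_t B(x_1,r)\cap B(x_2,r)\neq\emptyset$ once $t$ exceeds a suitable negative power of $r$; this is exactly the property of Definition~\ref{def:5.3}.

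For completeness I recall the mechanism behind Theorem~7.2 of \cite{LL}. One descends from $\{\sigma_r^t\}$ to the base shift $(\Sigma,\sigma)$ with roof $r$ and studies, for a Hölder potential $f$ on $\Sigma_r$ with discretization $F(y):=\int_0^{r(y)} f(\sigma_r^t[y,0])\,dt$, the twisted transfer operators
\[
\cL_{a,b}\,u(x):=\sum_{\sigma y=x} e^{\,F(y)-(P(F)+a+ib)\,r(y)}\,u(y),\qquad a,b\in\bR,
\]
acting on $C_b^\alpha(\Sigma)$, where $P(F)$ denotes topological pressure; after a Laplace transform in the time variable, all flow correlations, and the time-averaged triple expression in the statement, are controlled by the resolvent $(\mathrm{Id}-\cL_{a,b})^{-1}$ near $a=0$. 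The crucial estimate is Dolgopyat's oscillatory cancellation, $\|\cL_{0,b}^{\,n}\|_{C_b^\alpha(\Sigma)}\le C|b|\rho^{\,n}$ for $|b|\ge b_0$ with $\rho\in(0,1)$, together with the Ruelle--Perron--Frobenius spectral gap for bounded $|b|$; topological power mixing is precisely the non-joint-integrability condition (no Hölder cohomology of $r$ into a discrete subgroup of $\bR$) behind it. What makes the conclusion \emph{uniform} is that $(\Sigma,\sigma)$ and $r$ are fixed data, independent of $f$, so that these non-integrability constants are common to all $f$, and a routine perturbation analysis of $(a,b)\mapsto\cL_{a,b}$ in $C_b^{\alpha_0}(\Sigma)$ — uniform Lasota--Yorke bounds, analytic perturbation of the simple leading eigenvalue, continuity of the pressure — yields $C,\rho,b_0$ uniform over $\{\|f-f_0\|_{\alpha_0}<\epsilon\}$ once $\epsilon$ is small and $\alpha_0$ is chosen small relative to $\alpha$. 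This gives the uniform pair estimate $\rho_{g_1,g_2,\overline{m_f}}(t)\le C(1+|t|)^{-n}\|g_1\|_{k,\alpha}\|g_2\|_{k,\alpha}$ with $C,k$ locally uniform in $f$.

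From this pair estimate the triple bound follows by decorrelating the three observables across the two time-gaps. With $f_1$ placed at the initial time, $f_2$ sampled along the flow over the averaging interval, and $f_3$ at the far time $t$, one first applies rapid mixing with $f_3$ carried to time $t$ against the product of $f_1$ and the intermediate sample of $f_2$, splitting off $\int f_3\,d\overline{m_f}$ with error $O((1+t)^{-c_2})$; then, after averaging, one applies it again to split $\int f_1\,d\overline{m_f}$ from $\int f_2\,d\overline{m_f}$, the averaged error contributing the term $O((1+s)^{-c_2})$. Summing the two errors gives the asserted bound with constants depending only on $f_0$ and $\epsilon$. Keeping the right-hand side in the plain Hölder norms $\|f_i\|_\alpha$ rather than higher flow-smoothness norms uses that the time-average over the intermediate slot is automatically differentiable along the flow, its flow-derivative being controlled by $\|f_2\|_\alpha$; this bookkeeping is carried out exactly as in \cite{LL}.

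The step I expect to be the main obstacle is the \emph{uniform} version of Dolgopyat's cancellation estimate over the ball $\{\|f-f_0\|_{\alpha_0}<\epsilon\}$: one must show that the temporal-distance data encoding non-joint-integrability, and the associated spectral bounds, are stable under small Hölder perturbations of the potential. This holds essentially because that data is manufactured from $r$ and $\sigma$ alone, so only the continuity in $f$ of the leading eigenvalue and of the Lasota--Yorke constants is at stake; still, making it quantitative is the technical heart, and it is here that \cite{D} and \cite{LL} enter. The rest — the hypothesis check and the two-step decorrelation with its norm bookkeeping — is routine given those inputs.
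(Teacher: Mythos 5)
Your proposal is correct and follows essentially the same route as the paper: the paper likewise does not reprove the estimate but observes that Theorem 7.2 of \cite{LL}, although stated there for Anosov flows, is actually proved at the level of suspensions of finite Markov shifts (via the symbolic model of \cite{PP}), so the transfer-operator/Dolgopyat argument applies verbatim to $(\Sigma,\sigma)$ and $\Sigma_r$ as in the statement, which is exactly the mechanism you sketch and defer to \cite{LL} and \cite{D}. Your preliminary verification of the hypotheses (finite alphabet, topological power mixing deduced from rapid mixing of the Bowen--Margulis measure) corresponds to the paper's subsequent appeal to Lemma~\ref{lem:5.2} and Theorem~1 of \cite{D}, differing only in that you derive power mixing directly by testing smoothed indicators rather than citing \cite{D}.
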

 The assumption of Theorem 7.2 \cite{LL} is that the flow is Anosov. An Anosov flow is bilipschitz homeomorphic to the suspension of a finite Markov shift (see \cite{PP} Chapter 9 and Appendix III). Theorem 7.2 in \cite{LL} was proved for the suspension. Thus the proof of Theorem \ref{thm:5.4} is valid. Theorem 1 in \cite{D} shows that the suspension flow $\{\sigma_r^t\}$ on the suspension $\Sigma_r$ is topologically power mixing if a Gibbs measure of a potential has the rapid mixing property. Since $\overline{\Theta}\circ\Phi^{-1}$ 
 is bilipschitz, Theorem \ref{thm:5.4} implies that the geodesic flow has the uniform rapid mixing property with respect to $\overline{m}_\lambda$. As in the proof of Proposition 4.2 in \cite{LL}, using Proposition \ref{prop:2.11} and Theorem \ref{thm:5.4}, we have the following corollaries.

 \begin{coro}\label{coro:5.5}\emph{(\cite{LL} Proposition 4.1)} Suppose that $\mathcal{T}$ and $\Gamma$ satisfy the assumption of Theorem \ref{thm:1.3}. There exists a positive constant $\alpha$ satisfying the following property. For any $\epsilon>0$ and $\alpha$-H\"older continuous functions $f_1,f_2$ on $\Gamma\backslash\cG\mathcal{T}$ with $\int f_i d\overline{m}_\lambda>0$, there exist constants $T_1$ and $a_1$ such that for all $t>T_1$ and $\lambda\in[\lambda_0-a_1,\lambda_0]$,
 $$ \int_{\cG\mathcal{T}} f_1 f_2\circ \phi_t d\overline{m}_\lambda\asymp_{1+\epsilon} \int_{\cG\mathcal{T}} f_1 d\overline{m}_\lambda \int_{\cG\mathcal{T}} f_2 d\overline{m}_\lambda.$$
 The constant $T_1$ depends on $\|f_1\|_\alpha$, $\|f_2\|_\alpha,$ $\inf \int f_1 d\overline{m}_\lambda,$ and $\inf \int f_2 d\overline{m}_\lambda$ whereas $T_1$ is independent of $\lambda\in[\lambda_0-a_1,\lambda_0].$ 
 \end{coro}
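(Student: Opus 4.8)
The plan is to transport the statement to the symbolic suspension model built in Section~\ref{sec:3.6} and then invoke the uniform rapid mixing theorem, Theorem~\ref{thm:5.4}, on a short interval of parameters to the left of $\lambda_0$. Recall from the proof of Theorem~\ref{thm:1.4} that $\overline{\Theta}\circ\Phi^{-1}$ is a bilipschitz homeomorphism from $\Gamma\backslash\cG\mathcal{T}$ onto the suspension $\Sigma_r$ conjugating the geodesic flow $\phi_t$ to $\sigma_r^t$, and that $(\overline{\Theta}\circ\Phi^{-1})_*\overline{m}_\lambda$ is the equilibrium state --- equivalently, through Buzzi's theorem applied via $\Theta$ together with Abramov's formula, the Gibbs measure in the sense of Theorem~\ref{thm:5.4} --- of the potential $f^{(\lambda)}:=f_\lambda\circ\Phi\circ\overline{\Theta}^{-1}-P_\lambda$ on $\Sigma_r$, whose pressure is $0$. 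Because a bilipschitz homeomorphism sends $\alpha$-H\"older functions to $\alpha$-H\"older functions of comparable norm, it suffices to establish, for $\alpha$-H\"older $g_1,g_2$ on $\Sigma_r$ with $\int g_i\,d\overline{m}_{f^{(\lambda)}}>0$, the relation
$$\int_{\Sigma_r} g_1\,(g_2\circ\sigma_r^t)\,d\overline{m}_{f^{(\lambda)}}\ \asymp_{1+\epsilon}\ \int_{\Sigma_r} g_1\,d\overline{m}_{f^{(\lambda)}}\int_{\Sigma_r} g_2\,d\overline{m}_{f^{(\lambda)}}$$
for all large $t$, uniformly in $\lambda$ near $\lambda_0$, and to pull it back.

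First I would record that the hypothesis of Theorem~\ref{thm:5.4} is satisfied: under the assumptions of Theorem~\ref{thm:1.3} the length spectrum is dense and Diophantine, so by Lemma~\ref{lem:5.2} the geodesic flow is rapid mixing for the Bowen--Margulis measure, and hence by Theorem~1 of \cite{D} the suspension flow $\{\sigma_r^t\}$ on $\Sigma_r$ is topologically power mixing. Then I would apply Theorem~\ref{thm:5.4} with reference potential $f_0:=f^{(\lambda_0)}$; by Proposition~\ref{prop:2.11} the function $f_{\lambda_0}$ is H\"older on $\cG\mathcal{T}$, so $f^{(\lambda_0)}\in C_b^{\alpha_0}(\Sigma_r)$ once $\alpha_0$ is chosen no larger than that exponent. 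This produces $\alpha>0$ and constants $c_1,c_2,\epsilon_1>0$ such that the uniform mixing estimate holds for $\overline{m}_f$ whenever $\|f-f^{(\lambda_0)}\|_{\alpha_0}<\epsilon_1$. The decisive point is that $\lambda\mapsto f^{(\lambda)}$ is continuous into $C_b^{\alpha_0}(\Sigma_r)$: this follows from Proposition~\ref{prop:2.11} (continuity of $\lambda\mapsto f_\lambda$ in the $\alpha_0$-H\"older norm on $\cG\mathcal{T}$, for $\alpha_0$ below the continuity exponent there), Corollary~\ref{2203313} (continuity of $\lambda\mapsto P_\lambda$), and the bilipschitz character of $\Phi\circ\overline{\Theta}^{-1}$. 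Consequently there is $a_1>0$ with $\|f^{(\lambda)}-f^{(\lambda_0)}\|_{\alpha_0}<\epsilon_1$ for all $\lambda\in[\lambda_0-a_1,\lambda_0]$, and the conclusion of Theorem~\ref{thm:5.4} holds for every $\overline{m}_{f^{(\lambda)}}$ with one and the same $c_1,c_2,\alpha$.

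Next I would specialize $f_3\equiv1$ in Theorem~\ref{thm:5.4}, so that $\int f_3\,d\overline{m}_{f^{(\lambda)}}=1$ and $\|f_3\|_\alpha=1$, which collapses the triple product into the pair correlation of $g_1,g_2$; as in the proof of Proposition~4.2 of \cite{LL} one then obtains
$$\left|\int_{\Sigma_r} g_1\,(g_2\circ\sigma_r^t)\,d\overline{m}_{f^{(\lambda)}}-\int_{\Sigma_r} g_1\,d\overline{m}_{f^{(\lambda)}}\int_{\Sigma_r} g_2\,d\overline{m}_{f^{(\lambda)}}\right|\le c_1\,(1+t)^{-c_2}\,\|g_1\|_\alpha\|g_2\|_\alpha$$
for all $t>0$ and all $\lambda\in[\lambda_0-a_1,\lambda_0]$. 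To convert this additive bound into the multiplicative statement, observe that $\lambda\mapsto\overline{m}_\lambda$ is weak-$*$ continuous on $[\lambda_0-a_1,\lambda_0]$ --- by Lemma~\ref{2203311}, Corollary~\ref{coro:3.6}, Lemma~\ref{2203312} and Corollary~\ref{2203313}, which give continuity of the ingredients $\mu_{x,\lambda}$, $\theta^\lambda_x$, the normalization and $P_\lambda$ out of which $\overline{m}_\lambda$ is built --- so the continuous positive functions $\lambda\mapsto\int f_i\,d\overline{m}_\lambda$ stay above some $\kappa_i:=\inf_\lambda\int f_i\,d\overline{m}_\lambda>0$ on this compact interval. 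Choosing $T_1$ so large that $c_1(1+T_1)^{-c_2}\|f_1\|_\alpha\|f_2\|_\alpha\le\tfrac{\epsilon}{1+\epsilon}\,\kappa_1\kappa_2$ then yields $\asymp_{1+\epsilon}$ for all $t>T_1$ and all $\lambda$ in the interval, and pulling back via $\overline{\Theta}\circ\Phi^{-1}$ gives the corollary; by construction $T_1$ depends only on $\|f_1\|_\alpha$, $\|f_2\|_\alpha$, $\kappa_1$ and $\kappa_2$, and not on $\lambda$.

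The main obstacle is the middle step: making the rate of decay of correlations genuinely uniform in $\lambda$. This is exactly what the perturbative form of Theorem~\ref{thm:5.4} is designed to provide, so the real content is the verification that $\lambda\mapsto f^{(\lambda)}$ is continuous in the H\"older norm on $\Sigma_r$ --- which rests on Proposition~\ref{prop:2.11} and the continuity of $\lambda\mapsto P_\lambda$ --- together with some bookkeeping of H\"older exponents (one needs $\alpha_0$ below both the exponent of $f_{\lambda_0}$ and the continuity exponent in Proposition~\ref{prop:2.11}). A lesser, essentially bookkeeping point is the passage from the time-averaged triple-product estimate in Theorem~\ref{thm:5.4} to the bare pair correlation at a single time, which is handled by setting $f_3\equiv1$ and copying the argument of \cite{LL}.
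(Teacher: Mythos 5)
Your overall route is exactly the paper's: conjugate to the suspension $\Sigma_r$ via the bilipschitz map $\overline{\Theta}\circ\Phi^{-1}$, check topological power mixing from Lemma \ref{lem:5.2} together with Dolgopyat's theorem, invoke the perturbative uniform rapid mixing statement of Theorem \ref{thm:5.4} around the reference potential coming from $f_{\lambda_0}-P_{\lambda_0}$, and feed in the continuity of $\lambda\mapsto f_\lambda$ (Proposition \ref{prop:2.11}) and of $\lambda\mapsto P_\lambda$ to get a single interval $[\lambda_0-a_1,\lambda_0]$ and $\lambda$-independent constants; the paper itself gives no more detail than this, simply citing Theorem \ref{thm:5.4}, Proposition \ref{prop:2.11} and the arguments of \cite{LL}.

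One step does not work as you wrote it: the specialization $f_3\equiv 1$. The estimate in Theorem \ref{thm:5.4} averages over $s\in[0,t]$, so killing the third slot only controls the Ces\`aro-averaged pair correlation $\frac{1}{t}\int_0^t\int_{\Sigma_r} g_1\,(g_2\circ\sigma_r^s)\,d\overline{m}\,ds$, which does not bound the correlation at a single time $t$ (it could decay on average while oscillating). The correct specialization is $f_2\equiv 1$, the middle slot: then the integrand $g_1\,(g_3\circ\sigma_r^t)$ is independent of $s$, the $s$-average trivializes, $\int f_2\,d\overline{m}=\|f_2\|_\alpha=1$, and one obtains the single-time bound $\bigl|\int g_1\,(g_3\circ\sigma_r^t)\,d\overline{m}-\int g_1\,d\overline{m}\int g_3\,d\overline{m}\bigr|\leq 2c_1(1+t)^{-c_2}\|g_1\|_\alpha\|g_3\|_\alpha$ (taking, say, $s=t$ on the right-hand side). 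With this correction, and noting that the pair-correlation statement is \cite{LL} Proposition 4.1 rather than 4.2, your conversion of the additive bound into $\asymp_{1+\epsilon}$ via the uniform lower bounds $\kappa_i$ and the choice of $T_1$ is exactly the intended argument, and $T_1$ indeed depends only on the H\"older norms and the infima of the integrals, not on $\lambda$.
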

  \begin{coro}\label{coro:5.6}\emph{(\cite{LL} Proposition 4.2)} Suppose that $\mathcal{T}$ and $\Gamma$ satisfy the assumption of Theorem \ref{thm:1.3}. There exists a positive constant $\alpha'$ satisfying the following property. For any $\epsilon>0$ and $\alpha'$-H\"older continuous functions $f_1,f_2,f_3$ on $\Gamma\backslash\cG\mathcal{T}$ with $\int f_i d\overline{m}_\lambda>0$, there exist constants $T_2$ and $a_2$ such that for all $t>T_2$ and $\lambda\in[\lambda_0-a_2,\lambda_0]$,
 $$ \frac{1}{t}\int_0^t\int_{\cG\mathcal{T}} f_1 f_2\circ\phi_s f_3\circ \phi_t d\overline{m}_\lambda ds\asymp_{1+\epsilon} \int_{\cG\mathcal{T}} f_1 d\overline{m}_\lambda\int_{\cG\mathcal{T}} f_2 d\overline{m}_\lambda \int_{\cG\mathcal{T}} f_3 d\overline{m}_\lambda.$$
 The constant $T_2$ depends on $\|f_1\|_\alpha$, $\|f_2\|_\alpha,$ $\|f_3\|_\alpha,$ $\inf \int f_1 d\overline{m}_\lambda,$ $\inf \int f_2 d\overline{m}_\lambda$ and $\inf \int f_3 d\overline{m}_\lambda,$ whereas $T_2$ is independent of $\lambda\in[\lambda_0-a_2,\lambda_0].$ 
 \end{coro}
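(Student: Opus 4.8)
The plan is to transport the assertion to the symbolic suspension model and apply the uniform rapid mixing estimate of Theorem \ref{thm:5.4}, following the proof of Proposition 4.2 in \cite{LL}. Recall from Section \ref{sec:3.6} that $\overline{\Theta}\circ\Phi^{-1}$ is a bilipschitz homeomorphism from $\Gamma\backslash\cG\mathcal{T}$ onto the suspension $\Sigma_r$ conjugating the geodesic flow $\phi_t$ to $\sigma_r^t$, and that $(\overline{\Theta}\circ\Phi^{-1})_*\overline{m}_\lambda$ is the equilibrium state -- hence, being a Hölder potential, the Gibbs measure -- of $\hat f_\lambda:=f_\lambda\circ\Phi\circ\overline{\Theta}^{-1}-\delta_\lambda$ on $\Sigma_r$. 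The Diophantine hypothesis of Theorem \ref{thm:1.3}, together with Lemma \ref{lem:5.2} and the discussion following Theorem \ref{thm:5.4}, already guarantees that $\{\sigma_r^t\}$ is topologically power mixing, so the hypotheses of Theorem \ref{thm:5.4} are in place.

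Next I would set up the continuity in $\lambda$. By Proposition \ref{prop:2.11} and Corollary \ref{2203313}, the map $\lambda\mapsto f_\lambda-\delta_\lambda$ is continuous from $[0,\lambda_0]$ into $C_b^{\alpha''}(\cG\mathcal{T})$ for every $\alpha''<\alpha$, and since $\overline{\Theta}\circ\Phi^{-1}$ is bilipschitz, composition with it preserves Hölder continuity with at most a fixed loss in exponent and constant; thus, shrinking the exponent $\alpha_0$ of Theorem \ref{thm:5.4} if necessary, $\lambda\mapsto\hat f_\lambda$ is continuous from $[0,\lambda_0]$ into $C_b^{\alpha_0}(\Sigma_r)$. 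Applying Theorem \ref{thm:5.4} with $f_0=\hat f_{\lambda_0}$ yields constants $\epsilon_0,\alpha,c_1,c_2>0$; by continuity there is $a_2>0$ with $\|\hat f_\lambda-f_0\|_{\alpha_0}<\epsilon_0$ for all $\lambda\in[\lambda_0-a_2,\lambda_0]$, and then Theorem \ref{thm:5.4}, pulled back through the bilipschitz conjugacy (using $\|f_i\circ\Phi\circ\overline{\Theta}^{-1}\|_\alpha\asymp\|f_i\|_\alpha$ and, for the average in $s$ over $[0,t]$, that $\frac1t\int_0^t(1+s)^{-c_2}ds$ is itself $O((1+t)^{-c})$ for some $c>0$), gives for all such $\lambda$ and all $\alpha$-Hölder $f_1,f_2,f_3$ on $\Gamma\backslash\cG\mathcal{T}$ an additive triple-correlation bound whose error is at most $c_1'(1+t)^{-c}\prod_{i=1}^3\|f_i\|_\alpha$ with $c_1',c$ independent of $\lambda$.

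Finally I would convert the additive bound into the multiplicative one $\asymp_{1+\epsilon}$. Since $\lambda\mapsto\overline{m}_\lambda$ is weak-$*$ continuous (the content underlying Lemma \ref{2203312}), the maps $\lambda\mapsto\int_{\cG\mathcal{T}}f_i\,d\overline{m}_\lambda$ are continuous on the compact interval $[\lambda_0-a_2,\lambda_0]$ and, being positive there by hypothesis, are bounded below by some $\iota>0$. Dividing the additive bound by $\prod_i\int f_i\,d\overline{m}_\lambda\ge\iota^3$ and choosing $T_2$ so large that $c_1'(1+T_2)^{-c}\prod_i\|f_i\|_\alpha\le\frac{\epsilon}{1+\epsilon}\,\iota^3$ yields the claimed estimate for all $t>T_2$ and all $\lambda\in[\lambda_0-a_2,\lambda_0]$, with $T_2$ depending only on the $\|f_i\|_\alpha$, on $\iota$ (hence on $\inf_\lambda\int f_i\,d\overline{m}_\lambda$), and on the fixed constants $c_1',c$ -- in particular not on $\lambda$. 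One then takes $\alpha'$ to be this $\alpha$.

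I expect no genuinely new idea to be needed: the analytic substance is packaged in Theorem \ref{thm:5.4} and in the continuity statements, so the work is bookkeeping. The two delicate points are (i) ensuring that the conjugacy $\overline{\Theta}\circ\Phi^{-1}$ does not cost so much Hölder regularity that $\lambda\mapsto\hat f_\lambda$ fails to be continuous in the precise norm $C_b^{\alpha_0}$ demanded by Theorem \ref{thm:5.4} -- handled by choosing $\alpha_0$ small and invoking Proposition \ref{prop:2.11} at a correspondingly small exponent -- and (ii) the uniformity in $\lambda$ of the lower bounds $\int f_i\,d\overline{m}_\lambda\ge\iota$, which rests on the continuity $\lambda\mapsto\overline{m}_\lambda$ and compactness of $[\lambda_0-a_2,\lambda_0]$.
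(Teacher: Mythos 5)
Your proposal is correct and follows essentially the same route as the paper, which proves Corollary \ref{coro:5.6} exactly by combining Theorem \ref{thm:5.4} (applied on the suspension $\Sigma_r$ via the bilipschitz conjugacy $\overline{\Theta}\circ\Phi^{-1}$, with topological power mixing supplied by Lemma \ref{lem:5.2} and Dolgopyat's theorem) with the continuity of $\lambda\mapsto f_\lambda$ from Proposition \ref{prop:2.11}, following the proof of Proposition 4.2 in \cite{LL}. Your additional bookkeeping (choice of $a_2$ by continuity in the H\"older norm, lower bound $\iota$ on the integrals, and conversion of the additive error into the multiplicative $\asymp_{1+\epsilon}$ bound) matches the intended argument.
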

  \subsection{Change of the description of Gibbs measure}In this section, we redescribe Gibbs measure of $f_\lambda$ by changing the parametrization. 
  
 Let $\cG^+ \mathcal{T}$ and $\cG^-\mathcal{T}$ be the set of geodesic rays defined on $[0,+\infty)$ and $(-\infty,0]$, respectively. Denote $\cG^\pm_x\mathcal{T}:=\{w\in \cG^\pm \mathcal{T}: w(0)=x\}$. The geodesic ray $w$ is considered as a path defined on $\mathbb{R}$ when we define $w(\mp t):=w(0)$ for any $w\in \cG^\pm \mathcal{T}$ and $t\in [0,\infty)$. The distance between $w$ and $w'$ in $\cG^\pm\mathcal{T}$ is defined by
 $$d_{\cG\mathcal{T}}(w,w'):=\int_{-\infty}^\infty d(w(t),w'(t))e^{-2|t|}dt.$$
 
 Using the end point homeomorphism $w\mapsto w_\pm:=\underset{t\rightarrow\pm\infty}\lim w(t)$ for any $w\in\cG^{\pm}_x\mathcal{T}$, a measure $\mu_{\cG^{\pm}_x\mathcal{T}}^\lambda$ on $\cG^{\pm}_x\mathcal{T}$ is defined by 
 $$\mu_{\cG^{\pm}_x\mathcal{T}}^\lambda(w):=\mu_{x,\lambda}(w_\pm).$$
 
The stable leaf $W^{0+}(g)$ (the unstable leaf $W^{0-}(g)$, resp.) of a geodesic line $g$ is the set of geodesic lines $g'$ such that $g_+=g'_+$ ($g_-=g_-'$, resp.). The strong stable leaf $W^+(g)$ and the strong unstable leaf $W^-(g)$ of $g$ are defined by
$$W^{\pm}(g):=\{g'\in W^{0\pm}(g): \lim_{t\rightarrow \infty} d(g(\pm t),g'(\pm t))=0\}.$$
Similarly, we obtain the leaves $W^{\pm}(w)$ and $W^{0\pm}(w)$ of $w\in\cG^\pm\mathcal{T}$. 

Fix $w\in \mathcal{G}^\pm\mathcal{T}$. As an application of the end point homeomorphism $g\mapsto g_\mp$ from $W^\pm(w)$ to $\partial \mathcal{T}\backslash\{w_\pm\}$, a measure $\mu_{W^{\pm}(w)}^\lambda$ on $W^\pm(w)$ is defined for any $g\in W^\pm(w)$, by
$$\mu_{W^{\pm}(w)}^\lambda(g):=k^2_\lambda(x_0,g(0),g_{\mp})e^{P_{\lambda}\beta_{g_{\mp}}(x_0,g(0))}d\mu_{x_0,\lambda}(g_\mp).$$
The measure $\mu_{W^{\pm}(w)}^\lambda$ dose not depend on the choice of $x_0$. 

The homeomorphism from $W^\pm(w)\times \mathbb{R}$ to $W^{0\pm}(w)$ defined by
$$(g,s)\mapsto g':=\phi_s g.$$
Using the above homeomorphism, we obtain a measure $\nu^{\lambda}_{W^{0\pm}(w)}$ on $W^{0\pm}(w)$ defined for any $g=\phi_s g'\in W^{0\pm}(w)$, by
 $$\nu^{\lambda}_{W^{0\pm}(w)}(g):=\theta^\lambda_{g'(0)}(g_\mp,w_\pm)^2k^2_\lambda(w(0),g'(0),w_\pm)ds d\mu_{W^{\pm}(w)}^\lambda(g').$$
\begin{lem}\label{lem:5.7} For any $\lambda\in [0,\lambda_0]$, $x\in \mathcal{T}$ and $w\in \cG_x\mathcal{T}$, the measures $\mu_{\cG^{\pm}_x\mathcal{T}}^\lambda,$ $\mu_{W^{\pm}(w)}^\lambda$ and $\nu^{\lambda}_{W^{0\pm}(w)}$ are $\G$-equivariant, i.e. 

$$\mu^\lambda_{\mathcal{G}_x^\pm \mathcal{T}}(g)=\mu^\lambda_{\mathcal{G}_{\gamma x}^\pm \mathcal{T}}(\gamma g),\,\mu_{W^{\pm}(w)}^\lambda(g)=\mu_{W^{\pm}(\gamma w)}^\lambda(\gamma g)\text{ and }\nu^{\lambda}_{W^{0\pm}(w)}(g)=\nu^{\lambda}_{W^{0\pm}(\g w)}(\g g)$$
for any $\gamma\in \Gamma$ and $g\in W^{\pm}(w).$
\end{lem}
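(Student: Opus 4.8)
The plan is to verify the three equivariance statements one at a time, in the order they were defined, since each measure is built from the previous one and the earlier equivariance feeds into the later one. For $\mu^\lambda_{\cG^\pm_x\cT}$, the definition reads $\mu^\lambda_{\cG^\pm_x\cT}(w) = \mu_{x,\lambda}(w_\pm)$, where $w \mapsto w_\pm$ is the endpoint homeomorphism. Since the action of $\G$ on $\overline{\cT}$ extends the action on $\cT$ and is compatible with taking endpoints, $(\g w)_\pm = \g (w_\pm)$; combining this with the defining $\G$-equivariance $\g_* \mu_{x,\lambda} = \mu_{\g x,\lambda}$ from \eqref{eq:3.3} gives immediately $\mu^\lambda_{\cG^\pm_{\g x}\cT}(\g w) = \mu_{\g x,\lambda}((\g w)_\pm) = (\g_*\mu_{x,\lambda})(\g w_\pm) = \mu_{x,\lambda}(w_\pm) = \mu^\lambda_{\cG^\pm_x\cT}(w)$. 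This step is essentially bookkeeping.

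Next I would treat $\mu^\lambda_{W^\pm(w)}$. Here the key point is the cocycle identity for the Patterson-Sullivan density combined with the cocycle property of the Busemann function $\beta$ and the $\G$-equivariance of the Martin kernel. Concretely, the definition is $\mu^\lambda_{W^\pm(w)}(g) = k_\lambda^2(x_0, g(0), g_\mp) e^{P_\lambda \beta_{g_\mp}(x_0, g(0))} d\mu_{x_0,\lambda}(g_\mp)$. One first checks, as stated in the text, that this is independent of the base point $x_0$ (this follows from \eqref{eq:2.1}/the transformation rule \eqref{eq:3.4} and the Busemann cocycle identity — for a change of base point $x_0 \to x_0'$ the factor $k_\lambda^2 e^{P_\lambda \beta}$ picks up exactly $\tfrac{d\mu_{x_0',\lambda}}{d\mu_{x_0,\lambda}}$). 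Granting base-point independence, the $\G$-equivariance is then a one-line computation: apply $\g$, use $\g$-equivariance of $k_\lambda$ (i.e. $k_\lambda(\g x, \g y, \g \xi) = k_\lambda(x,y,\xi)$, which holds since $G_\lambda(\g x, \g y) = G_\lambda(x,y)$), use $\beta_{\g \xi}(\g x, \g y) = \beta_\xi(x,y)$, use $(\g g)(0) = \g(g(0))$ and $(\g g)_\mp = \g(g_\mp)$, and use \eqref{eq:3.3} once more for the measure $d\mu_{x_0,\lambda}$ — taking advantage of the freedom to place the base point at $x_0$ on the left and at $\g x_0$ (equivalently, re-use base-point independence) on the right.

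Finally, for $\nu^\lambda_{W^{0\pm}(w)}(g) = \theta^\lambda_{g'(0)}(g_\mp, w_\pm)^2 k_\lambda^2(w(0), g'(0), w_\pm) \, ds \, d\mu^\lambda_{W^\pm(w)}(g')$ where $g = \phi_s g'$ with $g' \in W^\pm(w)$, I would combine: the $\G$-invariance of the Lebesgue factor $ds$ under the identification (the flow parameter is unaffected by $\g$ since $\g$ commutes with $\phi_s$); the already-established equivariance of $\mu^\lambda_{W^\pm(w)}$ applied to the leaf through $\g w$; the equivariance of $k_\lambda$ applied to the factor $k_\lambda^2(w(0), g'(0), w_\pm)$; and the $\G$-invariance of the Näim kernel, namely $\theta^\lambda_{\g x}(\g \xi, \g \zeta) = \theta^\lambda_x(\xi, \zeta)$, which is immediate from the defining limit \eqref{3.13} and $G_\lambda(\g y, \g z) = G_\lambda(y,z)$. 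Assembling these, $\nu^\lambda_{W^{0\pm}(\g w)}(\g g) = \nu^\lambda_{W^{0\pm}(w)}(g)$ follows. The only genuine subtlety — and the step I would write out most carefully — is the base-point independence of $\mu^\lambda_{W^\pm(w)}$ and the correct handling of the base point when pushing forward by $\g$; once that is pinned down, each of the three equivariance assertions is a short computation using $\G$-invariance of $G_\lambda$, the Busemann cocycle identity, and \eqref{eq:3.3}.
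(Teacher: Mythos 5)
Your proposal is correct and follows essentially the same route as the paper: the first two equivariances come from \eqref{eq:3.3} together with the $\G$-invariance of $G_\lambda$ (hence of $k_\lambda$) and of the Busemann cocycle, and the equivariance of $\nu^\lambda_{W^{0\pm}(w)}$ is then the same term-by-term computation using the $\G$-invariance of the N\"aim kernel, of $k_\lambda$, of $ds$, and the already-established equivariance of $\mu^\lambda_{W^\pm(w)}$. Your extra care with base-point independence of $\mu^\lambda_{W^\pm(w)}$ is a detail the paper leaves implicit but is handled correctly.
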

\begin{proof}By \eqref{eq:3.3}, $\mu_{\cG^{\pm}_x\mathcal{T}}^\lambda,$ $\mu_{W^{\pm}(w)}^\lambda$ are $\G$-equivariant. Since $\mu_{W^{\pm}(w)}^\lambda$ is $\G$-equivariant, by definition, we have
\begin{eqnarray}
\nonumber \nu^{\lambda}_{W^{0\pm}(w)}(g)&=&\theta^\lambda_{g'(0)}(g_\mp,w_\pm)^2k^2_\lambda(w(0),g'(0),w_\pm)ds d\mu_{W^{\pm}(w)}^\lambda(g')\\
\nonumber&=&\theta^\lambda_{\g g'(0)}(\g g_\mp,\g w_\pm)^2k^2_\lambda(\g w(0),\g g'(0),\g w_\pm)ds d\mu_{W^{\pm}(\g w)}^\lambda(\g g')\\
&=&\nu^{\lambda}_{W^{0\pm}(\g w)}(\g g).\nonumber
\end{eqnarray}
This shows that $\mu_{W^{\pm}(w)}^\lambda$ is $\G$-equivariant.
\end{proof}
 
 \begin{lem}\label{decomposition} For any integrable function function $f$ on $\cG\mathcal{T}$ and $\lambda\in [0,\lambda_0]$, 
 $$\int_{\cG\mathcal{T}} f d m_\lambda=\int_{\cG_x^{\pm} \mathcal{T}}\int_{W^{0\pm}(w)}f(g)\nu^{\lambda}_{W^{0\pm}(w)}(g)\mu_{\cG_x^\pm\mathcal{T}}^\lambda(w)$$
 \end{lem}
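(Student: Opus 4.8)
The plan is to write both sides of the identity in the Hopf parametrization based at a fixed vertex $x$ and to reduce the claim to two elementary facts about tripods in $\mathcal{T}$ together with the cocycle property of the Naim kernel. By the symmetry $g\mapsto g(-\,\cdot\,)$ (which exchanges $\cG^+_x\mathcal{T}\leftrightarrow\cG^-_x\mathcal{T}$, $W^{0+}\leftrightarrow W^{0-}$ and preserves $m_\lambda$) it is enough to treat the $W^{0+}$ case. Fix $x\in\mathcal{T}$, and for $\xi\in\partial\mathcal{T}$ let $w_\xi\in\cG^+_x\mathcal{T}$ be the geodesic ray from $x$ with $(w_\xi)_+=\xi$; then $w\mapsto w_+$ identifies $\cG^+_x\mathcal{T}$ with $\partial\mathcal{T}$ and, by definition, $\mu^\lambda_{\cG^+_x\mathcal{T}}$ with $\mu_{x,\lambda}$. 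Since $\cG\mathcal{T}=\bigsqcup_{\xi\in\partial\mathcal{T}}W^{0+}(w_\xi)$, each leaf $W^{0+}(w_\xi)$ is parametrized by $W^+(w_\xi)\times\mathbb{R}$ via $(g',s)\mapsto\phi_s g'$, and $W^+(w_\xi)$ is parametrized by $\partial\mathcal{T}\setminus\{\xi\}$ via $g'\mapsto\eta:=g'_-$, the right-hand side of the claimed formula becomes, after Fubini, a triple integral over $(\eta,\xi,s)\in\partial\mathcal{T}\times\partial\mathcal{T}\times\mathbb{R}$ of $f(\phi_sg')$ against an explicit weight.

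First I would record the geometry. Let $c=c(x,\eta,\xi)$ be the centre of the tripod spanned by $x,\eta,\xi$, so $d(x,c)=(\eta|\xi)_x$. For $g'\in W^+(w_\xi)$ the condition $d(g'(t),w_\xi(t))\to 0$ forces $g'(t)=w_\xi(t)$ for all large $t$ (in a tree the two rays toward $\xi$ branch at a single point, and the distance of their corresponding points is constant, hence zero), so the clock of $g'$ is synchronized with that of $w_\xi$; thus $g'((\eta|\xi)_x)=c$ and $g'(0)$ lies on the branch toward $\eta$ at distance $(\eta|\xi)_x$ from $c$. Splitting distances at $c$ this yields (i) $g=\phi_sg'$ has Hopf coordinates $(\eta,\xi,(\eta|\xi)_x-s)$, so for fixed $\eta,\xi$ the change of variables $s\mapsto t$ carries $ds$ to the Lebesgue measure $dt$ in the flow direction; and (ii) $\beta_\eta(x,g'(0))=\lim_{z\to\eta}\bigl(d(x,z)-d(g'(0),z)\bigr)=2(\eta|\xi)_x$.

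Next I would invoke the transformation rule for the Naim kernel, immediate from \eqref{3.13} and $G_\lambda(a,b)=G_\lambda(b,a)$: for all $y\in\mathcal{T}$, $\theta^\lambda_y(\eta,\xi)=\theta^\lambda_x(\eta,\xi)\bigl(k_\lambda(x,y,\eta)k_\lambda(x,y,\xi)\bigr)^{-1}$. Taking $y=g'(0)$ and the free base point of $\mu^\lambda_{W^+(w_\xi)}$ equal to $w_\xi(0)=x$, the weight defining $\nu^\lambda_{W^{0+}(w_\xi)}$ collapses:
$$\theta^\lambda_{g'(0)}(\eta,\xi)^2\,k_\lambda^2(x,g'(0),\xi)\,k_\lambda^2(x,g'(0),\eta)\,e^{P_\lambda\beta_\eta(x,g'(0))}=\theta^\lambda_x(\eta,\xi)^2\,e^{2P_\lambda(\eta|\xi)_x},$$
where the last equality uses (ii) and $P_\lambda=\delta_\lambda$ (Theorem \ref{thm:1.4}). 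Combined with (i) and $d\mu^\lambda_{\cG^+_x\mathcal{T}}(w_\xi)=d\mu_{x,\lambda}(\xi)$, this is exactly the density of $m_\lambda$ in Hopf coordinates based at $x$; hence the two iterated integrals agree for $f\in C_c(\cG\mathcal{T})$ and, by a monotone-class argument, for every $m_\lambda$-integrable $f$.

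I expect the main obstacle to be the bookkeeping around the synchronized clock on $W^+(w_\xi)$: one must check that the parametrization $g'\mapsto g'_-$ is the one compatible with the base point $w_\xi(0)=x$ used in $\mu^\lambda_{W^+(w_\xi)}$, and locate $g'(0)$ accordingly, so that (i) and (ii) come out with the stated constants. Once this is pinned down the remaining steps are purely formal, using the independence of $m_\lambda$, $\mu^\lambda_{W^\pm(w)}$ and $\nu^\lambda_{W^{0\pm}(w)}$ from the auxiliary base point noted when these measures were introduced.
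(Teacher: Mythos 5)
Your proposal is correct and follows essentially the same route as the paper: both write $m_\lambda$ in Hopf coordinates based at $x$, use the identity $\beta_{g'_\mp}(x,g'(0))=2(g'_\mp|w_\pm)_x$ (you via the tripod, the paper via the cocycle relation \eqref{eq:2.1} and $(g'_-|g'_+)_{g'(0)}=0$), and absorb the base-point change through the Naim-kernel relation $\theta_x^\lambda(\xi,\zeta)=\theta_y^\lambda(\xi,\zeta)k_\lambda(x,y,\xi)k_\lambda(x,y,\zeta)$ together with the shift $t=(\eta|\xi)_x-s$ in the flow direction. The only cosmetic differences are your reduction to the $+$ case by time reversal and running the density computation in the collapsing rather than expanding direction.
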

 \begin{proof}  
 For any $w\in \cG_x^\pm \mathcal{T}$ and $g'\in W^\pm(w)$,
 \begin{eqnarray}\label{gromov}
 \nonumber2(g'_- |g'_+)_{g'(0)}&=&2(g_+' |g_-')_x+\beta_{g'_+}(g'(0),x)+\beta_{g'_-}(g'(0),x)\\
 \nonumber&=&2(g'_+ |g'_-)_x+\beta_{g'_\mp}(g'(0),x)+\beta_{w_\pm}(w(0),x)
 \\&=&2(g'_\mp |w'_\pm)_x-\beta_{g'_\mp}(x,g'(0)).\nonumber
 \end{eqnarray}
 Since $(g'_- |g'_+)_{g'(0)}=0$, we have $2(g'_\mp |w'_\pm)_x=\beta_{g'_\mp}(x,g'(0)).$
 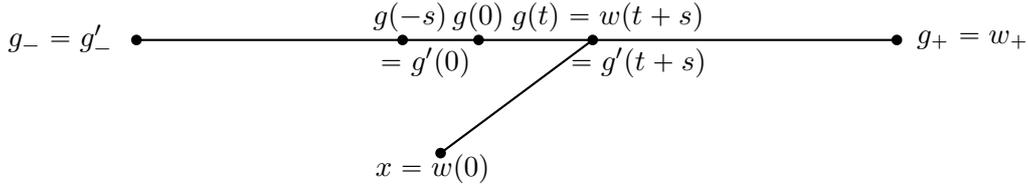
\begin{figure}[h]
 \begin{center}
\begin{tikzpicture}[scale=1]
  \draw [thick](-5,1) -- (5,1);   \draw [thick](-1,-0.5) -- (1,1);
  \node at (-6,1) {$g_-=g'_-$};\node at (-0.5,1.3) {$g(0)$};\node at (-1.4,1.3) {${g(-s)}$};\node at (-1.2,0.7) {$={g'(0)}$}; \node at (1.2,1.3) {$g(t)=w(t+s)$};\node at (1.6,0.7) {$=g'(t+s)$};\node at (6,1) {$g_+=w_+$}; \node at (-1.1,-0.7) {$x=w(0)$};\fill (-5,1)   circle (2pt); \fill (-0.5,1)    circle (2pt); \fill(-1.5,1)  circle (2pt);\fill (1,1)    circle (2pt); \fill (5,1)    circle (2pt);\fill (-1,-0.5)    circle (2pt);
\end{tikzpicture}
\end{center}
\caption{The map $(g_-,g_+,t)\mapsto(w,g',s)\in \mathcal{G}_x^+\mathcal{T}\times W^+(w)\times \mathbb{R}$}\label{figure5}
\end{figure}

 Let $(g_-,g_+,t)$ be Hopf parametrization of $g\in \cG\mathcal{T}$ with the base point $x$. Using the endpoint homeomorphism $w\mapsto w_\pm$ from $\cG_x^\pm\mathcal{T}$ to $\partial \mathcal{T}$ and the end point homeomorphism $g'\mapsto g'_\mp$ from $W^\pm(w)$ to $\partial \mathcal{T}\backslash\{w_\pm\}$, we have a triple $(w,g',s)\in \cG^\pm_x\mathcal{T}\times W^\pm(w)\times \mathbb{R}$ where $w_\pm=g_\pm'=g_\pm$, $g'_\mp=g_\mp$ and $\phi_s g'=g.$ Note that $(g_-|g_+)_x=t+s$ (see Figure \ref{figure5}). 
 
 The second equality below follows from the definition of $\mu_{\cG^{\pm}_x\mathcal{T}}^\lambda$ and $\mu_{W^{\pm}(w)}^\lambda$.
 \begin{displaymath}\begin{array}{rcl}
&&\nonumber \displaystyle\int_{\cG\mathcal{T}} f d m_\lambda\\
&=&\displaystyle\int_{g_\pm\in\partial\mathcal{T}}\int_{g_\mp\in\partial\mathcal{T}\backslash\{g_\pm\}}\int_{\mathbb{R}}f(g)\theta_x^\lambda(g_-,g_+)^2e^{2P_{\lambda}(g_-|g_+)_x}dtd\mu_{x,\lambda}(g_-)\mu_{x,\lambda}(g_+)\\ &=&\displaystyle\int_{w\in\cG_x^\pm\mathcal{T}}\int_{g'\in W^\pm(w)}\int_{\mathbb{R}}f(\phi_s g')k_\lambda^2(g'(0),x,g'_\mp)\theta_{x}^\lambda(g'_\mp,w_\pm)^2 dsd\mu_{W^\pm(w)}^\lambda(g')d\mu_{\cG_x^{\pm}\mathcal{T}}^\lambda(w).
\end{array}\end{displaymath}
 
Since $\cG\mathcal{T}=\bigcup_{w\in\cG_x^\pm\mathcal{T}} W^{0\pm}(w)$, $\theta_x^\lambda(\xi,\zeta)=\theta_y^\lambda(\xi,\zeta)k_\lambda(x,y,\xi)k_\lambda(x,y,\zeta)$ and $w(0)=x$ for all $w\in \cG_x^\pm\mathcal{T}$, we have
\begin{displaymath}
\begin{array}{rcl}
&&\displaystyle\int_{\cG\mathcal{T}} f d m_\lambda\\&=&\displaystyle\int_{w\in\cG_x^\pm\mathcal{T}}\int_{g'\in W^\pm(w)}\int_{\mathbb{R}}f(\phi_s g')k_\lambda^2(x,g'(0),w_\pm)\theta^\lambda_{g'(0)}(g'_\mp,w_\pm)^2 dsd\mu_{W^\pm(w)}^\lambda(g')d\mu_{\cG_x^{\pm}\mathcal{T}}^\lambda(w)\\
&=&\displaystyle\int_{w\in\cG^\pm_x\mathcal{T}}\int_{g\in W^{0\pm}(w)}f(g)d\nu_{W^{0\pm}(w)}^\pm(g)d\mu_{\cG_x^{\pm}\mathcal{T}}^\lambda(w).
 \end{array}
\end{displaymath}
 The above equation completes the proof.
 \end{proof}
 \subsection{Construction of H\"older continuous functions} In this section, using the measures introduced in the previous section, we construct H\"older continuous functions $h_{x,\lambda,\eta}^\pm$ and show the propositions related to $h_{x,\lambda,\eta}^\pm$. The propositions in this section will be also used in the Section \ref{subsec:5.4}. We recall that $\mathcal{G}_x\mathcal{T}$ is the space of geodesic lines such that $g(0)=x$.
 
 Let $w\in \cG_x^\pm \mathcal{T}$. Denote $$\cG_{x,\eta} \mathcal{T}:=\underset{s\in (-\eta,\eta)}\bigcup \phi_s\cG_x\mathcal{T}\text{ and }W_{x,\eta}^{0\pm}(w):=\underset{s\in (-\eta,\eta)}\bigcup \phi_s(W^{\pm}(w)\cap \cG_{x}\mathcal{T}).$$ 
 \begin{lem} \label{lem:5.9}For any $x\in \mathcal{T}$, $w\in \cG_x^\pm\mathcal{T}$ $\lambda\in[0,\lambda_0]$ and $t>0$,
 $$\nu^\lambda_{W^{0\pm}(w)}(W_{x,\eta}^{0\pm}(w))=2\eta\int_{\cO_{w(\pm t)}(x)}\theta^\lambda_x(\xi,w_\pm)^2\mu_{x,\lambda}(\xi).$$
 \end{lem}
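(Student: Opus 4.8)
The plan is to unwind the three nested definitions involved here --- the measure $\mu^\lambda_{W^\pm(w)}$ on $W^\pm(w)$, the product homeomorphism $W^\pm(w)\times\bR\to W^{0\pm}(w)$, and the measure $\nu^\lambda_{W^{0\pm}(w)}$ --- and then to evaluate the resulting integral over the explicit slab $W^{0\pm}_{x,\eta}(w)$, exploiting throughout that all the auxiliary base points may be taken to be $x$ itself, since $w(0)=x$ and every geodesic line in $\cG_x\mathcal{T}$ passes through $x$ at time $0$. I will carry out the $+$ case; the $-$ case is identical after replacing $g'_-,w_+,w(t)$ by $g'_+,w_-,w(-t)$.

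First I would identify the fiber $W^+(w)\cap\cG_x\mathcal{T}$ explicitly. If $g'\in W^+(w)$ and $g'(0)=x$, then $g'_+=w_+$ and, since $\mathcal{T}$ is a tree with $g'(0)=x=w(0)$, necessarily $g'|_{[0,\infty)}=w$; hence such a $g'$ is determined by its negative endpoint $g'_-$, and the only constraint on $g'_-$ is that the geodesic $[x,g'_-]$ leaves $x$ along an edge different from the first edge of $w$ (no backtracking at $x$), which is equivalent to $x\in[w(t),g'_-]$, i.e. $g'_-\in\cO_{w(t)}(x)$. Since this constraint does not involve $t$, the shadow $\cO_{w(t)}(x)$ is one and the same subset of $\partial\mathcal{T}$ for every $t>0$, and $g'\mapsto g'_-$ is a bijection of $W^+(w)\cap\cG_x\mathcal{T}$ onto $\cO_{w(t)}(x)$.

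Next I would collapse the cocycle factors on this fiber, taking the base point to be $x$, which is legitimate because $\mu^\lambda_{W^\pm(w)}$ does not depend on it. For $g'\in W^+(w)\cap\cG_x\mathcal{T}$ one has $g'(0)=x$, so $k_\lambda^2(x,g'(0),g'_-)=1$ and $\beta_{g'_-}(x,g'(0))=0$; hence $d\mu^\lambda_{W^+(w)}(g')=d\mu_{x,\lambda}(g'_-)$, which under the bijection above is simply $\mu_{x,\lambda}$ restricted to $\cO_{w(t)}(x)$. Likewise, writing $g=\phi_s g'$ with $g'\in W^+(w)\cap\cG_x\mathcal{T}$, we have $g_-=g'_-$ and $w(0)=x=g'(0)$, so the factor $k_\lambda^2(w(0),g'(0),w_+)$ in the definition of $\nu^\lambda_{W^{0+}(w)}$ equals $k_\lambda^2(x,x,w_+)=1$ and $\theta^\lambda_{g'(0)}(g_-,w_+)=\theta^\lambda_x(g'_-,w_+)$. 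Since under the product homeomorphism the set $W^{0+}_{x,\eta}(w)$ corresponds exactly to $\big(W^+(w)\cap\cG_x\mathcal{T}\big)\times(-\eta,\eta)$, integrating the $ds$-factor over $(-\eta,\eta)$ contributes $2\eta$ and integrating the surviving density $\theta^\lambda_x(g'_-,w_+)^2$ against $d\mu_{x,\lambda}(\xi)$, $\xi=g'_-\in\cO_{w(t)}(x)$, yields precisely $2\eta\int_{\cO_{w(t)}(x)}\theta^\lambda_x(\xi,w_+)^2\,d\mu_{x,\lambda}(\xi)$, as claimed.

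The step requiring the most care is the first one: checking that the strong stable fiber over $x$ is genuinely in bijection with $\cO_{w(\pm t)}(x)$ and, in particular, that this shadow is independent of $t>0$. This is exactly where the tree hypothesis enters --- through the fact that two asymptotic geodesic rays in a tree eventually coincide exactly, so that "no backtracking at $x$'' is a purely combinatorial condition on the edge through which $w$ leaves $x$ and carries no dependence on how far out along $w$ one looks. Everything after that is bookkeeping: tracking how $g_\mp$, $g'(0)$ and $w(0)$ all specialize to $x$ once one restricts to geodesics through $x$, after which the factors $k_\lambda^2(\cdot)$ and $e^{P_\lambda\beta}$ collapse to $1$ and only the $\theta^\lambda_x$ factor and the length $2\eta$ of the flow interval survive.
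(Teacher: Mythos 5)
Your proposal is correct and follows essentially the same route as the paper: unwind the definitions of $\mu^\lambda_{W^{\pm}(w)}$ and $\nu^{\lambda}_{W^{0\pm}(w)}$ with base point $x=w(0)=g'(0)$ so that the $k_\lambda$ and Busemann factors collapse to $1$, identify $W^{\pm}(w)\cap\cG_x\mathcal{T}$ with $\cO_{w(\pm t)}(x)$ via the endpoint map, and integrate the flow parameter over $(-\eta,\eta)$ to produce the factor $2\eta$. Your additional verification that the shadow is independent of $t>0$ and that the fiber--shadow correspondence is a bijection is exactly what the paper leaves implicit, so there is nothing to correct.
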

 \begin{proof} Since $w(\pm t) = g'(\pm t)$ for any $g'\in W^{\pm}(w)\cap \mathcal{G}_x\mathcal{T}$ and $t>0$, $g_\mp\in \mathcal{O}_{w(\pm t)}(x)$ for any $t>\eta$ and $g\in W_{x,\eta}^{\pm}(w)$. By definition of  $\nu^\lambda_{W^{0\pm}(w)}$ and $W_{x,\eta}^{0\pm}(w)$,
 \begin{equation}
\begin{split}
\nonumber \nu^\lambda_{W^{0\pm}(w)}(W_{x,\eta}^{0\pm}(w))&=\int_{-\eta}^\eta\int_{g'\in W^{\pm}(w)\cap \cG_x\mathcal{T}}\theta^\lambda_x(g'_\mp,w_\pm)^2\mu_{W^{\pm}(w)}^\lambda(g')ds\\
&=2\eta\int_{g'\in W^{\pm}(w)\cap \cG_x\mathcal{T}}\theta^\lambda_x(g'_\mp,w_\pm)^2\mu_{W^{\pm}(w)}^\lambda(g')\\
&=2\eta\int_{\cO_{w(\pm t)}(x)}\theta^\lambda_x(\xi,w_\pm)^2\mu_{x,\lambda}(\xi).
\end{split}
\end{equation}
The above equation completes the proof.
 \end{proof}
A map $f_x^\pm:\cG\mathcal{T}\rightarrow\cG^{\pm}_x\mathcal{T}$ is defined by $f_x^\pm(g):=w,$ where $w(0)=x$ and $w_\pm=g_\pm$. 
Then we have a function $h_{x,\lambda,\eta}^\pm:\cG\mathcal{T}\rightarrow \mathbb{R}$ defined by
\begin{displaymath}
h_{x,\lambda,\eta}^\pm(g):=\begin{cases}\displaystyle\frac{2\eta-2d(x,\pi (g)))}{\eta\nu^\lambda_{W^{0\pm}(w)}(W_{x,\eta}^{0\pm}(w))}& \text{ if }\,\,g\in \cG_{x,\eta}\mathcal{T}\\
0&\text{otherwise}
\end{cases}
\end{displaymath}
for any $g\in \cG\mathcal{T}$, where $f_x^\pm(g)=w.$ 
\begin{prop}\label{5.9}For any $\lambda\in [0,\lambda_0]$ and $\eta\in (0,1),$ the function $h_{x,\lambda,\eta}^\pm$ is H\"older continuous. The norm $\|h_{x,\lambda,\eta}^\pm\|_\alpha$ is bounded above by a constant which only depends on $\eta$. 
\end{prop}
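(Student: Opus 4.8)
The plan is to exhibit $h:=h_{x,\lambda,\eta}^{\pm}$ as a quotient $h=N/(\eta D)$ of a Lipschitz function by an $\alpha$-H\"older function that is pinched between two positive constants, and then to invoke the elementary fact that such a quotient is $\alpha$-H\"older. Here $N(g):=2\eta-2d(x,\pi(g))$ for $g\in\cG_{x,\eta}\mathcal{T}$ and $N(g):=0$ otherwise, while $D(g):=\nu^{\lambda}_{W^{0\pm}(w)}\big(W^{0\pm}_{x,\eta}(w)\big)$ with $w=f_x^{\pm}(g)$. Two preliminary reductions: by Lemma~\ref{lem:5.7} and the $\G$-invariance of $d_{\cG\mathcal{T}}$ one has $h_{\g x,\lambda,\eta}^{\pm}(\g g)=h_{x,\lambda,\eta}^{\pm}(g)$, so it suffices to treat $x$ a vertex in $\overline{T_0}$, which makes all constants below depend only on $\eta$ and the fixed data of $\mathcal{T}$ and $\G$, uniformly in $\lambda\in[0,\lambda_0]$; and by Lemma~\ref{lem:5.9}, $D(g)=2\eta\int_{\cO_{w(\pm t)}(x)}\theta_x^{\lambda}(\xi,w_{\pm})^2\,d\mu_{x,\lambda}(\xi)$ for every $t>0$, so $D(g)$ depends on $g$ only through the endpoint $g_{\pm}$.

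For $N$, I would first record that on $\overline{\cG_{x,\eta}\mathcal{T}}=\bigcup_{|s|\le\eta}\phi_s\cG_x\mathcal{T}$ one has $d(x,\pi(g))\le\eta$ with equality precisely on the boundary, so $N$ extends continuously by $0$ and $0\le N\le 2\eta$. Local Lipschitz continuity of $\pi$ ($d(\pi(g),\pi(g'))\le C\,d_{\cG\mathcal{T}}(g,g')$, from Lemma~\ref{lem:2.1}) handles pairs with both $g,g'\in\overline{\cG_{x,\eta}\mathcal{T}}$. The delicate case is $g'\in\cG_{x,\eta}\mathcal{T}$ and $g\notin\cG_{x,\eta}\mathcal{T}$: using the common-segment part of Lemma~\ref{lem:2.1} (for $d_{\cG\mathcal{T}}(g,g')$ small in terms of $\eta$, the shared geodesic segment of $g$ and $g'$ covers $[-\eta,\eta]$, with shift $s$ satisfying $|s|\le C\,d_{\cG\mathcal{T}}(g,g')$), the vertex $x$ lies on $g'$ at a parameter $u$ with $|u|\le\eta$ inside the shared segment, hence lies on $g$ at parameter $u-s$; since $g\notin\cG_{x,\eta}\mathcal{T}$ forces $d(x,\pi(g))=|u-s|\ge\eta$, we get $|u|\ge\eta-|s|$ and therefore $|N(g)-N(g')|=2\eta-2|u|\le 2|s|\le 2C\,d_{\cG\mathcal{T}}(g,g')$. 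This is precisely where Lemma~\ref{lem:2.1} is used to rule out a jump of $N$ across the boundary of $\cG_{x,\eta}\mathcal{T}$; it yields that $N$ is Lipschitz on all sufficiently small scales with a constant independent of $\eta$.

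For $D$, I would fix $t\in(0,l_m)$, so that $w(\pm t)$ lies in the interior of the edge $e=[x,v]$ of $w$ incident to $x$ and the shadow is $\cO_{w(\pm t)}(x)=\partial\mathcal{T}\setminus\cO_x(v)=\bigsqcup_{v'\sim x,\,v'\ne v}\cO_x(v')$; in particular it is independent of $t\in(0,l_m)$ and depends on $g$ only through which edge at $x$ the endpoint $g_{\pm}$ determines. On this shadow $(\xi|w_{\pm})_x\le t$, so $\theta_x^{\lambda}(\xi,w_{\pm})$ is pinched between two positive constants depending only on $t$: either by \eqref{eq:3.16} when $x$ lies on the line from $\xi$ to $w_{\pm}$, or via the cocycle identity $\theta_x^{\lambda}(\xi,w_{\pm})=\theta_p^{\lambda}(\xi,w_{\pm})\,k_\lambda(x,p,\xi)\,k_\lambda(x,p,w_{\pm})$ (with $p$ the projection of $x$ onto that line, $d(x,p)\le t$) together with Harnack's inequality \eqref{eq:2.2}. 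The lower bound on $D$ then reduces to $\mu_{x,\lambda}(\cO_{w(\pm t)}(x))\ge 2c_0>0$: each edge-shadow $\cO_x(v')$ has $\mu_{x,\lambda}$-mass bounded below uniformly, by Lemma~\ref{lem:3.4} applied to a point at distance in $(1,2]$ beyond $v'$, using $\delta_\lambda\le0$ and the uniform positive lower bound for $G_\lambda$ at bounded distance, and here the hypothesis $\deg x\ge 3$ is essential. The upper bound on $D$ follows from the fact that $\mu_{x,\lambda}(\partial\mathcal{T})$ is uniformly bounded, itself a consequence of Harnack's inequality \eqref{eq:2.2} and the normalization $\int_{T_0}\mu_{y,\lambda}(\partial\mathcal{T})\,d\mu(y)=1$. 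For H\"older continuity of $D$: when $d_{\cG\mathcal{T}}(g,g')$ is small, $g_{\pm}$ and $g'_{\pm}$ determine the same edge at $x$, so $D(g)$ and $D(g')$ are integrals of $\theta_x^{\lambda}(\xi,\cdot)^2\,d\mu_{x,\lambda}(\xi)$ over the \emph{same} domain; with $\theta_x^{\lambda}$ bounded there, $|\theta_x^{\lambda}(\xi,g_{\pm})^2-\theta_x^{\lambda}(\xi,g'_{\pm})^2|$ is controlled by $|\theta_x^{\lambda}(\xi,g_{\pm})-\theta_x^{\lambda}(\xi,g'_{\pm})|$, and $\theta_x^{\lambda}(\xi,\cdot)$ is $\alpha$-H\"older on the region $(\xi|\cdot)_x\le t$ uniformly in $\xi$ and $\lambda$ by the strong Ancona inequality \eqref{eq:2.5}, arguing as in the proof of Proposition~\ref{prop:2.11} and Corollary~\ref{coro:3.6}; integrating against $\mu_{x,\lambda}$ (whose total mass is bounded) and using $d_{\pi(g)}(g_{\pm},g'_{\pm})\le C^{1/2}d_{\cG\mathcal{T}}(g,g')^{1/2}$ from Lemma~\ref{lem:2.1} gives $|D(g)-D(g')|\le C(\eta)\,d_{\cG\mathcal{T}}(g,g')^{\alpha}$ after possibly shrinking $\alpha$.

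Finally I would combine: from $D\ge c(\eta)>0$ we get $\|h\|_{\infty}\le 2/c(\eta)$, and $|h(g)-h(g')|\le\eta^{-1}\big(|N(g)-N(g')|/D(g')+N(g)\,|D(g)-D(g')|/(D(g)D(g'))\big)$, together with the two previous steps and the bound $d\le\varepsilon^{1-\alpha}d^{\alpha}$ for $d=d_{\cG\mathcal{T}}(g,g')<\varepsilon$ on the fixed scale $\varepsilon=\varepsilon(\eta)$, gives $|h|_{\alpha}\le C(\eta)$; hence $h\in C_b^{\alpha}(\cG\mathcal{T})$ with $\|h\|_{\alpha}$ bounded by a constant depending only on $\eta$. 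The main obstacle I expect is the analysis of $D$: pinning down the shadow $\cO_{w(\pm t)}(x)$ explicitly, obtaining the \emph{uniform}-in-$(\lambda,x)$ two-sided control of $D$ (where $\deg x\ge 3$ and Lemma~\ref{lem:3.4} must be combined carefully), and the uniform H\"older dependence of the N\"aim kernel on its second variable; the boundary analysis of $N$ above is a secondary subtlety for which Lemma~\ref{lem:2.1} is the right tool.
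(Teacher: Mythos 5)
Your argument is essentially the paper's proof: the same ingredients appear in the same roles --- Lemma \ref{lem:5.9} together with \eqref{eq:3.9} and \eqref{eq:3.16} give the uniform lower bound $\nu^\lambda_{W^{0\pm}(w)}(W^{0\pm}_{x,\eta}(w))\gtrsim \eta$, the numerator is handled by a Lipschitz estimate in the flow direction, the dependence on the endpoint is controlled through the strong Ancona inequality (Corollary \ref{coro:2.7}) applied to the N\"aim kernel, and Lemma \ref{lem:2.1} converts everything into a bound in $d_{\cG\mathcal{T}}$, so your quotient $N/(\eta D)$ packaging and the explicit treatment of the boundary of the support are only refinements of the paper's two-step estimate. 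One small correction: $\Gamma$-invariance reduces the problem to $x\in\overline{T_0}$, not to $x$ a vertex, but your shadow analysis goes through unchanged when $x$ lies in the interior of an edge (the shadow $\cO_{w(\pm t)}(x)$ is then the boundary of the complementary half-tree and the uniform lower bound follows from the same shadow-lemma argument, with no need of the degree hypothesis at $x$), so this is a misstatement rather than a gap.
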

\begin{proof}
Let $f^{\pm}_{x}(g)=w$. By Lemma \ref{lem:5.9}, \eqref{eq:3.9} and \eqref{eq:3.16}, there exists a constant $C$ such that for any $\lambda\in [0,\lambda_0]$, $x\in \mathcal{T}$ and $g\in \cG_x\mathcal{T},$
\begin{equation}
\nonumber\nu^\lambda_{W^{0\pm}(w)}(W_{x,\eta}^{0\pm}(w))\overset{\substack{\text{Lem}\\\ref{lem:5.9}}}=2\eta\int_{\cO_{w(\pm 2\eta)}(x)}\theta^\lambda_x(\xi,w_\pm)^2\mu_{x,\lambda}(\xi)\overset{{\eqref{eq:3.9} + \eqref{eq:3.16}}}>2\eta C.
\end{equation}
 Since $f_x^\pm(\phi_s g)=f_x^\pm(g)=w$ for any $s\in\mathbb{R}$ and $g\in \cG_x\mathcal{T}$, we have for any $s_1,s_2 \in[-\eta,\eta],$
 \begin{equation} \label{A}
 |h_{x,\lambda,\eta}^\pm(\phi_{s_1} g)-h_{x,\lambda,\eta}^\pm(\phi_{s_2} g)|\leq \frac{2|s_1-s_2|}{\eta\nu^\lambda_{W^{0\pm}(w)}(W_{x,\eta}^{0\pm}(w))}<\frac{|s_1-s_2|}{\eta^2C}.
 \end{equation}
 
Let $f^{\pm}_{x}(g_i)=w_i$ for $i=1,2$. 
By Corollary \ref{coro:2.7}, there exist $C'>0$ and $\rho\in (0,1)$ such that for any $\lambda\in [0,\lambda_0]$, $g_1,g_2\in \cG_{x,\eta}\mathcal{T}$ with $(w_{1\pm}|w_{2\pm})_{\pi(g_1)}=t>1$, and $\xi\in \cO_{g_1(\pm 1)}(x),$ $$|\theta_x^\lambda(w_{1\pm},\xi)^2-\theta_x^\lambda(w_{2\pm},\xi)^2|\leq C'\rho^t.$$ 
 By Lemma \ref{lem:5.9}, the above inequality and \eqref{eq:3.9}, there exists a constant $C''$ satisfying
\begin{equation}\begin{split}
\nonumber&|\nu^\lambda_{W^{0\pm}(w_1)}(W_{x,\eta}^{0\pm}(w_1))-\nu^\lambda_{W^{0\pm}(w_2)}(W_{x,\eta}^{0\pm}(w_2))|\\
&\leq 2\eta\int_{\cO_{g_1(\pm 2\eta)}(x)}|\theta^\lambda_x(\xi,w_{1\pm})^2-\theta^\lambda_x(\xi,w_{2\pm})^2|\mu_{x,\lambda}(\xi)\leq 2\eta C''\rho^t.
\end{split}\end{equation}

For any $g_1,g_2\in \cG_x\mathcal{T}$ with $(w_{1\pm}|w_{2\pm})_{\pi(g_1)}=t>1$,
\begin{equation}\label{B}
\begin{split}
 |h_{x,\lambda,\eta}^\pm(g_1)-h_{x,\lambda,\eta}^\pm(g_2)|&=\frac{2|\nu^\lambda_{W^{0\pm}(w_1)}(W_{x,\eta}^{0\pm}(w_1))-\nu^\lambda_{W^{0\pm}(w_2)}(W_{x,\eta}^{0\pm}(w_2))|}{\eta^2|\nu^\lambda_{W^{0\pm}(w_1)}(W_{x,\eta}^{0\pm}(w_1))\nu^\lambda_{W^{0\pm}(w_2)}(W_{x,\eta}^{0\pm}(w_2))|}\\
&<\frac{|\nu^\lambda_{W^{0\pm}(w_1)}(W_{x,\eta}^{0\pm}(w_1))-\nu^\lambda_{W^{0\pm}(w_2)}(W_{x,\eta}^{0\pm}(w_2))|}{2\eta^4 C^2}\leq\frac{C''\rho^t}{\eta^3C^2}.
\end{split}
\end{equation}
 Let $\alpha:=\min\{\frac{1}{2},-\frac{1}{2}\log\rho\}.$ For any $g_1$, $g_2\in W^{0\pm}_{x,\eta}(w)$ with $g_1(0)=g_2(s)$ and $(g_{1+}|g_{2+})_{\pi(g_1)}=t$ for some sufficiently large $t$, by \eqref{A}, \eqref{B} and Lemma \ref{lem:2.1}, 
\begin{equation}\nonumber
\begin{split}
|h_{x,\lambda,\eta}^\pm(g_1)-h_{x,\lambda,\eta}^\pm(g_2)|\leq& |h_{x,\lambda,\eta}^\pm(g_1)-h_{x,\lambda,\eta}^\pm(\phi_s g_1)|+|h_{x,\lambda,\eta}^\pm(\phi_s g_1)-h_{x,\lambda,\eta}^\pm(g_2)|\\
\overset{\eqref{A}+\eqref{B}}\leq& \frac{|s|}{2\eta^2C}+\frac{C''\rho^t}{\eta^2C^3}=\frac{d(\pi(g_1),\pi(g_2))}{2\eta^2C}+\frac{C''}{\eta^3C^2}d_{\pi(g)}(g_{1+},g_{2+})^{-\log\rho}\\
\overset{\text{Lem }\ref{lem:2.1}}\leq& C'''(\frac{1}{2\eta^2C}+\frac{C''}{\eta^3C^2})d_{\mathcal{G}\mathcal{T}}(g_1,g_2)^{\alpha}.
\end{split}
\end{equation}
The constant $C'''$ is from Lemma \ref{lem:2.1}. The function $h_{x,\lambda,\eta}^\pm$ is $\alpha$-H\"older continuous and $\|h_{x,\lambda,\eta}^\pm\|_\alpha<C'''(\frac{1}{2\eta^2C}+\frac{C''}{\eta^3C^2})$. 
\end{proof}
Since Lemma \ref{lem:5.7} shows that for all $x\in \mathcal{T}$, $\g\in \G$ and $\lambda\in[0,\lambda_0]$,
 \begin{equation}\label{eq:5.5}
 h_{x,\lambda,\eta}^\pm(g)=h_{\g x,\lambda,\eta}^\pm(\g g),
 \end{equation} 
 we obtain a $\G$-invariant function $\widetilde{H}_{x,\lambda,\eta}$ on $\cG\mathcal{T}$ defined by 
  \begin{equation}\label{eq:5.6}
  \widetilde{H}_{x,\lambda,\eta}^\pm(g):=\sum_{\g\in \G}h_{\g x,\lambda,\eta}^\pm(g).
  \end{equation}
The equality \eqref{eq:5.5} also implies $\widetilde{H}_{x,\lambda,\eta}^\pm(g)=\widetilde{H}_{\g x,\lambda,\eta}^\pm(g)$ for any $g\in \cG\mathcal{T}.$
\begin{lem}\label{lem:5.8} For any point $x\in\mathcal{T},$ $w\in \cG_x^\pm\mathcal{T}$ and $\lambda\in [0,\lambda_0]$,
\begin{equation}\label{eq:5.7}
\int_{W^{0\pm}(w)}h_{x,\lambda,\eta}^\pm d\nu_{W^{0\pm}(w)}^\lambda=1.
\end{equation}\end{lem}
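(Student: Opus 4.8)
The plan is to compute the integral directly by restricting $\nu_{W^{0\pm}(w)}^\lambda$ to the set where $h_{x,\lambda,\eta}^\pm$ is nonzero and then invoking the normalizing computation already carried out in the proof of Lemma~\ref{lem:5.9}. Set $D:=\nu^\lambda_{W^{0\pm}(w)}(W_{x,\eta}^{0\pm}(w))$. Since $w\in\cG^\pm_x\mathcal{T}$ and every $g\in W^{0\pm}(w)$ satisfies $g_\pm=w_\pm$, we have $f_x^\pm(g)=w$ for all such $g$; hence along the leaf $W^{0\pm}(w)$ the denominator appearing in the definition of $h_{x,\lambda,\eta}^\pm$ is the constant $D$.

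First I would show that, inside $W^{0\pm}(w)$, the function $h_{x,\lambda,\eta}^\pm$ is supported on $W_{x,\eta}^{0\pm}(w)$, i.e.\ $\cG_{x,\eta}\mathcal{T}\cap W^{0\pm}(w)=W_{x,\eta}^{0\pm}(w)$: the inclusion $\supseteq$ is immediate from the definitions, and for $\subseteq$ one writes a point as $\phi_s h$ with $h\in\cG_x\mathcal{T}$ and $|s|<\eta$, observes that $h=\phi_{-s}g\in\cG_x\mathcal{T}\cap W^{0\pm}(w)$, and uses uniqueness of the geodesic ray from $x$ to $w_\pm$ to conclude $h\in W^\pm(w)\cap\cG_x\mathcal{T}$, so that $\phi_s h\in W_{x,\eta}^{0\pm}(w)$. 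Next I would parametrise $W_{x,\eta}^{0\pm}(w)$ by $(g',s)\in(W^\pm(w)\cap\cG_x\mathcal{T})\times(-\eta,\eta)$ via $g=\phi_s g'$ (the restriction of the product homeomorphism $W^\pm(w)\times\mathbb{R}\to W^{0\pm}(w)$); exactly as in the proof of Lemma~\ref{lem:5.9}, the fact that $g'(0)=x=w(0)$ makes $k_\lambda(w(0),g'(0),w_\pm)=1$ and $\theta^\lambda_{g'(0)}=\theta_x^\lambda$, so that in these coordinates $d\nu^\lambda_{W^{0\pm}(w)}=\theta_x^\lambda(g'_\mp,w_\pm)^2\,ds\,d\mu^\lambda_{W^\pm(w)}(g')$, while $\pi(\phi_s g')=g'(s)$ gives $d(x,\pi(\phi_s g'))=|s|$ and hence $h_{x,\lambda,\eta}^\pm(\phi_s g')=(2\eta-2|s|)/(\eta D)$.

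Then the computation is immediate: using $\int_{-\eta}^{\eta}(2\eta-2|s|)\,ds=2\eta^2$ and Fubini,
$$\int_{W^{0\pm}(w)}h_{x,\lambda,\eta}^\pm\,d\nu^\lambda_{W^{0\pm}(w)}=\frac{2\eta}{D}\int_{W^\pm(w)\cap\cG_x\mathcal{T}}\theta_x^\lambda(g'_\mp,w_\pm)^2\,d\mu^\lambda_{W^\pm(w)}(g'),$$
and the remaining integral equals $D/(2\eta)$ by the intermediate identity established in the proof of Lemma~\ref{lem:5.9}, namely $\nu^\lambda_{W^{0\pm}(w)}(W_{x,\eta}^{0\pm}(w))=2\eta\int_{W^\pm(w)\cap\cG_x\mathcal{T}}\theta_x^\lambda(g'_\mp,w_\pm)^2\,d\mu^\lambda_{W^\pm(w)}(g')$. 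The two factors cancel and the value is $1$. There is no serious obstacle here — the content of the statement is essentially that $h_{x,\lambda,\eta}^\pm$ was built so as to be normalized in this way — and the only point requiring care is the bookkeeping in the support identification, in particular checking that the $W^\pm(w)$-coordinate of any point of $W_{x,\eta}^{0\pm}(w)$ automatically lies in $\cG_x\mathcal{T}$, which is exactly what collapses the Radon--Nikodym factor $k_\lambda(w(0),g'(0),w_\pm)^2$ to $1$ and lets the constant $D$ from Lemma~\ref{lem:5.9} reappear unchanged.
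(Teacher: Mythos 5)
Your proof is correct and follows essentially the same route as the paper: parametrize $W^{0\pm}_{x,\eta}(w)$ by $(g',s)\in (W^\pm(w)\cap\cG_x\mathcal{T})\times(-\eta,\eta)$, note that $g'(0)=x=w(0)$ collapses the kernel factors, integrate $2(\eta-|s|)$ in $s$, and cancel against the normalizing constant via the identity in Lemma \ref{lem:5.9}. The extra care you take with the support identification is just the bookkeeping the paper leaves implicit.
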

\begin{proof}
Every $g\in W^{0\pm}_{x,\eta}(w)$ is of the form $g=\phi_s g'$ where $g'\in W^\pm(w)\cap \cG_x\mathcal{T}$ and $s\in [-\eta,\eta]$. By the definition of $\nu_{W^{0\pm}(w)}^\lambda$ and $h_{x,\lambda,\eta}^\pm$, we have 
\begin{equation}
\begin{split}
\nonumber\int_{W^{0\pm}(w)}h_{x,\lambda,\eta}^\pm d\nu_{W^{0\pm}(w)}^\lambda&=
\int_{W^{0\pm}_{x,\eta}(w)}\frac{2\eta-2d(x,\pi (g))}{\eta\nu^\lambda_{W^{0\pm}(w)}(W_{x,\eta}^{0\pm}(w))}d\nu_{W^{0\pm}(w)}^\lambda(g)\\
&=\frac{\int_{g'\in W^{\pm}(w)\cap \cG_x\mathcal{T}}\int_{-\eta}^\eta2(\eta-|s|)dsd\mu_{W^{0\pm}(w)}^\lambda(g')}{\eta\nu^\lambda_{W^{0\pm}(w)}(W_{x,\eta}^{0\pm}(w))}\\
&=\frac{2\eta\int_{\xi\in\mathcal{O}_{w(\pm 2\eta)}(x)}\theta^\lambda_x(\xi,w_\pm)^2d\mu_{x,\lambda}(\xi)}{\nu^\lambda_{W^{0\pm}(w)}(W_{x,\eta}^{0\pm}(w))}\overset{\substack{\text{Lem}\\\ref{lem:5.9}}}=1.
\end{split}
\end{equation}
Lemma \ref{lem:5.9} shows the last equation above. 
\end{proof}
\begin{lem}\label{lem:5.10} Let $f$ be a bounded non-negative H\"older continuous function on $\cG^\pm\mathcal{T}$. For any $\epsilon>0$, there exists a constant $\eta'$ such that for any $x\in \mathcal{T}$, $\eta<\eta'$ and $\lambda\in [0,\lambda_0]$,
\begin{equation}
\int_{\cG\mathcal{T}} fh_{x,\lambda,\eta}^\pm dm_\lambda\asymp_{(1+\epsilon)}\int_{\cG_x^\pm \mathcal{T}}f(w)d\mu_{\cG_x^\pm\mathcal{T}}^\lambda(w).
\end{equation}
\end{lem}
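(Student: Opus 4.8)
The plan is to combine the leaf decomposition of $m_\lambda$ from Lemma~\ref{decomposition} with the normalization in Lemma~\ref{lem:5.8}, and then to exploit that $h_{x,\lambda,\eta}^\pm$ is concentrated very near $w$ on each leaf $W^{0\pm}(w)$. Concretely, applying Lemma~\ref{decomposition} to the function $g\mapsto f(g)\,h_{x,\lambda,\eta}^\pm(g)$ on $\cG\mathcal{T}$ (where $f$ is regarded as a function on $\cG\mathcal{T}$ in the evident way, $f(g)$ being the value of $f$ on the $\pm$-ray of $g$ based at $g(0)$ — it is precisely this identification, rather than the one through $f_x^\pm$, that produces a comparison only up to a factor $1+\epsilon$) gives
\[
\int_{\cG\mathcal{T}} f\,h_{x,\lambda,\eta}^\pm\,dm_\lambda
=\int_{\cG_x^\pm\mathcal{T}}\Bigl(\int_{W^{0\pm}(w)} f(g)\,h_{x,\lambda,\eta}^\pm(g)\,d\nu^{\lambda}_{W^{0\pm}(w)}(g)\Bigr)\,d\mu_{\cG_x^\pm\mathcal{T}}^\lambda(w).
\]
Since $h_{x,\lambda,\eta}^\pm\ge 0$, vanishes off $\cG_{x,\eta}\mathcal{T}$, and satisfies $\int_{W^{0\pm}(w)} h_{x,\lambda,\eta}^\pm\,d\nu^{\lambda}_{W^{0\pm}(w)}=1$ by Lemma~\ref{lem:5.8}, the inner integral is a probability average of $f$ over $W_{x,\eta}^{0\pm}(w)$, and so the whole problem reduces to showing that this average is close to $f(w)$, uniformly in $x,\lambda,w$.

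For that comparison I would use the geometric fact that every $g\in W_{x,\eta}^{0\pm}(w)$ has the form $\phi_s g'$ with $g'\in W^{\pm}(w)\cap\cG_x\mathcal{T}$ and $|s|<\eta$; since $g'(0)=x$ and $g'_\pm=w_\pm$, the $\pm$-ray of $g'$ is $w$ itself, while the $\pm$-ray of $g$ is issued from $g(0)$ with $d(g(0),x)=|s|<\eta$. In a tree, two geodesic rays with a common endpoint issued from points at distance less than $\eta$ coincide after time $O(\eta)$, so the $\pm$-ray of $g$ lies within $d_{\cG\mathcal{T}}$-distance $C\eta$ of $w$, with $C$ absolute. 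By $\alpha$-H\"older continuity of $f$ this gives $|f(g)-f(w)|\le C^{\alpha}|f|_\alpha\,\eta^{\alpha}$ for all $g$ in the support of $h_{x,\lambda,\eta}^\pm$, hence
\[
\Bigl|\int_{W^{0\pm}(w)} f\,h_{x,\lambda,\eta}^\pm\,d\nu^{\lambda}_{W^{0\pm}(w)}-f(w)\Bigr|\le C^{\alpha}|f|_\alpha\,\eta^{\alpha},
\]
and integrating against $\mu_{\cG_x^\pm\mathcal{T}}^\lambda$, whose total mass equals $\mu_{x,\lambda}(\partial\mathcal{T})$, yields the additive bound
\[
\Bigl|\int_{\cG\mathcal{T}} f\,h_{x,\lambda,\eta}^\pm\,dm_\lambda-\int_{\cG_x^\pm\mathcal{T}} f(w)\,d\mu_{\cG_x^\pm\mathcal{T}}^\lambda(w)\Bigr|\le C^{\alpha}|f|_\alpha\,\eta^{\alpha}\,\mu_{x,\lambda}(\partial\mathcal{T}).
\]

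To turn this additive estimate into the multiplicative comparison $\asymp_{1+\epsilon}$ of the statement, I would invoke uniform two-sided bounds on $\mu_{x,\lambda}(\partial\mathcal{T})$: since $\g_*\mu_{x,\lambda}=\mu_{\g x,\lambda}$, the total mass is $\G$-invariant and hence factors through the compact quotient $\G\backslash\mathcal{T}$, and on a fixed fundamental domain it is pinched between positive constants by the normalization $\int_{T_0}\mu_{y,\lambda}(\partial\mathcal{T})\,d\mu(y)=1$ together with \eqref{eq:3.4}, the Harnack inequality \eqref{eq:2.2} and boundedness of $\delta_\lambda$, all with constants independent of $\lambda\in[0,\lambda_0]$. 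When $f$ is bounded below by a positive constant (the case relevant to the applications), $\int_{\cG_x^\pm\mathcal{T}} f\,d\mu_{\cG_x^\pm\mathcal{T}}^\lambda$ is comparable to $\mu_{x,\lambda}(\partial\mathcal{T})$, so the additive error above is at most $\epsilon$ times the right-hand side once $\eta<\eta'$, with $\eta'$ depending only on $\epsilon$ and $f$; this is the asserted $\asymp_{1+\epsilon}$. I expect the only genuine work to be this uniformity bookkeeping — that the H\"older modulus of $f$, the confluence estimate $d_{\cG\mathcal{T}}(\cdot,w)\le C\eta$, and the two-sided bounds on $\mu_{x,\lambda}(\partial\mathcal{T})$ are all uniform in $x$ and $\lambda$ (the continuity $\lambda\mapsto\mu_{x,\lambda}$ of Lemma~\ref{2203311} can be used here if one prefers a compactness argument in $\lambda$); everything else is a direct substitution into Lemmas~\ref{decomposition} and~\ref{lem:5.8}.
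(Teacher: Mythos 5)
Your overall route is the same as the paper's: regard $f$ as a function on $\cG\mathcal{T}$ through the ray based at $\pi(g)$, apply the leaf decomposition of Lemma \ref{decomposition} to $f\,h^{\pm}_{x,\lambda,\eta}$, use Lemma \ref{lem:5.8} to view the inner integral as an average of $f$ over $W^{0\pm}_{x,\eta}(w)$, and compare that average with $f(w)$ using H\"older continuity together with the observation that every $g$ in the support of $h^{\pm}_{x,\lambda,\eta}$ is a time-shift by at most $\eta$ of a line whose $\pm$-ray is exactly $w$. Up to that point your argument matches the paper's proof step for step.

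Where you genuinely deviate is the final step, and it leaves a gap relative to the statement as written. You only derive the additive estimate $|f(g)-f(w)|\le C|f|_{\alpha}\eta^{\alpha}$, and to upgrade it to the multiplicative comparison $\asymp_{1+\epsilon}$ you must assume that $f$ is bounded below by a positive constant (your parenthetical ``the case relevant to the applications'') and invoke uniform two-sided bounds on $\mu_{x,\lambda}(\partial\mathcal{T})$. The lemma, however, is stated for every bounded non-negative H\"older $f$, and in Theorem \ref{thm:5.14} it is invoked for functions that are only assumed to have positive integral; with your conversion, $\eta'$ would depend on the lower bound of $f$ (equivalently on $\inf_{x,\lambda}\int_{\cG_x^{\pm}\mathcal{T}}f\,d\mu_{\cG_x^{\pm}\mathcal{T}}^{\lambda}$), not merely on $\epsilon$ and the H\"older data of $f$. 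The paper's proof avoids any estimate on $\mu_{x,\lambda}(\partial\mathcal{T})$ altogether: it asserts the pointwise multiplicative comparison $f(g)\asymp_{1+\epsilon}f(w)$ for all $g\in W^{0\pm}_{x,\eta'}(w)$, pulls $f(w)$ out of the inner integral, and then Lemma \ref{lem:5.8} finishes the computation in one line, so that $\eta'$ depends only on $\epsilon$ and $f$. (To be fair, justifying that multiplicative comparison from H\"older continuity alone also implicitly requires $f$ to be bounded away from zero on the relevant set, so your explicitness points at a real imprecision in the paper; but taken as a proof of Lemma \ref{lem:5.10} as stated, your argument establishes a strictly weaker statement, and that is the step you would need to repair.)
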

\begin{proof}The function $f$ is considered as a bounded non-negative H\"older function on $\cG\mathcal{T}$ when we define $f(g):=f(g|_{[0,\infty)})$ ($f(g):=f(g|_{(-\infty,0]})$, resp.). There exists a constant $\eta'$ such that $f(w)\asymp_{(1+\epsilon)}f(g)$ for all $g\in W^{0\pm}_{x,\eta'}(w)$. Using Lemma \ref{decomposition}, we obtain the first equality of \eqref{eq::5.8}. By H\"older continuity of $f$, the second line of \eqref{eq::5.8} holds. Lemma \ref{lem:5.8} shows the last equality of \eqref{eq::5.8}. 
 \begin{equation}\label{eq::5.8}
\begin{split}
\displaystyle\int_{\cG\mathcal{T}} fh_{x,\lambda,\eta}^\pm dm_\lambda&\overset{\substack{\text{Lem}\\\ref{decomposition}}}=\displaystyle\int_{w\in\cG_x^\pm\mathcal{T}}\int_{g\in W^{\pm}_{x,\eta}(w)}f(g)h^\pm_{x,\lambda,\eta}(g)d\nu_{W^{0\pm}(w)}^\lambda(g)\mu_{\cG^{\pm}_x\mathcal{T}}^\lambda(w)\\
&\asymp_{(1+\epsilon)}\displaystyle\int_{w\in\cG_x^\pm\mathcal{T}}f(w)\int_{g\in W^{\pm}_{x,\eta}(w)}h^\pm_{x,\lambda,\eta}(g)d\nu_{W^{0\pm}(w)}^\lambda(g)\mu_{\cG^{\pm}_x\mathcal{T}}^\lambda(w)\\
&\overset{\substack{\text{Lem}\\ \ref{lem:5.8}}}=\displaystyle\int_{\cG_x^\pm \mathcal{T}}fd\mu_{\cG_x^\pm\mathcal{T}}^\lambda.
\end{split}
\end{equation}
Lemma \ref{lem:5.10} follows from \eqref{eq::5.8}.
\end{proof}
Let $(\mathcal{G}\mathcal{T})_0$ be a fundametal domain of $\Gamma$ in $\mathcal{G}\mathcal{T}$.
\begin{lem}\label{H} For any $\Gamma$-invariant bounded non-negative H\"older continuous function $f$, 
\begin{equation}
\int_{(\mathcal{G}\mathcal{T})_0}f\widetilde{H}_{x,\lambda,\eta'}^\pm dm_\lambda=\int_{\cG\mathcal{T}} fh_{x,\lambda,\eta'}^\pm dm_\lambda
\end{equation}
\end{lem}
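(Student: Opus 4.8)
The plan is to prove Lemma~\ref{H} by the standard unfolding of a $\G$-periodization against a fundamental domain, using only the equivariance relation \eqref{eq:5.5}, the $\G$-invariance of $f$ and of $m_\lambda$, and the non-negativity of $f$ (which is what makes the interchange of summation and integration unconditional, via Tonelli's theorem).

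First I would record that $\widetilde H_{x,\lambda,\eta'}^\pm$ is well defined pointwise: by construction $h_{\g x,\lambda,\eta'}^\pm(g)\neq 0$ only if $g\in\cG_{\g x,\eta'}\mathcal{T}$, and every element of $\cG_{\g x,\eta'}\mathcal{T}$ projects into the ball $B(\g x,\eta')$, so properness of the $\G$-action forces all but finitely many terms of $\sum_{\g\in\G}h_{\g x,\lambda,\eta'}^\pm(g)$ to vanish; hence $\widetilde H_{x,\lambda,\eta'}^\pm$ is a locally finite sum of the non-negative Hölder functions of Proposition~\ref{5.9}. Next, rewriting \eqref{eq:5.5} as $h_{\g x,\lambda,\eta'}^\pm(g)=h_{x,\lambda,\eta'}^\pm(\g^{-1}g)$ and applying Tonelli's theorem on $\G\times(\cG\mathcal{T})_0$ (with counting measure on $\G$),
\[
\int_{(\cG\mathcal{T})_0}f\,\widetilde H_{x,\lambda,\eta'}^\pm\,dm_\lambda
=\sum_{\g\in\G}\int_{(\cG\mathcal{T})_0}f(g)\,h_{x,\lambda,\eta'}^\pm(\g^{-1}g)\,dm_\lambda(g).
\]
In the $\g$-th summand I would substitute $g\mapsto\g g$; since $m_\lambda$ is $\G$-invariant and $f(\g g)=f(g)$ by $\G$-invariance of $f$, the summand becomes $\int_{\g^{-1}(\cG\mathcal{T})_0}f(g)\,h_{x,\lambda,\eta'}^\pm(g)\,dm_\lambda(g)$. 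Finally, because $(\cG\mathcal{T})_0$ is a fundamental domain, the translates $\{\g^{-1}(\cG\mathcal{T})_0\}_{\g\in\G}$ cover $\cG\mathcal{T}$ with pairwise $m_\lambda$-null overlaps, so summing over $\g$ yields $\int_{\cG\mathcal{T}}f\,h_{x,\lambda,\eta'}^\pm\,dm_\lambda$, which is the claim. For completeness I would also note that both sides are finite: $f$ is bounded, and taking $f\equiv1$ in Lemma~\ref{decomposition} together with Lemma~\ref{lem:5.8} gives $\int_{\cG\mathcal{T}}h_{x,\lambda,\eta'}^\pm\,dm_\lambda=\mu_{x,\lambda}(\partial\mathcal{T})<\infty$, so $f\,h_{x,\lambda,\eta'}^\pm$ is $m_\lambda$-integrable.

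I do not expect a serious obstacle here; the argument is essentially bookkeeping. The only points needing care are the local finiteness of the sum defining $\widetilde H_{x,\lambda,\eta'}^\pm$ (so that it makes sense pointwise as a Borel function) and the measure-theoretic fact that the $\G$-translates of $(\cG\mathcal{T})_0$ tile $\cG\mathcal{T}$ up to an $m_\lambda$-null set, and both follow at once from the properness of the $\G$-action on $\cG\mathcal{T}$ and from the definition of a fundamental domain. The hypothesis that $f$ be non-negative is used precisely to license the unconditional Tonelli interchange of $\sum$ and $\int$.
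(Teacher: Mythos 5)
Your proposal is correct and follows essentially the same route as the paper's proof: unfold $\widetilde H_{x,\lambda,\eta'}^\pm$ via the equivariance \eqref{eq:5.5}, interchange sum and integral, and use the $\G$-invariance of $f$ and $m_\lambda$ together with the fundamental-domain tiling to reassemble $\int_{\cG\mathcal{T}} f\,h_{x,\lambda,\eta'}^\pm\,dm_\lambda$. The only difference is that you make explicit the Tonelli interchange, the local finiteness of the sum, and the finiteness of both sides, points the paper leaves implicit.
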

\begin{proof}The second line follow from \eqref{eq:5.5}. 
\begin{equation}
\begin{split}
\nonumber\int_{(\mathcal{G}\mathcal{T})_0}f\widetilde{H}_{x,\lambda,\eta'}^\pm dm_\lambda&=\int_{(\mathcal{G}\mathcal{T})_0} f(g)\sum_{\g\in \G}h_{\gamma x,\lambda,\eta'}^\pm(g) dm_\lambda(g)\\
&\overset{\eqref{eq:5.5}}=\int_{(\mathcal{G}\mathcal{T})_0} f(g)\sum_{\g\in \G}h_{ x,\lambda,\eta'}^\pm(\gamma^{-1}g) dm_\lambda(g)\\
&=\sum_{\g\in \G}\int_{(\mathcal{G}\mathcal{T})_0} f(\gamma^{-1} g)h_{ x,\lambda,\eta'}^\pm(\gamma^{-1}g) dm_\lambda(g)=\int_{\cG\mathcal{T}} fh_{x,\lambda,\eta'}^\pm dm_\lambda.
\end{split}
\end{equation}
The above equation completes the proof.
\end{proof}
\subsection{Application of rapid mixing}\label{subsec:5.4}
Let $\cG\mathcal{T}_{x,y}$ be the set of geodesic lines $g$ satisfying $\pi (g)= x$ and $\pi(\phi_{d(x,y)}g)=y$. Denote $\cG\mathcal{T}_{x,y,\eta}:=\bigcup_{s\in (-\eta,\eta)}\phi_s \cG\mathcal{T}_{x,y}.$ Define a function $a_{\eta}$ on $\mathbb{R}$ as follows:
\begin{displaymath}
a_{\eta}(t):=\begin{cases} \frac{1}{6} (2\eta-|t|)^3 & \text{if } \eta\leq|t|\leq 2\eta\\
\frac{1}{2}|t|^3-\eta t^2+\frac{2}{3}\eta^3 & \text{if } |t|\leq \eta\\ 
0 & \text {otherwise}.
\end{cases}
\end{displaymath} 
The function $a_{\eta}(t)$ satisfies that $\int_{-2\eta}^{2\eta}a_{\eta}dt=\eta^4.$ Then we have the following lemma.
\begin{lem}\label{lem:5.11} Let $f$ be a $\G$-invariant non-negative H\"older continuous function on $\cG^+\mathcal{T}$. For any $\epsilon>0$, there exist constants $R_1$, $\eta_1$ and $\delta_1$ such that for any $\lambda\in[\lambda_0-\delta_1,\lambda_0]$, $r\geq R_1$, $x,y$ with $|d(x,y)-r|\leq2\eta\leq 2\eta_1$ and $g_x^y\in \cG\mathcal{T}_{x,y}$,  
$$\int_{\cG\mathcal{T}} fh_{x,\lambda,\eta}^+ h_{y,\lambda,\eta}^-\circ \phi_{r}dm_\lambda  \asymp_{(1+\epsilon)^3}\frac{a_{\eta}(r-d(x,y))}{\eta^4}G_\lambda^2(x,y)e^{-P_{\lambda}d(x,y)}f(g_x^y).$$
\end{lem}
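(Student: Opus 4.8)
The plan is to unfold the integral against $m_\lambda$ by a leafwise disintegration adapted to the two bump functions, reduce it to a one–dimensional integral in the flow direction times a boundary double integral, and identify the latter with $G_\lambda^2(x,y)e^{-P_\lambda d(x,y)}$ via the shadow lemma. First I would dispose of $f$: any geodesic line $g$ in the support of $h_{x,\lambda,\eta}^{+}\,(h_{y,\lambda,\eta}^{-}\circ\phi_r)$ passes through $x$ at a time in $(-\eta,\eta)$ and through $y$ at a time in $(r-\eta,r+\eta)$, hence $[x,y]\subset g$ and $g|_{[0,\infty)}$ agrees with $g_x^y|_{[0,\infty)}$ along $[x,y]$, so $d_{\cG\mathcal{T}}(g,g_x^y)=O(\eta+e^{-2\alpha d(x,y)})$. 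By $\alpha$-Hölder continuity of $f$, for $\eta\le\eta_1$ and $r\ge R_1$ this gives $f(g)\asymp_{1+\epsilon}f(g_x^y)$ on the whole support (if $f(g_x^y)=0$ both sides are $O(\eta^\alpha)$ and the estimate is read accordingly), which accounts for one factor $1+\epsilon$; it remains to estimate $\int_{\cG\mathcal{T}}h_{x,\lambda,\eta}^{+}\,(h_{y,\lambda,\eta}^{-}\circ\phi_r)\,dm_\lambda$.

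Next I would disintegrate $m_\lambda$ along the foliation to which $h_{y,\lambda,\eta}^{-}$ is adapted. By $\phi_r$-invariance the integral equals $\int(h_{x,\lambda,\eta}^{+}\circ\phi_{-r})\,h_{y,\lambda,\eta}^{-}\,dm_\lambda$, and Lemma~\ref{decomposition} in the $-$ direction turns it into $\int_{\cG_y^-\mathcal{T}}\big(\int_{W^{0-}(w)}(h_{x,\lambda,\eta}^{+}\circ\phi_{-r})\,h_{y,\lambda,\eta}^{-}\,d\nu^\lambda_{W^{0-}(w)}\big)\,d\mu^\lambda_{\cG_y^-\mathcal{T}}(w)$. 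On each leaf use the product coordinates $W^{0-}(w)=(W^{-}(w)\cap\cG_y\mathcal{T})\times\mathbb{R}$, $(g',s)\mapsto\phi_s g'$; since $g'(0)=y=w(0)$ the Martin-kernel weights in $\nu^\lambda_{W^{0-}(w)}$ are $1$, so $d\nu^\lambda_{W^{0-}(w)}=\theta^\lambda_y(w_-,g'_+)^2\,ds\,d\mu_{y,\lambda}(g'_+)$, and from the definition of $h^\pm$ and Lemma~\ref{lem:5.9} one gets $h_{y,\lambda,\eta}^{-}(\phi_s g')=(\eta-|s|)_+\big/\eta^2\Psi(w_-)$ and $h_{x,\lambda,\eta}^{+}(\phi_{s-r}g')=(\eta-|s+\delta|)_+\big/\eta^2\Phi(g'_+)$, where $\delta:=d(x,y)-r$, $2\eta\Psi(w_-):=\nu^\lambda_{W^{0-}(w)}(W^{0-}_{y,\eta}(w))$ and $2\eta\Phi(\xi):=\nu^\lambda_{W^{0+}(w_\xi)}(W^{0+}_{x,\eta}(w_\xi))$ with $w_\xi$ the ray from $x$ to $\xi$. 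The verifications needed here are: the integrand vanishes unless $w_-\in\cO_y(x)$ (otherwise no line of $W^{0-}(w)$ meets $x$) and $y\in(w_-,g'_+)$; the line $\phi_s g'$ meets $x$ at time $-s-d(x,y)$, so $\phi_{s-r}g'$ meets $x$ at time $-(s+\delta)$; and $\Psi,\Phi$ are positive and bounded above and below by absolute constants by Lemma~\ref{lem:5.9} with \eqref{eq:3.9} and \eqref{eq:3.16}, while the normalization identity of Lemma~\ref{lem:5.8} gives $\Psi(w_-)=\int_{\{y\in(w_-,\zeta)\}}\theta^\lambda_y(w_-,\zeta)^2\,d\mu_{y,\lambda}(\zeta)$.

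Now the $s$-integral factors out: $\int_{\mathbb{R}}(\eta-|s|)_+(\eta-|s+\delta|)_+\,ds$ is the auto-convolution of the triangle function $s\mapsto(\eta-|s|)_+$, and a short computation identifies it with the piecewise-cubic $a_\eta(|\delta|)=a_\eta(r-d(x,y))$ (in particular its total mass is $\eta^4$, consistent with the stated property of $a_\eta$). Thus $\int h_{x,\lambda,\eta}^{+}(h_{y,\lambda,\eta}^{-}\circ\phi_r)\,dm_\lambda=\frac{a_\eta(r-d(x,y))}{\eta^4}\,J$ with
\[
J=\int_{w_-\in\cO_y(x)}\frac{1}{\Psi(w_-)}\Big(\int_{\{y\in(w_-,g_+)\}}\frac{\theta^\lambda_y(w_-,g_+)^2}{\Phi(g_+)}\,d\mu_{y,\lambda}(g_+)\Big)\,d\mu_{y,\lambda}(w_-).
\]
Since $w_-\in\cO_y(x)$ and $y\in(w_-,g_+)$ force $y$ onto the geodesic $(w_-,g_+)$, \eqref{eq:3.16} gives $\theta^\lambda_y(w_-,g_+)\asymp1$, and $\Psi(w_-),\Phi(g_+)\asymp1$; hence $J\asymp\int_{\cO_y(x)}\mu_{y,\lambda}(\{y\in(w_-,\zeta)\})\,d\mu_{y,\lambda}(w_-)\asymp\mu_{y,\lambda}(\cO_y(x))$, and Mohsen's shadow lemma (Lemma~\ref{lem:3.4}, base point $y$) identifies $\mu_{y,\lambda}(\cO_y(x))$ with $e^{-P_\lambda d(x,y)}G_\lambda^2(x,y)$ up to an absolute constant.

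The main obstacle is precisely the last step: the computation above already shows $J\asymp_C G_\lambda^2(x,y)e^{-P_\lambda d(x,y)}$ with a \emph{universal} constant $C$, but to squeeze $C$ to within $(1+\epsilon)^2$ of $1$ for $r$ large one must rerun all the kernel comparisons — those bounding $\theta^\lambda_y$, $\Psi$, $\Phi$, and the comparison inside the proof of Lemma~\ref{lem:3.4} — using the quantitatively sharp strong Ancona inequality \eqref{eq:2.5} in place of the plain Ancona inequality \eqref{eq:2.4}: its multiplicative error $1+O(\rho^{\,n})$ with $n$ of order $d(x,y)\ge r-2\eta_1$ tends to $1$ as $r\to\infty$. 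The continuity in $\lambda$ furnished by Proposition~\ref{prop:2.11}, Lemma~\ref{2203311} and Corollary~\ref{coro:3.6} then allows one to choose $\delta_1,\eta_1$ small and $R_1$ large uniformly over $\lambda\in[\lambda_0-\delta_1,\lambda_0]$. Combining this with Steps 1–3 gives the asserted estimate, with $R_1,\eta_1,\delta_1$ as above.
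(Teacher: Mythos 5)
Your reduction steps are essentially the paper's: removing $f$ by H\"older continuity, the identification of the support of $h^+_{x,\lambda,\eta}\,(h^-_{y,\lambda,\eta}\circ\phi_r)$ with $\cG\mathcal{T}_{x,y,\eta}$, the flow-direction integral producing $a_\eta(r-d(x,y))$ as the auto-convolution of the triangle function, and the resulting boundary double integral $J$ over $\cO_y(x)\times\cO_x(y)$ (your choice of base point $y$ instead of the paper's $x$ is immaterial here, since both Gromov products vanish on that set). Up to that point the proposal is sound and matches the paper's computation.

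The gap is in the last step, which is exactly where the content of the lemma lies. You concede that your comparisons only give $J\asymp_C G^2_\lambda(x,y)e^{-P_\lambda d(x,y)}$ with a universal constant $C$, and you propose to squeeze $C$ to within $(1+\epsilon)^2$ of $1$ by rerunning the bounds on $\theta^\lambda_y(w_-,\xi)$, $\Psi$, $\Phi$ and the constant in Mohsen's shadow lemma (Lemma \ref{lem:3.4}) using the strong Ancona inequality \eqref{eq:2.5}. That cannot work: none of these quantities is asymptotically $1$ (nor asymptotically equal to any universal constant) as $d(x,y)\to\infty$. The N\"aim kernel $\theta^\lambda_y(w_-,\xi)$ for $y$ on the geodesic $(w_-,\xi)$, the leafwise normalizations $\Psi,\Phi$, and the ratio in the shadow lemma all depend on the local branching at $x$ and $y$ and stay a bounded distance from $1$ uniformly in $d(x,y)$; inequality \eqref{eq:2.5} only forces ratios of Green-function ratios built on a long common geodesic segment to tend to $1$, not these individual kernels. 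The paper's proof avoids estimating them individually and never invokes the shadow lemma here. Instead, (i) the $\zeta$-integral of the numerator $\theta^\lambda_x(\zeta,\xi)^2$ cancels the normalization $\Phi(\xi)$ \emph{exactly}, by definition (via Lemma \ref{lem:5.9}); (ii) the other normalization $\Psi$ is identified, up to $(1+\epsilon)^2$ and independently of $\zeta$, with $\mu_{x,\lambda}(\cO_x(y))e^{P_\lambda d(x,y)}/G^2_\lambda(x,y)$, using the single strong-Ancona statement $G_\lambda(x,y)\,k_\lambda(x,y,\xi)\,\theta^\lambda_y(\zeta,\xi)\asymp_{1+\epsilon}1$ (which is a ratio of ratios along the shared segment $[x,y]$, so Corollary \ref{coro:2.7} does apply) combined with the exact change-of-basepoint identity \eqref{eq:3.4}; the factor $\mu_{x,\lambda}(\cO_x(y))$ then also cancels exactly. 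Without this cancellation structure your argument delivers only a universal multiplicative constant, which does not prove the $(1+\epsilon)^3$ statement and is useless for the later limits (Theorem \ref{thm:5.14}, Theorem \ref{thm:6.3}) that require constants tending to $1$.
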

\begin{proof}By H\"older continuity of $f$ and Lemma \ref{lem:2.1}, there exist $\eta_1$ and $R$ such that for any $x,y$ with $d(x,y)>R$ and $g,g'\in \cG\mathcal{T}_{x,y,\eta_1}$, $f(g)\asymp_{1+\epsilon}f(g').$ Note that $|r-d(x,y)|\leq 2\eta$ if and only if there exists $g\in \cG\mathcal{T}_{x,y,\eta}$ such that $$h_{x,\lambda,\eta}^+(g) h_{y,\lambda,\eta}^-\circ \phi_{r}(g)\neq 0$$ 
(see Figure \ref{figure6}).
For any $g_x^y\in \cG\mathcal{T}_{x,y}$ and $\eta\in (0,\eta_1)$, H\"older continuity of $f$ implies that
\begin{eqnarray}
\nonumber\int_{\cG\mathcal{T}} fh_{x,\lambda,\eta}^+ h_{y,\lambda,\eta}^-\circ \phi_{r}dm_\lambda&=&\int_{\cG\mathcal{T}_{x,y,\eta}} fh_{x,\lambda,\eta}^+ h_{y,\lambda,\eta}^-\circ \phi_{r}dm_\lambda\\ \nonumber &\asymp_{1+\epsilon}&f(g_x^y)\int_{\cG\mathcal{T}_{x,y,\eta}} h_{x,\lambda,\eta}^+ h_{y,\lambda,\eta}^-\circ \phi_{r}dm_\lambda.\nonumber
\end{eqnarray}
 Hence the remaining part of the proof is to show that
$$\int_{\cG\mathcal{T}_{x,y,\eta}} h_{x,\lambda,\eta}^+ h_{y,\lambda,\eta}^-\circ \phi_{r}dm_\lambda  \asymp_{(1+\epsilon)^2}\frac{a_{\eta}(r-d(x,y))}{\eta^4}G_\lambda^2(x,y)e^{-P_{\lambda}d(x,y)}.$$

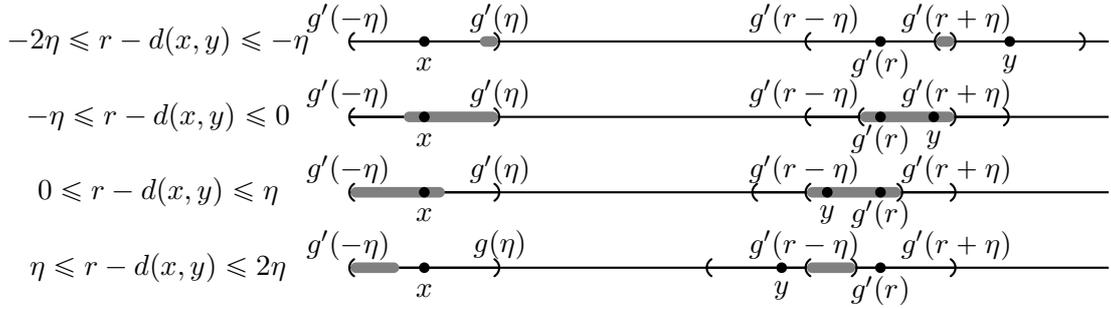
\begin{figure}[h]
\begin{center}
\begin{tikzpicture}[scale=1]
\draw [thick](-4,4)--(6,4);\draw [thick](-4,3)--(6,3);\draw [thick](-4,2)--(6,2);\draw [thick](-4,1)--(6,1);
\draw [thick,(-)](-4,4)--(-2,4);\draw [thick,(-)](-4,3)--(-2,3);\draw [thick,(-)](-4,2)--(-2,2);\draw [thick,(-)](-4,1)--(-2,1);
\draw [thick,(-)](2,4)--(4,4);\draw [thick,(-)](2,3)--(4,3);\draw [thick,(-)](2,2)--(4,2);\draw [thick,(-)](2,1)--(4,1);
\draw[thick,(-)](3.7,4)--(5.7,4);\draw[thick,(-)](2.7,3)--(4.7,3);\draw[thick,(-)](1.3,2)--(3.3,2);\draw[thick,(-)](0.7,1)--(2.7,1);
\draw[line cap=round,line width=4pt, color=gray](-2.2,4)--(-2.1,4);\draw[line cap=round,line width=4pt, color=gray](-3.9,2)--(-2.8,2);\draw[line cap=round,line width=4pt, color=gray](-3.2,3)--(-2.1,3);\draw[line cap=round,line width=4pt, color=gray](-3.4,1)--(-3.9,1);
\draw[line cap=round,line width=4pt, color=gray](3.8,4)--(3.9,4);\draw[line cap=round,line width=4pt, color=gray](2.8,3)--(3.9,3);\draw[line cap=round,line width=4pt, color=gray](2.1,2)--(3.2,2);\draw[line cap=round,line width=4pt, color=gray](2.1,1)--(2.6,1);
\fill (-3,4)  circle (2pt);\fill (-3,3)    circle (2pt);\fill (-3,2)    circle (2pt);\fill (-3,1)    circle (2pt);
\fill (3,4)    circle (2pt);\fill (3,3)    circle (2pt);\fill (3,2)    circle (2pt);\fill (3,1)    circle (2pt);
\fill (4.7,4)    circle (2pt);\fill (3.7,3)    circle (2pt);\fill (2.3,2)    circle (2pt);\fill (1.7,1)    circle (2pt);
\node at (-6.5,4) {$-2\eta \leq r-d(x,y) \leq -\eta$};\node at (-3,3.7) {$x$};\node at (3,3.7) {$g'(r)$};\node at (4.7,3.7) {$y$};\node at (-4,4.3) {$g'(-\eta)$};\node at (-2,4.3) {$g'(\eta)$};\node at (2,4.3) {$g'(r-\eta)$};\node at (4,4.3) {$g'(r+\eta)$};
\node at (-6.5,3) {$-\eta \leq r-d(x,y) \leq 0$};\node at (-3,2.7) {$x$};\node at (3,2.7) {$g'(r)$};\node at (3.7,2.7) {$y$};\node at (-4,3.3) {$g'(-\eta)$};\node at (-2,3.3) {$g'(\eta)$};\node at (2,3.3) {$g'(r-\eta)$};\node at (4,3.3) {$g'(r+\eta)$};
\node at (-6.5,2) {$0 \leq r-d(x,y) \leq \eta$};\node at (-3,1.7) {$x$};\node at (3,1.7) {$g'(r)$};\node at (2.3,1.7) {$y$};\node at (-4,2.3) {$g'(-\eta)$};\node at (-2,2.3) {$g'(\eta)$};\node at (2,2.3) {$g'(r-\eta)$};\node at (4,2.3) {$g'(r+\eta)$};
\node at (-6.5,1) {$\eta \leq r-d(x,y) \leq 2\eta$};\node at (-3,0.7) {$x$};\node at (3,0.7) {$g'(r)$};\node at (1.7,0.7) {$y$};\node at (-4,1.3) {$g'(-\eta)$};\node at (-2,1.3) {$g(\eta)$};\node at (2,1.3) {$g'(r-\eta)$};\node at (4,1.3) {$g'(r+\eta)$};
\end{tikzpicture}
\end{center}
\caption{The intervals satisfying $h_{x,\lambda,\eta}^+(\phi_s g') h_{y,\lambda,\eta}^-( \phi_{r+s}g')\neq 0$}\label{figure6}
\end{figure}
 
When $0\leq r-d(x,y) \leq 2\eta$ ($-2\eta\leq r-d(x,y)<0$, resp.), $g'\in\cG\mathcal{T}_{x,y}$ and $$-\eta\leq s\leq\eta-r+d(x,y) \text{(}-\eta-r+d(x,y)\leq s\leq \eta\text{, resp.)}$$ if and only if $h_{x,\lambda,\eta}^+(\phi_s g') h_{y,\lambda,\eta}^-( \phi_{r+s}g')\neq 0$ (see Figure \ref{figure6}). 

For any $y$ with $0\leq r-d(x,y)\leq 2\eta,$
\begin{equation}\label{eq:5.11}
\int_{-\eta}^{\eta-r+d(x,y)}(\eta-|s|)(\eta-|s+r-d(x,y)|)ds=a_{\eta}(r-d(x,y))
\end{equation}
and for any $y$ with $-2\eta\leq r-d(x,y)\leq 0,$
\begin{equation}\label{eq:5.12}
\int_{-\eta-r+d(x,y)}^{\eta}(\eta-|s|)(\eta-|s+r-d(x,y)|)ds=a_{\eta}(r-d(x,y)).
\end{equation}

For sufficiently small $\eta$,  we have $$\cG\cT_{x,y,\eta}=\{g\in\mathcal{G}\mathcal{T}:(g_-,g_+,t)\in\mathcal{O}_y(x)\times\mathcal{O}_x(y)\times(-\eta,\eta)\}$$ (see Figure \ref{figure6}). Denote by $w_{\xi}$ in $\cG_x^+\cT$ ($w'_{\zeta}$ in $\cG_y^-\cT$, resp.) the geodesic ray satisfying $w_{\xi+}=\xi$ ($w'_{\zeta-}=\zeta$, resp.). Since $\eta-d(y,\pi (\phi_{r+s} g'))=\eta-|s+r-d(x,y)|$ for any $g'\in \cG\mathcal{T}_{x,y}$ and $s\in(-\eta,\eta)$ with $s+r-d(x,y)\in(-\eta,\eta)$, the equations \eqref{eq:5.11} and \eqref{eq:5.12} show the first equality of \eqref{eq:221113}. The second equality of \eqref{eq:221113} follows from Lemma \ref{lem:5.9}. 
\begin{equation}\label{eq:221113}
\begin{split}
&\int_{\cG\mathcal{T}}h_{x,\lambda,\eta}^+ h_{y,\lambda,\eta}^-\circ \phi_{r}dm_\lambda=\int_{\cG\mathcal{T}_{x,y,\eta}}h_{x,\lambda,\eta}^+ h_{y,\lambda,\eta}^-\circ \phi_{r}dm_\lambda\\ 
&\overset{\eqref{eq:5.11}+\eqref{eq:5.12}}=\int_{\mathcal{O}_x(y)}\int_{\mathcal{O}_y(x)}\frac{4a_{\eta}(r-d(x,y))\theta_x^\lambda(\zeta,\xi)^2}{\eta^2\nu_{W^{0+}(w_{\xi})}^\lambda(W^{0+}_{x,\eta}(w_{\xi}))\nu_{W^{0-}(w_{\zeta}')}^\lambda(W^{0-}_{y,\eta}(w_{\zeta}'))}d\mu_{x,\lambda}(\zeta)d\mu_{x,\lambda}(\xi)\\
&\overset{\substack{\text{Lem}\\\ref{lem:5.9}}}=\int_{\mathcal{O}_x(y)}\int_{\mathcal{O}_y(x)}\frac{a_{\eta}(r-d(x,y))\theta_x^\lambda(\zeta,\xi)^2}{\eta^4\int_{\mathcal{O}_y(x)}\theta^\lambda_x(\zeta',\xi)^2d\mu_{x,\lambda}(\zeta')\int_{\mathcal{O}_x(y)}\theta^\lambda_y(\zeta,\xi')^2d\mu_{y,\lambda}(\xi')}d\mu_{x,\lambda}(\zeta)d\mu_{x,\lambda}(\xi).
\end{split}
\end{equation}
By Corollary \ref{coro:2.7}, there exist $R'$ and $\delta_1$ such that for any  $x,y\in \mathcal{T}$ with $d(x,y)\geq R'$, $\lambda\in [\lambda_0-\delta_1,\lambda_0]$, $\zeta\in \mathcal{O}_y(x)$ and $\xi \in \mathcal{O}_x(y)$,
$$G_\lambda(x,y)k_\lambda(x,y,\xi)\theta_y^\lambda(\zeta,\xi)\asymp_{1+\epsilon}1.$$
The equation \eqref{eq:3.4} shows that for any $x,y\in \mathcal{T}$ with $d(x,y)\geq R'$, 
\begin{equation}\label{032501}
\begin{split}
\int_{\mathcal{O}_x(y)}\theta^\lambda_y(\zeta,\xi')^2d\mu_{y,\lambda}(\xi')&\asymp_{(1+\epsilon)^2}\int_{\mathcal{O}_x(y)}\frac{k_\lambda(y,x,\xi)^2}{G_\lambda(x,y)^2}d\mu_{y,\lambda}(\xi)\overset{\eqref{eq:3.4}}=\frac{\mu_{x,\lambda}(\mathcal{O}_x(y))e^{P_\lambda d(x,y)}}{G_\lambda(x,y)^2}.
\end{split}
\end{equation}
Using \eqref{032501}, we have
\begin{equation}
\begin{split}
&\int_{\cG\mathcal{T}}h_{x,\lambda,\eta}^+ h_{y,\lambda,\eta}^-\circ \phi_{r}dm_\lambda\\
&\overset{\eqref{032501}}\asymp_{(1+\epsilon)^2}\int_{\mathcal{O}_x(y)}\frac{a_{\eta}(r-d(x,y))G_\lambda^2(x,y)e^{-P_{\lambda}d(x,y)}\int_{\mathcal{O}_y(x)}\theta_x^\lambda(\zeta,\xi)^2d\mu_{x,\lambda}(\zeta)}{\eta^4\mu_{x,\lambda}(\mathcal{O}_x(y))\int_{\mathcal{O}_y(x)}\theta^\lambda_x(\zeta',\xi)^2d\mu_{x,\lambda}(\zeta')}d\mu_{x,\lambda}(\xi)\\
&=\int_{\mathcal{O}_x(y)}\frac{a_{\eta}(r-d(x,y))G_\lambda^2(x,y)e^{-P_{\lambda}d(x,y)}}{\eta^4\mu_{x,\lambda}(\mathcal{O}_x(y))}d\mu_{x,\lambda}(\xi)=\frac{a_{\eta}(r-d(x,y))G_\lambda^2(x,y)e^{-P_{\lambda}d(x,y)}}{\eta^4}.
\end{split}
\end{equation} 
Let $R_1:=\max\{R,R'\}.$ Then Lemma \ref{lem:5.11} follows.
\end{proof}
\begin{coro}\label{coro:5.13} Let $f_1$ be a $\G$-invariant non-negative H\"older continuous function on $\cG^+\mathcal{T}$ and let $f_2$ be a $\G$-invariant non-negtative H\"older continuous function on $\cG\mathcal{T}$. For any $\epsilon>0$, there exist constants $R_1'$, $\eta_1'$ and $\delta_1'$ saisfyint the following property. For any $x,y$ with $d(x,y)> 2R_1'$, $\eta\in(0,\eta_1')$, $g_x^{y}\in \cG\mathcal{T}_{x,y,\eta_1'}$, and $\lambda\in[\lambda_0-\delta'_1,\lambda_0]$,
\begin{equation}
\begin{split}
\nonumber&\int_{\cG\mathcal{T}}f_1h_{x,\lambda,\eta}^+ \biggl(\int_{R_1'}^{d(x,y)-R_1'}f_2\circ\phi_sds\biggr) h_{y,\lambda,\eta}^-\circ \phi_{r}dm_\lambda \\ 
&\asymp_{(1+\epsilon)^4}\frac{a_{\eta}(r-d(x,y))}{\eta^4}{G_\lambda^2(x,y)e^{-P_{\lambda}d(x,y)}f_1(g_x^{y})}\int_{R_1'}^{d(x,y)-R_1'}f_2\circ\phi_s(g_x^y)ds\nonumber
\end{split}
\end{equation}
where $ds$ is Lebesgue measure on $\mathbb{R}$.
\end{coro}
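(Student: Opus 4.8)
The plan is to mimic the proof of Lemma \ref{lem:5.11}, peeling off the two extra factors $f_1$ and $\int_{R_1'}^{d(x,y)-R_1'}f_2\circ\phi_s\,ds$ by H\"older continuity and then invoking the core estimate already contained in that proof. As in Lemma \ref{lem:5.11}, the product $h_{x,\lambda,\eta}^+(g)\,h_{y,\lambda,\eta}^-(\phi_r g)$ is supported on $\cG\mathcal{T}_{x,y,\eta}$ and vanishes unless $|r-d(x,y)|\le 2\eta$, so the integral may be restricted to $\cG\mathcal{T}_{x,y,\eta}$. On this set every $g$ has the form $\phi_{t_0}g'$ with $g'\in\cG\mathcal{T}_{x,y}$ and $|t_0|<\eta$, so $g$ follows the segment $[x,y]$ with a parametrization shift at most $\eta+\eta_1'<2\eta_1'$ relative to $g_x^y$.

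Since $f_1$ lives on $\cG^+\mathcal{T}$ and the forward rays of $g$ and $g_x^y$ agree on $[0,d(x,y)]$ up to that shift (differing only at times $\gtrsim d(x,y)$, where the weight $e^{-2t}\le e^{-4R_1'}$), Lemma \ref{lem:2.1} gives $d_{\cG\mathcal{T}}(g,g_x^y)\le 2\eta_1'+Ce^{-4R_1'}$ on $\cG^+\mathcal{T}$; hence, choosing $\eta_1'$ small and $R_1'$ large (depending only on $f_1$ and $\epsilon$), H\"older continuity of $f_1$ yields $f_1(g)\asymp_{1+\epsilon}f_1(g_x^y)$ for all $g\in\cG\mathcal{T}_{x,y,\eta}$ with $d(x,y)>2R_1'$, uniformly in $x,y,\lambda,\eta$. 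The new ingredient is the middle factor. For $s\in[R_1',d(x,y)-R_1']$ the geodesic lines $\phi_s g$ and $\phi_s g_x^y$ have base points within $2\eta_1'$ of one another, and because both $s$ and $d(x,y)-s$ exceed $R_1'$, each of them runs along $[x,y]$ for length at least $R_1'-\eta_1'$ in the forward and backward directions from its base point. Lemma \ref{lem:2.1} therefore bounds $d_{\cG\mathcal{T}}(\phi_s g,\phi_s g_x^y)$ by a quantity depending only on $\eta_1'$ and $R_1'$ (small when $\eta_1'$ is small and $R_1'$ large), \emph{uniformly in} $s$, $x$, $y$, $\lambda$ and $\eta$; this uniformity is the crucial point, and is exactly what allows $R_1'$, $\eta_1'$ to be fixed once and for all. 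H\"older continuity of $f_2$ then gives $f_2(\phi_s g)\asymp_{1+\epsilon}f_2(\phi_s g_x^y)$ for every such $s$, and integrating over $s$,
\[
\int_{R_1'}^{d(x,y)-R_1'}f_2(\phi_s g)\,ds\ \asymp_{1+\epsilon}\ \int_{R_1'}^{d(x,y)-R_1'}f_2(\phi_s g_x^y)\,ds ,
\]
the right-hand side being independent of $g$.

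Combining these two approximations, the left-hand side of the corollary is $\asymp_{(1+\epsilon)^2}$ to $f_1(g_x^y)\big(\int_{R_1'}^{d(x,y)-R_1'}f_2(\phi_s g_x^y)\,ds\big)\int_{\cG\mathcal{T}_{x,y,\eta}}h_{x,\lambda,\eta}^+\,h_{y,\lambda,\eta}^-\circ\phi_r\,dm_\lambda$, since both prefactors no longer depend on $g$ and so factor out of the integral over $\cG\mathcal{T}_{x,y,\eta}$. Taking $R_1'\ge R_1$, $\eta_1'\le\eta_1$ and $\delta_1'=\delta_1$ with $R_1,\eta_1,\delta_1$ as in Lemma \ref{lem:5.11}, the last integral is precisely the one shown in that proof (via Corollary \ref{coro:2.7} and the identity \eqref{032501}) to be $\asymp_{(1+\epsilon)^2}\frac{a_\eta(r-d(x,y))}{\eta^4}G_\lambda^2(x,y)e^{-P_\lambda d(x,y)}$. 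Multiplying the three estimates gives the asserted $\asymp_{(1+\epsilon)^4}$ bound. The principal obstacle, as indicated, is establishing the $s$-, $x$-, $y$-uniform closeness of $\phi_s g$ and $\phi_s g_x^y$ on the middle range $[R_1',d(x,y)-R_1']$, and, as throughout Section \ref{Sec:5}, phrasing the H\"older estimates as multiplicative ($\asymp_{1+\epsilon}$) errors on the relevant supports exactly as in the proof of Lemma \ref{lem:5.11}.
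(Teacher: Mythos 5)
Your proposal is correct and follows essentially the same route as the paper: the paper likewise fixes $R_2,\eta''$ so that $f_2\circ\phi_s$ is multiplicatively $(1+\epsilon)$-constant across $\cG\mathcal{T}_{x,y,\eta_1'}$ for $s$ in the middle range, factors the $f_1$-value and the $\int f_2\circ\phi_s\,ds$ term out of the integral, and then repeats the core computation of Lemma \ref{lem:5.11} to get the remaining $(1+\epsilon)^2$ factor. Your extra detail (the uniform-in-$s$ closeness of $\phi_s g$ and $\phi_s g_x^y$ via Lemma \ref{lem:2.1}) is just an explicit justification of the step the paper states directly.
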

\begin{proof}There exists $R_2$ and $\eta''$ such that for any $g,g'\in \cG\mathcal{T}$ with $\min\{|t|:g(t)\neq g'(t)\}>R_2$ and $s\in (-\eta'',\eta'')$, $f_2(g)\asymp_{1+\epsilon}f_2(\phi_s g')$. Put $R_1'=\max\{R_1,R_2\}$, $\eta_1'=\min\{\eta'',\eta_1\}$ and $\delta_1'=\delta_1$ where $R_1$, $\eta_1$ and $\delta_1$ are constants in Lemma \ref{lem:5.11}. Since for any $g_1,g_2\in \cG\mathcal{T}_{x,y,\eta'},$
$$\int_{R_1'}^{d(x,y)-R_1'}f_2\circ\phi_s(g_1)ds\asymp_{1+\epsilon}\int_{R_1'}^{d(x,y)-R_1'}f_2\circ\phi_s(g_2)ds,$$
following the proof of Lemma \ref{lem:5.11}, we have Corollary \ref{coro:5.13}.
\end{proof}
Fix a connected fundamental domain $T_0$ of $\G$ in $\mathcal{T}$ and a fundamental domain $(\cG\mathcal{T})_0$ of $\G$ in $\mathcal{T}$ which satisfies $\pi (g)\in T_0$ for all $g\in (\cG\mathcal{T})_0.$ Assume that $\int_{T_0}\mu_{y,\lambda}(\partial\mathcal{T})d\mu(y)=1$ for any $\lambda\in [0,\lambda_0]$. In the rest part of this article, if $y$ and $y'$ are in the same open edge, the geodesic $g_{x}^y\in \mathcal{G}\mathcal{T}_{x,y}$ arising in the integrals will be chosen to be the same as $g_x^{y'}\in \mathcal{G}\mathcal{T}_{x,y'}$. Since  Lemma \ref{lem:5.11} and Corollary \ref{coro:5.13} hold for any $g_x^y\in \mathcal{G}\mathcal{T}_{x,y}$ satisfying asssumptions, the choice of $g_x^y$ makes sense.
\begin{thm}\label{thm:5.14} Let $f$ be a $\G$-invariant bounded non-negative H\"older function on $\cG^+\mathcal{T}$. Suppose that for any ${\lambda\in [0,\lambda_0]}$, $\int_{\cG_x^+\mathcal{T}}fd\mu_{\cG_x^+\mathcal{T}}^\lambda>0.$ For any $\epsilon>0$, there exist constants $R_0$ and $\delta_0$ such that for all $x\in \mathcal{T}$, $\lambda\in [\lambda_0-\delta_0,\lambda]$ and $r\geq R_0$,
\begin{equation}\label{eq:5.13.0} 
-P_\lambda\int_{B(x,r)^c} G_\lambda^2(x,y)f(g_x^y)d\mu(y)\asymp_{(1+\epsilon)^{9}}\mathcal{C}e^{P_{\lambda_0} r}{\int_{\cG_x^+\mathcal{T}}f(w)d\mu_{\cG_x^+\mathcal{T}}^{\lambda_0}(w)},
\end{equation}
where $g_x^y\in \cG\mathcal{T}_{x,y}$ and $\mathcal{C}:=\frac{1}{m_{\lambda_0}((\cG\mathcal{T})_0)}.$ 
\end{thm}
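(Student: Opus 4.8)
The strategy is to convert the left-hand integral over $B(x,r)^c$ into an integral over $\cG\mathcal{T}$ against the Gibbs measure $m_\lambda$, using the auxiliary $\Gamma$-invariant functions $\widetilde{H}^{\pm}_{x,\lambda,\eta}$ and the correlation estimates of Section \ref{Sec:5}. First I would discretize: write $B(x,r)^c=\bigsqcup_{n}A_n$, where $A_n$ is the set of $y$ with $d(x,y)\in[r+(n-1)\cdot 2\eta,\, r+n\cdot 2\eta)$ for small $\eta$, and on each annular-type piece approximate $\int_{A_n}G_\lambda^2(x,y)f(g_x^y)\,d\mu(y)$ by a sum over vertices $v$ at the appropriate distance, so that the total becomes (up to $(1+\epsilon)$-factors) a sum $\sum_{\gamma\in\Gamma}G_\lambda^2(x,\gamma y_0)f(g_x^{\gamma y_0})$ weighted by edge lengths. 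The point of the functions $h^{-}_{y,\lambda,\eta}$ is precisely that, by Lemma \ref{lem:5.11} applied with $f$ replaced by $f$ and with the appropriate $g_x^y$, one has
$$\int_{\cG\mathcal{T}} f\,h_{x,\lambda,\eta}^+\, h_{y,\lambda,\eta}^-\circ \phi_{r}\,dm_\lambda \asymp_{(1+\epsilon)^3}\frac{a_{\eta}(r-d(x,y))}{\eta^4}G_\lambda^2(x,y)e^{-P_{\lambda}d(x,y)}f(g_x^y),$$
so that $G_\lambda^2(x,y)f(g_x^y)$ is realized as a dynamical correlation. Summing over the relevant $y$ (i.e. integrating $h^-_{y,\lambda,\eta}\circ\phi_r$ against $d\mu(y)$, using $\int a_\eta=\eta^4$ and $e^{-P_\lambda d(x,y)}\asymp e^{-P_\lambda r}$ on each thin annulus) turns the left side of \eqref{eq:5.13.0}, after multiplying by $-P_\lambda$ and passing through $\widetilde H$ via Lemma \ref{H}, into
$$-P_\lambda\int_{\cG\mathcal{T}} f\,h^{+}_{x,\lambda,\eta}\Big(\sum_{n\ge n_0}\,\text{(indicator of the $n$-th annulus)}\circ\phi_r\Big)\,dm_\lambda,$$
which telescopes to an integral of the form $-P_\lambda\int_{R_0}^\infty (\cdots)\circ\phi_t\,dt$ against $f h^+_{x,\lambda,\eta}$.

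Next I would invoke Corollary \ref{coro:5.5} (uniform rapid mixing), which is valid uniformly for $\lambda\in[\lambda_0-a_1,\lambda_0]$: taking $f_1=f\,\widetilde H^{+}_{x,\lambda,\eta}$ and $f_2=\widetilde H^{-}_{x,\lambda,\eta}$ (or rather the renormalized versions living on $\Gamma\backslash\cG\mathcal{T}$), the correlation $\int_{\cG\mathcal{T}} f_1\,(f_2\circ\phi_t)\,d\overline m_\lambda$ is $(1+\epsilon)$-close to $\int f_1\,d\overline m_\lambda\int f_2\,d\overline m_\lambda$ for $t\ge T_1$, where $T_1$ is independent of $\lambda$. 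By Lemma \ref{H} and Lemma \ref{lem:5.10}, $\int_{\cG\mathcal{T}} f\,h^{+}_{x,\lambda,\eta}\,dm_\lambda\asymp_{1+\epsilon}\int_{\cG_x^+\mathcal{T}}f\,d\mu^\lambda_{\cG_x^+\mathcal{T}}$ and $\int_{\cG\mathcal{T}}h^{-}_{y,\lambda,\eta}\,dm_\lambda$ integrates (over $y$ against $d\mu$, as the integrand is an indicator localized near $\phi_r$) to something $\asymp_{1+\epsilon} m_\lambda((\cG\mathcal{T})_0)^{-1}$ times a length factor. Putting these pieces together with the continuity of $\lambda\mapsto\mu_{x,\lambda}$ (Lemma \ref{2203311}) and $\lambda\mapsto m_\lambda((\cG\mathcal{T})_0)$ (Lemma \ref{2203312}) and of $\lambda\mapsto P_\lambda$ (Corollary \ref{2203313}) — so that for $\lambda$ close to $\lambda_0$ we may replace $\mu^\lambda$, $m_\lambda$, $P_\lambda$ by $\mu^{\lambda_0}$, $m_{\lambda_0}$, $P_{\lambda_0}$ up to $(1+\epsilon)$ — produces the right-hand side $\mathcal{C}e^{P_{\lambda_0}r}\int_{\cG_x^+\mathcal{T}}f\,d\mu^{\lambda_0}_{\cG_x^+\mathcal{T}}$, with the exponential $e^{P_{\lambda_0}r}$ emerging from the accumulated $e^{-P_\lambda d(x,y)}$ factors over the tail $d(x,y)\ge r$ together with the $-P_\lambda$ prefactor (note $\int_r^\infty (-P_\lambda)e^{P_\lambda t}\,dt=e^{P_\lambda r}$, using $P_\lambda<0$ by Corollary \ref{2203313}). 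Finally I would track the $\epsilon$-bookkeeping: each of the roughly nine applications of a $(1+\epsilon)$-type estimate (Lemma \ref{lem:5.11}'s three factors, the two of Lemma \ref{lem:5.10}/\ref{lem:5.8}, the rapid mixing step, the annulus discretization of $d\mu$, the replacement of $d(x,y)$ by $r$, and the $\lambda\to\lambda_0$ continuity replacements) contributes one factor, giving the stated $(1+\epsilon)^9$.

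The main obstacle is the interchange of the $\sum_{n}$ (equivalently $\int_{R_0}^\infty\cdots\circ\phi_t\,dt$) with the single-time rapid-mixing estimate of Corollary \ref{coro:5.5}: that corollary controls $\int f_1\,(f_2\circ\phi_t)\,d\overline m_\lambda$ for each fixed large $t$, but here I need to integrate this in $t$ over an unbounded range and still control the error uniformly. The resolution is that the error in Corollary \ref{coro:5.5} is multiplicative, $\asymp_{1+\epsilon}$, not additive, so it survives integration against the positive weight $(-P_\lambda)e^{P_\lambda t}\,dt$ (which is a finite measure on $[R_0,\infty)$ since $P_\lambda<0$), and the contribution of the initial segment $t<R_0$ is, after choosing $R_0$ large, a $(1+\epsilon)$-fraction of the whole by the uniform exponential decay $e^{P_\lambda t}$ — this is exactly why $R_0$ (like $T_1$) must be allowed to depend on $\epsilon$ and on $\|f\|_\alpha$, $\inf_\lambda\int f\,d\mu^\lambda_{\cG_x^+\mathcal{T}}$ but not on $\lambda$ or $x$. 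A secondary technical point is verifying that $\widetilde H^{\pm}_{x,\lambda,\eta}$ descends to a genuine bounded $\alpha$-Hölder function on $\Gamma\backslash\cG\mathcal{T}$ with $\|\cdot\|_\alpha$ bounded independently of $\lambda$ and $x$ (so that Corollary \ref{coro:5.5} applies with $T_1$ uniform); this follows from Proposition \ref{5.9} together with the local finiteness of the sum defining $\widetilde H$ in \eqref{eq:5.6}, since each $h^\pm_{\gamma x,\lambda,\eta}$ is supported in $\cG_{\gamma x,\eta}\mathcal{T}$ and only finitely many translates $\gamma x$ lie within distance $\eta$ of a given $\pi(g)$.
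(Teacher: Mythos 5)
Your proposal follows essentially the same route as the paper's proof: Lemma \ref{lem:5.11} realizes $G_\lambda^2(x,y)f(g_x^y)$ as a correlation of $fh^+_{x,\lambda,\eta}$ with $h^-_{y,\lambda,\eta}\circ\phi_s$, one integrates over $y$ and over $s$ against $-P_\lambda e^{P_\lambda s}\,ds$ (so that $\int_r^\infty(-P_\lambda)e^{P_\lambda s}ds=e^{P_\lambda r}$), unfolds to fundamental domains via $\widetilde{H}^\pm$ and Lemma \ref{H}, applies the uniform rapid mixing Corollary \ref{coro:5.5} with $T_1$ independent of $\lambda$, identifies the marginals via Lemma \ref{lem:5.10} and the normalization $\int_{T_0}\mu_{y,\lambda}(\partial\mathcal{T})d\mu(y)=1$, and passes from $\lambda$ to $\lambda_0$ by the continuity lemmas. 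Your annulus discretization and the concern about an initial time segment are unnecessary detours (the paper handles the $y$-integral directly using the support of $a_\eta$ and the normalization $\int a_\eta=\eta^4$, and the time integral starts at $r\geq R_0\geq T_1$), but these are cosmetic and do not change the argument.
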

\begin{proof}Let $R_1$, $\eta_1$ and $\delta_1$ be the constants in Lemma \ref{lem:5.11}. Fix $\eta<\min\{\eta_1,\eta'\}$ satisfying $e^{-2P_\lambda \eta}\leq 1+\epsilon$ for any $\lambda\in [0,\lambda_0]$, where $\eta'$ is the constant in Lemma \ref{lem:5.10}. We claim that for any $r\geq R_1$, $\lambda\in [\lambda_0-\delta_1,\lambda_0]$ and $x\in \mathcal{T}$,
 \begin{equation}\label{eq:5.13}
 \begin{split}
&-P_\lambda(1+\epsilon)^{-4}\int_{B(x,r+2\eta)^c} G_\lambda^2(x,y)f(g_x^y)d\mu(y)\\
&\leq (*):=-P_\lambda\int_r^\infty e^{P_\lambda s}\int_{\mathcal{T}}\int_{\cG\mathcal{T}} fh_{x,\lambda,\eta}^+ h_{y,\lambda,\eta}^-\circ \phi_{s}dm_\lambda d\mu(y)ds\\
&\leq -P_\lambda(1+\epsilon)^4\int_{B(x,r-2\eta)^c} G_\lambda^2(x,y)f(g_x^y)d\mu(y).
\end{split}
\end{equation}
Assume that the claim holds. The second line of \eqref{eq:5.13} is replaced by the first line of \eqref{eq:5.13.1}. The second line of \eqref{eq:5.13.1} follows from the definition of $\widetilde{H}_{y',\lambda,\eta}^-$ (see \eqref{eq:5.6}). Since $f$ and $\widetilde{H}_{y',\lambda,\eta}^-$ are $\G$-invariant functions and $m_\lambda$ is a $\G$-invariant measure, the third line of \eqref{eq:5.13.1} holds. Using \eqref{eq:5.5} and Lemma \ref{H}, we have the last line of \eqref{eq:5.13.1}. 
\begin{equation}\label{eq:5.13.1}
\begin{split}
(*)&=\int_r^\infty -P_\lambda e^{P_\lambda s}\int_{T_0}\sum_{\g \in \G}\int_{\cG\mathcal{T}} fh_{x,\lambda,\eta}^+(g) h_{\g y,\lambda,\eta}^-\circ \phi_{s}(g)dm_\lambda(g)d\mu(y)ds\\ 
&=\int_r^\infty -P_\lambda e^{P_\lambda s}\int_{T_0}\int_{\cG\mathcal{T}} fh_{x,\lambda,\eta}^+(g) \widetilde{H}_{y,\lambda,\eta}^-\circ \phi_{s}(g)dm_\lambda(g)d\mu(y)ds\\
&= \int_r^\infty -P_\lambda e^{P_\lambda s}\int_{T_0}\sum_{\g\in \G}\int_{(\cG\mathcal{T})_0} fh_{x,\lambda,\eta}^+(\g g) \widetilde{H}_{y,\lambda,\eta}^-\circ \phi_{s}(\g g)dm_\lambda(g)d\mu(y)ds\\
&=\int_r^\infty -P_\lambda e^{P_\lambda s}\int_{T_0}\int_{(\cG\mathcal{T})_0} f\widetilde{H}_{x,\lambda,\eta}^+ \widetilde{H}_{y,\lambda,\eta}^-\circ \phi_{s}dm_\lambda d\mu ds.
\end{split}
\end{equation}

 The constant $T_1$ in Corollary \ref{coro:5.5} depends on $\|f\widetilde{H}_{x,\lambda,\eta}^+\|_\alpha$, $\|\widetilde{H}_{y,\lambda,\eta}^-\|_\alpha,$ $\inf \int f\widetilde{H}_{x,\lambda,\eta}^+ d\overline{m}_\lambda,$ and $\inf \int \widetilde{H}_{y,\lambda,\eta}^- d\overline{m}_\lambda$.  By Lemma \ref{lem:5.10} and Lemma \ref{H}, $\int f\widetilde{H}_{x,\lambda,\eta}^+ d\overline{m}_\lambda\asymp_{1+\epsilon}\int_{\cG_x^+ \mathcal{T}}f(w)d\mu_{\cG_x^+\mathcal{T}}^{\lambda}(w)$ and $\int \widetilde{H}_{x,\lambda,\eta}^- d\overline{m}_\lambda=\int_{\cG_y^- \mathcal{T}}d\mu_{\cG_y^-\mathcal{T}}^{\lambda}(w).$ Since the map $\lambda\mapsto \mu_{x,\lambda}$ is continuous in $[0,\lambda_0]$ for any $x\in \mathcal{T}$ and the action of $\Gamma$ is cocompact, it is possible to choose $T_1$, $a_1$ such that for any $r>T_1$ and $\lambda\in [\lambda_0-a_1,\lambda_0]$, the second line of \eqref{eq:5.14} holds. The third line follows from Lemma \ref{H}. Since we assumed that $\int_{T_0}\mu_{y,\lambda}(\partial \mathcal{T})d\mu(y)=1$, Lemma \ref{lem:5.10} is used to obtain the fourth line of \eqref{eq:5.14}. 
\begin{equation}\label{eq:5.14}
\begin{split}
&\int_r^\infty -P_\lambda e^{P_\lambda s}\int_{T_0}\int_{(\cG\mathcal{T})_0} f\widetilde{H}_{x,\lambda,\eta}^+ \widetilde{H}_{y,\lambda,\eta}^-\circ \phi_sdm_\lambda d\mu ds\\
&\overset{\substack{\text{Coro}\\\ref{coro:5.5}}}\asymp_{1+\epsilon} \frac{1}{m_\lambda((\cG\mathcal{T})_0)}\int_r^\infty -P_\lambda e^{P_\lambda s}ds \int_{T_0}\int_{(\cG\mathcal{T})_0} f\widetilde{H}_{x,\lambda,\eta}^+dm_\lambda \int_{(\cG\mathcal{T})_0}\widetilde{H}_{y,\lambda,\eta}^-dm_\lambda d\mu(y) \\ 
&\overset{\substack{\text{Lem}\\ \ref{H}}}=\frac{e^{P_\lambda r}}{m_\lambda((\cG\mathcal{T})_0)}\int_{\cG\mathcal{T}} f(g)h_{x,\lambda,\eta}^+(g)dm_\lambda(g)\int_{T_0}\mu_{y,\lambda}(\partial \mathcal{T})d\mu(y) \\
&\overset{\substack{\text{Lem}\\\ref{lem:5.10}}}\asymp_{(1+\epsilon)^{2}}\frac{e^{P_{\lambda} r}}{m_{\lambda}((\cG\mathcal{T})_0)}\int_{\cG_x^+ \mathcal{T}}f(w)d\mu_{\cG_x^+\mathcal{T}}^{\lambda}(w)\\
&\asymp_{1+\epsilon}\frac{e^{P_{\lambda_0} r}}{m_{\lambda_0}((\cG\mathcal{T})_0)}\int_{\cG_x^+ \mathcal{T}}f(w)d\mu_{\cG_x^+\mathcal{T}}^{\lambda_0}(w).
\end{split}
\end{equation}
By Lemma \ref{2203311}, Lemma \ref{2203312} and Corollary \ref{2203313}, there exists $\delta'$ such that the last line of \eqref{eq:5.14} holds  for any $\lambda\in [\lambda_0-\delta',\lambda_0]$.
 By \eqref{eq:5.13}, \eqref{eq:5.13.1}, \eqref{eq:5.14} and the choice of $\delta_0$ and $\eta$, 
 \begin{equation}\label{eq:2201151}
 \frac{e^{P_{\lambda_0} (r+2\eta)}}{m_{\lambda_0}((\cG\mathcal{T})_0)}\int_{\cG_x^+ \mathcal{T}}f(w)d\mu_{\cG_x^+\mathcal{T}}^{\lambda_0}(w)\leq -P_\lambda(1+\epsilon)^{8}\int_{B(x,r)^c} G_\lambda^2(x,y)f(g_x^y)d\mu(y)
  \end{equation}
  and 
   \begin{equation}\label{eq:2201152}
   -P_\lambda(1+\epsilon)^{-8}\int_{B(x,r)^c} G_\lambda^2(x,y)f(g_x^y)d\mu(y)\leq \frac{e^{P_{\lambda_0} (r-2\eta)}}{m_{\lambda_0}((\cG\mathcal{T})_0)}\int_{\cG_x^+ \mathcal{T}}f(w)d\mu_{\cG_x^+\mathcal{T}}^{\lambda_0}(w). 
  \end{equation}
The equation \eqref{eq:5.13.0} follows from \eqref{eq:2201151}, \eqref{eq:2201152} and the choice of $\eta$. Let $R_0=\max\{R_1,T_1\}$ and $\delta_0=\min\{\delta_1,a_1,\delta'\}$. Then we obtain Theorem \ref{thm:5.14}.
 
 The remaining part of the proof is to show the claim. Since $|s-d(x,y)|<2\eta$ if and only if $h_{x,\lambda,\eta}^+(g) h_{y,\lambda,\eta}^-\circ \phi_{s}(g)\neq 0$ for some $g\in \mathcal{GT}_{x,y,\eta},$ Lemma \ref{lem:5.11} shows that 
\begin{equation}\label{eq:20222}
\begin{split}
& -P_\lambda \int_r^\infty e^{P_\lambda s}\int_{\mathcal{T}}\int_{\cG\mathcal{T}} fh_{x,\lambda,\eta}^+ h_{y,\lambda,\eta}^-\circ \phi_{s}dm_\lambda d\mu(y)ds\\
&\asymp_{(1+\epsilon)^3} -P_\lambda \int_{r}^\infty \int_{B(x,r-2\eta)^c}\frac{a_{\eta}(s-d(x,y))}{\eta^4}G_\lambda^2(x,y)e^{P_{\lambda}(s-d(x,y))}f(g_x^y)d\mu(y)ds\\
\end{split}
\end{equation}
 Since $$\int_r^\infty \frac{a_\eta(s-d(x,y))}{\eta^4}ds=\int_r^{d(x,y)+2\eta} \frac{a_\eta(s-d(x,y))}{\eta^4}ds\leq 1$$ for any $y$ with $d(x,y)+2\eta\geq r$, we have the first inequality of \eqref{eq:20221}. By the choice of $\eta$, $e^{P_\lambda(s-d(x,y))}<1+\epsilon$ for any $x,y$ with $|d(x,y)-s|<2\eta$. Thus we have the second inequality of \eqref{eq:20221}.
 \begin{equation}\label{eq:20221}
\begin{split}
&\int_{r}^\infty\int_{B(x,r-2\eta)^c} \frac{a_{\eta}(s-d(x,y))}{\eta^4}G_\lambda^2(x,y)e^{P_{\lambda}(s-d(x,y))}f(g_x^y)d\mu(y)ds\\
&\leq \int_{B(x,r-2\eta)^c}\int_{d(x,y)-2\eta}^{\infty} \frac{a_{\eta}(s-d(x,y))}{\eta^4}G_\lambda^2(x,y)e^{P_{\lambda}(s-d(x,y))}f(g_x^y)dsd\mu(y)\\
& \leq (1+\epsilon)\int_{B(x,r-2\eta)^c} G_\lambda^2(x,y)f(g_x^y)d\mu(y).
\end{split}
\end{equation}
For any $y$ with $d(x,y)-2\eta>r$, $\int_r^\infty\frac{a_\eta(s-d(x,y))}{\eta^4}ds=1.$ Similar to \eqref{eq:20221}, we have the following inequality
 \begin{equation}\label{eq:20223}
\begin{split}
&\int_{r}^\infty\int_{B(x,r-2\eta)^c} \frac{a_{\eta}(s-d(x,y))}{\eta^4}G_\lambda^2(x,y)e^{P_{\lambda}(s-d(x,y))}f(g_x^y)ds\\
&\geq \int_{B(x,r+2\eta)^c}\int_{d(x,y)-2\eta}^{\infty} \frac{a_{\eta}(s-d(x,y))}{\eta^4}G_\lambda^2(x,y)e^{P_{\lambda}(s-d(x,y))}f(g_x^y)dsd\mu(y)\\
&\geq (1+\epsilon)^{-1}\int_{B(x,r+2\eta)^c} G_\lambda^2(x,y)f(g_x^y)d\mu(y).
\end{split}
\end{equation}
Combining \eqref{eq:20222}, \eqref{eq:20221} and \eqref{eq:20223}, we have \eqref{eq:5.13}.
\end{proof}
\begin{coro}\label{coro:5.16} Let $f_1$ be a $\G$-invariant non-negative H\"older continuous function on $\cG^+\mathcal{T}$ and let $f_2$ be a $\G$-invariant positive H\"older contnuous function on $\cG\mathcal{T}$. For any sufficiently small $\epsilon>0$, there exist constants $R_0'$ and $\delta_0'$ such that for all $R>R_0',$ $r$ with $(1+\epsilon)^{-1}<\frac{r-2R}{r}$ and $\lambda\in [\lambda_0-\delta_0',\lambda_0]$, 
\begin{equation}
\begin{split}
&\int_{B(x,r)^c}{P_\lambda^2}G_\lambda^2(x,y)f_1(g_{x}^y)\int_{R}^{d(x,y)-R}f_2\circ\phi_s(g_x^y)dsd\mu(y)\nonumber\\
\nonumber&\asymp_{(1+\epsilon)^{12}}\mathcal{C}^2(1-P_{\lambda_0} r)e^{P_{\lambda_0} r}\int_{\cG_x^+\mathcal{T}}f_1(w)d\mu_{\cG_x^+\mathcal{T}}^{\lambda_0}(w)\int_{\Gamma\backslash\cG\mathcal{T}}f_2d\overline{m}_{\lambda_0},\nonumber
\end{split}
\end{equation}
where $g_x^y\in \mathcal{GT}_{x,y}$ and $ds$ is Lebesgue measure on $\mathbb{R}$. 
\end{coro}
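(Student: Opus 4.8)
The plan is to repeat the proof of Theorem~\ref{thm:5.14} with the third function $f_2$ inserted, using the three-fold uniform rapid mixing of Corollary~\ref{coro:5.6} in place of the two-fold estimate of Corollary~\ref{coro:5.5}, and Corollary~\ref{coro:5.13} in place of Lemma~\ref{lem:5.11}. Fix a small $\epsilon>0$. As in Theorem~\ref{thm:5.14} we may assume $\int_{\cG_x^+\mathcal{T}}f_1\,d\mu_{\cG_x^+\mathcal{T}}^{\lambda}>0$ (otherwise $f_1$ vanishes along every ray issued from $x$ and both sides are $0$), and positivity of $f_2$ gives $\int_{\Gamma\backslash\cG\mathcal{T}}f_2\,d\overline{m}_\lambda>0$. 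Choose $\eta$ below the thresholds of Lemma~\ref{lem:5.10}, Lemma~\ref{lem:5.11} and Corollary~\ref{coro:5.13} and small enough that $e^{2|P_\lambda|\eta}\le 1+\epsilon$ for all $\lambda\in[0,\lambda_0]$. As the analogue of the quantity $(\ast)$ from the proof of Theorem~\ref{thm:5.14}, I would set
\[
(\ast\ast):=P_\lambda^2\int_r^\infty e^{P_\lambda t}\int_{\mathcal{T}}\int_{\cG\mathcal{T}} f_1\,h_{x,\lambda,\eta}^+\Bigl(\int_R^{t-R}f_2\circ\phi_s\,ds\Bigr)\,h_{y,\lambda,\eta}^-\circ\phi_t\,dm_\lambda\,d\mu(y)\,dt .
\]
On the joint support of $h_{x,\lambda,\eta}^+(g)$ and $h_{y,\lambda,\eta}^-(\phi_tg)$ one has $|d(x,y)-t|<2\eta$, so $\int_R^{t-R}f_2\circ\phi_s\,ds$ and $\int_R^{d(x,y)-R}f_2\circ\phi_s\,ds$ differ by the integral of $f_2$ over an interval of length $O(\eta)$; since the latter is $\ge(d(x,y)-2R)\inf f_2$, the two are $(1+\epsilon)$-comparable once $r$ is large in terms of $f_2$.

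First I would compare $(\ast\ast)$ to the left-hand side of the corollary. By Corollary~\ref{coro:5.13} the inner $\cG\mathcal{T}$-integral is $(1+\epsilon)^4$-comparable to $\eta^{-4}a_{\eta}(t-d(x,y))\,G_\lambda^2(x,y)e^{-P_{\lambda}d(x,y)}f_1(g_x^y)\int_R^{d(x,y)-R}f_2\circ\phi_s(g_x^y)\,ds$. Interchanging the order of integration, using $e^{P_\lambda(t-d(x,y))}\asymp_{1+\epsilon}1$ on $\{|t-d(x,y)|<2\eta\}$, and using that $\int_r^\infty\eta^{-4}a_{\eta}(t-d(x,y))\,dt$ equals $1$ for $d(x,y)\ge r+2\eta$, equals $0$ for $d(x,y)\le r-2\eta$ and lies in $[0,1]$ otherwise (because $\int_{\mathbb{R}}\eta^{-4}a_{\eta}=1$), one obtains, exactly as for the claim \eqref{eq:5.13} in the proof of Theorem~\ref{thm:5.14},
\[
(1+\epsilon)^{-4}I_{r+2\eta}\ \le\ (\ast\ast)\ \le\ (1+\epsilon)^{4}I_{r-2\eta},\qquad
I_\rho:=\int_{B(x,\rho)^c}P_\lambda^2G_\lambda^2(x,y)f_1(g_x^y)\int_R^{d(x,y)-R}f_2\circ\phi_s(g_x^y)\,ds\,d\mu(y).
\]

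Next I would evaluate $(\ast\ast)$ by mixing. Unfolding the $\mathcal{T}$-integral to $T_0$ and the $\cG\mathcal{T}$-integral to $(\cG\mathcal{T})_0$ exactly as in \eqref{eq:5.13.1} (the factor $\int_R^{t-R}f_2\circ\phi_s\,ds$ being $\Gamma$-invariant, Lemma~\ref{H} applies),
\[
(\ast\ast)=P_\lambda^2\int_r^\infty e^{P_\lambda t}\int_{T_0}\int_R^{t-R}\int_{(\cG\mathcal{T})_0}(f_1\widetilde{H}_{x,\lambda,\eta}^+)(f_2\circ\phi_s)(\widetilde{H}_{y,\lambda,\eta}^-\circ\phi_t)\,dm_\lambda\,ds\,d\mu(y)\,dt .
\]
Now replace $\int_R^{t-R}$ by $\int_0^t$: since $f_2>0$, the main term $\int_0^t(\cdots)\,ds\gtrsim t$ dominates the two end pieces of length $R$, and the hypothesis $(1+\epsilon)^{-1}<\frac{r-2R}{r}$ (forcing $t\ge r$ to be large compared with $R$) bounds their ratio uniformly in $x$, in $y\in T_0$ and in $\lambda$, costing a factor $(1+\epsilon)$. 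Applying Corollary~\ref{coro:5.6}, in its form on the fundamental domain, to the $s$-average with $f_1\widetilde{H}_{x,\lambda,\eta}^+$, $f_2$, $\widetilde{H}_{y,\lambda,\eta}^-$, the inner triple integral becomes $(1+\epsilon)$-comparable to $t$ times $\mathcal{C}_\lambda^{2}\int_{(\cG\mathcal{T})_0}f_1\widetilde{H}_{x,\lambda,\eta}^+\,dm_\lambda\int_{(\cG\mathcal{T})_0}f_2\,dm_\lambda\int_{(\cG\mathcal{T})_0}\widetilde{H}_{y,\lambda,\eta}^-\,dm_\lambda$, with $\mathcal{C}_\lambda:=m_\lambda((\cG\mathcal{T})_0)^{-1}$. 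By Lemma~\ref{H} and Lemma~\ref{lem:5.10}, $\int_{(\cG\mathcal{T})_0}f_1\widetilde{H}_{x,\lambda,\eta}^+\,dm_\lambda=\int_{\cG\mathcal{T}}f_1h_{x,\lambda,\eta}^+\,dm_\lambda\asymp_{1+\epsilon}\int_{\cG_x^+\mathcal{T}}f_1\,d\mu_{\cG_x^+\mathcal{T}}^{\lambda}$; integrating the last factor against $d\mu(y)$ over $T_0$ gives $\int_{T_0}\int_{\cG\mathcal{T}}h_{y,\lambda,\eta}^-\,dm_\lambda\,d\mu(y)\asymp_{1+\epsilon}\int_{T_0}\mu_{y,\lambda}(\partial\mathcal{T})\,d\mu(y)=1$ by Lemma~\ref{H}, Lemma~\ref{lem:5.10} and the normalization; and $\int_{(\cG\mathcal{T})_0}f_2\,dm_\lambda=m_\lambda((\cG\mathcal{T})_0)\int_{\Gamma\backslash\cG\mathcal{T}}f_2\,d\overline{m}_\lambda$. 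Inserting these, carrying out $P_\lambda^2\int_r^\infty te^{P_\lambda t}\,dt=(1-P_\lambda r)e^{P_\lambda r}$, and collecting the surviving powers of $m_\lambda((\cG\mathcal{T})_0)$ yields $(\ast\ast)\asymp_{(1+\epsilon)^N}\mathcal{C}_\lambda^{2}(1-P_\lambda r)e^{P_\lambda r}\int_{\cG_x^+\mathcal{T}}f_1\,d\mu_{\cG_x^+\mathcal{T}}^{\lambda}\int_{\Gamma\backslash\cG\mathcal{T}}f_2\,d\overline{m}_\lambda$ for a fixed $N$. Combining this with the sandwich of the previous paragraph, running the same argument also with the cutoffs $r\pm2\eta$ in place of $r$ exactly as in \eqref{eq:2201151}--\eqref{eq:2201152}, using $e^{2|P_{\lambda_0}|\eta}\le1+\epsilon$, and finally using Lemmas~\ref{2203311}, \ref{2203312} and Corollary~\ref{2203313} to replace every $\lambda$ by $\lambda_0$ (so $\mathcal{C}_\lambda$ becomes $\mathcal{C}_{\lambda_0}=\mathcal{C}$) at the cost of a further $(1+\epsilon)$, gives the asserted estimate, once $R_0'$ is taken larger than all thresholds above, $\delta_0'$ the minimum of the associated $\delta$'s, and $\epsilon$ shrunk so that the accumulated powers of $(1+\epsilon)$ lie within $(1+\epsilon)^{12}$.

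The step I expect to be the main obstacle is the passage from the finite segment integral $\int_R^{t-R}f_2\circ\phi_s\,ds$ to the ergodic time average $\frac1t\int_0^t$ provided by Corollary~\ref{coro:5.6}: one must show that the two end pieces of length $R$ are negligible compared with the main term $\asymp t\,\mathcal{C}_\lambda^2(\cdots)$, uniformly in $x$, in $y\in T_0$ and in $\lambda\in[\lambda_0-\delta_0',\lambda_0]$, and it is precisely here that positivity of $f_2$ and the explicit constraint $(1+\epsilon)^{-1}<\frac{r-2R}{r}$ on $r$ are used. The remaining ingredients --- the geometric comparison via Corollary~\ref{coro:5.13}, the unfolding via Lemma~\ref{H}, and the bookkeeping of the constants $\mathcal{C}_\lambda$ --- follow the template of Theorem~\ref{thm:5.14} with no new difficulty.
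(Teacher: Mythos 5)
Your proposal is essentially the paper's own argument: you introduce the same intermediate quantity $(\ast\ast)$, sandwich it against the left-hand side via Corollary \ref{coro:5.13} with the cutoffs $r\pm2\eta$ exactly as in the claim of Theorem \ref{thm:5.14}, evaluate it by the triple mixing of Corollary \ref{coro:5.6} after unfolding with Lemma \ref{H} and Lemma \ref{lem:5.10}, compute $P_\lambda^2\int_r^\infty te^{P_\lambda t}dt=(1-P_\lambda r)e^{P_\lambda r}$, and pass from $\lambda$ to $\lambda_0$ by Lemmas \ref{2203311}, \ref{2203312} and Corollary \ref{2203313}. The one place you deviate is the step you yourself flag: you replace the window $\int_R^{t-R}f_2\circ\phi_s\,ds$ by $\int_0^t$ and discard the two end pieces using positivity of $f_2$. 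As written this only gives a multiplicative error of size $1+O\bigl(\tfrac{2R}{t}\,\tfrac{\sup f_2}{\inf f_2}\bigr)$, and the stated constraint $(1+\epsilon)^{-1}<\tfrac{r-2R}{r}$ controls $\tfrac{2R}{t}$ but not the ratio $\sup f_2/\inf f_2$, so you do not get a clean $(1+\epsilon)$ unless you let $R_0'$ (hence the admissible $r$) depend on $f_2$ more strongly than the statement allows. The paper avoids this by keeping the window $[R,t-R]$ and simply inserting the normalization $\tfrac{t}{t-2R}\asymp_{1+\epsilon}1$ (which is exactly what the hypothesis on $r$ provides), applying the uniform rapid-mixing estimate to the averaged correlation over that shifted window; adopting that normalization in place of your end-piece comparison repairs the only soft point and makes your argument coincide with the paper's.
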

\begin{proof}Let $R_1'$, $\eta_1'$ and $\delta_1'$ be the constants in Corollary \ref{coro:5.13} and let $T_2$ and $a_2$ be the constants in Corollary \ref{coro:5.6}. Fix $\eta<\eta_1'$. Denote $R_0':=\max\{R_1', T_2\}$ and $\delta_0':=\min\{\delta_0, \delta_1', a_2\},$ where $\delta_0$ is the constant in the proof of Theorem \ref{thm:5.14}. Similar to the proof of Theorem \ref{thm:5.14}, applying Corollary \ref{coro:5.13}, for any $R>R_0',$ $r>2R$ and $\lambda\in [\lambda_0-\delta_0',\lambda_0]$, we have 
\begin{equation}\label{eq:2202061}
\begin{split}
&(1+\epsilon)^{-5}\int_{B(x,r+2\eta)^c}{P_\lambda^2}G_\lambda^2(x,y)f_1(g_{x}^y)\int_{R}^{d(x,y)-R}f_2\circ\phi_s(g_x^y)dsd\mu(y)\\
&\leq \int_r^\infty P_\lambda^2e^{P_\lambda t}\int_{T}\int_{\cG\mathcal{T}}f_1h_{x,\lambda,\eta}^+ \biggl(\int_{R}^{t-R}f_2\circ\phi_sds\biggr) h_{y,\lambda,\eta}^-\circ \phi_{r}dm_\lambda d\mu dt\\
&\leq(1+\epsilon)^{5}\int_{B(x,r-2\eta)^c}{P_\lambda^2}G_\lambda^2(x,y)f_1(g_{x}^y)\int_{R}^{d(x,y)-R}f_2\circ\phi_s(g_x^y)dsd\mu(y),
\end{split}
\end{equation}
where $dt$ is Lebesgue measure on $\mathbb{R}$.  Similar to the proof of Theorem \ref{thm:5.14}, Corollary \ref{coro:5.6} shows that for any $r$ with $(1+\epsilon)^{-1}<\frac{r-2R}{r},$ 
\begin{equation}\label{eq:2202062}
\begin{split}
& \int_r^\infty P_\lambda^2e^{P_\lambda t}\int_{\mathcal{T}}\int_{\cG\mathcal{T}}f_1h_{x,\lambda,\eta}^+ \biggl(\int_{R}^{t-R}f_2\circ\phi_sds\biggr) h_{y,\lambda,\eta}^-\circ \phi_{r}dm_\lambda d\mu dt\\
&\asymp_{1+\epsilon} \int_r^\infty \frac{tP_\lambda^2e^{P_\lambda t}}{t-2R}\int_{\mathcal{T}}\int_{\cG\mathcal{T}}f_1h_{x,\lambda,\eta}^+ \biggl(\int_{R}^{t-R}f_2\circ\phi_sds\biggr) h_{y,\lambda,\eta}^-\circ \phi_{r}dm_\lambda d\mu dt\\
&\overset{\substack{\text{Coro}\\\ref{coro:5.6}}}\asymp_{(1+\epsilon)^3}  \mathcal{C}^2\int_r^\infty {tP_\lambda^2e^{P_\lambda t}}\int_{\cG_x^+\mathcal{T}}f_1d\mu_{\cG_x^+\mathcal{T}}^{\lambda} \int_{\Gamma\backslash\cG\mathcal{T}}f_2d\overline{m}_{\lambda}dt\\
&\asymp_{(1+\epsilon)^2}  \mathcal{C}^2(1-P_{\lambda_0} r)e^{P_{\lambda_0} r}\int_{\cG_x^+\mathcal{T}}f_1d\mu_{\cG_x^+\mathcal{T}}^{\lambda_0} \int_{\Gamma\backslash\cG\mathcal{T}}f_2d\overline{m}_{\lambda_0}.
\end{split}
\end{equation}
Using \eqref{eq:2202061} and \eqref{eq:2202062}, Corollary \ref{coro:5.16} follows.
\end{proof}
\section{Proof of local limit theorem}\label{Sec:6}
In this section, using the formulae in the previous section and the fact that $\lambda\mapsto P_\lambda$ is continuous on $[0,\lambda_0]$, we find functions which are asymptotic to the derivatives of $G_\lambda(x,y)$ as $\lambda\rightarrow\lambda_0-$ . The local limit theorem will be proved by applying the asymptotic functions and Hardy-Littlewood Tauberian theorem. 

\subsection{The pressure of $f_{\lambda_0}$}
\begin{prop}\label{prop:6.1} $P_{\lambda_0}=0.$
\end{prop}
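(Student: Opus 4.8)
The plan is to identify the pressure $P_{\lambda_0}$ (which by Theorem~\ref{thm:1.4} equals the critical exponent $\delta_{\lambda_0}$ of $f_{\lambda_0}$) by pinning down the convergence type of the Poincaré series $P_{x,y,\lambda_0}(s) = \sum_{\gamma} G_{\lambda_0}^2(x,\gamma y) e^{-sd(x,\gamma y)}$ at $s = 0$. Recall that by Proposition~4.10 of \cite{HL} we already know $\delta_{\lambda_0} \le 0$, so it suffices to rule out $\delta_{\lambda_0} < 0$; equivalently, to show that $P_{x,y,\lambda_0}(0) = \sum_{\gamma \in \Gamma_{x,y}} G_{\lambda_0}^2(x,\gamma y)$ diverges. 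Since $P_{\lambda_0}$ equals $\delta_{\lambda_0}$, if $P_{x,y,\lambda_0}(0)$ diverges then $\delta_{\lambda_0} \ge 0$, forcing $P_{\lambda_0} = \delta_{\lambda_0} = 0$.

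To show the divergence, I would exploit the relation between $\lambda_0$ being the bottom of the $L^2$-spectrum and the behavior of $G_{\lambda_0}$. The natural tool is the formula $G_\lambda(x,y) = \int_0^\infty e^{\lambda t} p(t,x,y)\,dt$ together with the semigroup identity $\int_{\mathcal{T}} p(t,x,y) p(s,y,z)\,d\mu(y) = p(t+s,x,z)$ from \eqref{eq:2201283}. First I would express $\int_{\mathcal{T}} G_{\lambda_0}^2(x,y)\,d\mu(y)$ (or a truncated version over a large ball) in terms of the heat kernel: by Fubini,
\begin{equation}\label{eq:prop61step}
\int_{\mathcal{T}} G_{\lambda_0}(x,y)^2\,d\mu(y) = \int_0^\infty\!\!\int_0^\infty e^{\lambda_0(t+s)} p(t+s, x, x)\,dt\,ds = \int_0^\infty u e^{\lambda_0 u} p(u,x,x)\,du,
\end{equation}
which diverges precisely because $\lambda_0$ is the bottom of the spectrum (so $e^{\lambda_0 u} p(u,x,x)$ does not decay exponentially; more precisely, $-\tfrac1u \log p(u,x,x) \to \lambda_0$, and the extra factor $u$ makes the integral diverge, or at minimum one uses that $\int_0^\infty e^{\lambda_0 u} p(u,x,x)\,du = G_{\lambda_0}(x,x)$ already sits at the edge of convergence). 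Then I would convert $\int_{\mathcal{T}} G_{\lambda_0}^2(x,y)\,d\mu(y)$ into a sum over $\Gamma$-orbits using cocompactness: $\int_{\mathcal{T}} G_{\lambda_0}^2(x,y)\,d\mu(y) = \sum_{\gamma\in\Gamma}\int_{T_0} G_{\lambda_0}^2(x,\gamma y)\,d\mu(y)$, and by Harnack's inequality \eqref{eq:2.2} each integral $\int_{T_0} G_{\lambda_0}^2(x,\gamma y)\,d\mu(y)$ is comparable, up to a uniform constant, to $G_{\lambda_0}^2(x,\gamma y_0)\,\mu(T_0)$ for a fixed basepoint $y_0$ (taking care of the finitely many $\gamma$ with $\gamma y_0$ near $x$, and of the stabilizers, which are finite). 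Hence $\sum_{\gamma\in\Gamma_{x,y_0}} G_{\lambda_0}^2(x,\gamma y_0)$ diverges, which is exactly $P_{x,y_0,\lambda_0}(0) = +\infty$.

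The main obstacle I anticipate is making the divergence in \eqref{eq:prop61step}, or rather in $\int_{\mathcal{T}} G_{\lambda_0}^2(x,y)\,d\mu(y)$, rigorous: one needs a lower bound on the heat kernel that captures the spectral edge. The cleanest route is probably to avoid the improper integral of $u e^{\lambda_0 u}p(u,x,x)$ and instead argue spectrally — write $G_{\lambda_0}(x,\cdot)$ as a (generalized) eigenfunction-type object and note that $\|G_{\lambda_0}(x,\cdot)\|_{L^2}^2 = \langle (\lambda_0 - \Delta)^{-1}\delta_x, (\lambda_0-\Delta)^{-1}\delta_x\rangle$, which is $\int (\lambda_0 - \mu)^{-2}\,d\langle E_\mu \delta_x, \delta_x\rangle$ over the spectrum of $-\Delta$; since $\lambda_0$ is in the spectrum this integral diverges (the measure $d\langle E_\mu\delta_x,\delta_x\rangle$ puts mass arbitrarily close to $\lambda_0$, and $(\lambda_0-\mu)^{-2}$ is non-integrable there). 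Then one transfers this $L^2$-divergence to the orbit sum via cocompactness and Harnack as above. Alternatively, if the authors prefer a softer argument, one can use that $P_{\lambda_0}(S(m))$ being the supremum of $h_m + \int(f_{\lambda_0}-\delta_{\lambda_0})\,dm$ over invariant measures, together with the fact (from \cite{HL}, Theorem~3.15 or the Ancona machinery) that $G_\lambda \to G_{\lambda_0}$ and $\delta_\lambda$ is continuous and equals $0$ in the limit exactly when the Green function remains finite and of divergence type — but I expect the spectral/heat-kernel argument to be the most self-contained. In all cases the endpoint is: $\delta_{\lambda_0} \ge 0$ combined with $\delta_{\lambda_0} \le 0$ gives $P_{\lambda_0} = \delta_{\lambda_0} = 0$.
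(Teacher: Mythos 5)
Your reduction steps are fine (divergence of $\sum_{\gamma}G_{\lambda_0}^2(x,\gamma y)$ at exponent $0$ does give $\delta_{\lambda_0}\geq 0$, and passing from $\int_{\mathcal{T}}G_{\lambda_0}^2(x,z)\,d\mu(z)=\infty$ to the orbit sum via cocompactness and Harnack is legitimate), but the central claim — that $\int_{\mathcal{T}}G_{\lambda_0}^2(x,z)\,d\mu(z)$, equivalently $\int_0^\infty t\,e^{\lambda_0 t}p(t,x,x)\,dt$ or $\int(\lambda_0-\mu)^{-2}\,d\nu_x(\mu)$, diverges \emph{because} $\lambda_0$ is the bottom of the spectrum — is a genuine gap. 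Having spectral mass arbitrarily close to $\lambda_0$ does not force $\int(\lambda_0-\mu)^{-2}\,d\nu_x=\infty$: the spectral measure of the relevant vector can vanish at the edge, and indeed it must decay there, since $\int(\lambda_0-\mu)^{-1}\,d\nu_x(\mu)=G_{\lambda_0}(x,x)<\infty$ in this setting (\cite{HL}, Cor.~3.12/Thm.~3.15). Whether the \emph{second} moment diverges is exactly a statement about the edge exponent of the spectral measure (equivalently, whether $e^{\lambda_0 t}p(t,x,x)$ decays like $t^{-3/2}$ rather than, say, $t^{-5/2}$), i.e.\ it is essentially the content of the local limit theorem being proved; there are settings (higher-rank symmetric spaces, faster edge decay) where $\lambda_0\in\operatorname{spec}(-\Delta)$ and yet these quantities are finite. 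Similarly, "$e^{\lambda_0 t}p(t,x,x)$ does not decay exponentially, and the extra factor $t$ makes the integral diverge" is not a valid inference, and knowing $G_\lambda(x,x)=\infty$ for $\lambda>\lambda_0$ does not help either: a Laplace transform can have finite derivative at its abscissa of convergence. (There is also the minor issue that $\delta_x\notin L^2$, so the resolvent identity must be run through an approximate identity or $p(1,x,\cdot)$.)

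The paper avoids exactly this trap by arguing in the opposite direction: assuming $P_{\lambda_0}<0$, it proves (Lemma~\ref{lem:6.2}, which relies on Theorem~\ref{thm:5.14}, hence on the Ancona inequality, the shadow lemma and rapid mixing, and crucially on the room $\delta<-\tfrac{P_{\lambda_0}}{2}$ to make $e^{\delta d(x,\cdot)}G_{\lambda_0}^2(x,\cdot)$ integrable) the convolution bound $\int_{\mathcal{T}}G_{\lambda_0}(x,z)G_{\lambda_0}(z,y)e^{\delta d(x,z)}d\mu(z)\leq B_0G_{\lambda_0}(x,y)e^{\delta d(x,y)}$, whence $\bigl|\partial_\lambda^kG_{\lambda_0}(x,y)\bigr|\leq k!\,B_0^k\,G_{\lambda_0}(x,y)e^{\delta d(x,y)}$ by \eqref{eq:2202041}, so $\lambda\mapsto G_\lambda(x,y)$ would extend analytically past $\lambda_0$ — contradicting that $(-\infty,\lambda_0]$ is exactly the set of parameters with finite Green function. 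The only spectral input there is this last (already established) fact, which is much weaker than the edge-divergence your route requires. To repair your proposal you would have to supply an independent proof that the spectral measure does not vanish to order $\geq 3/2$ at $\lambda_0$, and no soft argument of the kind you sketch provides that; so as written the proof does not go through.
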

\begin{proof} Proposition 4.10 in \cite{HL} shows that $P_\lambda\leq0$ for any $\lambda\in[0,\lambda_0]$. As in the proof of Proposition 5.1 in \cite{LL}, we claim that if $P_{\lambda_0}<0$, the function $\lambda\mapsto G_\lambda(x,y)$ has a real analytic extension on $\varepsilon$-neighborhood of $\lambda_0$ for any distinct points $x,y\in \mathcal{T}$. This implies that for any $\lambda\in(\lambda_0,\lambda_0+\epsilon)$, $G_\lambda(x,y)$ is finite. This contradicts that the set of $\lambda\in \mathbb{R}$ for which $G_\lambda(x,y)$ converges for any $x,y$ is $(-\infty,\lambda_0]$ (\cite{HL} Corollary 3.12). 
Following the proof of Proposition 2.2 in \cite{LL}, we have
\begin{equation}\label{eq:2202041}
\frac{\partial^k}{\partial \lambda^k}G_\lambda(x,y)=k!\int_{\mathcal{T}\times\cdots\times \mathcal{T}}G_\lambda(x,x_1)\cdots G_\lambda(x_{k-1},x_k)G_\lambda(x_k,y)d\mu(x_1)\cdots d\mu(x_k).
\end{equation}
To prove the claim, we show that if $P_{\lambda_0}<0,$ there exists a constant $B_0$ such that for any $k\in \mathbb{N}$, distinct points $x,y\in \mathcal{T}$ and $\delta\in(0,-\frac{P_{\lambda_0}}{2})$,
\begin{equation}\label{eq:6.1}
\int_{\mathcal{T}\times\cdots \times \mathcal{T}}G_{\lambda_0}(x,x_1)G_{\lambda_0}(x_1,x_2)\cdots G_{\lambda_0}(x_k,y)e^{\delta d(x,x_k)}d\mu(x_1)\cdots d\mu(x_k)\leq G_{\lambda_0}(x,y)e^{\delta d(x,y)}B_0^k.
\end{equation}
If there exists $B_0$, then the analytic extension of $G_{\lambda}(x,y)$ is bounded above by $\frac{G_{\lambda_0}(x,y)e^{\delta d(x,y)}}{1-(\lambda-\lambda_0) B_0}$ for any $\lambda$ with $0<\lambda-\lambda_0<1/B_0.$  The inequality \eqref{eq:6.1} follows from the next lemma and induction on $k$.
\end{proof}
\begin{lem}\label{lem:6.2}
Suppose that $P_{\lambda_0}<0$. There exists $B_0$ such that for any distinct points $x,y \in\mathcal{T}$ and $\delta\in(0,-\frac{P_{\lambda_0}}{2})$,
$$\int_{\mathcal{T}}G_{\lambda_0}(x,z)G_{\lambda_0}(z,y)e^{\delta d(x,z)}d\mu(z)\leq B_0G_{\lambda_0}(x,y)e^{\delta d(x,y)}.$$
\end{lem}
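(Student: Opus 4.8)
\emph{Overall strategy.} The plan is to control the integral by projecting the running point $z$ onto the geodesic segment $[x,y]$ and reducing the whole estimate to the convergence of a single Poincar\'e series. For $z\in\mathcal{T}$ let $w(z)$ be the point of $[x,y]$ closest to $z$; then $d(x,z)=d(x,w(z))+d(w(z),z)$, $d(z,y)=d(w(z),z)+d(w(z),y)$, and $w(z)$ is either a vertex of $\mathcal{T}$ in the interior of $[x,y]$ or one of $x,y$. Accordingly I decompose $\mathcal{T}=[x,y]\sqcup\mathcal{H}_x\sqcup\mathcal{H}_y\sqcup\bigsqcup_{v}\mathcal{H}_v$, where $v$ runs over the interior vertices of $[x,y]$ and $\mathcal{H}_v$ is the union of the components of $\mathcal{T}\setminus[x,y]$ attached at $v$ (so $w\equiv v$ on $\mathcal{H}_v$). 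Throughout I use two consequences of cocompactness of the $\Gamma$-action together with Harnack's inequality \eqref{eq:2.2} and the positivity/continuity of $G_{\lambda_0}$: (i) for every $0<r\le R$, $G_{\lambda_0}$ is bounded above and below by positive constants on $\{(p,q):r\le d(p,q)\le R\}$ (using $\Gamma$-invariance of $G_{\lambda_0}$ to pass to a fundamental domain), and $\sup_v\mu(B(v,1))<\infty$; and (ii) an orbital comparison of $\int_{\{d(p,z)\ge1\}}G_{\lambda_0}(p,z)^2e^{\delta d(p,z)}\,d\mu(z)$ with $\sum_{\gamma}G_{\lambda_0}(p,\gamma z_0)^2e^{\delta d(p,\gamma z_0)}$ up to a bounded error, for a fixed base point $z_0$. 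I may assume $d(x,y)$ exceeds any fixed constant, since for $d(x,y)$ bounded a direct argument (Harnack, (i), Lemma~\ref{lem:2.4}, and $G_{\lambda_0}(x,y)\ge c>0$) gives the claim.

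\emph{Endpoint pieces.} If $d(x,z)\le 4$, Harnack \eqref{eq:2.2} gives $G_{\lambda_0}(z,y)\le e^{D}G_{\lambda_0}(x,y)$ and $\int_{B(x,4)}G_{\lambda_0}(x,z)\,d\mu(z)\le C_4$ by Lemma~\ref{lem:2.4}, so this contributes $\le e^{D}C_4e^{4\delta}G_{\lambda_0}(x,y)\le C'G_{\lambda_0}(x,y)$ (note $e^{4\delta}\le e^{-2P_{\lambda_0}}$). The case $d(z,y)\le4$ is symmetric but carries the extra factor $e^{\delta d(x,z)}\le e^{4\delta}e^{\delta d(x,y)}$, giving $\le C'G_{\lambda_0}(x,y)e^{\delta d(x,y)}$. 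For $z\in\mathcal{H}_x$ with $d(x,z)>4$, Ancona's inequality \eqref{eq:2.4} on $[z,y]\ni x$ gives $G_{\lambda_0}(x,z)G_{\lambda_0}(z,y)\le C\,G_{\lambda_0}(x,y)\,G_{\lambda_0}(x,z)^2$, hence this piece is $\le C\,G_{\lambda_0}(x,y)\int_{\{d(x,z)\ge1\}}G_{\lambda_0}(x,z)^2e^{\delta d(x,z)}\,d\mu(z)\le CM\,G_{\lambda_0}(x,y)$, where $M:=\sup_p\int_{\{d(p,z)\ge1\}}G_{\lambda_0}(p,z)^2e^{\delta d(p,z)}\,d\mu(z)$. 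For $z\in\mathcal{H}_y$ the same argument on $[x,z]\ni y$ together with $e^{\delta d(x,z)}=e^{\delta d(x,y)}e^{\delta d(y,z)}$ yields $\le CM\,G_{\lambda_0}(x,y)e^{\delta d(x,y)}$.

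\emph{Bulk and assembly.} Let $v_1,\dots,v_m$ be the interior vertices of $[x,y]$ in order from $x$ to $y$ with $d(x,v_j)\ge2$ and $d(v_j,y)\ge2$ (the other $\mathcal{H}_v$'s are absorbed into the endpoint pieces after enlarging their radii). For $z\in\mathcal{H}_{v_j}$ with $d(v_j,z)\ge1$, three applications of \eqref{eq:2.4} at $v_j$ give $G_{\lambda_0}(x,z)G_{\lambda_0}(z,y)\le C^3G_{\lambda_0}(x,y)G_{\lambda_0}(v_j,z)^2$; for $z\in\mathcal{H}_{v_j}$ with $d(v_j,z)<1$, interposing the points $v_j^{\pm}$ on $[x,v_j]$ and $[v_j,y]$ at distance $1$ from $v_j$, applying \eqref{eq:2.4} at $v_j^{\pm}$, and using (i) to bound $G_{\lambda_0}(v_j^{\pm},z)$ (as $d(v_j^{\pm},z)\in[1,2)$) and $G_{\lambda_0}(v_j^-,v_j^+)$, gives $G_{\lambda_0}(x,z)G_{\lambda_0}(z,y)\le C'G_{\lambda_0}(x,y)$. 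Using $e^{\delta d(x,z)}=e^{\delta d(x,v_j)}e^{\delta d(v_j,z)}$ and integrating over $\mathcal{H}_{v_j}$,
$$\int_{\mathcal{H}_{v_j}}G_{\lambda_0}(x,z)G_{\lambda_0}(z,y)e^{\delta d(x,z)}\,d\mu(z)\le C''G_{\lambda_0}(x,y)e^{\delta d(x,v_j)}\big(M+\sup_v\mu(B(v,1))\big).$$
Similarly the contribution of $[x,y]$ itself, using Ancona on its middle portion and (i) near its ends, is $\le C''G_{\lambda_0}(x,y)\int_0^{d(x,y)}e^{\delta t}\,dt\le (C''/\delta)G_{\lambda_0}(x,y)e^{\delta d(x,y)}$. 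Since $d(x,v_j)\le d(x,y)-l_m(m-j)$, one has $\sum_{j=1}^m e^{\delta d(x,v_j)}\le e^{\delta d(x,y)}\sum_{i\ge0}e^{-\delta l_m i}=e^{\delta d(x,y)}/(1-e^{-\delta l_m})$. Summing all contributions gives $\int_{\mathcal{T}}G_{\lambda_0}(x,z)G_{\lambda_0}(z,y)e^{\delta d(x,z)}\,d\mu(z)\le B_0G_{\lambda_0}(x,y)e^{\delta d(x,y)}$ with $B_0$ a constant of the form $C\big(1+\delta^{-1}+(M+\sup_v\mu(B(v,1)))(1-e^{-\delta l_m})^{-1}\big)$.

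\emph{Finiteness of $M$ and the main difficulty.} It remains to show $M<\infty$, and this is the only step where the hypothesis $P_{\lambda_0}<0$ is used. By Theorem~\ref{thm:1.4}, $P_{\lambda_0}=\delta_{\lambda_0}$ is the critical exponent of $f_{\lambda_0}$, so the Poincar\'e series $P_{x_0,x_0,\lambda_0}(s)=\sum_\gamma G_{\lambda_0}^2(x_0,\gamma x_0)e^{-sd(x_0,\gamma x_0)}$ converges for every $s>\delta_{\lambda_0}$; since $\delta\in(0,-P_{\lambda_0}/2)$ we have $-\delta>P_{\lambda_0}/2>P_{\lambda_0}=\delta_{\lambda_0}$ (using $P_{\lambda_0}<0$), so $P_{x_0,x_0,\lambda_0}(-\delta)<\infty$. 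Writing $\int_{\{d(p,z)\ge1\}}G_{\lambda_0}(p,z)^2e^{\delta d(p,z)}\,d\mu(z)=\sum_\gamma\int_{T_0}G_{\lambda_0}(p,\gamma z)^2e^{\delta d(p,\gamma z)}\,d\mu(z)$, comparing the terms with $d(p,\gamma T_0)$ large to $\mu(T_0)G_{\lambda_0}(p,\gamma x_0)^2e^{\delta d(p,\gamma x_0)}$ via Harnack, and bounding the boundedly many remaining terms via (i), one obtains $M\le C(P_{x_0,x_0,\lambda_0}(-\delta)+1)<\infty$, uniformly in $p$ by cocompactness and $\Gamma$-invariance of $G_{\lambda_0}$. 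The main obstacle is the treatment of points $z$ that lie close to $[x,y]$ but away from its endpoints: there Ancona's inequality does not apply directly at $w(z)$, forcing the auxiliary points $v_j^{\pm}$ and the comparison estimates in (i), and one must check that the roughly $d(x,y)/l_m$ small contributions from the vertices of $[x,y]$ combine into a geometric series rather than growing linearly in $d(x,y)$.
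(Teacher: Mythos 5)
Your treatment of the main case (large $d(x,y)$) is essentially the paper's Step 1: project $z$ onto $[x,y]$, apply Ancona \eqref{eq:2.4} at the projection vertex together with Harnack \eqref{eq:2.2} for points close to the segment, and sum a geometric series $\sum_j e^{\delta d(x,v_j)}\lesssim e^{\delta d(x,y)}/(1-e^{-\delta l_m})$, everything hinging on a uniform bound for $M=\sup_p\int_{\{d(p,z)\ge1\}}G_{\lambda_0}^2(p,z)e^{\delta d(p,z)}d\mu(z)$. The one genuine difference is how you obtain $M<\infty$: you use convergence of the Poincar\'e series at $s=-\delta>P_{\lambda_0}=\delta_{\lambda_0}$ (via Theorem~\ref{thm:1.4} and the hypothesis $P_{\lambda_0}<0$), compared with the orbital sum by Harnack and cocompactness; the paper instead derives the tail bound \eqref{eq:6.2} by following the proof of Theorem~\ref{thm:5.14} and handles the intermediate annulus with \eqref{eq:6.19} (Proposition 4.10 in \cite{HL}). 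Your route is legitimate and more elementary, since it avoids the mixing machinery; note also that, like the paper's constant $B_1$, your $B_0$ depends on $\delta$, which is no worse than the paper.

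The genuine gap is the case of nearby points, to which the paper devotes its entire Step 2 ($d(x,y)<2$) and which you dismiss as ``a direct argument (Harnack, (i), Lemma~\ref{lem:2.4}, and $G_{\lambda_0}(x,y)\ge c$)''. Those tools do not suffice. When $d(x,y)<1$, Ancona \eqref{eq:2.4} can never be applied with $x$, $y$, or a point of $[x,y]$ as the middle point, so both your endpoint reduction ($G_{\lambda_0}(x,z)G_{\lambda_0}(z,y)\le C G_{\lambda_0}(x,y)G_{\lambda_0}(x,z)^2$) and your bulk reduction at $v_j^{\pm}$ break down; standard Harnack at scale $d(x,y)/2$ has a constant $D_{r,l}$ that blows up as the radii shrink, so one must use the refined Lemma~\ref{lem:2.2}; and even granting that, for $z$ at bounded distance from $x$ but outside $B(x,d(x,y)/2)\cup B(y,d(x,y)/2)$ you are left with a product $G_{\lambda_0}(x,z)G_{\lambda_0}(z,y)$ in which both distances can tend to $0$, while your toolkit only controls one factor at a time in $L^1$ (Lemma~\ref{lem:2.4}); you would need boundedness or square-integrability of $G_{\lambda_0}$ up to the diagonal, which the paper never establishes. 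The paper's mechanism here is the Markov-property identity \eqref{eq:2.8}: for $z$ projecting to $x$, to an interior vertex $v$, or to $y$, it writes for instance $G_{\lambda_0}(x,z)G_{\lambda_0}(z,y)=G_{\lambda_0}(x,z)G_{\lambda_0}(x,y)\,G_{\lambda_0}(z,y')/G_{\lambda_0}(x,y')$ with an auxiliary point $y'$ at distance $2$, and then invokes the already-proved inequality for the pair at distance $\ge2$. Some equivalent device is missing from your proposal, so the close-points case is a real hole. (A smaller point: your radius-$4$ endpoint balls require $d(x,y)$ well above $8$ for the Harnack steps, so for moderate $d(x,y)\in[2,8]$ you must shrink the radii to $1$, as the paper does.)
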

\begin{proof}First, we verify that for any $x$,
$$\int_{B(x,r)^c}G_{\lambda_0}^2(x,z)e^{\delta d(x,z)}d\mu(z)\leq B$$
for some $B.$
Following the proof of Theorem \ref{thm:5.14}, for sufficiently large $R$, we have
\begin{equation}\label{eq:6.2}
\int_{B(x,R)^c} G_{\lambda_0}^2(x,y)e^{\delta d(x,y)}d\mu(y)\leq -\frac{\mathcal{C}(1+\epsilon)^{9}e^{(P_{\lambda_0}+\delta) R}}{(P_{\lambda_0}+\delta) r}\mu_{\cG_x^+\mathcal{T}}^{\lambda_0}(\cG_x^+\mathcal{T})=:A_1.
\end{equation}
Proposition 4.10 in \cite{HL} shows that there exists a constant $A_2$ for any $r>1$,
\begin{equation}\label{eq:2201153}
\sum_{y\in S(x,r)} G_{\lambda_0}^2(x,y)\leq A_2.
\end{equation}
The inequalities \eqref{eq:6.2} and \eqref {eq:2201153} show that there exists $B$ such that for any positive number $\delta<-\frac{P_{\lambda_0}}{2}$ and for any $x\in \mathcal{T},$
\begin{equation}\label{eq:6.3}
\int_{B(x,1)^c}G_{\lambda_0}^2(x,z)e^{\delta d(x,z)}d\mu(z)\leq A_1e^{\delta R}+\int_1^R\sum_{y\in S(x,r)}G_{\lambda_0}^2(x,y)e^{\delta r}dr\leq A_1e^{\delta R}+A_2e^{\delta R}R = B.\end{equation}
where $dr$ is Lebesgue measure on $\mathbb{R}$.
\begin{figure}[h]
\begin{center}
\begin{tikzpicture}[scale=1]
\node at (-2.7,-0.3) {$x$};\node at (2.7,-0.3) {$y$};
 \fill (-2.7,0) circle (2pt); \fill (2.7,0) circle (2pt);
 \draw[(-),thick] (-3.7,0)--(-1.7,0); \draw[-),thick] (-2,0)--(-2,0.3);\draw[-),thick] (-3,0)--(-3,-0.7);
 \draw[(-),thick] (3.7,0)--(1.7,0); \draw[-),thick] (2,0)--(2,0.3);\draw[-),thick] (3,0)--(3,-0.7);
 \draw[very thick,dashed](-4.5,-{sqrt(3)}/2)--(-4,0);\draw[very thick,dashed](-4,0)--(-3.7,0);\draw[very thick,dashed](-4.5,+{sqrt(3)}/2)--(-4,0);
  \draw[very thick,dashed](-4.5,-{sqrt(3)}/2)--(-5.2,-1.3);  \draw[very thick,dashed](-4.5,-{sqrt(3)}/2)--(-4.7,-1.6);
    \draw[very thick,dashed](-4.5,+{sqrt(3)}/2)--(-5.2,1.3);  \draw[very thick,dashed](-4.5,+{sqrt(3)}/2)--(-4.7,1.6);
 \draw[very thick,dashed](4.5,-{sqrt(3)}/2)--(4,0);\draw[very thick,dashed](4,0)--(3.7,0);\draw[very thick,dashed](4.5,+{sqrt(3)}/2)--(4,0);
   \draw[very thick,dashed](4.5,-{sqrt(3)}/2)--(5.2,-1.3);  \draw[very thick,dashed](4.5,-{sqrt(3)}/2)--(4.7,-1.6);
    \draw[very thick,dashed](4.5,+{sqrt(3)}/2)--(5.2,1.3);  \draw[very thick,dashed](4.5,+{sqrt(3)}/2)--(4.7,1.6);
 \draw[,very thick,dashed](-3,-0.7)--(-3,-1)--(-3.5,-{sqrt(3)}/2-1);\draw[very thick,dashed](-3,-1)--(-2.5,-{sqrt(3)}/2-1);
  \draw[-,very thick,dashed](-3.5,-{sqrt(3)}/2-1)--(-3.7,-2.4); \draw[,very thick,dashed](-3.5,-{sqrt(3)}/2-1)--(-3.3,-2.4);
    \draw[-,very thick,dashed](-2.5,-{sqrt(3)}/2-1)--(-2.7,-2.4); \draw[,very thick,dashed](-2.5,-{sqrt(3)}/2-1)--(-2.3,-2.4);
 \draw[,very thick,dashed](3,-0.7)--(3,-1)--(3.5,-{sqrt(3)}/2-1);\draw[very thick,dashed](3,-1)--(2.5,-{sqrt(3)}/2-1);
   \draw[,very thick,dashed](3.5,-{sqrt(3)}/2-1)--(3.7,-2.4); \draw[,very thick,dashed](3.5,-{sqrt(3)}/2-1)--(3.3,-2.4);
       \draw[,very thick,dashed](2.5,-{sqrt(3)}/2-1)--(2.7,-2.4); \draw[,very thick,dashed](2.5,-{sqrt(3)}/2-1)--(2.3,-2.4);
 \draw[,very thick,dashed](-2,0.3)--(-2,1)--(-2.5,+{sqrt(3)}/2+1);\draw[very thick,dashed](-2,1)--(-1.5,+{sqrt(3)}/2+1);
  \draw[very thick,dashed](-2.5,+{sqrt(3)}/2+1)--(-2.7,2.4);  \draw[very thick,dashed](-2.5,+{sqrt(3)}/2+1)--(-2.3,2.4);
  \draw[very thick,dashed](-1.5,+{sqrt(3)}/2+1)--(-1.7,2.4);  \draw[very thick,dashed](-1.5,+{sqrt(3)}/2+1)--(-1.3,2.4);
  \draw[,very thick,dashed](2,0.3)--(2,1)--(2.5,+{sqrt(3)}/2+1);\draw[very thick,dashed](2,1)--(1.5,+{sqrt(3)}/2+1); 
    \draw[very thick,dashed](2.5,+{sqrt(3)}/2+1)--(2.7,2.4);  \draw[very thick,dashed](2.5,+{sqrt(3)}/2+1)--(2.3,2.4);
  \draw[very thick,dashed](1.5,+{sqrt(3)}/2+1)--(1.7,2.4);  \draw[very thick,dashed](1.5,+{sqrt(3)}/2+1)--(1.3,2.4);
 \draw[,very thick,dotted] (-1.7,0)--(1.7,0); \draw[-),very thick,dotted](-1,0)--(-1,-1);\draw[-),very thick,dotted](0,0)--(0,1);\draw[-),very thick,dotted](1,0)--(1,-1);
  \draw[line width=2.2pt, color=gray](-1,-1)--(-1.5,-{sqrt(3)}/2-1); \draw[line width=2.2pt, color=gray](-1,-1)--(-0.5,-{sqrt(3)}/2-1);
      \draw[line width=2.2pt, color=gray](-0.5,-{sqrt(3)}/2-1)--(-0.7,-2.4);\draw[line width=2.2pt, color=gray](-0.5,-{sqrt(3)}/2-1)--(-0.3,-2.4);
    \draw[line width=2.2pt,color=gray](-1.5,-{sqrt(3)}/2-1)--(-1.7,-2.4);\draw[line width=2.2pt,color=gray](-1.5,-{sqrt(3)}/2-1)--(-1.3,-2.4); 
    \draw[line width=2.2pt,color=gray](1,-1)--(1.5,-{sqrt(3)}/2-1); \draw[line width=2.2pt, color=gray](1,-1)--(0.5,-{sqrt(3)}/2-1);
          \draw[line width=2.2pt,color=gray](0.5,-{sqrt(3)}/2-1)--(0.7,-2.4);\draw[line width=2.2pt,color=gray](0.5,-{sqrt(3)}/2-1)--(0.3,-2.4);
    \draw[line width=2.2pt,color=gray](1.5,-{sqrt(3)}/2-1)--(1.7,-2.4);\draw[line width=2.2pt,color=gray](1.5,-{sqrt(3)}/2-1)--(1.3,-2.4); 
     \draw[line width=2.2pt, color=gray](0,1)--(0.5,+{sqrt(3)}/2+1); \draw[line width=2.2pt,color=gray](0,1)--(-0.5,+{sqrt(3)}/2+1);
      \draw[line width=2.2pt, color=gray](0.5,+{sqrt(3)}/2+1)--(0.7,2.4); \draw[line width=2.2pt, color=gray](0.5,+{sqrt(3)}/2+1)--(0.3,2.4); 
       \draw[line width=2.2pt,color=gray](-0.5,+{sqrt(3)}/2+1)--(-0.7,2.4);    \draw[line width=2.2pt,color=gray](-0.5,+{sqrt(3)}/2+1)--(-0.3,2.4);
\node at (-5,-3) {${T}_1$}; \draw[thick] (-4.7,-3)--(-3.3,-3);
\node at (-3,-3) {${T}_2$};\draw[very thick,dashed](-2.7,-3)--(-1.3,-3);
\node at (-1,-3) {${T}_3$};\draw[very thick,dotted]  (-0.7,-3)--(0.7,-3);
\node at (1,-3) {${T}_4$};\draw[line width=2.2pt,color=gray] (1.3,-3)--(2.7,-3);
  \end{tikzpicture}
\end{center}
\caption{Decomposition of 3-regular tree of which every edge has length 1}\label{figure7}
\end{figure}
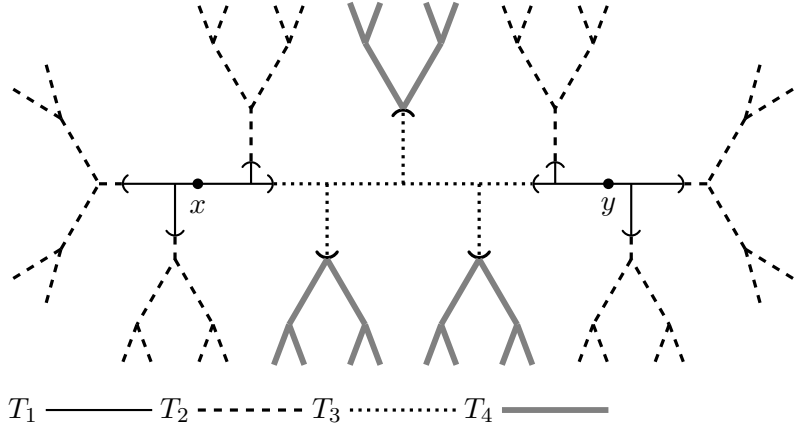\\
Since the constant $D_{r,l}$ in Harnack inequality \eqref{eq:2.2} depends on $r$ and $l$, we prove by separating into cases $d(x,y)\geq 2$ and $d(x,y)<2.$\\
\underline{\textit{Step 1, $d(x,y)\geq 2$}:} First, we will consider the case when $d(x,y)\geq2.$ For given $z\in \mathcal{T}$, denote by $z'$ the closest point to $z$ on $[x,y]$. Decompose $\mathcal{T}$ as follows: 
 \begin{equation}
 \begin{split}\nonumber
 &{T}_1:=B(x,1)\cup B(y,1)\\
 &{T}_2:=\{z\in T_1^c:z'\in \mathcal{T}_1\}\\
 &{T}_3:=\{z\in (\mathcal{T}_1\cup\mathcal{T}_2)^c:d(z,[x,y])<1\} \text{ and }\\
 & {T}_4:=(\mathcal{T}_1\cup \mathcal{T}_2\cup\mathcal{T}_3)^c.
\end{split}
\end{equation}
Harnack inequality \eqref{eq:2.2} and Lemma \ref{lem:2.4} show that 
\begin{equation}
\begin{split}
\nonumber&\int_{B(x,1)} G_{\lambda_0}(x,z)G_{\lambda_0}(z,y) e^{\delta d(x,z)}d\mu(z)\\
\nonumber&\overset{\eqref{eq:2.2}}\leq  e^{D_{1,1/2}}e^{2\delta }\int_{B(x,1)} G_{\lambda_0}(x,z)G_{\lambda_0}(x,y) d\mu \overset{\substack{\text{Lem}\\ \ref{lem:2.4}}}\leq  e^{D_{1,1/2}}e^{2\delta}C_{1}G_{\lambda_0}(x,y)e^{\delta d(x,y)},
\end{split}
\end{equation}
where $C_{1}$ is the constant in Lemma \ref{lem:2.4}. Similarly, we also have the inequality for $B(y,1)$ and we have
\begin{equation}\label{eq:6.4}
\int_{T_1} G_{\lambda_0}(x,z)G_{\lambda_0}(z,y) e^{\delta d(x,z)}d\mu(z)\leq  2e^{D_{1,1/2}}e^{2\delta}C_{1}G_{\lambda_0}(x,y)e^{\delta d(x,y)}.
\end{equation} 
For any point $z\in T_2$ with $d(z,z')<1$ and $z'\in B(x,1)$, by Harnack inequality \eqref{eq:2.2},
\begin{equation}\label{6a}
G_{\lambda_0}(x,z)G_{\lambda_0}(z,y)\leq e^{D_{1,1/2}} G_{\lambda_0}(x,z)G_{\lambda_0}(z',y)\leq e^{2D_{1,1/2}} G_{\lambda_0}(x,z)G_{\lambda_0}(x,y).
\end{equation}

For any point $z\in T_2$ with $d(z,z')\geq 1$ and $z'\in B(x,1),$ Harnack inequality \eqref{eq:2.2} and Ancona inequality \eqref{eq:2.4} show that
\begin{equation}\label{6b}
G_{\lambda_0}(x,z)G_{\lambda_0}(z,y)\overset{\eqref{eq:2.4}}\leq C G_{\lambda_0}(x,z)G_{\lambda_0}(z,z')G_{\lambda_0}(z',y)\overset{\eqref{eq:2.2}}\leq Ce^{2D_{1,1/2}} G_{\lambda_0}^2(x,z)G_{\lambda_0}(x,y).
\end{equation}
where $C$ is the constant in Theorem \ref{thm:2.6}. The reason we consider two cases is the assumption of Ancona inequality. Obviously, 
\begin{equation}
\begin{split}
&\{z\in T_2:z'\in B(x,1),d(z,z')<1\}\subset B(x,2)\text{ and }\\ 
&\{z\in T_2:z'\in B(x,1),d(z,z')\geq 1\}\subset B(x,1)^c.\nonumber
\end{split}
\end{equation}
By \eqref{6a}, \eqref{6b}, Lemma \ref{lem:2.4} and \eqref{eq:6.3},
\begin{equation}
\begin{split}
\nonumber&\int_{\{z\in T_2:z'\in B(x,1)\}}G_{\lambda_0}(x,z)G_{\lambda_0}(z,y) e^{\delta d(x,z)}d\mu(z)\\
&\overset{\eqref{6a}+\eqref{6b}}\leq e^{2D_{1,1/2}}e^{\delta(d(x,y)+2)}G_{\lambda_0}(x,y)\int_{\{z\in T_2:z'\in B(x,1),d(z,z')<1\}}G_{\lambda_0}(x,z)d\mu(z)\\
&+Ce^{2D_{1,1/2}}e^{\delta d(x,y)}G_{\lambda_0}(x,y)\int_{\{z\in T_2:z'\in B(x,1),d(z,z')\geq 1\}}G_{\lambda_0}^2(x,z) e^{\delta d(x,z)}d\mu(z)\\
&\overset{\substack{\text{Lem} \ref{lem:2.4}\\+\eqref{eq:6.3}}}\leq e^{2D_{1,1/2}}e^{\delta(d(x,y)+2)}C_2G_{\lambda_0}(x,y)+CBe^{2D_{1,1/2}}e^{\delta d(x,y)}G_{\lambda_0}(x,y)\\
&=(e^{2\delta}C_2+CB)e^{2D_{1,1/2}}G_{\lambda_0}(x,y)e^{\delta d(x,y)},
\end{split}
\end{equation}
where $C_2$ is the constant in Lemma \ref{lem:2.4}. Analogously, it is possible to compute the case when $z\in T_2$ and $z'\in B(y,1)$. Then we have
\begin{equation}\label{6ccc}
\begin{split}
\int_{T_2}G_{\lambda_0}(x,z)G_{\lambda_0}(z,y) e^{\delta d(x,z)}d\mu(z)\leq2(e^{2\delta}C_2+CB)e^{2D_{1,1/2}}G_{\lambda_0}(x,y)e^{\delta d(x,y)}.
\end{split}
\end{equation}
For any point $z\in T_3$, Harnack inequality and Ancona inequality show that
$$G_{\lambda_0}(x,z)G_{\lambda_0}(z,y)\leq e^{2D_{1,1/2}}G_{\lambda_0}(x,z')G_{\lambda_0}(z',y) \leq e^{2D_{1,1/2}}CG_{\lambda_0}(x,y).$$
Since the action of $\Gamma$ is cocompact, for any $x,y\in \mathcal{T}$ and $t\in [1,d(x,y)-1]$, the number of the points $z\in T_3$ with $d(x,z)=t$ is bounded above by some constant $M$. Using this and the above inequality, we have
\begin{equation}\label{eq:6.5} 
\begin{split}
&\int_{T_3}G_{\lambda_0}(x,z)G_{\lambda_0}(z,y) e^{\delta d(x,z)}d\mu(z)\leq e^{2D_{1,1/2}}CG_{\lambda_0}(x,y)\int_{T_2}e^{\delta d(x,z)}d\mu(z)\\
&\leq Me^{2D_{1,1/2}}CG_{\lambda_0}(x,y)\int_{1}^{d(x,y)-1}e^{\delta t} dt
\leq Me^{2D_{1,1/2}}\delta^{-1} e^{-\delta}CG_{\lambda_0}(x,y)e^{\delta d(x,y)}.
\end{split}
\end{equation}
By Ancona inequality, for any $z\in T_4$, 
\begin{equation}\label{eq:6.6}
G_{\lambda_0}(x,z)G_{\lambda_0}(z,y)\leq C^2 G_{\lambda_0}(x,z')G_{\lambda_0}(z',z)G_{\lambda_0}(z,z')G_{\lambda_0}(z',y)\leq C^3G_{\lambda_0}(x,y)G_{\lambda_0}^2(z,z').
\end{equation}
Let $\{v_i\}_{1\leq i\leq m}$ be the sequences of vertices in $[x,y]\cap T_1^c$. The cocompact action of $\Gamma$ guarantees that for any $x,y\in \mathcal{T}$ and $k>0$, the number of vertices $v_i$ on $[x,y]$ with $k-1 < d(x,v_i)\leq k$ is bounded above by some constant $D$. For any $z\in T_4$, the closest point $z'$ on $[x,y]$ is a vertex. Using the inequalities \eqref{eq:6.6} and \eqref{eq:6.3} in order, we have
\begin{equation}\label{eq:6.7}
\begin{split}
&\int_{T_4}G_{\lambda_0}(x,z)G_{\lambda_0}(z,y)e^{\delta d(x,z)}\mu(z)\\
&\overset{\eqref{eq:6.6}}\leq C^3G_{\lambda_0}(x,y)\sum_{i=1}^me^{\delta d(x,v_i)}\int_{\{z\in T_4:z'=v_i\}}G_{\lambda_0}^2(v_i,z)e^{\delta d(v_i,z)}d\mu(z)\\
 &\overset{\eqref{eq:6.3}}\leq C^3BG_{\lambda_0}(x,y)D\sum_{k=1}^{\llcorner d(x,y)-1\lrcorner}e^{\delta k}\leq C^3BDG_{\lambda_0}(x,y)\frac{e^{\delta d(x,y)+\delta}}{e^{\delta }-1},
 \end{split}
\end{equation}
where $\llcorner d(x,y)\lrcorner$ is the largest integer smaller than $d(x,y)$. Denote 
$$B_1:=4\max\left\{2e^{D_{1,1/2}}e^{\delta }C_{1},2(e^{2\delta}C_2+CB)e^{2D_{1,1/2}},Me^{2D_{1,1/2}}\delta^{-1} e^{-\delta }C,C^3BD\frac{e^{\delta }}{e^{\delta }-1}\right\}.$$
By \eqref{eq:6.4}, \eqref{6ccc}, \eqref{eq:6.5}, and \eqref{eq:6.7}, for any $x$ and $y$ with $d(x,y)\geq 2,$ we have
\begin{equation}\label{eq:6.8}
\int_{\mathcal{T}}G_{\lambda_0}(x,z)G_{\lambda_0}(z,y)e^{\delta d(x,z)}d\mu(z)\leq B_1G_{\lambda_0}(x,y)e^{\delta d(x,y)}.
\end{equation}
\begin{figure}[h]
\begin{center}
\begin{tikzpicture}[scale=1]
\node at (-0.35,-0.4) {$x$};\node at (0.65,-0.6) {$y$};\node at (0.2,0.1) {$v$};\node at (1.35,-1.05) {$y'$};\node at (2,-1.2) {$y''$};
\node at (-3.3,-2) {$T_1'$};\node at (-1.3,-2) {$T_2'$};\node at (-2.5,2.2) {$d(x,y)=3/5$};\node at (-2.63,1.7) {$d(x,y')=2$};\node at (-2.6,1.2) {$d(v,y'')=2$};
\draw  [-),line width=1.1pt, dashed](-3,-1)--(-{sqrt(3)},-1)--(-{sqrt(3)}/2,-1/2);\draw(-{sqrt(3)}/2,-1/2)--(0,0)--(0,1/5);\draw[(-,line width=1.1pt, dashed](0,1/5)--(0,2);\draw[line width=1.1pt, dashed] (0,2)--(-{sqrt(3)}/2,2.5);\draw [line width=1.1pt, dashed](-{sqrt(3)},-1)--(-{sqrt(3)},-1.7);\draw[line width=1.1pt, dashed](0,2)--(+{sqrt(3)}/2,2.5);\draw(0,0)--(+{sqrt(147)/10},-7/10);\draw[-),line width=1.1pt, dashed](+{sqrt(3)},-1)--(+{sqrt(147)/10},-7/10);
\draw [line width=1.1pt, dashed](+{sqrt(3)},-1)--(3,-1);\draw[line width=1.1pt, dashed] (+{sqrt(3)},-1)--(+{sqrt(3)},-1.7);
\fill (-{sqrt(3)}/5,-1/5) circle (2pt);\fill (+{sqrt(12)}/5,-2/5) circle (2pt);\fill (0,0) circle (2pt); \fill (+{sqrt(3)},-1) circle (2pt); \fill (+{sqrt(48)}/5,-4/5) circle (2pt);
\draw (-2.9,-2)--(-2,-2);\draw[line width=1.1pt, dashed](-1,-2)--(0,-2); 
  \end{tikzpicture}
\end{center}
\caption{Decomposition of 3-regular tree of which every edge has length 2}\label{figure8}
\end{figure}
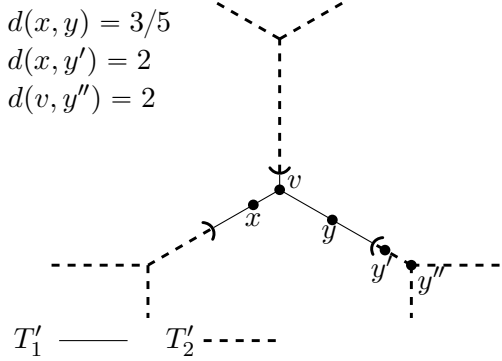
\underline{\textit{Step 2, $d(x,y)<2$}:}  Let us consider the case when $d(x,y)<2.$ 

Denote $T_1':=\overline{B(x,d(x,y)/2)\cup B(y,d(x,y)/2)}\text{ and }T_2':=T_1^c.$  Using Lemma \ref{lem:2.2}, we have for any $z\in B(x,d(x,y)/2)$, 
$$G_{\lambda_0}(z,y)\leq e^{D_{d(x,y)/2,d(x,y)/4}}G_{\lambda_0}(x,y).$$
In the proof of \eqref{eq:2.2} in \cite{HL}, we obtain $$D_{r,l}=\max\{\sqrt{4|S(x,r+s)|\mu(B(x,r))/l}:x\in \mathcal{T}, 0<s\leq l\},$$ where $|S(x,r+l)|$ is the cardinality of $S(x,r+l)$. Since $$|S(x,r+l)|<|S(x,1)|\text{ and }\mu(B(x,r))\leq r|S(x,1)|$$ for any $r,2l<1/2$,  $D_{d(x,y)/2,d(x,y)/4}\leq 4|S(x,1)|.$
Similar to \eqref{eq:6.4},  
\begin{equation}\label{eq:6.9}
\int_{T_1'} G_{\lambda_0}(x,z)G_{\lambda_0}(z,y) e^{\delta d(x,z)}d\mu(z)\leq  2e^{4|S(x,1)|}e^{\delta }C_{1}G_{\lambda_0}(x,y)e^{\delta d(x,y)}.
\end{equation}
For any $z\in T_2'$, $z'$ is either $x$, $y$ or if exists, a vertex on $[x,y]$ different to $x,y$ (see Figure \ref{figure8}). The number of vertices on $[x,y]$ is bounded above by $2D$. Consider the case when $z'=x$. Choose a point $y'$ with $y\in [x,y']$ and $d(x,y')=2.$ Using \eqref{eq:2.8} twice,
\begin{equation}\label{eq:6.10}
\begin{split}
 G_{\lambda_0}(x,z)G_{\lambda_0}(z,y)&=G_{\lambda_0}(x,z)G_{\lambda_0}(x,y)\mathbb{E}_z[1_{t_x(\omega)<\infty}(\omega)e^{\lambda_0 t_x(\omega)}]\\
&=G_{\lambda_0}(x,z)G_{\lambda_0}(x,y)\frac{G_{\lambda_0}(z,y')}{G_{\lambda_0}(x,y')}.
\end{split}
\end{equation}
Appying \eqref{eq:6.10} and \eqref{eq:6.8} in order, we have
\begin{equation}\label{eq:6.11}
\begin{split}
&\int_{\{z\in T_2':z'=x\}} G_{\lambda_0}(x,z)G_{\lambda_0}(z,y)e^{\delta d(x,z)}\mu(z)\\
&\overset{\eqref{eq:6.10}}=\frac{G_{\lambda_0}(x,y)}{G_{\lambda_0}(x,y')}\int_{\{z\in T_2:z'=x\}}G_{\lambda_0}(x,z)G_{\lambda_0}(z,y')e^{\delta d(x,z)}d\mu(z)\\
&\overset{\eqref{eq:6.8}}\leq B_1\frac{G_{\lambda_0}(x,y)}{G_{\lambda_0}(x,y')}G_\lambda(x,y')e^{\delta d(x,y')}\leq B_1e^{2\delta}G_{\lambda_0}(x,y)e^{\delta d(x,y)}.
\end{split}
\end{equation}
Suppose that there exists a vertex $v$ on $[x,y]$. Choose a point $y''$ with $y\in [v,y'']$ and $d(v,y'')=2.$
Using \eqref{eq:2.8} twice, we have for any $z\in T_2$ with $z'=v,$
\begin{equation}\label{eq:6.12}
\begin{split}
 G_{\lambda_0}(x,z)G_{\lambda_0}(z,y)&=G_{\lambda_0}(x,v)\mathbb{E}_z[1_{t_v(\omega)<\infty}(\omega)e^{\lambda_0 t_v(\omega)}]G_{\lambda_0}(z,v)\mathbb{E}_y[1_{t_v(\omega)<\infty}(\omega)e^{\lambda_0 t_v(\omega)}]\\
&=G_{\lambda_0}(x,y)G_{\lambda_0}(z,v)\frac{G_{\lambda_0}(z,y'')}{G_{\lambda_0}(v,y'')}.
\end{split}
\end{equation}
For any points $z\in T_2$ satisfying $z'=v$ or $z'=y$, $d(x,z)\leq 2+d(z',z).$ Similar to \eqref{eq:6.11}, applying \eqref{eq:6.12}, we obtain
\begin{equation}\label{eq:6.13}
\int_{T_2}G_\lambda(x,z)G_\lambda(z,y)e^{\delta d(x,z)}\leq (2D+2)B_1e^{4\delta }G_{\lambda_0}(x,y)e^{\delta d(x,y)}.
\end{equation}
Denote $B_0:=2\max\{2e^{4|\partial B(x,1)|}e^{\delta }C_{1},(2D+2)B_1e^{4\delta} \}.$ Using the inequalities \eqref{eq:6.9} and \eqref{eq:6.13}, Lemma \ref{lem:6.2} follows.
\end{proof}
\subsection{Derivatives of Green function} In this section, we will show the following theorem
\begin{thm}\label{thm:6.3} There exists a constant $\mathcal{C}_1$ such that for any distinct points $x,y\in \mathcal{T}$, 
\begin{equation}\label{aaa}
\lim_{\lambda\rightarrow\lambda_0-}\sqrt{\lambda_0-\lambda}\frac{\partial}{\partial \lambda}G_\lambda(x,y)=\frac{1}{2\mathcal{C}_1^{1/2}}c(x,y)\text{ and } \lim_{\lambda\rightarrow\lambda_0-}-\frac{P_\lambda}{\sqrt{\lambda_0-\lambda}}=2\mathcal{C}\mathcal{C}_1^{1/2},
\end{equation}
where $c(x,y):=\int_{\partial \cT}k_{\lambda_0}(x,y,\xi)\mu_{x,\lambda_0}(\xi).$
\end{thm}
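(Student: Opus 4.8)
The plan is to extract the two limits from Theorem~\ref{thm:5.14} and Corollary~\ref{coro:5.16}, applied to the integral formulas $\partial_\lambda G_\lambda(x,y)=\int_{\mathcal{T}}G_\lambda(x,z)G_\lambda(z,y)\,d\mu(z)$ and $\partial_\lambda^2G_\lambda(x,x)=2\int_{\mathcal{T}}\partial_\lambda G_\lambda(x,z)\,G_\lambda(x,z)\,d\mu(z)$ (the cases $k=1,2$ of \eqref{eq:2202041}), and then to couple them through an elementary ODE. By Proposition~\ref{prop:6.1}, $P_{\lambda_0}=0$, so $-P_\lambda\to0$ and $e^{P_{\lambda_0}r}=1$, $1-P_{\lambda_0}r=1$. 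Fix $R\ge R_0$ (the constant of Theorem~\ref{thm:5.14}) also larger than the diameter of $[x,y]$. Splitting $\mathcal{T}$ into the compact set $N_R([x,y])$ and its complement, the integral over $N_R([x,y])$ is bounded there by the continuous function $G_{\lambda_0}(x,\cdot)G_{\lambda_0}(\cdot,y)$ (or its iterate), hence is $O_{x,y}(1)$ uniformly in $\lambda$ and is killed upon multiplication by $-P_\lambda$ (resp. $P_\lambda^2$, resp. in the ODE step below).

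On the complement I would use the exact strong Markov factorization \eqref{eq:2.8}. For $z\notin N_R([x,y])$ with median $p=p(z)$ of $\{x,y,z\}$ --- a graph vertex of $[x,y]$, with $d(p,z)>R$ --- the point $p$ lies on $[x,z]$, on $[y,z]$ and on $[x,y]$, so \eqref{eq:2.8} gives $G_\lambda(x,z)=\tfrac{G_\lambda(x,p)}{G_\lambda(p,p)}G_\lambda(p,z)$, $G_\lambda(z,y)=\tfrac{G_\lambda(y,p)}{G_\lambda(p,p)}G_\lambda(p,z)$ and $G_\lambda(x,p)G_\lambda(y,p)=G_\lambda(x,y)G_\lambda(p,p)$, whence $G_\lambda(x,z)G_\lambda(z,y)=\tfrac{G_\lambda(x,y)}{G_\lambda(p,p)}G_\lambda(p,z)^2$. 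Grouping the (finitely many) medians,
\[
\int_{\mathcal{T}\setminus N_R([x,y])}G_\lambda(x,z)G_\lambda(z,y)\,d\mu(z)=G_\lambda(x,y)\sum_{p}\frac{1}{G_\lambda(p,p)}\int_{\{z:\,p(z)=p,\ d(p,z)>R\}}G_\lambda(p,z)^2\,d\mu(z),
\]
where $\{z:p(z)=p,\ d(p,z)>R\}$ consists of those $z$ with $d(p,z)>R$ whose direction from $p$ lies in the clopen set $\mathcal O_p\subset\partial\mathcal{T}$ of directions at $p$ pointing neither toward $x$ nor toward $y$. Writing $1_{\mathcal O_p}=1-(\text{indicators of the shadows toward }x,\ y)$ and iterating \eqref{eq:2.8} on those shadow contributions produces a convergent geometric recursion --- each step moves the base point to an adjacent vertex of $[x,y]$ and multiplies by a factor $G_\lambda(a,b)^2/G_\lambda(b,b)^2<1$ --- whose unique limiting fixed point, by the Patterson--Sullivan transformation rule \eqref{eq:3.3}--\eqref{eq:3.4}, is $\mathcal{C}\,\mu_{p,\lambda_0}(\mathcal O_p)$; that is, $-P_\lambda\int_{\{z:p(z)=p,\,d(p,z)>R\}}G_\lambda(p,z)^2\,d\mu(z)\to\mathcal{C}\,\mu_{p,\lambda_0}(\mathcal O_p)$ by Theorem~\ref{thm:5.14} with $f\equiv1$. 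Since \eqref{eq:2.8} likewise yields $k_{\lambda_0}(x,y,\xi)=G_{\lambda_0}(y,p(\xi))/G_{\lambda_0}(x,p(\xi))$ and hence $c(x,y)=G_{\lambda_0}(x,y)\sum_p\mu_{p,\lambda_0}(\mathcal O_p)/G_{\lambda_0}(p,p)$, this gives the first relation in the auxiliary form $-P_\lambda\,\partial_\lambda G_\lambda(x,y)\to\mathcal{C}\,c(x,y)$.

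For the extra relation I would treat $\partial_\lambda^2G_\lambda(x,x)=2\int_{\mathcal{T}}\partial_\lambda G_\lambda(x,z)\,G_\lambda(x,z)\,d\mu(z)$, expanding $\partial_\lambda G_\lambda(x,z)=\int_{\mathcal{T}}G_\lambda(x,z_1)G_\lambda(z_1,z)\,d\mu(z_1)$ and decomposing $z_1$ by its median $q$ with $x,z$ as above. The $z_1$ within distance $R$ of $[x,z]$ contribute only $O(G_\lambda(x,z)\,d(x,z))$ by Ancona \eqref{eq:2.4}, which is of lower order, and the rest gives, by the uniform (in base point) Theorem~\ref{thm:5.14},
\[
\partial_\lambda G_\lambda(x,z)=\frac{\mathcal{C}}{-P_\lambda}\,G_\lambda(x,z)\,\Phi(x,z)\,(1+o(1)),\qquad \Phi(x,z):=\sum_{q\in\mathrm{Vert}(x,z)}\frac{\mu_{q,\lambda_0}(\mathcal O_q)}{G_{\lambda_0}(q,q)},
\]
with $\mathcal O_q$ the directions at the interior vertex $q$ of $[x,z]$ pointing neither toward $x$ nor toward $z$. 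Now $q\mapsto\mu_{q,\lambda_0}(\mathcal O_q)/G_{\lambda_0}(q,q)$, read along the geodesic, is a positive $\Gamma$-invariant function that is locally constant on the cross-section $Y$ (it depends only on the vertex and its two edges carried by the geodesic), so spreading it over the preceding edge yields a bounded $\Gamma$-invariant function $f_2$ on $\cG\mathcal{T}$, continuous off the $\overline{m}_{\lambda_0}$-null set of geodesics through a vertex, with $\Phi(x,z)=\int_{R}^{d(x,z)-R}f_2(\phi_s g_x^z)\,ds+O(1)$. Substituting into $\partial_\lambda^2G_\lambda(x,x)$ and factoring $G_\lambda(x,z)^2$ through the median puts the integral into the left-hand side of Corollary~\ref{coro:5.16} with $f_1\equiv1$ and this $f_2$ (after a routine Hölder sandwiching of $f_2$), giving
\[
\partial_\lambda^2G_\lambda(x,x)=\frac{2\mathcal{C}^3\mathcal{C}_1}{(-P_\lambda)^3}\,\mu_{x,\lambda_0}(\partial\mathcal{T})\,(1+o(1)),\qquad \mathcal{C}_1:=\int_{\Gamma\backslash\cG\mathcal{T}}f_2\,d\overline{m}_{\lambda_0},
\]
with $\mathcal{C}_1>0$ (each $\mathcal O_q$ is nonempty since every vertex has degree $\ge3$, and $\mu_{q,\lambda_0}$ has full support), and $\mathcal{C}_1$ independent of $x,y$.

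Finally set $g(\lambda):=\partial_\lambda G_\lambda(x,x)=\int_{B(x,R)^c}G_\lambda^2(x,z)\,d\mu(z)+O(1)$, so $g(\lambda)\to+\infty$ as $-P_\lambda\to0$, $-P_\lambda\,g(\lambda)\to\mathcal{C}\,\mu_{x,\lambda_0}(\partial\mathcal{T})$ and $g'(\lambda)=\partial_\lambda^2G_\lambda(x,x)$. Then $\frac{d}{d\lambda}\big(g(\lambda)^{-2}\big)=-\frac{2g'(\lambda)}{g(\lambda)^3}\to-\frac{4\mathcal{C}_1}{\mu_{x,\lambda_0}(\partial\mathcal{T})^2}$, and integrating from $\lambda$ to $\lambda_0$ (using $g(\lambda_0-)^{-2}=0$ and uniformity of the convergence on $[\lambda,\lambda_0]$) gives $g(\lambda)^{-2}=\frac{4\mathcal{C}_1}{\mu_{x,\lambda_0}(\partial\mathcal{T})^2}(\lambda_0-\lambda)(1+o(1))$, i.e. $\sqrt{\lambda_0-\lambda}\,\partial_\lambda G_\lambda(x,x)\to\frac{\mu_{x,\lambda_0}(\partial\mathcal{T})}{2\mathcal{C}_1^{1/2}}=\frac{c(x,x)}{2\mathcal{C}_1^{1/2}}$ and $-P_\lambda=\frac{\mathcal{C}\,\mu_{x,\lambda_0}(\partial\mathcal{T})}{g(\lambda)}(1+o(1))=2\mathcal{C}\mathcal{C}_1^{1/2}\sqrt{\lambda_0-\lambda}\,(1+o(1))$, the second assertion. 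Feeding $-P_\lambda\sim2\mathcal{C}\mathcal{C}_1^{1/2}\sqrt{\lambda_0-\lambda}$ back into $-P_\lambda\,\partial_\lambda G_\lambda(x,y)\to\mathcal{C}\,c(x,y)$ gives the first assertion for all $x,y$. \emph{The main obstacle} is that the functions one is naturally led to integrate against --- $k_{\lambda_0}(x,y,\cdot)$, the indicators of the shadows toward $x$ and toward $y$, and the weight in $\Phi$ --- are not $\Gamma$-invariant, whereas Theorem~\ref{thm:5.14} and Corollary~\ref{coro:5.16} require this; the median decomposition together with \eqref{eq:2.8} reduces everything to the constant function and to the single genuinely $\Gamma$-invariant $f_2$, the non-invariant pieces dissolving into a convergent recursion along the bounded segment $[x,y]$. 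A secondary point is the uniformity of the estimates when they are summed over the $\sim d(x,z)$ intermediate vertices and then integrated, together with the justification of the ODE integration step.
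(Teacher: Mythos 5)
Your overall skeleton is the same as the paper's: the integral formulas \eqref{eq:2202041} for $\partial_\lambda G_\lambda$ and $\partial^2_\lambda G_\lambda$, Theorem \ref{thm:5.14} for the first derivative, Corollary \ref{coro:5.16} for the second, and the final coupling through $\frac{d}{d\lambda}(F^{-2})=-2F'/F^{3}$, which is exactly how the paper concludes from Propositions \ref{prop:6.4} and \ref{prop:6.6} (you run the ODE step on the diagonal and transfer back through the second limit, which is equivalent, modulo the unverified extension of \eqref{eq:2202041} and the relevant finiteness statements to $x=y$). What you change is the treatment of the non-invariant integrands: instead of the paper's $\Gamma$-invariant H\"older test functions you use a median/branch decomposition with the exact factorization $G_\lambda(x,z)G_\lambda(z,y)=\frac{G_\lambda(x,y)}{G_\lambda(p,p)}G_\lambda(p,z)^2$, and your tree identities (including $c(x,y)=G_{\lambda_0}(x,y)\sum_p\mu_{p,\lambda_0}(\mathcal{O}_p)/G_{\lambda_0}(p,p)$, using $P_{\lambda_0}=0$) are correct.

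The gap sits exactly where you flag ``the main obstacle,'' and it is not closed by the median decomposition. The limit you need, $-P_\lambda\int_{\{p(z)=p,\ d(p,z)>R\}}G_\lambda^2(p,z)\,d\mu(z)\to\mathcal{C}\,\mu_{p,\lambda_0}(\mathcal{O}_p)$, does not follow from Theorem \ref{thm:5.14} with $f\equiv1$: that only gives the full-boundary statement (Remark \ref{rem:6.5}), and the branch indicator at a fixed $p$ is not $\Gamma$-invariant, which is a hypothesis of the theorem. Your substitute, an iterated \eqref{eq:2.8} recursion with ``contraction factor $G_\lambda(a,b)^2/G_\lambda(b,b)^2<1$'' and a fixed point identified via \eqref{eq:3.3}--\eqref{eq:3.4}, is not an argument as written: the recursion does not stop at $x$ and $y$ but continues into the infinite tree past them; the contraction claim is unproven ($G_\lambda(a,b)/G_\lambda(b,b)=\mathbb{E}_a[1_{t_b<\infty}e^{\lambda t_b}]$ has no evident bound by $1$ for $\lambda$ near $\lambda_0$, and Cauchy--Schwarz only gives $G_\lambda(a,b)^2\le G_\lambda(a,a)G_\lambda(b,b)$); and even granting convergence you must show the resulting infinite linear system has a unique bounded solution before the Patterson--Sullivan cocycle identifies it. The paper avoids all of this in Lemma \ref{lem:6.3} by averaging $k_{\lambda_0}(x,y,\cdot)$ over the $\Gamma$-orbit against a bump in $\pi(g)$, producing a genuinely $\Gamma$-invariant H\"older function to which Theorem \ref{thm:5.14} applies, with \eqref{eq:6.15.3} recovering $c(x,y)$ directly, so no branch-by-branch limit is needed. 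The same issue recurs in your second-derivative step: your $f_2$ is only piecewise constant with jumps on geodesics through vertices, hence not H\"older, so Corollary \ref{coro:5.16} does not apply as stated (the paper's Na\"im-kernel function $u$ with the tent weight $h_1$ is precisely the H\"older regularization of your vertex weights, and on a tree reduces to them), and your pointwise substitution $\partial_\lambda G_\lambda(x,z)=\frac{\mathcal{C}}{-P_\lambda}G_\lambda(x,z)\Phi(x,z)(1+o(1))$ requires a multiplicative error uniform in $z$ as $d(x,z)\to\infty$, which is the content of Lemma \ref{lem:6.7} together with \eqref{eq:6.32}--\eqref{eq:6.40}. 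So the plan is structurally sound and tree-combinatorially appealing, but the two steps carrying the analytic weight are asserted rather than proved, and the natural repairs are essentially the constructions of Lemma \ref{lem:6.3} and Proposition \ref{prop:6.6}.
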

As in the proof Theorem $6.1$ in \cite{LL}, we will prove  Proposition \ref{prop:6.4} and Proposition \ref{prop:6.6}. Then we obtain the proof of Theorem \ref{thm:6.3}.
\begin{lem}\label{lem:6.3} For any $\epsilon>0$ and $d>0$, there exist constants $R_d$ and $\delta_d$ such that for all $x,y\in \mathcal{T}$ with $d(x,y)<d$, $r>R_d$ and $\lambda\in [\lambda_0-\delta_d,\lambda)$, 
\begin{equation}\label{eq:6.14}
-P_\lambda\int_{B(x,r)^c} G_\lambda(x,z)G_\lambda(z,y)d\mu(z)\asymp_{(1+\epsilon)^{11}} \mathcal{C} c(x,y), 
\end{equation}
where $\cC$ is the constant in Theorem \ref{thm:5.14}.
\end{lem}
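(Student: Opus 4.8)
The plan is to rewrite the integrand so that the argument proving Theorem~\ref{thm:5.14} applies. Fix distinct $x,y\in\mathcal{T}$ with $d(x,y)<d$. By symmetry of $G_\lambda$ and the definition of the Martin kernel,
$$G_\lambda(x,z)G_\lambda(z,y)=G_\lambda^2(x,z)\,k_\lambda(x,y,z),\qquad k_\lambda(x,y,z):=\frac{G_\lambda(y,z)}{G_\lambda(x,z)}>0 .$$
First I would show that on $B(x,r)^c$ the factor $k_\lambda(x,y,z)$ may be replaced, up to $(1+\epsilon)$, by $k_{\lambda_0}(x,y,\xi_z)$, where $\xi_z\in\partial\mathcal{T}$ is the endpoint of the geodesic ray from $x$ through $z$. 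If $g$ denotes that ray, then for large $t$ the segments $[y,z]$ and $[x,g(t)]$ share a subsegment of $g$ of length at least $d(x,z)-d(x,y)>r-d$, so Corollary~\ref{coro:2.7} gives $k_\lambda(x,y,z)\asymp_{1+\epsilon}k_\lambda(x,y,\xi_z)$ once $r\ge R_d$, uniformly in $\lambda\in[0,\lambda_0]$ and in $x,y$ with $d(x,y)<d$ (the constants in Corollary~\ref{coro:2.7} are uniform). Splitting $[x,y]$ into boundedly many subsegments $[z_i,z_{i+1}]$ short enough for the convergence statement at the end of the proof of Proposition~\ref{prop:2.11} to apply, the cocycle identity $k_\lambda(x,y,\xi)=\prod_i k_\lambda(z_i,z_{i+1},\xi)$ together with the fact that each factor is pinched by Harnack between two positive constants then gives $k_\lambda(x,y,\xi)\asymp_{1+\epsilon}k_{\lambda_0}(x,y,\xi)$ for $\lambda\in[\lambda_0-\delta_d,\lambda_0]$, uniformly; by cocompactness the pairs $(z_i,z_{i+1})$ fall into finitely many $\G$-orbit types, so $R_d$ and $\delta_d$ depend only on $d$ and $\epsilon$. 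Since $G_\lambda^2(x,z)\ge 0$, these two multiplicative comparisons pass through the integral, giving
$$-P_\lambda\int_{B(x,r)^c} G_\lambda(x,z)G_\lambda(z,y)\,d\mu(z)\;\asymp_{(1+\epsilon)^2}\;-P_\lambda\int_{B(x,r)^c} G_\lambda^2(x,z)\,\psi_{x,y}(g_x^z)\,d\mu(z),$$
where $\psi_{x,y}$ is the function on $\cG^+\mathcal{T}$ defined by $\psi_{x,y}(w):=k_{\lambda_0}(x,y,w_+)$. This $\psi_{x,y}$ is positive, bounded by Harnack, and (by Corollary~\ref{coro:2.7}, exactly as in Proposition~\ref{prop:2.11}) H\"older continuous in $w_+$ for the visual metric, with $\|\psi_{x,y}\|_\alpha$ bounded uniformly over $d(x,y)<d$; it is however not $\G$-invariant.

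The second part of the plan is to run the proof of Theorem~\ref{thm:5.14} with $f=\psi_{x,y}$. One checks that $\G$-invariance of the potential enters that proof only through the unfolding step~\eqref{eq:5.13.1}, which rewrites $\int_{\cG\mathcal{T}}fh_{x,\lambda,\eta}^+\,\widetilde{H}_{y,\lambda,\eta}^-\!\circ\phi_s\,dm_\lambda$ as an integral over $(\cG\mathcal{T})_0$ of $f\widetilde{H}_{x,\lambda,\eta}^+\,\widetilde{H}_{y,\lambda,\eta}^-\!\circ\phi_s$. For a non-$\G$-invariant $f$ this is replaced by integrating the $\G$-periodization $\sum_{\g\in\G}\g_*\big(fh_{x,\lambda,\eta}^+\big)$ against the $\G$-invariant function $\widetilde{H}_{y,\lambda,\eta}^-\!\circ\phi_s$ over $\G\backslash\cG\mathcal{T}$. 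Because $h_{x,\lambda,\eta}^+$ is supported in $\cG_{x,\eta}\mathcal{T}$ and $\G$ acts properly, this periodization is locally finite; by Proposition~\ref{5.9} and the H\"older bound on $\psi_{x,y}$ it lies in $C_b^\alpha$ with norm controlled uniformly over $d(x,y)<d$; and its $\overline{m}_\lambda$-mass equals $\int_{\cG\mathcal{T}}fh_{x,\lambda,\eta}^+\,dm_\lambda$, which by Lemma~\ref{lem:5.10} (whose proof does not use $\G$-invariance) is $\asymp_{1+\epsilon}\int_{\cG_x^+\mathcal{T}}\psi_{x,y}\,d\mu_{\cG_x^+\mathcal{T}}^\lambda=\int_{\partial\mathcal{T}}k_{\lambda_0}(x,y,\xi)\,d\mu_{x,\lambda}(\xi)>0$. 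Likewise Lemma~\ref{lem:5.11} uses $\G$-invariance of $f$ only to know, uniformly in the integration variables, that $f$ is almost constant on sets $\cG\mathcal{T}_{x',y',\eta}$ with $d(x',y')$ large; for $\psi_{x,y}$ this follows from H\"older continuity of $k_{\lambda_0}(x,y,\cdot)$, since the forward endpoints of all geodesics in such a set lie in one small shadow. With these substitutions the remaining steps of the proof of Theorem~\ref{thm:5.14} (equations \eqref{eq:5.13}--\eqref{eq:5.14} and \eqref{eq:20222}--\eqref{eq:20223}), together with the continuity inputs of Lemmas~\ref{2203311}, \ref{2203312} and Corollary~\ref{2203313} used to pass to $\lambda_0$, give for $r\ge R_d$ and $\lambda\in[\lambda_0-\delta_d,\lambda_0]$
$$-P_\lambda\int_{B(x,r)^c} G_\lambda^2(x,z)\,\psi_{x,y}(g_x^z)\,d\mu(z)\;\asymp_{(1+\epsilon)^9}\;\mathcal{C}\int_{\cG_x^+\mathcal{T}}\psi_{x,y}\,d\mu_{\cG_x^+\mathcal{T}}^{\lambda_0}=\mathcal{C}\,c(x,y).$$
Combining this with the previous display yields the $(1+\epsilon)^{11}$ comparison of Lemma~\ref{lem:6.3}.

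I expect the main obstacle to be precisely the bookkeeping of the preceding paragraph: re-reading the proof of Theorem~\ref{thm:5.14} (through Lemma~\ref{lem:5.11} and \eqref{eq:5.14}) to confirm that every appeal to $\G$-invariance of the potential is of the harmless ``uniformity only'' kind, and that replacing $f\widetilde{H}_{x,\lambda,\eta}^+$ by the periodization $\sum_{\g\in\G}\g_*(fh_{x,\lambda,\eta}^+)$ does not affect the mixing inputs of Corollaries~\ref{coro:5.5} and~\ref{coro:5.6} — in particular that the relevant H\"older norms and the lower bounds on the test-function masses stay uniform over the pairs $(x,y)$ with $d(x,y)<d$. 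By contrast, the two Martin-kernel comparisons of the first paragraph are routine consequences of Corollary~\ref{coro:2.7} and the continuity statements already proved in Proposition~\ref{prop:2.11}.
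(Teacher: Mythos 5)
Your proposal is essentially correct, and its first paragraph coincides with the paper's own opening move (the identity $G_\lambda(x,z)G_\lambda(z,y)=G_\lambda^2(x,z)k_\lambda(x,y,z)$ together with the two comparisons $k_\lambda(x,y,z)\asymp_{1+\epsilon}k_{\lambda_0}(x,y,z)\asymp_{1+\epsilon}k_{\lambda_0}(x,y,\xi_z)$ from Corollary \ref{coro:2.7}, Harnack and cocompactness, exactly the paper's \eqref{eq:6.15.1}). Where you diverge is in handling the lack of $\G$-invariance of $\psi_{x,y}(w)=k_{\lambda_0}(x,y,w_+)$: you propose to re-run the proof of Theorem \ref{thm:5.14} for a non-invariant test function, replacing the unfolding step \eqref{eq:5.13.1} by the periodization $\sum_{\g}\g_*(\psi_{x,y}h^+_{x,\lambda,\eta})$ and re-checking that the H\"older norms, the mass lower bounds entering Corollaries \ref{coro:5.5}--\ref{coro:5.6}, and the uniformity in Lemma \ref{lem:5.11} survive; this does work, because $k_{\lambda_0}(x,y,\cdot)$ is pinched between positive constants (Harnack, $d(x,y)<d$) and the periodization has bounded overlap by properness of the action, and it yields the same $(1+\epsilon)^{2}\cdot(1+\epsilon)^{9}$ count. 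The paper instead performs the periodization once and for all at the level of the test function: it fixes a bump $h$ supported near the orbit point and sets $f(g)=\frac{1}{|\G_x|}\sum_{\g\in\G}h(d(\g x,\pi(g)))\,k_{\lambda_0}(x,y,\g^{-1}g_+)$, which is genuinely $\G$-invariant, bounded and H\"older, agrees (after averaging over $\G_x$ and up to $(1+\epsilon)^2$) with $k_\lambda(x,y,z)$ along $g_x^z$, and satisfies $\int f\,d\mu^{\lambda_0}_{\cG_x^+\mathcal{T}}=c(x,y)$ exactly by the $\G$-equivariance \eqref{eq:3.3} of the Patterson--Sullivan density; Theorem \ref{thm:5.14} then applies verbatim, with no need to revisit the mixing estimates. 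So your route trades the construction of one auxiliary invariant function for a re-verification of the uniform rapid-mixing machinery; it is more work and more delicate bookkeeping, but the points you flag as the main obstacles are indeed the only ones, and they can be discharged as you indicate.
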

\begin{proof} By defintion of Martin kernel, we have
$$\int_{B(x,r)^c} G_\lambda(x,z)G_\lambda(z,y)d\mu(z)=\int_{B(x,r)^c} G_\lambda^2(x,z)k_\lambda(x,y,z)d\mu(z).$$
By Corollary \ref{coro:2.7} and Harnack inequality, for any $\epsilon>0$, there exist $R$ and $\delta$ such that for any $y$ with $d(x,y)<d$, $z$ with $d(x,z)>R$, $\xi\in \cO_x(z)$ and $\lambda\in [\lambda_0-\delta,\lambda_0]$,
\begin{equation}\label{eq:6.15.1}
k_\lambda(x,y,z)\asymp_{1+\epsilon} k_{\lambda_0}(x,y,z)\text{ and } k_{\lambda_0}(x,y,z)\asymp_{1+\epsilon}k_{\lambda_0}(x,y,\xi).
\end{equation}
Choose a smooth function $h:\mathbb{R}_+\rightarrow[0,1]$ satisfying the following properties: 
\begin{itemize}
\item If $h(d(x,\g x))\neq 0$, then $\g\in\G_x$. 
\item The support of $h$ is $[0,2\eta]$ and $h|_{[0,\eta]}\equiv 1$ for some sufficiently small $\eta$. 
\item $|h'|<2/\eta.$ 
\end{itemize}
The function $h$ allows us to define a function $f$ as follows: For any $g\in \mathcal{G}\mathcal{T}$,
$$f(g):=\frac{1}{|\G_x|}\sum_{\g\in \G} h(d(\g x,\pi (g)))k_{\lambda_0}(x, y, \g^{-1} g_+).$$
Since $f(g)$ depends on $\pi (g)$ and $g_+$, $f$ is considered as a function on $\cG^+\mathcal{T}.$
By the choice of $h$ and the bounded H\"older continuity of $\xi\mapsto k_{\lambda_0}(x,y,\xi)$, $f$ is H\"older continuous. For any $\g'\in \G$ and $g\in \cG \mathcal{T}$,
\begin{equation}
\begin{split}
f(\g' g)&=\frac{1}{|\G_x|}\sum_{\g\in \G} h(d(\g x,\pi (\g'g)))k_{\lambda_0}(x, y, \g^{-1} \g' g_+)\\
\nonumber&=\frac{1}{|\G_x|}\sum_{\g\in \G} h(d(\g'^{-1}\g x,\pi (g)))k_{\lambda_0}(x, y, (\g'^{-1}\g)^{-1}g_+)=f(g).
\end{split}
\end{equation}
This implies that $f$ is $\G$-invariant.
Let $R_0$ and $\delta_0$ be the constants in Theorem \ref{thm:5.14} which depend on $\epsilon$. Denote $R_d:=\max\{R,R_0\}$ and $\delta_d:=\min\{\delta,\delta_0\}$. By \eqref{eq:6.15.1}, Theorem \ref{thm:5.14} and Proposition \ref{prop:6.1}, for any $y$ with $d(x,y)<d$, $r>R_d$ and $\lambda\in [\lambda_0-\delta_d,\lambda_0]$,
\begin{equation}\label{eq:6.15.2}
\begin{split}
&-P_\lambda \int_{B(x,r)^c}G_\lambda^2(x,z)k_\lambda(x,y,z)d\mu(z)=-P_\lambda\int_{B(x,r)^c} G_\lambda^2(x,z)\frac{1}{|\G_x|}\sum_{\g\in \G_x}k_\lambda(x,y,\g^{-1}z)\mu(z)\\
&\overset{\eqref{eq:6.15.1}}\asymp_{(1+\epsilon)^2}-P_\lambda\int_{B(x,r)^c} G_\lambda^2(x,z)f(g_x^z)d\mu(z)
\overset{\substack{\text{Thm }\ref{thm:5.14}\\+\text{Prop } \ref{prop:6.1}}}\asymp_{(1+\epsilon)^{9}}\mathcal{C}\int_{\cG_x^+\mathcal{T}}f(w)d\mu_{\cG_x^+\mathcal{T}}^{\lambda_0}(w)
\end{split}
\end{equation}
where $g_x^z\in \cG\mathcal{T}_{x,z}.$

For any $w\in \cG_x\mathcal{T}$, if $h(d(\g x, \pi (w)))\neq 0$, then $\g \in \G_x$. We conclude that
\begin{equation}\label{eq:6.15.3}
\begin{split}
&\int_{\partial \mathcal{T}}f(w)d\mu_{\cG_x^+\mathcal{T}}^{\lambda_0}(w)\\
&=\frac{1}{|\G_x|}\sum_{\g\in \G_x} \int_{\partial \mathcal{T}} k_{\lambda_0}(x,y,\g^{-1} \xi)d\mu_{x,\lambda_0}(\xi)=\frac{1}{|\G_x|}\sum_{\g\in \G_x} \int_{\partial \mathcal{T}} k_{\lambda_0}(x,y,\xi)d\g^{-1}_*\mu_{x,\lambda_0}(\xi)\\
&\overset{\eqref{eq:3.3}}=\frac{1}{|\G_x|}\sum_{\g\in \G_x} \int_{\partial \mathcal{T}} k_{\lambda_0}(x,y,\xi)d\mu_{\g^{-1} x,\lambda_0}(\xi)= \int_{\partial \mathcal{T}}k_{\lambda_0}(x,y,\xi)d\mu_{x,\lambda_0}(\xi),
\end{split}
\end{equation}
The relation \eqref{eq:6.14} follows from \eqref{eq:6.15.2} and \eqref{eq:6.15.3}.
\end{proof}
\begin{prop}\label{prop:6.4}For any distinct points $x,y\in \mathcal{T}$, 
\begin{equation}\label{eq:6.16}
\lim_{\lambda\rightarrow\lambda_0-}-P_\lambda\frac{\partial}{\partial \lambda} G_\lambda(x,y)=\mathcal{C}c(x,y), 
\end{equation}
where $\cC$ is in Lemma \ref{lem:6.3}.
For any compact set $K$ with $x\in K$ and $Diam(K)\geq3Diam(T_0),$ there exists $\lambda'<\lambda_0$ such that the function defined by $y\mapsto \sup_{\lambda\in[\lambda',\lambda_0]} -P_{\lambda}\frac{\partial}{\partial \lambda}G_\lambda(x,y)$ is integrable on $K$.
\end{prop}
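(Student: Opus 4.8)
The plan is to read $\frac{\partial}{\partial\lambda}G_\lambda(x,y)$ through the case $k=1$ of the integral formula \eqref{eq:2202041},
\[
\frac{\partial}{\partial\lambda}G_\lambda(x,y)=\int_{\mathcal{T}}G_\lambda(x,z)G_\lambda(z,y)\,d\mu(z)=\int_{B(x,r)}\!+\int_{B(x,r)^c},
\]
and to treat the two pieces separately, the far one via Lemma \ref{lem:6.3} and the near one (which carries the two singularities at $z=x$ and $z=y$) via Proposition \ref{prop:2.3} and Lemma \ref{lem:2.4}. For the limit statement I would fix distinct $x,y$, fix $\epsilon>0$, take $d:=d(x,y)+1$, let $R_d,\delta_d$ be the constants of Lemma \ref{lem:6.3} attached to this $d$ and this $\epsilon$, and fix $r:=R_d+1$. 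On $B(x,r)$ one has $G_\lambda\le G_{\lambda_0}$, so $\int_{B(x,r)}G_\lambda(x,z)G_\lambda(z,y)\,d\mu(z)\le\int_{B(x,r)}G_{\lambda_0}(x,z)G_{\lambda_0}(z,y)\,d\mu(z)$; splitting $B(x,r)$ into two small balls around $x$ and $y$ and the complementary compact set, Proposition \ref{prop:2.3} bounds $G_{\lambda_0}(x,\cdot)$ and $G_{\lambda_0}(\cdot,y)$ away from their poles while Lemma \ref{lem:2.4} makes $\int G_{\lambda_0}(x,z)\,d\mu(z)$ and $\int G_{\lambda_0}(z,y)\,d\mu(z)$ finite near the poles, so the near piece is bounded uniformly in $\lambda\in[0,\lambda_0]$. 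Since $P_\lambda\to P_{\lambda_0}=0$ by Proposition \ref{prop:6.1} and Corollary \ref{2203313}, the near piece multiplied by $-P_\lambda$ tends to $0$. By Lemma \ref{lem:6.3}, $-P_\lambda\int_{B(x,r)^c}G_\lambda(x,z)G_\lambda(z,y)\,d\mu(z)\asymp_{(1+\epsilon)^{11}}\mathcal{C}\,c(x,y)$ for $\lambda\in[\lambda_0-\delta_d,\lambda_0)$. Adding the two pieces gives $(1+\epsilon)^{-11}\mathcal{C}c(x,y)\le\liminf_{\lambda\to\lambda_0-}(-P_\lambda\frac{\partial}{\partial\lambda}G_\lambda(x,y))\le\limsup_{\lambda\to\lambda_0-}(-P_\lambda\frac{\partial}{\partial\lambda}G_\lambda(x,y))\le(1+\epsilon)^{11}\mathcal{C}c(x,y)$, and letting $\epsilon\to0$ yields \eqref{eq:6.16}.

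For the integrability statement, let $K$ be compact with $x\in K$ and $Diam(K)\ge 3\,Diam(T_0)$, put $d:=Diam(K)+1$ so that $d(x,y)<d$ for every $y\in K$, fix $\epsilon=1$, let $R_d,\delta_d$ be the corresponding constants of Lemma \ref{lem:6.3}, fix $r:=R_d+1$ and set $\lambda':=\lambda_0-\delta_d$. For $y\in K$ and $\lambda\in[\lambda',\lambda_0)$ split as above. The far piece is, by Lemma \ref{lem:6.3}, at most $2^{11}\mathcal{C}\,c(x,y)$, and $c(x,y)=\int_{\partial\mathcal{T}}k_{\lambda_0}(x,y,\xi)\,d\mu_{x,\lambda_0}(\xi)$ is bounded uniformly for $y\in K$: the positive $\lambda_0$-harmonic function $y\mapsto k_{\lambda_0}(x,y,\xi)$ (Corollary \ref{coro:2.8}) satisfies $k_{\lambda_0}(x,y,\xi)\le e^{D}k_{\lambda_0}(x,x,\xi)=e^{D}$ for $y\in K\subset B(x,Diam(K))$ by Harnack's inequality \eqref{eq:2.2}, and $\mu_{x,\lambda_0}$ is finite. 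For the near piece, $-P_\lambda$ is bounded on $[0,\lambda_0]$ by Corollary \ref{2203313} and $G_\lambda\le G_{\lambda_0}$, so its supremum over $\lambda$ is at most a constant times $y\mapsto\int_{B(x,r)}G_{\lambda_0}(x,z)G_{\lambda_0}(z,y)\,d\mu(z)$; integrating in $y$ over $K$ and applying Tonelli's theorem gives $\int_{B(x,r)}G_{\lambda_0}(x,z)\bigl(\int_K G_{\lambda_0}(z,y)\,d\mu(y)\bigr)d\mu(z)$, and since $K\subset B(z,r+Diam(K))$ for all $z\in B(x,r)$, Lemma \ref{lem:2.4} bounds the inner integral by a constant $C_{r+Diam(K)}$ and then the outer integral by $C_{r+Diam(K)}C_r<\infty$. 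Hence the supremum over $\lambda$ of the near piece is in $L^1(K)$ and that of the far piece is bounded, so their sum dominates $y\mapsto\sup_{\lambda\in[\lambda',\lambda_0)}(-P_\lambda\frac{\partial}{\partial\lambda}G_\lambda(x,y))$, which is therefore integrable on $K$; including $\lambda_0$ only adds the continuous boundary value $\mathcal{C}c(x,y)$ supplied by \eqref{eq:6.16}, which is again bounded on $K$.

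Most of the genuine content has already been packaged into Lemma \ref{lem:6.3} (which rests on Theorem \ref{thm:5.14} and the uniform rapid mixing of Section \ref{Sec:5}), so the main obstacle here is organizational rather than conceptual: keeping every estimate uniform simultaneously in $\lambda$ near $\lambda_0$, in $y$ ranging over $K$, and in the inner integration variable, and controlling the two poles of the integrand $G_\lambda(x,z)G_\lambda(z,y)$ by combining the local regularity of Proposition \ref{prop:2.3} with the uniform ball bounds of Lemma \ref{lem:2.4}. The one subtle point is the endpoint $\lambda=\lambda_0$: there $\frac{\partial}{\partial\lambda}G_\lambda(x,y)$ may itself diverge while $-P_\lambda$ vanishes, and the product must be interpreted through the continuous extension provided by the first part of the proposition.
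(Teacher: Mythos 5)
Your proposal is correct and follows essentially the same route as the paper: the same split of $-P_\lambda\frac{\partial}{\partial\lambda}G_\lambda(x,y)=-P_\lambda\int_{\mathcal{T}}G_\lambda(x,z)G_\lambda(z,y)\,d\mu(z)$ into $B(x,r)$ and $B(x,r)^c$, with the far part handled by Lemma \ref{lem:6.3} and the near part shown to be bounded uniformly in $\lambda$ so that the factor $-P_\lambda\to 0$ (Proposition \ref{prop:6.1}) kills it, followed by letting $\epsilon\to 0$. The only differences are cosmetic: you bound the near part via $G_\lambda\le G_{\lambda_0}$ together with Proposition \ref{prop:2.3} and Lemma \ref{lem:2.4}, whereas the paper uses the Lemma \ref{lem:6.2}-type estimate \eqref{eq:6.17} plus Harnack and the sphere bound \eqref{eq:6.19} from \cite{HL}, and you make explicit (via Tonelli and Lemma \ref{lem:2.4}) the domination argument for the integrability statement that the paper leaves implicit after \eqref{eq:6.24}; both variants are sound.
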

\begin{proof}As in the proof of Proposition \ref{prop:6.1}, we have
$$-P_{\lambda}\frac{\partial}{\partial\lambda}G_\lambda(x,y)=-P_\lambda\int_{\mathcal{T}}G_\lambda(x,z)G_\lambda(z,y)d\mu(z).$$
Following the proof of Lemma \ref{lem:6.2}, for any compact set $K$, it is possible to choose $C_K$ such that for any $y\in K\backslash\{x\}$ and $\lambda\in[0,\lambda_0)$, 
\begin{equation}\label{eq:6.17}
\int_{B(x,Diam(K)+1)}G_\lambda(x,z)G_\lambda(z,y)d\mu(z)\leq C_KG_{\lambda}(x,y).
\end{equation}
By Harnack inequality, we have for any $y\in K\backslash\{x\}$ and $z$ with $d(x,z)>Diam(K)+1$,
\begin{eqnarray}\label{eq:6.18}
G_\lambda(x,z)G_\lambda(z,y)\leq e^{D_{Diam(K),1}}G_\lambda^2(x,z)
\end{eqnarray}
By Proposition 4.10 in \cite{HL}, there exists constant $C$ such that for any $r>Diam(K)+1$ and $\lambda\in [0,\lambda_0]$
\begin{equation}\label{eq:6.19}
\sum_{z\in S(x,r)} G_{\lambda}^2(x,z)\leq C.
\end{equation}
Using \eqref{eq:6.18} and \eqref{eq:6.19} in order, we have that for any $r>Diam(K)+1$ and $\lambda\in[0,\lambda_0]$,
\begin{eqnarray}\label{eq:6.20}
\nonumber &&\int_{B(x,r)\backslash\overline{B(x,Diam(K)+1)}}G_\lambda(x,z)G_\lambda(z,y)d\mu(z)\\ &&\leq e^{D_{Diam(K),1}}\int_{Diam(K)+1}^{r}\sum_{z\in S(x,r)} G_\lambda^2(x,z)dr\leq  e^{D_{Diam(K),1}}C(r-Diam(K)-1),
\end{eqnarray}
where $dr$ is Lebesgue measure on $\mathbb{R}.$
Since $P_{\lambda_0}=0$, using the inequalities \eqref{eq:6.17} and \eqref{eq:6.20}, for any $r>Diam(K)+1,$ we have
\begin{eqnarray}\label{eq:6.21}
\nonumber&&\lim_{\lambda\rightarrow\lambda_0-}-P_\lambda\int_{B(x,r)}G_\lambda(x,z)G_\lambda(z,y)d\mu(z)\\
&&\leq\lim_{\lambda\rightarrow\lambda_0-}-P_\lambda \left\{C_KG_\lambda(x,y)+ e^{D_{Diam(K),1}}C(r-Diam(K)-1)\right\}=0.
\end{eqnarray}
By Lemma \ref{lem:6.3}, for any $\epsilon$, there exist constants $R_{Diam(K)+1}$ and $\delta_{Diam(K)+1}$ such that for any $r>R_{Diam(K)+1}$ and $\lambda\in[\lambda_0-\delta_{Diam(K)+1},\lambda_0],$
 \begin{eqnarray}
\nonumber-P_\lambda\int_{B(x,r)^c}G_\lambda(x,z)G_\lambda(z,y)d\mu(z)\asymp_{(1+\epsilon)^{11}}\mathcal{C}\int_{\partial\mathcal{T}} k_{\lambda_0}(x,y,\xi)d\mu_{x,\lambda_0}(\xi).
\end{eqnarray}
The first equality of \eqref{eq:6.24} follows from \eqref{eq:6.21}. The above relation shows that the last line of \eqref{eq:6.24} holds. 
\begin{equation}\label{eq:6.24}
\begin{split}
&\lim_{\lambda\rightarrow\lambda_0-}-P_\lambda\int_{\mathcal{T}} G_\lambda(x,z)G_\lambda(z,y)d\mu(y)\overset{\eqref{eq:6.21}}=\lim_{\lambda\rightarrow\lambda_0-}-P_\lambda\int_{{B(x,r)}^c}G_\lambda(x,z)G_\lambda(z,y)d\mu(y)\\
&\asymp_{(1+\epsilon)^{11}}\cC \int_{\partial\mathcal{T}}k_{\lambda_0}(x,y,\xi)d\mu_{x,\lambda_0}(\xi).
\end{split}
\end{equation}
Since \eqref{eq:6.24} holds for any $\epsilon$, we obtain \eqref{eq:6.16}. The relation \eqref{eq:6.24} also implies that the last statement of this proposition.
\end{proof}
\begin{remark}\label{rem:6.5}
Following the proof of Proposition \ref{prop:6.4}, we also have
\begin{equation}\label{eq:6.25}
\lim_{\lambda\rightarrow\lambda_0-}-P_\lambda\int_{{B(x,l_m)}^c} G_\lambda^2(x,z)d\mu(y)=\cC \mu_{x,\lambda_0}(\partial \mathcal{T}).
\end{equation}
\end{remark}
For any $g\in \mathcal{G}\mathcal{T}$ and $x\in \overline{\mathcal{T}}$, denote $t_{g,x}=(g_+| x)_{\pi(g)}$. For any $x\in\overline{\mathcal{T}}$ with $t_{g,x}<1$ and $d(x,g(0))>1$, the point $g(t_{g,x})$ is the closest to $x$ on $g$. Let $u$ be a function defined for any $g\in\mathcal{G}\mathcal{T}$, by
$$u(g):=\mathcal{C}\int_{\partial \mathcal{T}}h_1(t_{g,\xi})\frac{ \theta^{\lambda_0}_{g(t_{g,\xi})}(g_-,\xi)\theta^{\lambda_0}_{g(t_{g,\xi})}(\xi,g_+)}{\theta_{g(t_{g,\xi})}^{\lambda_0}(g_-,g_+)}\mu_{g(t_{g,\xi}),\lambda_0}(\xi),$$
where  $h_1(t)=\max\{1-|t|,0\}$. Ancona inequality \eqref{eq:2.4} and Corollary \ref{coro:2.7} show that $u$ is a bound H\"older continuous function. Obviously, $u$ is $\G$-invariant. 
The function $u$ satisfies the following proposition.
\begin{prop}\label{prop:6.6}For any $x,y$, 
\begin{equation}\label{eq:6.261}
\lim_{\lambda\rightarrow\lambda_0-}-P_\lambda^3\frac{\partial^2}{\partial \lambda^2}G_\lambda(x,y)=2\mathcal{C}^3 c(x,y)\int_{\G\backslash \cG\mathcal{T}} u(g)d\overline{m}_{\lambda_0}(g),
\end{equation}
where $\mathcal{C}$ is the constant in Theorem \ref{thm:5.14}. For any compact set $K$ with $x\in K$ and $Diam(K)\geq3Diam(T_0),$ there exists $\lambda'<\lambda_0$ such that the function defined by $y\mapsto \sup_{\lambda\in[\lambda',\lambda_0]} -P_{\lambda}^3\frac{\partial^2}{\partial \lambda^2}G_\lambda(x,y)$ is integrable on $K$.
\end{prop}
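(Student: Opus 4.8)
The plan is to follow the proof of Proposition \ref{prop:6.4}, with the second derivative of the Green function in place of the first and Corollary \ref{coro:5.16} in place of Theorem \ref{thm:5.14}. By \eqref{eq:2202041} with $k=2$, for distinct $x,y\in\mathcal{T}$,
$$\frac{\partial^2}{\partial \lambda^2}G_\lambda(x,y)=2\int_{\mathcal{T}}\Bigl(\int_{\mathcal{T}}G_\lambda(x,x_1)G_\lambda(x_1,z)\,d\mu(x_1)\Bigr)G_\lambda(z,y)\,d\mu(z),$$
so that $-P_\lambda^3\frac{\partial^2}{\partial \lambda^2}G_\lambda(x,y)=-2P_\lambda^3\int_{\mathcal{T}}\bigl(\tfrac{\partial}{\partial \lambda}G_\lambda(x,z)\bigr)G_\lambda(z,y)\,d\mu(z)$ with $\tfrac{\partial}{\partial \lambda}G_\lambda(x,z)=\int_{\mathcal{T}}G_\lambda(x,x_1)G_\lambda(x_1,z)\,d\mu(x_1)$. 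First I would discard the contributions that vanish after multiplication by $-P_\lambda^3$: the part of the $z$-integral over $B(x,r)$; the part of the inner $x_1$-integral with the nearest point of $x_1$ on $[x,z]$ within distance $R$ of $\{x,z\}$, and the part with $x_1\in[x,z]$; and the part where $x\in[x_1,z]$. Using Harnack's inequality \eqref{eq:2.2}, Ancona's inequality \eqref{eq:2.4}, Lemma \ref{lem:2.4}, Proposition \ref{prop:6.1} ($P_{\lambda_0}=0$) and Theorem \ref{thm:5.14}, each of these, after integration in $z$ and multiplication by $-P_\lambda^3$, is of order $P_\lambda^2$ or $-P_\lambda$ and therefore tends to $0$; keeping all the bounds uniform in $y\in K$ and $\lambda$ near $\lambda_0$, as in \eqref{eq:6.24}, will also furnish the dominating function for the integrability statement on $K$.

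The main step is to identify the surviving part of the inner integral, namely to prove that, for $z\in B(x,r)^c$ and $\lambda$ near $\lambda_0$,
$$\int_{\mathcal{T}}G_\lambda(x,x_1)G_\lambda(x_1,z)\,d\mu(x_1)=-\frac{1}{P_\lambda}\,G_\lambda(x,z)\int_{R}^{d(x,z)-R}u(\phi_s g_x^z)\,ds\,\bigl(1+o(1)\bigr)+(\text{terms discarded above}).$$
For a branch point $p=g_x^z(s)$ of $[x,z]$ and $x_1$ in the subtree hanging off $[x,z]$ at $p$, I would use Ancona's inequality \eqref{eq:2.4}, strong Ancona (Corollary \ref{coro:2.7}) and the definition \eqref{3.13} of the N\"aim kernel to write $G_\lambda(x,x_1)G_\lambda(x_1,z)$, up to a factor $1+\epsilon$, as $G_\lambda(x,z)\,G_\lambda(p,x_1)^2$ times $\theta^{\lambda_0}_{p}((g_x^z)_-,\xi_{x_1})\theta^{\lambda_0}_{p}(\xi_{x_1},(g_x^z)_+)/\theta^{\lambda_0}_{p}((g_x^z)_-,(g_x^z)_+)$, where $\xi_{x_1}$ is the end of a geodesic ray from $p$ through $x_1$ (here the two Ancona comparisons are turned into equalities via the identity $\theta_x^{\lambda_0}(\xi,\zeta)=\theta_y^{\lambda_0}(\xi,\zeta)k_{\lambda_0}(x,y,\xi)k_{\lambda_0}(x,y,\zeta)$, and \eqref{eq:3.16} controls the denominator). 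Then I would evaluate $\int_{\text{subtree}}G_\lambda(p,x_1)^2\times(\text{N\"aim ratio})\,d\mu(x_1)$ by Theorem \ref{thm:5.14} applied at the base point $p$ — in the form used in the proof of Lemma \ref{lem:6.3}, with $\G$-invariant H\"older functions built from the relevant N\"aim ratios and shadows, cf.\ also Remark \ref{rem:6.5} and \eqref{eq:6.25} — which is what produces the factor $\mathcal{C}/(-P_\lambda)$, the measure $\mu_{p,\lambda_0}$, and the shape of the integrand of $u$. Finally, summing over the branch points of $[x,z]$ and converting the sum into the integral $\int_R^{d(x,z)-R}(\cdot)\,ds$ is what the cutoff $h_1$ and the parameter $t_{g,\xi}$ in the definition of $u$ are designed for; the $\lambda$-objects occurring along the way are replaced by their $\lambda_0$-versions by Corollary \ref{coro:3.6} and Lemma \ref{2203311}.

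Inserting this into $-2P_\lambda^3\int_{B(x,r)^c}\bigl(\tfrac{\partial}{\partial \lambda}G_\lambda(x,z)\bigr)G_\lambda(z,y)\,d\mu(z)$ and using $G_\lambda(z,y)=G_\lambda(x,z)k_\lambda(x,y,z)$ reduces the problem to
$$-P_\lambda^3\frac{\partial^2}{\partial \lambda^2}G_\lambda(x,y)=2P_\lambda^2\int_{B(x,r)^c}G_\lambda^2(x,z)\,k_\lambda(x,y,z)\Bigl(\int_R^{d(x,z)-R}u(\phi_s g_x^z)\,ds\Bigr)d\mu(z)+o(1),$$
which is precisely the quantity estimated in Corollary \ref{coro:5.16}. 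Replacing $k_\lambda(x,y,z)$ by $k_{\lambda_0}(x,y,z)$ and then by its value at a shadow point, and assembling the $\G$-invariant H\"older function $f_1$ from $k_{\lambda_0}(x,y,\cdot)$ exactly as in the proof of Lemma \ref{lem:6.3} (so that $\int_{\cG_x^+\mathcal{T}}f_1\,d\mu_{\cG_x^+\mathcal{T}}^{\lambda_0}=c(x,y)$ by \eqref{eq:6.15.3}), Corollary \ref{coro:5.16} with $f_2=u$ and $P_{\lambda_0}=0$ gives, after letting $\epsilon\to0$, the limit $2\mathcal{C}^3 c(x,y)\int_{\G\backslash\cG\mathcal{T}}u\,d\overline{m}_{\lambda_0}$; the passage from $\lambda$ to $\lambda_0$ is justified by Corollary \ref{coro:3.6}, Lemma \ref{2203311}, Lemma \ref{2203312} and Corollary \ref{2203313}. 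The integrability of $y\mapsto\sup_{\lambda\in[\lambda',\lambda_0]}\bigl(-P_\lambda^3\tfrac{\partial^2}{\partial \lambda^2}G_\lambda(x,y)\bigr)$ on $K$ then follows from the uniformity in $y\in K$ of these estimates, the dominating function being a constant multiple of $c(x,y)$ plus a constant, which is integrable on $K$ by Lemma \ref{lem:2.4}. The main obstacle is the middle step: converting the two Ancona comparisons into an equality (up to $1+\epsilon$) carrying exactly the N\"aim-kernel weight that appears in $u$, together with the careful bookkeeping of the constant $\mathcal{C}$ — reconciling the $\mathcal{C}$ built into $u$, the one produced by Theorem \ref{thm:5.14}, and the two supplied by Corollary \ref{coro:5.16} — and the discrete-to-continuous passage in $s$ governed by $h_1$ and $t_{g,\xi}$.
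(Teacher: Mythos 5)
Your proposal is correct and follows essentially the same route as the paper's own proof: split off the regions that die against powers of $P_\lambda$, factor the inner integrand along the long geodesic via the N\"aim kernel, apply Theorem \ref{thm:5.14} at moving base points $g(s)$ (this is exactly the paper's Lemma \ref{lem:6.7}, with the $h_1$-cutoff producing the continuous $s$-integral), and finish with Corollary \ref{coro:5.16} using $f_1$ built from $k_{\lambda_0}(x,y,\cdot)$ and $f_2=u$. The only differences are cosmetic — you group the inner integral as $\tfrac{\partial}{\partial\lambda}G_\lambda(x,z)$ along $[x,z]$ whereas the paper normalizes by $G_\lambda(y,z_1)$ and works along $[y,z_1]$ — and the step you flag as the main obstacle is handled in the paper by the exact identity \eqref{jj} combined with the strong Ancona comparison \eqref{k}, precisely as you anticipate.
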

\begin{proof}By \eqref{eq:2202041}, 
$$\frac{\partial^2}{\partial \lambda^2}G_\lambda(x,y)=2\int_{\mathcal{T}\times \mathcal{T}}G_\lambda(x,z_1)G_\lambda(z_1,z_2)G_\lambda(z_2,y)d\mu(z_2)d\mu(z_1).$$
\underline{\textit{Step 1. The integral over $B(x,r)\times \mathcal{T}$}:} Following the proof of Proposition 6.2 in \cite{LL}, we will verify 
 $$\lim_{\lambda\rightarrow\lambda_0-}-2P_\lambda^3\int_{B(x,r)\times \mathcal{T}}G_\lambda(x,z_1)G_\lambda(z_1,z_2)G_\lambda(z_2,y)d\mu(z_2)d\mu(z_1)=0.$$
Using Corollary \ref{coro:5.16}, we will show that the limit of the remaining part coincides with the right hand side of \eqref{eq:6.261}.

 Let $R_0'$ be the constant in Corollary \ref{coro:5.16}. By Corollary \ref{coro:3.6}, for any $\epsilon>0,$ there exist constants $R>R_0'$ and $\delta$ such that for any $x_0$, $x_1,$ $x_2$ and $x_3$ with $d(x_0,x_i)>R$, $\xi_i\in \mathcal{O}_{x_0}(x_i)$ and $\lambda\in [\lambda_0-\delta,\lambda_0],$
$$\theta_{x_0}^\lambda(x_i,x_j)\asymp_{(1+\epsilon)}\theta_{x_0}^{\lambda_0}(\xi_i,\xi_j).$$

Fix $r>2R+2d(x,y)+2$ with $\frac{r-2R-2d(x,y)-2}{r}\asymp_{1+\epsilon}1$. The first equality of \eqref{eq::6.33} follows from Proposition \ref{prop:6.4}. Harnack inequality \eqref{eq:2.2} shows the first inequality of \eqref{eq::6.33}. By Harnack inequality \eqref{eq:2.2}, $\mu_{x,\lambda_0}(\partial\mathcal{T})\leq e^{2D_{Diam(T_0),1}}\mu_{x_0\lambda_0}(\partial \mathcal{T})$ for any $x\in \mathcal{T}$ and $x_0\in T_0$. Using this and Lemma \ref{lem:2.4}, we have the last equality of \eqref{eq::6.33}.
\begin{equation}\label{eq::6.33}
\begin{split}
&\lim_{\lambda\rightarrow\lambda_0-}-P_\lambda^3\int_{B(x,r)}G_\lambda(x,z_1)\int_{\mathcal{T}}G_\lambda(z_1,z_2)G_\lambda(z_2,y)d\mu(z_2)d\mu(z_1)\\
&\overset{\text{Prop }\ref{prop:6.4}}=\lim_{\lambda\rightarrow\lambda_0-}\mathcal{C}P_\lambda^2\int_{B(x,r)}G_\lambda(x,z_1)\int_{\partial \mathcal{T}}k_{\lambda_0}(z_1,y,\xi)d\mu_{z_1,\lambda_0}(\xi)d\mu(z_1)\\
&\overset{\eqref{eq:2.2}}\leq\lim_{\lambda\rightarrow\lambda_0-}\mathcal{C}P_\lambda^2\int_{B(x,r)}G_\lambda(x,z_1)e^{D_{r,1}}\mu_{z_1,\lambda_0}(\partial \mathcal{T})d\mu(z_1)\\
&\overset{\text{Lem }\ref{lem:2.4}} \leq\lim_{\lambda\rightarrow\lambda_0-}\mathcal{C}P_\lambda^2C_re^{D_{r,1}+2D_{Diam(T_0),1}}\mu_{x_0,\lambda_0}(\partial \mathcal{T})=0.
\end{split}
\end{equation}
\underline{\textit{Step 2. The integral over $B(x,r)^c\times \mathcal{T}$}:} The remaining part of the integral satisfies
 \begin{equation}\label{t}
\begin{split}
 &-P_\lambda^3\int_{{B(x,r)}^c}G_\lambda(x,z_1)\int_{\mathcal{T}}G_\lambda(z_1,z_2)G_\lambda(z_2,y)d\mu(z_2)d\mu(z_1)\\
&=-P_\lambda^3\int_{{B(x,r)}^c}G_\lambda^2(x,z_1)\frac{G_\lambda(y,z_1)}{G_\lambda(x,z_1)}\int_{\mathcal{T}}\frac{G_\lambda(z_1,z_2)G_\lambda(z_2,y)}{G_\lambda(y,z_1)}d\mu(z_2)d\mu(z_1)\\
&=-P_\lambda^3\int_{{B(x,r)}^c}G_\lambda^2(x,z_1)k_\lambda(x,y,z_1)\int_{\mathcal{T}}\frac{G_\lambda(z_1,z_2)G_\lambda(z_2,y)}{G_\lambda(y,z_1)}d\mu(z_2)d\mu(z_1).
\end{split}
\end{equation}
To obtain \eqref{eq:6.261} from \eqref{t}, we need to show that for any $\epsilon>0$, there exists a constant $r$ such that for any $z_1$ with $d(x,z_1)>r$ and for any $g_x^{z_1}\in \mathcal{G}\mathcal{T}_{x,z_1}$, 
\begin{equation}\label{l}
-P_\lambda \int_{\mathcal{T}}\frac{G_\lambda(z_1,z_2)G_\lambda(z_2,y)}{G_\lambda(y,z_1)}d\mu(z_2)\asymp_{(1+\epsilon)^{15}}\int_{R-d(x,y)-1}^{r-R-d(x,y)+1}u(\phi_t g_x^{z_1})dt.
\end{equation}
Let $z'$ be the closest point to $z$ on $[x,z_1]$. Decompose $\mathcal{T}$ into $\mathcal{T}_1(z_1)$, $\mathcal{T}_2(z_1)$ and $\mathcal{T}_3(z_1),$ where 
\begin{equation}
\begin{split}
\nonumber&\mathcal{T}_1(z_1):=B(x,R+d(x,y))\cup B(z_1,R+d(x,y))\\
&\mathcal{T}_2(z_1):=\{z_2\in (\mathcal{T}_1(z_1))^c:z_2'\in B(x,R+d(x,y)-1)\cup B(z_1,R+d(x,y)-1)\}\\
&\mathcal{T}_3(z_1):=(\mathcal{T}_1(z_1)\cup \mathcal{T}_2(z_1))^c.
\end{split}
\end{equation}
The decomposition is a complicated version of the one in the proof of Lemma \ref{lem:6.2} (see Figure \ref{figure7}). First, we will verify that the integral value over $\mathcal{T}$ in \eqref{l} is approximated by the integral over the set $\mathcal{T}_3(z_1).$
\begin{lem}\label{lem:a}For any $i\in\{1,2\},$ 
\begin{equation}\label{e-}
\lim_{\lambda\rightarrow\lambda_0-}-P_\lambda^3\int_{{B(x,r)}^c}G_\lambda^2(x,z_1)k_\lambda(x,y,z_1)\int_{\mathcal{T}_i(z_1)}\frac{G_\lambda(z_1,z_2)G_\lambda(z_2,y)}{G_\lambda(y,z_1)}d\mu(z_2)d\mu(z_1)=0.
\end{equation}
\end{lem}
\begin{proof}
 Since $\lim_{\lambda\rightarrow\lambda_0-}P_\lambda=0$ and $$\lim_{\lambda\rightarrow\lambda_0-}-P_\lambda\int_{{B(x,r)}^c}G_\lambda^2(x,z_1)k_\lambda(x,y,z_1)d\mu(z_1)=\mathcal{C}c(x,y)$$ by Proposition \ref{prop:6.4},
it suffices to show that the limit
$$\lim_{\lambda\rightarrow\lambda_0-}-P_\lambda\int_{\mathcal{T}_i(z_1)}\frac{G_\lambda(z_1,z_2)G_\lambda(z_2,y)}{G_\lambda(y,z_1)}d\mu(z_2)$$
is bounded.\\
\underline{\textit{Step 1: i=1}} By Harnack inequality \eqref{eq:2.2}, for any for $z_2\in B(x,R+d(x,y))$
$$\frac{G_\lambda(z_1,z_2)G_\lambda(z_2,y)}{G_\lambda(y,z_1)} \overset{\eqref{eq:2.2}}\leq e^{D_{R+d(x,y),1}}\frac{G_\lambda(y,z_1)G_\lambda(z_2,y)}{G_\lambda(y,z_1)}\leq e^{D_{R+d(x,y),1}}G_\lambda(z_2,y). $$

Using above inequality, we have the first inequality below. Lemma \ref{lem:2.4} shows the second inequality below. 
  \begin{equation}
 \begin{split}
 \nonumber \int_{B(x,R+d(x,y))}\frac{G_\lambda(z_1,z_2)G_\lambda(z_2,y)}{G_\lambda(y,z_1)}d\mu(z_2) &\leq e^{D_{R+d(x,y),1}}\int_{B(x,R+d(x,y))}G_\lambda(z_2,y)d\mu(z_2)\\
\nonumber &\overset{\substack{\text{Lem }\\ \ref{lem:2.4}}}\leq  e^{D_{R+d(x,y),1}}C_{R+2d(x,y)}.
\end{split}
 \end{equation}
 Following the above inequality for $B(z_1,R+d(x,y))$, we have 
 \begin{equation}
 \begin{split}\nonumber
\lim_{\lambda\rightarrow\lambda_0-}-P_\lambda\int_{\mathcal{T}_1(z_1)}\frac{G_\lambda(z_1,z_2)G_\lambda(z_2,y)}{G_\lambda(y,z_1)}d\mu(z_2)d\mu(z_1)\leq \lim_{\lambda\rightarrow\lambda_0-}-2e^{D_{R+d(x,y),1}}C_{R+2d(x,y)}P_\lambda=0.
 \end{split}
 \end{equation}
 \underline{\textit{Step 2: i=2}}  Let $z_2''$ be the closest point to $z_2$ on $[y,z_1]$. For any $z_2\in \mathcal{T}_2(z_1),$ $z_2''$ is contained in $B(x,R+d(x,y)-1)\cup B(z_1,R+d(x,y)-1)$. 
  
By Ancona inequality \eqref{eq:2.4} and Harnack inequality \eqref{eq:2.2}, for any $z_1\in B(x,r)^c$, $z_2\in \mathcal{T}_2(z)$ with $z_2''\in B(x,R+d(x,y)-1)$ and $d(y,z_2'')<1,$ and for any $\lambda\in [0,\lambda_0]$,
 \begin{equation}\label{e}
 \begin{split}
 &\frac{G_\lambda(z_1,z_2)G_\lambda(z_2,y)}{G_\lambda(z_1,y)}\overset{\eqref{eq:2.4}}\leq C\frac{G_\lambda(z_1,z_2'')G_\lambda(z_2'',z_2)G_\lambda(z_2,y)}{G_\lambda(z_1,y)}\\
  &\overset{\eqref{eq:2.2}}\leq Ce^{3D_{R+d(x,y)-1,1}} \frac{G_\lambda(z_1,y)G_\lambda(x,z_2)G_\lambda(z_2,x)}{G_\lambda(z_1,y)} \leq C^3e^{3D_{R+d(x,y)-1,1}} G_\lambda(x,z_2)^2.
 \end{split}
 \end{equation}
 Using Ancona inequality \eqref{eq:2.4} and Harnack inequality \eqref{eq:2.2}, we have for any $z_1\in B(x,r)^c$, $z_2\in \mathcal{T}_2(z_1)$ with $z_2''\in B(x,R+d(x,y)-1)$ and $d(y,z_2'')\geq 1$, and for any $\lambda\in [0,\lambda_0]$, 
 \begin{equation}\label{f}
 \begin{split}
 \frac{G_\lambda(z_1,z_2)G_\lambda(z_2,y)}{G_\lambda(z_1,y)}&\overset{\eqref{eq:2.4}}\leq C^2\frac{G_\lambda(z_1,z_2'')G_\lambda(z_2'',z_2)G_\lambda(z_2,z_2'')G_\lambda(z_2'',y)}{G_\lambda(z_1,y)}\\
  &\overset{\eqref{eq:2.2}}\leq C^2e^{2D_{R+d(x,y)-1,1}} \frac{G_\lambda(x,z_2)^2 G_\lambda(z_1,z_2'')G_\lambda(z_2'',y)}{G_\lambda(z_1,y)}\\
   &\overset{\eqref{eq:2.4}}\leq C^3e^{2D_{R+d(x,y)-1,1}} \frac{G_\lambda(x,z_2)^2 G_\lambda(z_1,y)}{G_\lambda(z_1,y)}\leq C^3e^{3D_{R+d(x,y)-1,1}} G_\lambda(x,z_2)^2.
 \end{split}
 \end{equation}
 Similar to \eqref{e} and \eqref{f}, we have for any $z_2\in \mathcal{T}_2(z_1)$ with $z_2'\in B(z_1,R+d(x,y)-1)$ and for any $\lambda\in [0,\lambda_0]$, 
\begin{equation}\label{g}
\frac{G_\lambda(z_1,z_2)G_\lambda(z_2,y)}{G_\lambda(z_1,y)}\leq C^3e^{3D_{R+d(x,y)-1,1}} G_\lambda(z_1,z_2)^2.
\end{equation}
 Using \eqref{e}, \eqref{f} and \eqref{g}, and Remark \ref{rem:6.5}, we have 
\begin{equation}\label{eq:6.26}
\begin{split}
\lim_{\lambda\rightarrow\lambda_0-}&-P_\lambda\int_{\mathcal{T}_2(z_1)}\frac{G_\lambda(z_1,z_2)G_\lambda(z_2,y)}{G_\lambda(y,z_1)}d\mu(w)\\
\leq& \lim_{\lambda\rightarrow\lambda_0-}-P_\lambda C^3e^{3D_{R+d(x,y)-1,1}}\\
&\times \left\{\int_{B(x,R+d(x,y))^c} G_\lambda(x,z_2)^2d\mu(z_2)+\int_{B(z_1,R+d(x,y))^c} G_\lambda(z_1,z_2)^2d\mu(z_2)\right\}\\
\overset{\substack{\text{Remark}\\\ref{rem:6.5}}}=& C^3e^{3D_{R+d(x,y)-1,1}} \biggl(\mu_{x,\lambda_0}(\partial \mathcal{T})+\mu_{z_1,\lambda_0}(\partial \mathcal{T})\biggr).\\
 \end{split}
\end{equation}
Since $\mu_{z,\lambda_0}(\partial \mathcal{T})$ is bounded for any $z\in \mathcal{T}$, we have \eqref{e-}.
\end{proof}

To check that $u$ satisfies \eqref{l}, we use a function $u_\lambda$ defined for any $g\in \mathcal{G}\mathcal{T}$, by
 $$u_{\lambda,z_1}(g):=\int_{\mathcal{T}} h_1(t_{g,z_2})\frac{G_\lambda(z_1,z_2)G_\lambda(z_2,y)}{G_\lambda(y,z_1)}d\mu(z_2).$$

\begin{lem}\label{lem:6.7}For any $\epsilon>0$, there exists $\delta_3$ such that for any $\lambda\in[\lambda_0-\delta_3,\lambda_0]$, $g\in \cG\mathcal{T}_{x,z_1}$ and $s$ with $R+d(x,y)-1\leq s\leq r-R-d(x,y)+1,$
\begin{equation}\label{eq:6.33}
-P_\lambda{u}_{\lambda,z_1}(\phi_sg)\asymp_{(1+\epsilon)^{13}}\mathcal{C}u(\phi_s g).
\end{equation}
\end{lem}
\begin{proof} For any $z_2\in \mathcal{T}_3(z_1)$, $z_2'=z_2''$ and $d(x,z_2')\in [R+d(x,y)-1,r-R-d(x,y)+1]$. For any $z_2\in \mathcal{T}_3(z_1)$ and $g\in \mathcal{G}\mathcal{T}_{x,z_1}$, the closest point to $z_2$ on $g$ coincides with $z_2'$.
By Harnack inequality \eqref{eq:2.2} and Ancona inequality \eqref{eq:2.4}, for any $\lambda\in[0,\lambda_0]$, $g$ in $\cG\mathcal{T}_{x,z_1}$ and $s$ with $R+d(x,y)-1\leq s\leq r-R-d(x,y)+1,$
\begin{equation}\label{m}
\begin{split}
&-P_\lambda\int_{B(g(s),R)}h_1(t_{\phi_sg,z_2})\frac{G_\lambda(z_1,z_2)G_\lambda(z_2,y)}{G_\lambda(y,z_1)}d\mu(z_2)\\
&\leq -Ce^{2D_{R,1}}P_\lambda\int_{B(g(s),R)}h_1(t_{\phi_sg,z_2})d\mu(z_2)\leq -P_\lambda Ce^{2D_{R,1}}\mu(B(g(s),R)).
\end{split}
\end{equation}
Let $\delta_0$ be the constant in Theorem \ref{thm:5.14}. The inequality \eqref{m} allows us to choose $\delta_3<\delta_0$ satisfying that for any $\lambda \in [\lambda_0-\delta_3,\lambda_0],$  $e^{P_\lambda r}\asymp_{1+\epsilon}1$ and  $g\in \mathcal{G}\mathcal{T}_{x,z_1},$
\begin{equation}\label{nn}
\begin{split}
-P_\lambda{u}_{\lambda,z_1}(\phi_s g)\asymp_{1+\epsilon}-P_\lambda\int_{B(g(s),R)^c} h_1(t_{\phi_sg,z_2})\frac{G_\lambda(z_1,z_2)G_\lambda(z_2,y)}{G_\lambda(y,z_1)}d\mu(z_2).
\end{split}
\end{equation}
For any $z_2\in \mathcal{T}_3(z_1)$ with $d(z_2,z_2')>R$,
\begin{equation}\label{jj}
\begin{split}
&\frac{G_\lambda(z_1,z_2)G_\lambda(z_2,y)}{G_\lambda(y,z_1)}=G_\lambda^2(z_2,z_2')\frac{ \theta^\lambda_{z_2'}(z_1,z_2)\theta^\lambda_{z_2'}(z_2,y)}{\theta_{z_2'}^\lambda(y,z_1)}.\end{split}\end{equation}
By the choice of $R$, for any $z_2\in \mathcal{T}_3(z_1)$ and $g_{s,z_2}\in \mathcal{G}\mathcal{T}_{g(s),z_2}$ with $d(z_2,z_2')>R$ and $h_1(t_{\phi_sg,z_2})<1$, 
\begin{equation}\label{k}
\begin{split}
G_\lambda^2(z_2,z_2')\frac{ \theta^\lambda_{z_2'}(z_1,z_2)\theta^\lambda_{z_2'}(z_2,y)}{\theta_{z_2'}^\lambda(y,z_1)}\asymp_{(1+\epsilon)^3}G_\lambda^2(z_2,z_2')\frac{ \theta^\lambda_{z_2'}(g_-,g_{s,z_2+})\theta^\lambda_{z_2'}(g_{s,z_2+},g_+)}{\theta_{z_2'}^\lambda(g_-,g_+)}.
\end{split}
\end{equation}
Let $\overline{u}_g$ be a $\Gamma$-invariant bounded function $\overline {u}_{g}$ defined by for any $g'\in \mathcal{G}\mathcal{T}$, by
$$\overline{u}_{g}(g')=\frac{1}{|\G_{\pi(g)}|}\sum_{\g\in \G}h_1(t_{g, \gamma g'_+})h(d(\pi(g),\pi(\g g'))\frac{ \theta^{\lambda_0}_{g(t_{g, \gamma g'_+})}(g_-,\g g'_+)\theta^{\lambda_0}_{g(t_{g, \gamma g'_+})}(\g g'_+,g_+)}{\theta_{g(t_{g, \gamma g'_+})}^{\lambda_0}(g_-,g_+)},$$
where $h$ is the function defined in the proof of Lemma \ref{lem:6.3}. Similar to the proof of Lemma \ref{lem:6.3}, H\"older continuity of $\overline{u}_g$ follows from Corollary \ref{coro:2.7}. Combining \eqref{nn}, \eqref{jj} and \eqref{k}, for any $g\in \mathcal{G}\mathcal{T}_{x,z_1}$, we have 
\begin{equation}\label{n}
\begin{split}
&-P_\lambda{u}_{\lambda,z_1}(\phi_s g)\asymp_{(1+\epsilon)^4}-P_\lambda\int_{B(g(s),R)^c} G_\lambda^2(g(s),z_2)\overline u_{\phi_sg}(g_{s,z_2})d\mu(z_2)
\end{split}
\end{equation}
where $g_{s,z_2}\in \mathcal{G}\mathcal{T}_{g(s),z_2}.$
 By Theorem \ref{thm:5.14}, for any $\lambda\in [\lambda_0-\delta_3,\lambda_0],$
\begin{equation}\label{eq:6.35}
\begin{split}
\nonumber&-P_\lambda{u}_{\lambda,z_1}(\phi_sg)\asymp_{(1+\epsilon)^4}-P_\lambda\int_{B(g(s),R)^c} G_\lambda^2(g(s),z_2)\overline u_{\phi_sg}(g_{s,z_2})d\mu(z_2)\\
&\overset{\substack{\text{Thm}\\\ref{thm:5.14}}}\asymp_{(1+\epsilon)^{9}} \mathcal{C} \int_{\cG_{g(s)}^+\mathcal{T}}\overline{u}_{\phi_s g}(w)d\mu_{\cG_{g(s)}^+\mathcal{T}}^{\lambda_0}(\omega)\\
&=\mathcal{C} \int_{\partial \mathcal{T}}\frac{1}{|\G_{g(s)}|}\sum_{\g\in \G_{g(s)}}h_1(t_{\phi_sg,\g \xi})\frac{ \theta^{\lambda_0}_{g(t_{\phi_sg,\g \xi})}(g_-,\g \xi)\theta^{\lambda_0}_{g(t_{\phi_sg,\xi})}(\g \xi,g_+)}{\theta_{g(t_{\phi_sg,\g \xi})}^{\lambda_0}(g_-,g_+)}d\mu_{g(t_{\phi_sg,\g \xi}),\lambda_0}(\xi)\\
&=\mathcal{C} \int_{\partial \mathcal{T}}h_1(t_{\phi_sg,\xi})\frac{ \theta^{\lambda_0}_{g(t_{\phi_sg,\xi})}(g_-,\xi)\theta^{\lambda_0}_{g(t_{\phi_sg,\xi})}(\xi,g_+)}{\theta_{g(t_{\phi_sg,\xi})}^{\lambda_0}(g_-,g_+)}d\mu_{g(t_{\phi_sg,\xi}),\lambda_0}(\xi)= \mathcal{C}u(\phi_s g).\\
\end{split}
\end{equation}
The above equation completes the proof.
\end{proof}
Since $\int_{R+d(x,y)-1}^{r-R-d(x,y)+1}h_1(d(\pi(\phi_tg_x^{z_1}),z_2'))dt\leq1$ for any $z_2\in \mathcal{T}_3(z_1)$ and $g_{x}^{z_1} \in \mathcal{G}\mathcal{T}_{x,z_1}$, the function $u_\lambda$ satisfies
\begin{equation}\label{eq:6.32}
\int_{R+d(x,y)}^{r-R-d(x,y)}{u}_{\lambda}(\phi_t g_{x}^{z_1}) dt\leq\int_{\mathcal{T}_3(z_1)}\frac{G_\lambda(z,w)G_\lambda(w,y)}{G_\lambda(y,z)}d\mu(w)\leq 
\int_{R+d(x,y)-1}^{r-R-d(x,y)+1}{u}_{\lambda}(\phi_t g_{x}^{z_1}) dt.
\end{equation}
Applying Lemma \ref{lem:a} and \eqref{eq:6.32}, for any $\lambda$ sufficiently close to $\lambda_0$, we have the first relation of \eqref{eq:6.40}. The second relation of \eqref{eq:6.40} follows from Lemma \ref{lem:6.7}.
\begin{equation}\label{eq:6.40}
\begin{split}
-{P_\lambda}\int_{\mathcal{T}}\frac{G_\lambda(z_1,z_2)G_\lambda(z_2,y)}{G_\lambda(y,z_1)}d\mu(z_2)&\asymp_{(1+\epsilon)^2}-{P_\lambda}\int_{R+d(x,y)-1}^{r-R-d(x,y)+1}{u}_{\lambda}(\phi_t g_{x}^{z_1}) dt\\
&\asymp_{(1+\epsilon)^{13}}{\mathcal{C}}\int_{R+d(x,y)-1}^{r-R-d(x,y)+1}{u}(\phi_t g_{x}^{z_1}) dt.
\end{split}
\end{equation}
The second line of \eqref{eq:6.3612} follows from \eqref{eq:6.40}. Corollary \ref{coro:5.16} shows the last line of \eqref{eq:6.3612}.
\begin{equation}
\begin{split}\label{eq:6.3612}
&\lim_{\lambda\rightarrow\lambda_0-}-P_\lambda^3\int_{{B(x,r)}^c}G_\lambda^2(x,z_1)k_\lambda(x,y,z_1)\int_{\mathcal{T}_3(z_1)}\frac{G_\lambda(z_1,z_2)G_\lambda(z_2,y)}{G_\lambda(y,z_1)}d\mu(z_2)d\mu(z_1)\\
& \overset{ \eqref{eq:6.40}} \asymp_{(1+\epsilon)^{15}}\lim_{\lambda\rightarrow\lambda_0-}\mathcal{C}P_\lambda^2\int_{B(x,r)^c}G_\lambda^2(x,z_1)k_\lambda(x,y,z_1)\int_{R+d(x,y)-1}^{r-R-d(x,y)+1}{u}(\phi_t g_x^{z_1}) dtd\mu(z_1)\\
& \overset{\substack{\text{Coro}\\\ref{coro:5.16}}}\asymp_{(1+\epsilon)^{12}}{\mathcal{C}}^3\int_{\partial \mathcal{T}} k_{\lambda_0}(x,y,\xi)\mu_{x,\lambda_0}(\xi)\int_{\G\backslash \cG\mathcal{T}} u(g)d\overline{m}_{\lambda_0}(g).
 \end{split}
 \end{equation}
Since $\epsilon$ is arbitrary, we have \eqref{eq:6.25}. The equation \eqref{eq:6.3612} shows the last statement of Proposition \ref{prop:6.6}.
 \end{proof}
\begin{proof}[Proof of Theorem \ref{thm:6.3}] Denote $F(\lambda):=\frac{\partial}{\partial \lambda}G_\lambda(x,y).$ By Proposition \ref{prop:6.4} and Proposition \ref{prop:6.6}, we have
$$\lim_{\lambda\rightarrow \lambda_0}\frac{2F(\lambda)'}{F(\lambda)^3}=4\mathcal{C}_1c(x,y)^{-2},$$
where $\mathcal{C}_1=\int_{\G\backslash \cG\mathcal{T}} u(g)d\overline{m}_{\lambda_0}(g).$
 Since $P_{\lambda_0}=0$, $\frac{\partial}{\partial\lambda}G_\lambda(x,y)$ diverges as $\lambda$ goes to $\lambda_0$. Thus for any $\lambda$ sufficiently close to $\lambda_0$,
$\frac{1}{F(\lambda)^2}$ is asymptotic to $4{\mathcal{C}_1c(x,y)^{-2}({\lambda_0-\lambda})}.$ This implies the first equation of \eqref{aaa}. Obviously, the second equation follows from the first equation. \end{proof}
The second term of \eqref{aaa} and Proposition \ref{prop:6.6} shows the following corollary.
\begin{coro}For any distinct points $x$ and $y$, we have
\begin{equation}
\lim_{\lambda\rightarrow\lambda_0-}(\sqrt{\lambda_0-\lambda})^3\frac{\partial^2}{\partial \lambda^2}G_\lambda(x,y)=\frac{1}{4\mathcal{C}_0^{1/2}}c(x,y).
\end{equation}
\end{coro}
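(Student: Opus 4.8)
The plan is to obtain this corollary as a purely algebraic consequence of the two limits already established: the second relation in \eqref{aaa} of Theorem~\ref{thm:6.3} (the asymptotics of $P_\lambda$) and the limit \eqref{eq:6.261} of Proposition~\ref{prop:6.6} (the asymptotics of $-P_\lambda^3\,\partial_\lambda^2 G_\lambda$). First I would record that $-P_\lambda$ is a legitimate nonzero denominator for $\lambda$ in a left neighbourhood of $\lambda_0$: Proposition 4.10 of \cite{HL} gives $P_\lambda\le 0$, and since $\mathcal{C}=1/m_{\lambda_0}((\cG\mathcal{T})_0)>0$ and $\mathcal{C}_1:=\int_{\G\backslash\cG\mathcal{T}}u\,d\overline{m}_{\lambda_0}>0$ (the function $u$ being strictly positive on $\cG\mathcal{T}$), the relation $-P_\lambda/\sqrt{\lambda_0-\lambda}\to 2\mathcal{C}\mathcal{C}_1^{1/2}>0$ forces $-P_\lambda>0$ for $\lambda$ close enough to $\lambda_0$.

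The key step is then the trivial factorization, valid for such $\lambda$,
\[
(\sqrt{\lambda_0-\lambda})^3\,\frac{\partial^2}{\partial\lambda^2}G_\lambda(x,y)
=\left(\frac{\sqrt{\lambda_0-\lambda}}{-P_\lambda}\right)^{3}\left(-P_\lambda^3\,\frac{\partial^2}{\partial\lambda^2}G_\lambda(x,y)\right),
\]
after which one passes to the limit $\lambda\to\lambda_0-$ in each factor separately. By the second equation of \eqref{aaa}, $(-P_\lambda)/\sqrt{\lambda_0-\lambda}\to 2\mathcal{C}\mathcal{C}_1^{1/2}$, so the first factor converges to $(2\mathcal{C}\mathcal{C}_1^{1/2})^{-3}=\bigl(8\,\mathcal{C}^3\,\mathcal{C}_1^{3/2}\bigr)^{-1}$. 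By Proposition~\ref{prop:6.6}, $-P_\lambda^3\,\partial_\lambda^2 G_\lambda(x,y)\to 2\mathcal{C}^3\,c(x,y)\,\mathcal{C}_1$. Multiplying the two limits gives
\[
\lim_{\lambda\to\lambda_0-}(\sqrt{\lambda_0-\lambda})^3\,\frac{\partial^2}{\partial\lambda^2}G_\lambda(x,y)
=\frac{2\,\mathcal{C}^3\,c(x,y)\,\mathcal{C}_1}{8\,\mathcal{C}^3\,\mathcal{C}_1^{3/2}}
=\frac{c(x,y)}{4\,\mathcal{C}_1^{1/2}},
\]
which is the asserted identity (the constant written $\mathcal{C}_0$ in the statement being $\mathcal{C}_1$).

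Since both inputs are already proved, there is no genuine obstacle: the corollary is a one-line cancellation. The sole point requiring a word of care is the division by $P_\lambda$, which is licit near $\lambda_0^-$ by the argument in the first paragraph; no new geometric or dynamical input is needed.
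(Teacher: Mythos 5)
Your proof is correct and is exactly the paper's argument: the paper deduces the corollary in one line from the second relation of \eqref{aaa} and Proposition \ref{prop:6.6}, which is precisely your factorization and passage to the limit (with $\mathcal{C}_0=\mathcal{C}_1$ being a notational slip in the statement). Your extra remark justifying division by $-P_\lambda$ near $\lambda_0^-$ is a harmless addition of care.
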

\subsection{Proof of local limit theorem} The following proposition is proved in \cite{LL}, based on Hardy-Littlewood Tauberian Theorem. The proof of the following proposition is the same as in \cite{LL} when we replace the volume form by the measure $\mu$. 
\begin{prop}\label{prop:6.11} Let $f$ be a compactly supported $C^\infty$-function on $\mathcal{T}$. Suppose that all derivatives of $f$ at all vertices in the support of the function $f$ are zero. Then
\begin{equation}
\lim_{t\rightarrow\infty}t^{3/2}\int_{\mathcal{T}\times\mathcal{T}}e^{\lambda_0 t}p(t,x,y)f(x)f(y)d\mu(x)d\mu(y)=\frac{1}{2\sqrt{\pi}\mathcal{C}_0^{1/2}}c(x,y).
\end{equation}
\end{prop}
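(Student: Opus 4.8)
The proof will follow the Tauberian argument of \cite{LL}, the only change being that the Riemannian volume is replaced by $\mu$. Set
\[
\Psi(t):=\int_{\mathcal{T}\times\mathcal{T}}e^{\lambda_0 t}p(t,x,y)f(x)f(y)\,d\mu(x)d\mu(y).
\]
Using the symmetry of $p$ and the Chapman--Kolmogorov relation \eqref{eq:2201283}, $\Psi(t)=e^{\lambda_0 t}\int_{\mathcal{T}}\bigl(\int_{\mathcal{T}}p(t/2,z,x)f(x)\,d\mu(x)\bigr)^2 d\mu(z)\ge 0$, and the goal is to prove that $t^{3/2}\Psi(t)$ tends to $\frac{1}{2\sqrt{\pi}\,\mathcal{C}_0^{1/2}}\int_{\mathcal{T}^2}c(x,y)f(x)f(y)\,d\mu(x)d\mu(y)$, which is the right-hand side of the displayed formula. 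Since $f$ is bounded with compact support, Lemma \ref{lem:2.4} makes $\int_{\mathcal{T}^2}G_\lambda(x,y)|f(x)||f(y)|\,d\mu(x)d\mu(y)$ finite, uniformly in $\lambda\in[0,\lambda_0]$, so Fubini's theorem gives the Laplace-transform identity
\[
H(s):=\int_0^\infty e^{-st}\Psi(t)\,dt=\widehat{G}(\lambda_0-s),\qquad \widehat{G}(\lambda):=\int_{\mathcal{T}^2}G_\lambda(x,y)f(x)f(y)\,d\mu(x)d\mu(y),
\]
valid for $s\ge 0$; in particular $H(0)<\infty$ and $\Psi\in L^1(0,\infty)$.

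The first substantive step is to record that $\Psi$ is \emph{completely monotone}. Since the self-adjoint operator $-\Delta$ has spectrum contained in $[\lambda_0,\infty)$, the spectral theorem gives $\int_{\mathcal{T}^2}p(t,x,y)f(x)f(y)\,d\mu\,d\mu=\langle e^{t\Delta}f,f\rangle=\int_{[\lambda_0,\infty)}e^{-\mu t}\,d\nu_f(\mu)$ for a finite positive measure $\nu_f$ of total mass $\|f\|_{L^2}^2$, hence $\Psi(t)=\int_{[0,\infty)}e^{-ut}\,d\tilde\nu_f(u)$ is the Laplace transform of a positive measure. In particular $\Psi$ and $-\Psi'$ are non-increasing, and this monotonicity is what will permit the passage from integrated to pointwise asymptotics. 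The hypotheses that $f$ be $C^\infty$ with vanishing derivatives at the vertices of its support are not needed here beyond $f\in C_c(\mathcal{T})\cap L^2(\mathcal{T})$; they will be used only in the deduction of Theorem \ref{thm:1.3} from Proposition \ref{prop:6.11}.

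Next I would analyse $H$ near $s=0$. Differentiating $\widehat{G}$ under the integral sign is legitimate: by the domination statement at the end of Proposition \ref{prop:6.4}, combined with $-P_\lambda/\sqrt{\lambda_0-\lambda}\to 2\mathcal{C}\mathcal{C}_1^{1/2}$ from Theorem \ref{thm:6.3}, the function $y\mapsto\sup_{\lambda}\sqrt{\lambda_0-\lambda}\,\partial_\lambda G_\lambda(x,y)$ is integrable on the (compact) support of $f$. Theorem \ref{thm:6.3} and dominated convergence then give $\sqrt{\lambda_0-\lambda}\,\widehat{G}'(\lambda)\to \frac{1}{2\mathcal{C}_1^{1/2}}\int_{\mathcal{T}^2}c(x,y)f(x)f(y)\,d\mu\,d\mu=:\frac{a_f}{2\mathcal{C}_1^{1/2}}$. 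Because $P_{\lambda_0}=0$ (Proposition \ref{prop:6.1}), $\widehat{G}'(\lambda)\to\infty$ as $\lambda\to\lambda_0-$, so that $H(0)$ is finite while
\[
H(0)-H(s)=\int_0^s\widehat{G}'(\lambda_0-u)\,du\sim \frac{a_f}{\mathcal{C}_1^{1/2}}\,s^{1/2},\qquad s\to 0+.
\]

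Finally the Tauberian step. With $\tilde\sigma(u):=\int_u^\infty\Psi(t)\,dt$ (finite, non-increasing, tending to $0$), the elementary identity $H(0)-H(s)=\int_0^\infty(1-e^{-st})\Psi(t)\,dt=s\int_0^\infty e^{-su}\tilde\sigma(u)\,du$ turns the previous line into $\int_0^\infty e^{-su}\tilde\sigma(u)\,du\sim \frac{a_f}{\mathcal{C}_1^{1/2}}s^{-1/2}$. Karamata's Tauberian theorem together with the monotone density theorem, applied to the non-increasing $\tilde\sigma$, yield $\tilde\sigma(u)\sim \frac{a_f}{\mathcal{C}_1^{1/2}\,\Gamma(1/2)}u^{-1/2}=\frac{a_f}{\sqrt{\pi}\,\mathcal{C}_1^{1/2}}u^{-1/2}$; since $\Psi=-\tilde\sigma'$ is non-increasing, a second application of the monotone density theorem gives $\Psi(u)\sim\tfrac{1}{2}\cdot\frac{a_f}{\sqrt{\pi}\,\mathcal{C}_1^{1/2}}u^{-3/2}$, i.e. $t^{3/2}\Psi(t)\to \frac{a_f}{2\sqrt{\pi}\,\mathcal{C}_1^{1/2}}$, which is the assertion (with $\mathcal{C}_0=\mathcal{C}_1$). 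The main obstacles are the two technical inputs: producing the $L^1$-domination that legitimises differentiating $\widehat{G}$ and moving the limit $\lambda\to\lambda_0-$ inside the double integral — this is precisely where Proposition \ref{prop:6.4}'s endpoint integrability is essential — and installing the Karamata/monotone-density machinery with the correct normalisations, where the complete monotonicity of $\Psi$ is indispensable and the factor $\tfrac{1}{2}$ coming from differentiating $u^{-1/2}$, together with $\Gamma(1/2)=\sqrt{\pi}$, produce the stated constant.
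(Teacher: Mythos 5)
Your argument is correct and lands on the stated constant, but it is organised differently from the paper. The paper does not reprove this proposition at all: it invokes the corresponding statement of \cite{LL} verbatim (with the Riemannian volume replaced by $\mu$), whose proof goes through the spectral measure of $\Delta-\lambda_0$ and the Hardy--Littlewood Tauberian theorem, and it explicitly records that $f$ must lie in the domain of the Laplacian --- which is exactly the role of the hypothesis that all derivatives of $f$ vanish at the vertices. You instead give a self-contained Karamata/monotone-density argument whose only dynamical input is the first-derivative asymptotics of Theorem \ref{thm:6.3} together with the endpoint integrability from Proposition \ref{prop:6.4}; the Tauberian condition is supplied entirely by the complete monotonicity of $\Psi$, for which $f\in C_c\cap L^2$ suffices, so you bypass both the domain condition and any use of the second-derivative estimate (Proposition \ref{prop:6.6}). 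That is a genuine streamlining, and your bookkeeping ($\Gamma(1/2)=\sqrt{\pi}$, the factor $\tfrac12$ from differentiating $u^{-1/2}$, and the positivity $\langle e^{t\Delta}f,f\rangle\ge 0$ which makes $\widehat G$ increasing so that $H(0)-H(s)=\int_0^s\widehat G'(\lambda_0-u)\,du$) is right.

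Two points should still be tightened. First, the domination you quote from Proposition \ref{prop:6.4} is stated for fixed $x$ (integrability in $y$ on a compact set); for the dominated-convergence step on the double integral over $\operatorname{supp} f\times\operatorname{supp} f$ you need a bound uniform in $x$, which does follow from the proof of Proposition \ref{prop:6.4} combined with the uniform constant $C_R$ of Lemma \ref{lem:2.4}, but it is not literally the displayed statement, so say so. Second, you (reasonably) read the right-hand side as $\frac{1}{2\sqrt{\pi}\,\mathcal{C}_1^{1/2}}\int_{\mathcal{T}^2} c(x,y)f(x)f(y)\,d\mu(x)d\mu(y)$ with $\mathcal{C}_0=\mathcal{C}_1$; note also that the asymptotic equivalences in your Karamata step are only meaningful when $\int c\,(f\otimes f)>0$ (e.g.\ $f\ge 0$, $f\not\equiv 0$, which is all that is used in the proof of Theorem \ref{thm:1.3}); in the degenerate case one should phrase the conclusion as an $o(t^{-3/2})$ bound.
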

To prove Proposition \ref{prop:6.11}, the spectral measure on the spectrum of $\Delta-\lambda_0$ is used. The function $f$ need to be in the domain of Laplacian $\Delta$ on $\mathcal{T}$. The function $f$ in Proposition \ref{prop:6.11} is in the domain of Laplacian. 

\begin{proof}[Proof of the Theorem \ref{thm:1.3}] Fix a compact set $K$ and a closed interval $[a,b].$ Lemma \ref{bssw} implies that for any $y\in K$ and $t\in [a,b]$, the function $x\mapsto p(t,x,y)$ is H\"older continuous. The convergence of Green function guarantees that $p(t,x,y)$ goes zero as $t\rightarrow\infty.$  Thus the $\|p(t,x,y)\|_\alpha$ converges to zero as $t\rightarrow \infty.$ Since $p(t,x,y)=p(t,y,x)$, for any sufficiently large $t$ and compact set $K_1$ containing $x$, $p(t,x,x)\asymp_{1+\epsilon}p(t,y_1,y_2)$ for any $y_1,y_2\in K_1.$ Let $f$ be a function satisfying the assumption of Proposition \ref{prop:6.11} and $\int_\mathcal{T}fd\mu=1$. For sufficiently large $t$, we have 
\begin{equation}\label{eq:6.48}
\int_{\mathcal{T}\times\mathcal{T}}p(t,x,y)f(x)f(y)d\mu(x)d\mu(y)\asymp_{1+\epsilon}p(t,x,x).
\end{equation}
Since $\epsilon$ is arbitrary, by \eqref{eq:6.48} and Proposition \ref{prop:6.11},
\begin{equation}
\lim_{t\rightarrow\infty}t^{3/2}e^{\lambda_0 t}p(t,x,x)=\frac{1}{2\sqrt{\pi}\mathcal{C}_0^{1/2}}c(x,x).
\end{equation}
Consider the case when $x\neq y$. Then choose a function $f_1$ and $f_2$ supported on compact sets containing $x$ and $y$. Proposition \ref{prop:6.11} show the rest part of proof when we use $f_1+f_2$.
\end{proof}
 
\end{document}